\newcommand{\C} {\mathbb{C}}
\newcommand{\Q} {\mathbb{Q}}
\newcommand{\N}  {\mathbb{N}}
\newcommand{\R} {\mathbb{R}}
\newcommand{\F}{\mathbb{F}}
\newcommand{\Z}{\mathbb{Z}}
\newcommand{\OO}{\mathcal{O}}
\newcommand{\NN}{\mathcal{N}}
\newcommand{\PP}{\mathbb{P}}
\newcommand{\NS}{\mbox{NS}}
\newcommand{\MW}{\mbox{MW}}
\newcommand{\MWL}{\mbox{MWL}}
\newcommand{\Km}{\mbox{Km}}
\newcommand{\A}{\mathbb{A}}
\newcommand{\T}{\mathbb{T}}
\newcommand{\Br}{\mathop{\rm Br}\nolimits}
\newcommand{\I}{{\mathop{\rm I}}}
\newcommand{\II}{{\mathop{\rm II}}}
\newcommand{\III}{{\mathop{\rm III}}}
\newcommand{\IV}{{\mathop{\rm IV}}}
\newtheorem{Theorem}{Theorem}[section]
\newtheorem{Proposition}[Theorem]{Proposition}
\newtheorem{Lemma}[Theorem]{Lemma}
\newtheorem{Corollary}[Theorem]{Corollary}
\theoremstyle{remark}
\newtheorem{Remark}[Theorem]{Remark}
\newtheorem{Example}[Theorem]{Example}
\theoremstyle{definition}
\newtheorem{Definition}[Theorem]{Definition}
\newtheorem{Question}[Theorem]{Question}
\begin{document}

\title{Elliptic surfaces}
\subjclass[2000]{Primary 14J27; 
Secondary 06B05, 11G05, 11G07, 11G50, 14J20, 14J26, 14J28}

\keywords{Elliptic surface, elliptic curve, N\'eron-Severi group, singular fibre, Mordell-Weil group, Mordell-Weil lattice, Tate algorithm, K3 surface}
\author{Matthias Sch\"utt}
\address{Institut f\"ur Algebraische Geometrie, Leibniz Universit\"at Hannover, Welfengarten 1, 30167 Hannover, Germany}
\email{schuett@math.uni-hannover.de}
\urladdr{http://www.iag.uni-hannover.de/\~{}schuett/}

\author{Tetsuji Shioda}
\address{Department of Mathematics, Rikkyo University, Tokyo 171-8501, Japan}
\email{shioda@rikkyo.ac.jp}
\urladdr{http://www.rkmath.rikkyo.ac.jp/math/shioda/}

\address{Research Institute,
for Mathematical Sciences,
Kyoto University,
Kyoto 606-8502,
Japan}
\email{shioda@kurims.kyoto-u.ac.jp}

\thanks{Partial funding from DFG under grant Schu 2266/2-2 and JSPS under Grant-in-Aid for Scientific Research (C) No.~20540051 is gratefully acknowledged.}

\date{February 22, 2010}

\begin{abstract}
This survey paper concerns elliptic surfaces with section.
We give a detailed overview of the theory including many examples.
Emphasis is placed on rational elliptic surfaces and elliptic K3 surfaces.
To this end, we particularly review the theory of Mordell-Weil lattices and address arithmetic questions.
\end{abstract}

\maketitle

\tableofcontents

\section{Introduction}

Elliptic surfaces are ubiquitous in the theory of algebraic surfaces.
They play a key role for many arithmetic and geometric considerations.
While this feature has become ever more clear during the last two decades, extensive survey papers and monographs seem to date back exclusively to the 80's and early 90's.


Hence we found it a good time to comprise a detailed overview of the theory of elliptic surfaces that includes the recent developments.
Since we are mainly aiming at algebraic and in particular arithmetic aspects,
it is natural to restrict to elliptic surfaces with sections.
Albeit this deters us from several interesting phenomena (Enriques surfaces to name but one example), 
elliptic surfaces with section are useful in general context as well since
one can always derive some basic information about a given elliptic surface without section from its Jacobian (cf.~\cite{Keum}; for the general theory see e.g.~\cite{LLR} and the references therein).

While we will develop the theory of elliptic surfaces with section in full generality, we will put particular emphasis on the following three related subjects:
\begin{itemize}
 \item 
rational elliptic surfaces,
\item
Mordell-Weil lattices,
\item
elliptic K3 surfaces and their arithmetic.
\end{itemize}
Throughout we will discuss examples whenever they become available.

\smallskip

Elliptic surfaces play a prominent role in the classification of algebraic surfaces and fit well with the general theory.
As we will see, they admit relatively simple foundations which in particular do not require too many prerequisites.
Yet elliptic surfaces are endowed with many different applications for several areas of research, such as lattice theory, singularities, group theory and modular curves.

To get an idea, consider height theory. 
On elliptic curves over number fields, height theory requires quite a bit of fairly technical machinery.
In contrast, elliptic surfaces naturally lead to a definition of height that very clear concepts and proofs underlie.
In particular, it is a key feature of elliptic surfaces among all algebraic surfaces that the structure of their N\'eron-Severi groups is well-understood; this property makes elliptic surfaces accessible to direct computations.

\smallskip

Throughout the paper, we will rarely give complete proofs;
most of the time, we will only briefly sketch the main ideas and concepts and include a reference for the reader interested in the details.
There are a few key references that should be mentioned separately:


\begin{center}
\begin{tabular}{ll}
textbooks on elliptic curves: &  Cassels \cite{Ca}, Silverman \cite{Si0}, \cite{Si}\\
textbooks on algebraic surfaces:\; & 
Mumford \cite{Mumford}, 
 Beauville \cite{Beau-surf}, Shafarevich \cite{Shafa-basic},\\
& Barth-Hulek-Peters-van de Ven \cite{BHPV}\\
textbook on lattices: & Conway-Sloane \cite{CS}\\
&\\
references for elliptic surfaces: &
Kodaira \cite{K},
Tate   \cite{Tate},
Shioda \cite{ShMW},\\
& Cossec-Dolgachev \cite{CD}, Miranda \cite{M1}
\end{tabular}
\end{center}

\smallskip

Elliptic surfaces form such a rich subject that we could not possibly have treated all aspects that we would have liked to include in this survey.
Along the same lines, it is possible, if not likely that at one point or another we might have missed an original reference, but we have always tried to give credit to the best of our knowledge, and our apologies go to those colleagues who might have evaded out attention.

\section{Elliptic curves}

We start by reviewing the basic theory of elliptic curves. Let $K$ denote a field; that could be a number field, the real or complex numbers, a local field, a finite field or a function field over either of the former fields.

\begin{Definition}
An elliptic curve $E$ is a smooth projective curve of genus one with a point $O$ defined over $K$.
\end{Definition}

One crucial property of elliptic curves is that for any field $K'\supseteq K$, the set of $K'$-rational points $E(K')$ will form a group with origin $O$. Let us consider the case $K=\C$ where we have the following analytic derivation of the group law.

\subsection{Analytic description}

Every complex elliptic curve is a one-dimensional torus. There is a lattice
\[
 \Lambda \subset \C \;\;\; \text{of rank two such that}\;\;\; E\cong \C/\Lambda.
\]
Then the composition in the group is induced by addition in $\C$, and the neutral element is the equivalence class of the origin. We can always normalise to the situation where
\[
 \Lambda = \Lambda_\tau = \Z + \tau\,\Z,\;\;\;\;\; \mbox{im}(\tau)>0.
\]
For later reference, we denote the corresponding elliptic curve by 
\begin{eqnarray}\label{eq:E_tau}
 E_\tau = \C/\Lambda_\tau.
\end{eqnarray}
Two complex tori are isomorphic if and only if the corresponding lattices are homothetic.

\subsection{Algebraic description}

Any elliptic curve $E$ has a model as a smooth cubic in $\PP^2$. As such, it has a point of inflection. 
Usually a flex $O$ is chosen as the neutral element of the group law.
It was classically shown how the general case can be reduced to this situation (cf.~for instance \cite[\S 8]{Ca}).

By Bezout's theorem, any line $\ell$ intersects a cubic in $\PP^2$ in three points $P,Q,R$, possibly with multiplicity. 
This fact forms the basis for defining the group law with $O$ as neutral element:

\begin{Definition}
Any three collinear points $P,Q,R$ on an elliptic curve $E\subset \PP^2$ add up to zero:
\[
 P+Q+R=O.
\]
\end{Definition}
Hence $P+Q$ is obtained as the third intersection point of the line through $O$ and $R$ with $E$.
The hard part about the group structure is proving the associativity, but we will not go into the details here.

\subsection{Weierstrass equation}

Given an elliptic curve $E\subset \PP^2$, we can always find a linear transformation that takes the origin of the group law $O$ to $[0,1,0]$ and the flex tangent to $E$ at $O$ to the line $\ell=\{z=0\}$. 
In the affine chart $z=1$, the equation of $E$ then takes the \textbf{generalised Weierstrass form}
\begin{eqnarray}\label{eq:NF}
E:\;\;\; y^2 + a_1 \,x\,y+ a_3\,y \, = \, x^3 + a_2 \,x^2 + a_4 \,x+a_6.
\end{eqnarray}
Outside this chart, $E$ only has the point $O=[0,1,0]$. The lines through $O$ are exactly  $\ell$ and the vertical lines $\{x=\mbox{const}\}$ in the usual coordinate system. 
The indices refer to some weighting (cf.~\ref{ss:j}).

Except for small characteristics, there are further simplifications: If char$(K)\neq 2$, then we can complete the square on the left-hand side of  (\ref{eq:NF}) to achieve $a_1=a_3=0$. Similarly, if char$(K)\neq 3$, we can assume $a_2=0$ after completing the cube on the right-hand side of (\ref{eq:NF}). Hence if char$(K)\neq 2,3$, then we can transform $E$ to the \textbf{Weierstrass form}
\begin{eqnarray}\label{eq:WF}
E:\;\;\; y^2 \, = \, x^3 +  a_4 \,x+a_6.
\end{eqnarray}

The name stems from the fact that the Weierstrass $\wp$-function for a given lattice $\Lambda$ satisfies a differential equation of the above shape. Hence $(\wp, \wp')$ gives a map from the complex torus $E_\tau$ to its Weierstrass form with $a_4, a_6$ expressed through Eisenstein series in $\tau$.

\subsection{Group operations}

One can easily write down the group operations for an elliptic curve $E$ given in (generalised) Weierstrass form. They are given as rational functions in the coefficients of $K$-rational points $P=(x_1,y_1), Q=(x_2,y_2)$ on $E$:

$$
\begin{array}{ccccc}
\text{operation} && \text{generalised Weierstrass form} && \text{Weierstrass form}\\
\hline
-P && (x_1, -y_1-a_1\,x_1-a_3) && (x_1, -y_1)\\
&&&&\\
 P+Q
 && \begin{matrix}
x_3=\lambda^2+a_1\lambda-a_2-x_1-x_2\\
y_3 = -(\lambda+a_1)x_3-\nu-a_3
    \end{matrix}
 && 
\begin{matrix}
x_3=\lambda^2-x_1-x_2\\
 y_3 = -\lambda x_3-\nu
\end{matrix}
\end{array}
$$
Here $y=\lambda\,x+\nu$ parametrises the line through $P,Q$ if $P\neq Q$, resp.~the tangent to $E$ at $P$ if $P=Q$ (under the assumption that $Q\neq -P$).

\subsection{Discriminant}

Recall that we require an elliptic curve to be smooth. If $E$ is given in Weierstrass form (\ref{eq:WF}), then smoothness can be phrased in terms of the \textbf{discriminant} of $E$:
\begin{eqnarray}\label{eq:disc}
 \Delta = -16\,(4\,a_4^3 + 27\,a_6^2).
\end{eqnarray}
For the generalised Weierstrass form (\ref{eq:NF}), the discriminant can be obtained by running through the completions of square and cube in the reverse direction.
It is a polynomial in the $a_i$ with integral coefficients (cf.~\cite[\S 1]{Tate}).

\begin{Lemma}
\label{Lem:disc}
A cubic curve $E$ is smooth if and only if $\Delta\neq 0$.
\end{Lemma}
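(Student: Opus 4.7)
The plan is to show directly that the singular points of the plane cubic $E$ are detected by the vanishing of the cubic polynomial $p(x) = x^3 + a_4 x + a_6$ and its derivative, and then to recognise $\Delta$ (up to a nonzero scalar) as the discriminant of $p$. Since the generalised form is reduced to the short Weierstrass form by the completions of square and cube described in the text, and the discriminant transforms compatibly under these changes of variable, it suffices to work with the equation \eqref{eq:WF}, assuming $\mathrm{char}(K) \neq 2,3$.

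First I would dispose of the unique point at infinity $O = [0,1,0]$. Writing the homogenisation $F(X,Y,Z) = Y^2 Z - X^3 - a_4 X Z^2 - a_6 Z^3$ and computing the partial $F_Z = Y^2 - 2 a_4 X Z - 3 a_6 Z^2$, one sees that $F_Z(O) = 1 \neq 0$, so $E$ is smooth at $O$ regardless of the $a_i$. Thus all singularities lie in the affine chart $z = 1$, where $E$ is cut out by $f(x,y) = y^2 - p(x)$ with $p(x) = x^3 + a_4 x + a_6$.

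Next I would analyse the system $f = f_x = f_y = 0$ for an affine singular point. Since $f_y = 2y$ and $\mathrm{char}(K) \neq 2$, any singular point must have $y = 0$; substituting back, the conditions become
\[
p(x) = 0, \qquad p'(x) = 3x^2 + a_4 = 0.
\]
Thus $E$ is singular if and only if $p$ and $p'$ have a common root in the algebraic closure $\bar K$, i.e.\ if and only if $p$ has a multiple root. This is the standard discriminant condition for a cubic: a short computation (eliminating $x$ via the resultant $\operatorname{Res}(p,p')$) yields
\[
\operatorname{Res}(p,p') = 4 a_4^3 + 27 a_6^2,
\]
which is nonzero precisely when $p$ has three distinct roots in $\bar K$. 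Since $\Delta = -16\,(4 a_4^3 + 27 a_6^2)$ and $-16 \neq 0$ in $K$, this shows that $E$ is smooth if and only if $\Delta \neq 0$.

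The only mild obstacle I foresee is the small-characteristic caveat: in characteristic $2$ or $3$ one cannot bring $E$ into the short form \eqref{eq:WF}, and the argument must instead be carried out for the generalised form \eqref{eq:NF}. The strategy is the same — identify the singular locus with the common zeros of $f$ and its partials, and recognise the resulting polynomial condition in the $a_i$ as (up to sign) the discriminant defined via the reverse completions of square and cube — but the bookkeeping of coefficients is more involved; one can instead quote the explicit polynomial expression for $\Delta$ recorded in \cite[\S 1]{Tate}, which is derived precisely by this uniform calculation.
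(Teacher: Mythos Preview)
Your argument is correct and is the standard one: check smoothness at $O$ directly, then reduce the affine singular-locus equations to the condition that $p(x)=x^3+a_4x+a_6$ have a repeated root, which is governed by the polynomial discriminant $4a_4^3+27a_6^2$. The handling of characteristics $2$ and $3$ via the generalised Weierstrass form and Tate's explicit formula is also appropriate.

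There is nothing to compare against: the paper states Lemma~\ref{Lem:disc} without proof, treating it as a well-known fact (in keeping with the survey's declared policy of omitting most proofs and referring to standard sources such as \cite{Si0}, \cite{Tate}). Your write-up would serve perfectly well as the missing justification.
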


\subsection{j-invariant}
\label{ss:j}

We have already seen when two complex tori are isomorphic. In this context, it is important to note that the Weierstrass form (\ref{eq:WF}) only allows one kind of coordinate transformation:
\begin{eqnarray}\label{eq:scale}
 x \mapsto u^2\,x, \;\;\; y\mapsto u^3\,y.
\end{eqnarray}
Such coordinate transformations are often called admissible.
An admissible transformation affects the coefficients and discriminant as follows:
\[
 a_4 \mapsto a_4/u^4,\;\;\; a_6 \mapsto a_6/u^6,\;\;\; \Delta \mapsto \Delta/u^{12}.
\]
It becomes clear that the indices of the coefficients refer to the respective exponents of $u$ which we consider as weights. 
All such transformation yield the same so-called \textbf{j-invariant}:
\[
 j = -1728\, \dfrac{(4\,a_4)^3}{\Delta}.
\]
\begin{Theorem}
\label{Thm:j}
Let $E, E'$ be elliptic curves in Weierstrass form (\ref{eq:WF}) over $K$.
If $E\cong E'$, then $j=j'$. The converse holds if additionally $a_4/a_4'$ is a fourth power and $a_6/a_6'$ is a sixth power in $K$, in particular if $K$ is algebraically closed.
\end{Theorem}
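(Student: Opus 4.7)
The plan is to show that any $K$-isomorphism between short Weierstrass models must be an admissible change of coordinates \eqref{eq:scale}, from which both directions of the theorem follow cleanly. For the forward direction, let $\phi\colon E\to E'$ be a $K$-isomorphism. It must send $O=[0,1,0]$ on $E$ to its counterpart on $E'$, since this is the unique non-affine point in each model. Therefore $\phi$ pulls back $x',y'$ to rational functions on $E$ whose only pole is at $O$, of order at most $2$ and $3$ respectively. By Riemann--Roch (or direct inspection) one has $L(2O) = \langle 1,x\rangle$ and $L(3O) = \langle 1,x,y\rangle$, so necessarily $\phi^*x' = ax+b$ and $\phi^*y' = cy+dx+e$ with $a,c\in K^*$.

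Next I would substitute these expressions into $(y')^2 = (x')^3 + a_4'x' + a_6'$ and reduce modulo $y^2 - (x^3+a_4x+a_6)$. Vanishing of the $xy$, $y$ and $x^2$ coefficients forces $d=e=b=0$ in characteristic $\neq 2,3$, while the $x^3$-coefficient gives $c^2=a^3$. Setting $u=c/a$ then yields $a=u^2$ and $c=u^3$, which is exactly \eqref{eq:scale}; matching the remaining two coefficients gives $a_4' = u^4 a_4$, $a_6' = u^6 a_6$, and hence $\Delta' = u^{12}\Delta$. Invariance of $j = -1728(4a_4)^3/\Delta$ is then immediate.

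For the converse, assume $j=j'$. Cross-multiplying the equality $(4a_4)^3/\Delta = (4a_4')^3/\Delta'$ and simplifying via \eqref{eq:disc} yields the identity $a_4^3\,a_6'^2 = a_4'^3\,a_6^2$. I seek $u\in K^*$ with $a_4 = u^4 a_4'$ and $a_6 = u^6 a_6'$. The boundary case $j=1728$ forces $a_6=a_6'=0$, and only the fourth-power hypothesis is needed; dually, $j=0$ forces $a_4=a_4'=0$, needing only the sixth-power hypothesis. In the generic case, both hypotheses produce $v,w\in K^*$ with $v^4=a_4/a_4'$ and $w^6=a_6/a_6'$, while the algebraic identity gives $v^{12}=w^{12}$, so $\xi := v/w$ satisfies $\xi^{12}=1$. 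Then $\xi^6=\pm 1$: if $+1$ take $u=v$; if $-1$, note that either $\xi$ or $\xi^3$ then has order $4$, forcing $i\in K$, so $u=iv$ does the job. When $K$ is algebraically closed, both root-extraction hypotheses are vacuous.

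The hard part will be the forward direction's identification of isomorphisms with admissible transformations, i.e.\ the statement that the pair $(x,y)$ is essentially canonical once $O$ is fixed; Riemann--Roch gives the cleanest route, though the entire argument can alternatively be recast by comparing leading pole orders at $O$. Once that step is in hand, both directions reduce to coefficient bookkeeping and an elementary case analysis of twelfth roots of unity.
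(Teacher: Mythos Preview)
The paper does not actually prove this theorem; it is a survey and merely remarks afterwards that the extra hypotheses are needed because of quadratic twists. Your argument is correct and is essentially the standard textbook proof (cf.\ \cite[III.3]{Si0}): identify all $K$-isomorphisms of short Weierstrass models with the admissible transformations \eqref{eq:scale} via Riemann--Roch, and then do the coefficient bookkeeping.

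Two minor comments. First, the justification that $\phi(O)=O'$ should simply be that an isomorphism of elliptic curves by definition preserves the marked point; your ``unique non-affine point'' remark presupposes that $\phi$ respects the ambient $\PP^2$, which is not given a priori. Second, in the Riemann--Roch step the pole orders of $\phi^*x'$ and $\phi^*y'$ at $O$ are \emph{exactly} $2$ and $3$ (not merely at most), and this is what forces $a,c\in K^*$. Neither point affects the validity of the argument.
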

The extra assumption of the Theorem is neccessary, since we have to consider \textbf{quadratic twists}. For each $d\in K^*$,
\begin{eqnarray}\label{eq:quadr_twist}
E_d:\;\;\; y^2 \, = \, x^3 + d^2\, a_4 \,x + d^3 \, a_6.
\end{eqnarray}
defines an elliptic curve with $j'=j$. However, the isomorphism $E\cong E_d$ involves $\sqrt{d}$, thus it is a priori only defined over $K(\sqrt{d})$.
The two elliptic curves with extra automorphisms in \ref{ss:normal_j} moreover admit cubic (and thus sextic) resp.~quartic twists.

\subsection{Normal form for given j-invariant}
\label{ss:normal_j}

We want to show that for any field $K$ and any fixed $j\in K$, there is an elliptic curve $E$ over $K$ with j-invariant $j$. For this purpose, we will cover $j=0$ and $12^3$ separately in the sequel.

\begin{Definition}[Normal form for j]
 Let $j\in K\setminus\{0, 12^3\}$. Then the following elliptic curve in generalised Weierstrass form has j-invariant $j$:
\begin{eqnarray}\label{eq:normal_j}
 E:\;\;\; y^2 + x\,y = x^3 - \frac{36}{j-12^3}\,x - \frac{1}{j-12^3}.
\end{eqnarray}
\end{Definition}

The remaining two elliptic curve are special because they admit extra automorphisms. They can be given as follows:
$$
\begin{array}{llcll}
j=0: & y^2 + y & = & x^3, & \mbox{char}(K)\neq 3,\\
j=12^3:\;\;\;\; & y^2 & = & x^3 + x, \;\;\; & \mbox{char}(K)\neq 2.
\end{array}
$$

\subsection{Group structure over number fields}

The results for elliptic curves over number fields motivated many developments in the study of elliptic surfaces. We give a brief review of some of the main results.

In view of Faltings' theorem \cite{Fa}, elliptic curves can be considered the most intriguing curves if it comes to rational points over number fields. Namely, a curve of genus zero has either no points at all, or it has infinitely many rational points and is isomorphic to $\PP^1$. On the other hand, curves of genus $g>1$ have only finitely many points over any number field. Elliptic curves meanwhile feature both cases:

\begin{Theorem}[Mordell-Weil]
 Let $E$ be an elliptic curve over a number field $K$. Then $E(K)$ is a finitely generated abelian group:
\begin{eqnarray}\label{eq:MW-thm}
 E(K) \cong E(K)_{\text{tor}} \oplus \Z^r.
\end{eqnarray}
\end{Theorem}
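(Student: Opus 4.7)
The plan is to prove this classical theorem by the standard two-step descent method due originally to Mordell (for $K=\Q$) and Weil (general case). First I would establish the \emph{Weak Mordell-Weil Theorem}: for some integer $m\geq 2$, the quotient $E(K)/mE(K)$ is finite. Second, I would combine this with a theory of heights on $E(K)$ via an infinite descent argument to deduce finite generation.

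For the weak step, I take $m=2$ and, after replacing $K$ by a finite extension $L$ containing $E[2]$, it suffices to prove $E(L)/2E(L)$ is finite since $E(K)/2E(K)$ maps to it with finite kernel (bounded in terms of $|E(L)[2]|$ and $[L{:}K]$). Over $L$ the Kummer sequence $0\to E[2]\to E\xrightarrow{\cdot 2}E\to 0$ induces an injection
\[
E(L)/2E(L) \hookrightarrow H^1(\mathrm{Gal}(\bar L/L),E[2]) = \mathrm{Hom}(\mathrm{Gal}(\bar L/L),E[2]).
\]
Using the explicit $2$-descent map (via square roots of $x-e_i$ for the roots $e_i$ of the Weierstrass cubic), one verifies the image lies in homomorphisms unramified outside a finite set $S$ of primes of $L$ (those above $2$ and those of bad reduction of $E$). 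Combining Hermite-Minkowski with Dirichlet's $S$-unit theorem bounds the number of such homomorphisms, yielding $|E(L)/2E(L)|<\infty$.

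For the descent, I would introduce a height $h\colon E(K)\to \R_{\geq 0}$, defined for $K=\Q$ by $h(P)=\log\max(|a|,|b|)$ when $x(P)=a/b$ in lowest terms, and via the Weil height machine in general. Its key properties are: \emph{Northcott finiteness}, namely $\{P\in E(K)\colon h(P)\leq B\}$ is finite for every $B$; the \emph{quasi-parallelogram law} $h(P+Q)+h(P-Q)=2h(P)+2h(Q)+O(1)$; and, as a consequence, the lower bound $h(2P)\geq 4h(P)-c$ for a constant $c=c(E,K)$. The first two follow from elementary diophantine estimates on the group law; alternatively one can pass to the N\'eron-Tate canonical height $\hat h=\lim 4^{-n}h(2^nP)$ which is an exact quadratic form.

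The descent then proceeds as follows. Let $P_1,\ldots,P_n$ represent the cosets of $2E(K)$ in $E(K)$. Given $P\in E(K)$, write $P=P_{i_1}+2Q_1$, $Q_1=P_{i_2}+2Q_2$, and iterate. Using the quasi-parallelogram law and the fact that there are only finitely many $P_i$, one shows $h(Q_{j+1})\leq \tfrac14 h(Q_j)+C$ for a constant $C$ independent of $P$. Hence after finitely many steps $h(Q_j)\leq B$ for some $B=B(E,K)$, so $P$ lies in the subgroup generated by $P_1,\ldots,P_n$ together with the finite set $\{R\in E(K)\colon h(R)\leq B\}$, proving finite generation. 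The structure theorem for finitely generated abelian groups then gives the decomposition (\ref{eq:MW-thm}). The principal obstacle is the weak Mordell-Weil theorem itself: its proof requires non-trivial arithmetic input (finite generation of $S$-units, equivalently finiteness of the class group plus Dirichlet's unit theorem), and the control over ramification of the Kummer cocycles rests on the reduction theory of $E$. Once weak Mordell-Weil is granted, the height-theoretic descent is essentially formal.
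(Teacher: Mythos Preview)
Your outline is correct and follows the standard Mordell--Weil proof: weak Mordell--Weil via Kummer theory and $S$-unit finiteness, then infinite descent using the (canonical) height. The paper, being a survey, does not actually prove this theorem; it merely states it and remarks that a quadratic height function on $E(K)$ is one of the main ingredients, pointing forward to the Mordell--Weil lattice theory for elliptic surfaces where heights arise geometrically. So your approach is exactly the classical one the paper alludes to, just carried out in full rather than left as a reference.
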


One of the main ingredients of the proof is a quadratic height function on the $K$-rational points. 
For elliptic surfaces, the definition of such a height function will feature prominently in the theory of Mordell-Weil lattices. 
In fact, the height arises naturally from geometric considerations (cf.~\ref{s:MWL}).

\smallskip

There are two basic quantities in the Mordell-Weil theorem: the \textbf{torsion subgroup} and the \textbf{rank} of $E(K)$. 
The possible torsion subgroups can be bounded in terms of the degree of $K$ by work of Mazur and Merel (cf.~\cite{Mazur}).
Meanwhile it is an open question whether the ranks are bounded for given number fields $K$. 
In fact, elliptic surfaces prove a convenient tool to find elliptic curves of high rank by specialisation,
as we will briefly discuss in section \ref{s:ranks}.

For fixed classes of elliptic surfaces, one can establish analogous classifications, bounding torsion and rank.
The case of rational elliptic surfaces will be treated in Cor.~\ref{Cor:MW-RES}.

\subsection{Integral points}

If we want to talk about integral points on a variety, we always have to refer to some affine model of it. For elliptic curves, we can use the Weierstrass form (\ref{eq:WF}) or the generalised Weierstrass form (\ref{eq:NF}) for instance. 

\begin{Theorem}[Siegel]
\label{Thm:Siegel}
 Let $E$ be an elliptic curve with some affine model over $\Q$. Then $\# E(\Z)<\infty$.
\end{Theorem}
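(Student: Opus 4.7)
The plan is to combine the Mordell-Weil theorem, the theory of heights on $E$, and Roth's theorem on Diophantine approximation of algebraic numbers. Since the integrality rings of any two affine models of $E$ differ only by inverting a fixed nonzero integer, I would first reduce to the case where $E$ is in Weierstrass form (\ref{eq:WF}) with $a_4, a_6 \in \Z$ and prove finiteness of $E(\Z)$ for this model.

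Arguing by contradiction, I would suppose $E(\Z)$ is infinite. Since only finitely many integers have any given absolute value, I can extract an infinite subsequence $P_n \in E(\Z)$ with $|x(P_n)| \to \infty$, hence with naive height $H(P_n) \to \infty$. Via the analytic model $E(\C)=\C/\Lambda$ from (\ref{eq:E_tau}) and the expansion $\wp(z) \sim z^{-2}$ near $0$, this yields an archimedean distance estimate
\[
 d_\infty(P_n, O) \ll |x(P_n)|^{-1/2} \ll H(P_n)^{-1/2}.
\]
By the Mordell-Weil theorem, the quotient $E(\Q)/mE(\Q)$ is finite for every fixed integer $m$, so after passing to a further subsequence I may write $P_n = R + m Q_n$ for a fixed $R \in E(\Q)$ and points $Q_n \in E(\Q)$.

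The heart of the proof is a height-versus-distance comparison. The canonical height $\hat h$ is a positive definite quadratic form on $E(\Q)/E(\Q)_{\mathrm{tor}}$ satisfying $\hat h = \tfrac12 \log H + O(1)$, so $H(Q_n) \asymp H(P_n)^{1/m^2}$. Since $[m]\colon E\to E$ is an unramified étale cover which, under the identification $E(\C) = \C/\Lambda$, scales distances exactly by $m$, the sequence $Q_n$ must cluster around one of the $m^2$ algebraic points $T \in E(\overline{\Q})$ satisfying $mT = -R$, yielding
\[
 d_\infty(Q_n, T) \ll \tfrac{1}{m}\, d_\infty(P_n, O) \ll H(Q_n)^{-m^2/2}.
\]
Each such $T$ lies in the fixed number field $K = \Q(E[m], T_0)$, where $T_0$ is any one preimage of $-R$. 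Projecting to the $x$-coordinate, I would translate this into an approximation of the algebraic number $x(T) \in K$ by rationals $x(Q_n)$ of Weil height at most $H(Q_n)$; taking $m$ sufficiently large makes the resulting approximation exponent exceed $2$, contradicting Roth's theorem over $K$.

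The hard part will be the quantitative distance-to-height comparisons and their transfer through $[m]$. One must verify that the étale structure of $[m]$ genuinely gives a local-isometry estimate (up to the uniform factor $m$) on shrinking neighbourhoods of $[m]^{-1}(-R)$, and one must control the finitely many non-archimedean places where $E$ has bad reduction so that integrality at the archimedean place really forces the Diophantine approximation statement. Once these ingredients are in place, choosing $m$ large makes the exponent $m^2/2$ exceed the Roth threshold $2$, producing the contradiction.
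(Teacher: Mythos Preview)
The paper does not actually give a proof of Siegel's theorem; it is stated as a classical result in the background section on elliptic curves over number fields, with references to the standard textbooks (Cassels, Silverman). Your outline is the standard Mordell--Weil + Roth argument (essentially Siegel's own proof, upgraded from the Thue--Siegel approximation theorem to Roth's), and it is correct in substance. The only places to tighten are routine: the reduction step really forces you to prove finiteness of $S$-integral points on the Weierstrass model (since passing between affine models inverts finitely many primes), and in the endgame you should note that $x$ is a local coordinate near $T$ provided $T\neq O$ and $2T\neq O$, cases one disposes of by a change of coordinate or a different choice of $m$.

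What is worth noting is the contrast with how the paper treats the \emph{function-field} analogue in \S\ref{ss:int-surf}. There the argument is completely different and far shorter: an integral section $\bar P$ on an elliptic surface $S\to C$ is by definition disjoint from $\bar O$, so the height formula gives $h(P)\le 2\chi(S)$; since the Mordell--Weil lattice is positive-definite of finite rank, only finitely many points have bounded height. That proof uses no Diophantine approximation at all---the geometry of the N\'eron--Severi lattice supplies an \emph{a priori} height bound for integral points. No such bound is known over $\Q$ (it would follow from Lang's height conjecture), which is exactly why your Roth-based approach is unavoidable in the number-field setting.
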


We will see that the same result holds for elliptic surfaces under mild conditions. In fact, we can even allow bounded denominators without losing the finiteness property.

\subsection{Bad places}

On a similar note, one can consider the set of bad places $v$ for an elliptic curve over a global field $K$.
Here $E$ attains  a singularity or degenerates over the residue field $K_v$;
this behaviour is called bad reduction.
In case $K$ is a number fields, one usually refers to the bad primes.

Since the bad places divide the discriminant by Lemma \ref{Lem:disc}, any elliptic curve has good reduction outside a finite set of bad places.
The following finiteness result for elliptic curves over $\Q$ addresses the inverse problem:

\begin{Theorem}[Shafarevich]
\label{Thm:Shafa}
Let $\mathcal S$ be a finite set of primes in $\Z$. Then there are only finitely many elliptic curves over $\Q$ up to $\Q$-isomorphism with good reduction outside $\mathcal S$.
\end{Theorem}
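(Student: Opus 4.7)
The plan is to reduce the statement to the Siegel--Mahler finiteness theorem for the $\mathcal{S}$-unit equation $X+Y=1$ in $X,Y\in\Z_\mathcal{S}^*$, applied to the discriminant relation. Since enlarging $\mathcal{S}$ only weakens the hypothesis, I would begin by assuming $2,3\in\mathcal{S}$, so that every candidate $E/\Q$ admits a short Weierstrass model $y^2=x^3+a_4x+a_6$ as in (\ref{eq:WF}) with $a_4,a_6\in\Z_\mathcal{S}$. Using the admissible scaling (\ref{eq:scale}), under which $(a_4,a_6,\Delta)\mapsto(u^{-4}a_4,u^{-6}a_6,u^{-12}\Delta)$, I pick a representative in each $\Q$-isomorphism class that is \emph{minimal} with respect to $v_p$ for every $p\notin \mathcal{S}$. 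Good reduction outside $\mathcal{S}$ combined with Lemma~\ref{Lem:disc} then forces $\Delta\in\Z_\mathcal{S}^*$.

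The key intermediate step is to show that in such a minimal model, $a_4$ and $a_6$ are themselves $\mathcal{S}$-units (the degenerate cases $a_4=0$ or $a_6=0$ contribute only the two exceptional $j$-invariants $12^3$ and $0$ of \S\ref{ss:normal_j} and can be handled separately). If a prime $p\notin\mathcal{S}$ divided $a_4$, then the identity $4a_4^3+27a_6^2=-\Delta/16\in\Z_\mathcal{S}^*$ would force $p\mid a_6$ as well; but then $p^6$ would divide the left-hand side, contradicting $\Delta\in\Z_\mathcal{S}^*$. Dividing the discriminant relation by $-\Delta/16$ then yields
\[
X+Y=1, \qquad X:=-\tfrac{64\,a_4^3}{\Delta},\quad Y:=-\tfrac{432\,a_6^2}{\Delta},
\]
with $X,Y\in\Z_\mathcal{S}^*$. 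The Siegel--Mahler theorem produces only finitely many such pairs, and since $j = -1728(4a_4)^3/\Delta = 1728\,X$, only finitely many $j$-invariants occur.

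To finish, for each permissible $j$ all elliptic curves over $\Q$ with that $j$-invariant are twists of a single fixed one by (\ref{eq:quadr_twist}) -- quadratic twists in general, with cubic/sextic at $j=0$ and quartic at $j=12^3$. A quadratic twist by $d$ scales $\Delta$ by $d^6$, so good reduction outside $\mathcal{S}$ restricts $d$ to $\Z_\mathcal{S}^*$; two such twists are $\Q$-isomorphic exactly when their ratio is a square, so the admissible twists are parametrized by $\Z_\mathcal{S}^*/(\Z_\mathcal{S}^*)^2$, which is finite by Dirichlet's unit theorem. The higher-order twists at the exceptional $j$-values are handled identically. Combining the two finitenesses gives the result.

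The main obstacle is the minimality/descent argument in the second paragraph: coupling the rescalings of $a_4$ and $a_6$ through a \emph{single} parameter $u$ is precisely what lets one pass from ``$\Delta\in\Z_\mathcal{S}^*$'' to ``$a_4$ and $a_6$ are individually $\mathcal{S}$-units.'' A careful extension to a general number field $K$ (rather than $\Q$) would moreover require replacing the principal-ideal arithmetic of $\Z_\mathcal{S}$ by a treatment that accommodates non-principal primes in the ring of $\mathcal{S}$-integers of $K$.
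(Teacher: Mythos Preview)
Your reduction to the $\mathcal{S}$-unit equation has a genuine gap in the second paragraph: the implication ``$p\mid a_4$ forces $p\mid a_6$'' is backwards. From $4a_4^3+27a_6^2=-\Delta/16\in\Z_{\mathcal S}^*$ and $p\mid a_4$ (with $p\notin\mathcal S$, so $p\neq 3$) you get $27a_6^2\equiv -\Delta/16\not\equiv 0\pmod p$, hence $p\nmid a_6$. So nothing prevents $a_4$ from being divisible by primes outside $\mathcal S$; for instance $y^2=x^3+5x+1$ has $\Delta=-16\cdot 527=-2^4\cdot 17\cdot 31$, good reduction at $5$, and the model is already minimal at $5$, yet $a_4=5$ is not a $5$-unit and no admissible rescaling over $\Q$ can make it one (you would need $v_5(u)=1/4$). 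Consequently your $X=-64a_4^3/\Delta$ is not an $\mathcal S$-unit in general, and the Siegel--Mahler step does not apply as written. (Even granting both divisibilities, you would obtain $p^2\mid 4a_4^3+27a_6^2$, not $p^6$.)

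The paper takes a different route, which is Shafarevich's original one (see \S\ref{ss:finite'}): after normalising to $y^2=x^3-3Ax-2B$ with $A,B\in\Z$, one observes that $\Delta\in\Z_{\mathcal S}^*$ leaves only finitely many possibilities for $\Delta$ up to $12$th powers, and for each fixed $\Delta$ the relation $B^2=A^3-\Delta/12^3$ is a Mordell curve on which Siegel's theorem (Thm.~\ref{Thm:Siegel}, in its $\mathcal S$-integer form) gives finitely many $(A,B)$. This sidesteps the unit problem entirely because one only needs $A,B$ to be $\mathcal S$-integers, not $\mathcal S$-units. If you want to salvage an $\mathcal S$-unit equation proof, the standard fix is to first adjoin the $2$-torsion (only finitely many extensions arise, by Hermite--Minkowski), pass to the Legendre form $y^2=x(x-1)(x-\lambda)$, and then apply Siegel--Mahler to $\lambda+(1-\lambda)=1$ with $\lambda,1-\lambda\in\Z_{\mathcal S'}^*$.
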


This result also generalises to elliptic surfaces under mild conditions which will appear quite naturally in the following (cf.~\ref{ss:finite}).
In fact, Shafarevich's original idea of proof generalises to almost all cases.
The generalisation is based on the theory of Mordell-Weil lattices as we will sketch in \ref{ss:finite'}.

\section{Elliptic surfaces}
\label{s:ell_surf}

In this section, we introduce elliptic surfaces.
We explain the interplay with elliptic curves over one-dimensional function fields using the generic fibre and the Kodaira-N\'eron model.
The central ingredient is the identification of sections of the surface and points on the generic fibre.

\subsection{}

We shall define elliptic surfaces in a geometric way. Therefore we let $k=\bar k$ denote an algebraically closed field, and $C$ a smooth projective curve over $k$.
Later on, we will also consider elliptic surfaces over non-algebraically closed fields (such as number fields and finite fields); in that case we will require the following conditions to be valid for the base change to the algebraic closure.

\begin{Definition}
An \textbf{elliptic surface} $S$ over $C$ is a smooth projective surface $S$ with an elliptic fibration over $C$, i.e.~a surjective morphism
\[
 f:\;\;\; S \to C,
\]
such that
\begin{enumerate}
 \item 
almost all fibres are smooth curves of genus $1$;
\item
no fibre contains an exceptional curve of the first kind.
\end{enumerate}
\end{Definition}

The second condition stems from the classification of algebraic surfaces.
An exceptional curve of the first kind is a smooth rational curve of self-intersection $-1$ (also called $(-1)$-curve). Naturally, $(-1)$-curves occur as exceptional divisors of blow-ups of surfaces at smooth points. In fact, one can always successively blow-down $(-1)$-curves to reduce to a smooth minimal model.

In the context of elliptic surface, we ask for a relatively minimal model with respect to the elliptic fibration. The next example will show that an elliptic surface need not be minimal as an algebraic surface when we forget about the structure of the elliptic fibration.

\subsection{Example: Cubic pencil}
\label{ss:cubic_pencil}
Let $g, h$ denote homogeneous cubic polynomials in three variables. Unless $g$ and $h$ have a common factor or involve only two of the coordinates of $\PP^2$ in total, the cubic pencil
\[
 S=\{\lambda\,g+\mu\,h = 0\}\subset \PP^2\times \PP^1.
\]
defines an elliptic surface $S$ over $\PP^1$ with projection onto the second factor. This surface is isomorphic to $\PP^2$ blown up in the nine base points of the pencil. This is visible from the projection onto the first factor. The exceptional divisors are not contained in the fibres: they will be sections.

If the base points are not distinct, then $S$ has ordinary double points as singularities. In this case, the minimal desingularisation gives rise to the elliptic surface associated to the cubic pencil.

\smallskip

For instance, the normal form for given j-invariant from \ref{ss:normal_j} is seen
as a cubic pencil with parameter $j$ after multiplying its equation by $(j-12^3)$. Thus the associated elliptic surface $S$ is rational with function field $k(S)=k(x,y)$.
The pencil has a multiple base point at $[0,1,0]$; the multiplicity is seven if char$(k)\neq 2,3$, and nine otherwise.
We will check in \ref{ss:ex_normal} how these multiplicities account for a singular fibre with many components.

\subsection{Sections}
A section of an elliptic surface $f:\; S\to C$ is a morphism 
\[
\pi:\;C\to S\;\;\; \text{ such that }\;\;\; f\circ\pi=\mbox{id}_C.
\]
The existence of a section is very convenient since then we can work with a Weierstrass form (\ref{eq:WF}) where we regard the generic fibre $E$ as an elliptic curve over the function field $k(C)$.
In particular, this implies that the sections form an abelian group: $E(k(C))$.
Here we choose one section as the origin of the group law.
We call it zero section and denote it by $O$.

\smallskip

In order to exploit the analogies with the number field case, we shall employ the following conventions throughout this paper:

\smallskip

\textbf{Conventions:}
\begin{enumerate}
 \item 
Every elliptic surface has a section.
\item
Every elliptic surface $S$ has a singular fibre. In particular, $S$ is not isomorphic to a product $E\times C$.
\end{enumerate}


\begin{figure}[ht!]
\setlength{\unitlength}{.45in}
\begin{picture}(8.5,4.5)(0,0)

{
\put(0.5,1.2){\line(1,0){6}}}


\qbezier(1.2,2.5)(1.8,2.5)(1.8,4)
\qbezier(1.2,2.5)(1.8,2.5)(1.8,1)

\qbezier(5.2,2.5)(5.2,1)(5.8,4)
\qbezier(5.2,2.5)(5.2,4)(5.8,1)

\qbezier(3.2,2.5)(3.2,3.5)(3.5,3)
\qbezier(3.5,3)(3.8,2.5)(3.8,4)

\qbezier(3.2,2.5)(3.2,1.5)(3.5,2)
\qbezier(3.5,2)(3.8,2.5)(3.8,1)

\put(0,0.75){\framebox(7,3.5){}}

\put(0,0){\line(1,0){7}}
\put(8,0){\makebox(0,0)[l]{$C$}}
\put(8,2.5){\makebox(0,0)[l]{$X$}}
\put(8.1,1.75){\vector(0,-1){1}}

\put(6.4,1.5){\makebox(0,0)[l]{$O$}}
\put(3.8,0){\circle*{.1}}

\end{picture}
\caption{Elliptic surface with section and singular fibres}
\label{Fig:ES}
\end{figure}
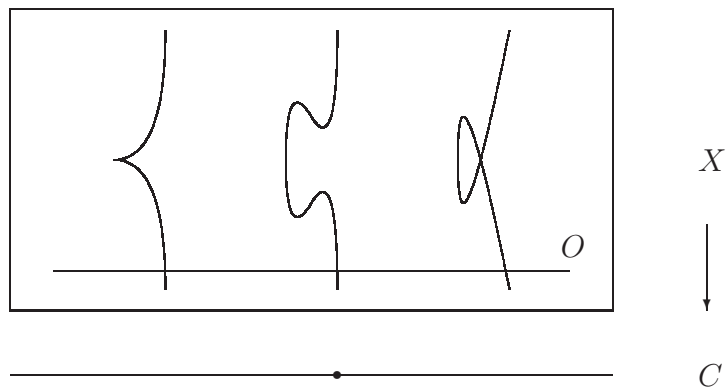

In terms of the Weierstrass form (\ref{eq:WF}), the second convention guarantees that the coefficients are not contained in the ground field $k$, but that they really involve the function field $k(C)$. We will discuss elliptic surfaces without singular fibres briefly in relation with minimal Weierstrass forms (cf.~\ref{ss:min}, \ref{ss:no}).

The assumption of a section is fairly strong. In fact, it rules out a number of surfaces that we could have studied otherwise.
For instance, every Enriques surface admits an elliptic fibration, albeit never with section.
However, we can always pass to the Jacobian elliptic surface. This process guarantees the existence of a section and preserves many properties, for instance the Picard number. The Jacobian of an elliptic fibration on an Enriques surface is a rational elliptic surface.

\subsection{Interplay between sections and points on the generic fibre}
\label{ss:s-p}

An elliptic surface $S$ over $C$ (with section) gives rise to an elliptic curve $E$ over the function field $k(C)$ by way of the generic fibre. Explicitly, a section $\pi:\;C\to S$ produces a $k(C)$ rational point $P$ on $E$ as follows: Let $\Gamma$ denote the image of $C$ under $\pi$ in $S$. Then $P=E\cap \Gamma$.

If we return to the example of a cubic pencil, then the base points will in general give nine sections, i.e.~points on the generic fibre $E$. Choosing one of them as the origin for the group law, we are led to ask whether the other sections give independent points. In general, this turns out to hold true (cf.~\ref{ss:MWL_RES}).

Conversely, let $P$ be a $k(C)$-rational point on the generic fibre  $E$. A priori, $P$ is only defined on the smooth fibres, but we can consider the closure $\Gamma$ of $P$ in $S$ (so that $\Gamma\cap E=P$).
Restricting the fibration to $\Gamma$, we obtain a birational morphism of $\Gamma$ onto the non-singular curve $C$.
\[
 f|_\Gamma:\; \Gamma \to C
\]
By Zariski's main theorem, $f|_\Gamma$ is an isomorphism. The inverse gives the unique section associated to the $k(C)$-rational point $P$.


\subsection{Kodaira-N\'eron model}

Suppose we are given an elliptic curve $E$ over the function field $k(C)$ of a curve $C$. Then the Kodaira-N\'eron model describes how to associate an elliptic surface $S\to C$ over $k$ to $E$ whose generic fibre returns exactly $E$.

At first, we can omit the singular fibres. Here we remove all those points from $C$ where the discriminant $\Delta$ vanishes (cf.~(\ref{eq:disc})). Denote the resulting punctured curve by $C^\circ$. Above every point of $C^\circ$, we read off the fibre -- a smooth elliptic curve -- from $E$. This gives a quasi-projective surface $S^\circ$ with a smooth elliptic fibration
\[ 
f^\circ:\;\;\; S^\circ \to C^\circ.
\]
Here one can simply think of the Weierstrass equation restricted to $C^\circ$ (after adding the point at $\infty$ to every smooth fibre).
It remains to fill in suitable singular fibres at the points omitted from $C$. 

For instance, if the Weierstrass form (\ref{eq:WF}) defines a smooth surface everywhere, then all fibres turn out to be irreducible. The singular fibres are either nodal or cuspidal rational curves:

\begin{figure}[ht!]
\setlength{\unitlength}{.45in}
\begin{picture}(10,2.8)(-1,-0.3)

%
%

\thinlines


%


\qbezier(0,1)(0,0)(2,2)
\qbezier(0,1)(0,2)(2,0)

\qbezier(5,1)(7.3,1)(8,2)
\qbezier(5,1)(7.3,1)(8,0)

\end{picture}
\caption{Irreducible singular fibres}
\label{Fig:I_1,II}
\end{figure}
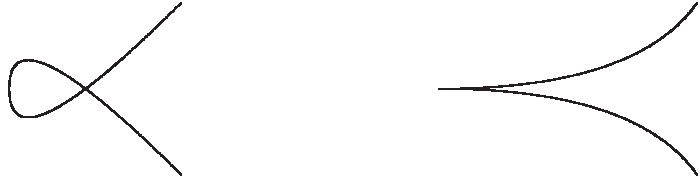

If the surface is not smooth somewhere, then we resolve singularities minimally. We will give an explicit description of the desingularisation process in the next section where we study Tate's algorithm in some detail. We will also recall Kodaira's classification of the possible singular fibres.

\smallskip

We conclude this section with a very rough discussion of the uniqueness of the Kodaira-N\'eron model. Assume we have two desingularisations $S, \tilde S$ which are relatively minimal with respect to the elliptic fibration over $C$:
$$
\begin{array}{ccccc}
S & \sim_{\mbox{bir}} & S^\circ & \sim_{\mbox{bir}} & \tilde S\\
& \searrow\; && \;\;\;\;\swarrow &\\
&& C &&
\end{array}
$$
By assumption there is a birational morphism 
\[
 S --\to \tilde S.
\]
The surface classification builds on the fact that every birational morphism is a succession of smooth blow-ups and blow-downs. 
By construction, the desingularisations $S$ and $\tilde S$ are isomorphic outside the singular fibres.
But then the fibres do not contain any $(-1)$-curves, since the surfaces are relatively minimal.
Hence $S\cong \tilde S$, and we have proven the uniqueness of the Kodaira-N\'eron model.

\section{Singular fibres}

We have seen that the main difference between an elliptic surface and its generic fibre consists in the singular fibres.
In this section, we discuss the classification of singular fibres.
This centers around Tate's algorithm.
We will mainly focus on the case outside characteristics $2, 3$, but also mention pathologies in  characteristic $p$ such as wild ramification.

\subsection{Classification}

For complex elliptic surfaces, the singular fibres were first classified by Kodaira \cite{K}. The fibre type is determined locally by the monodromy. 
In the general situation which includes positive characteristic, 
N\'eron pioneered  a canonical way to determine the singular fibres \cite{Neron}; 
later Tate exhibited a simplified algorithm that is valid over perfect fields \cite{Tate}. 
It turns out that there are no other types of singular fibres than already known to Kodaira. Except for characteristics 2 and 3, the fibre types are again determined by very simple local information.

The case of non-perfect base fields has recently been analysed by M.~Szydlo \cite{Szydlo}.
In characteristics $2$ and $3$, Tate's algorithm can fail as  there are new types of singular fibres appearing.
Here we will only consider perfect base fields and follow Tate's exposition \cite{Tate} (see also \cite{Si}).
It will turn out that all the irreducible components of a singular fibre are rational curves.
If a singular fibre is irreducible, then it is a rational curve with a node or a cusp. In Kodaira's 
classification of singular fibres, it is denoted by type $\I_1$ resp.~$\II$.

\smallskip

If a singular fibre is reducible, i.e. with more than one irreducible components, then 
each irreducible component is a smooth rational curve with self-intersection number -2
(a (-2)-curve) on the elliptic surface. Reducible singular fibres are classified into the following types: 

\begin{itemize}
 \item 
two infinite series  
$\I_n (n>1) $ and $\I_n^* (n \geq 0)$, and
\item 
five types 
$\III, \IV, \II^*, \III^*, \IV^*$. 
\end{itemize}

In general, an elliptic surface can also have multiple fibres.
For instance, this happens for Enriques surfaces.
In the present situation, however, multiple fibres are ruled out by the existence of a section.

\subsection{Tate's algorithm}

In the sequel, we shall sketch the first few steps in Tate's algorithm to give a flavour of the ideas involved. For simplicity, we shall only consider fields of characteristic different from two. Hence we can work with an extended Weierstrass form
\begin{eqnarray}\label{eq:ext_WF}
 y^2 = x^3 + a_2\,x^2+a_4\,x+a_6.
\end{eqnarray}
Here the discriminant is given by
\begin{eqnarray}\label{eq:disc_ext}
 \Delta = -27\,a_6^2+18\,a_2\,a_4\,a_6+a_2^2\,a_4^2-4\,a_2^3\,a_6-4\,a_4^3.
\end{eqnarray}
Recall that singular fibres are characterised by the vanishing of $\Delta$. Since we can work locally, we fix a local parameter $t$ on $C$ with normalised valuation $v$. (This set-up generalises to the number field case where one studies the reduction modulo some prime ideal and picks a uniformiser.) 

\smallskip

Assume that there is a singular fibre at $t$, that is $v(\Delta)>0$. By a translation in $x$,  we can move the singularity to $(0,0)$. Then the extended Weierstrass form transforms as
\begin{eqnarray}\label{eq:Tate-1}
  y^2 = x^3 + a_2\,x^2+t\,a_4'\,x+t\,a_6'.
\end{eqnarray}
If $\boldsymbol{t\nmid a_2}$, then the above equation at $t=0$ describes a nodal rational curve. We call the fibre \textbf{multiplicative}.
If $\boldsymbol{t\mid a_2}$, then (\ref{eq:Tate-1}) defines a cuspidal rational curve at $t=0$. We refer to \textbf{additive reduction}.
The terminology is motivated by the group structure of the rational points on the special fibre.
In either situation, the special point $(0,0)$ is a surface singularity if and only if $t|a_6'$.

\subsection{Multiplicative reduction}
Let $t\nmid a_2$ in (\ref{eq:Tate-1}). 
From the summand $a_2^3\,a_6$ of $\Delta$ in (\ref{eq:disc_ext}), it is immediate that $(0,0)$ is a surface singularity if and only if
\[
 v(\Delta)>1.
\]
If $v(\Delta)=1$, then $(0,0)$ is only a singularity of the fibre, but not of the surface.
Hence the singular fibre at $t=0$ is the irreducible nodal rational curve given by (\ref{eq:Tate-1}), cf.~Fig.~\ref{Fig:I_1,II}. Kodaira introduced the notation $\boldsymbol{\I_1}$ for this fibre type. 
So let us now assume that $v(\Delta)=n>1$. We have to resolve the singularity at $(0,0)$ of the surface given by (\ref{eq:Tate-1}).

This can be achieved as follows: Let $m=\lfloor \frac n2\rfloor$ be the greatest integer not exceeding $n/2$.
Then translate $x$ such that $t^{m+1}|a_4:$
\begin{eqnarray}\label{eq:Tate-m}
  y^2 = x^3 + a_2\,x^2+t^{m+1}\,a_4'\,x+a_6.
\end{eqnarray}
Then $v(\Delta)=n$ is equivalent to $v(a_6)=n$. Now we blow-up $m$ times at the point $(0,0)$. The first $(m-1)$ blow-ups introduce two $\PP^1$'s each. These exceptional divisors are locally given by
\[
 y'^2 = a_2(0)\,x'^2,
\]
so they will be conjugate over $k(\sqrt{a_2(0)})/k$. 
This extension appears on the nodal curve in (\ref{eq:Tate-1}) as the splitting field of the tangents to the singular point.
According to whether $\sqrt{a_2(0)}\in k$, one refers to split and non-split multiplicative reduction.

After each blow-up, we continue blowing up at $(0,0)$, the intersection of the exceptional divisors.
At the final step, the local equation of the special fibre is
\[
 y^2 = a_2(0)\,x^2 + (a_6/t^{2m})(0).
\]
This describes a single rational component, if $n=2m$ is even, or again two components if $n=2m+1$ is odd. At any rate, the blown-up surface is smooth locally around $t=0$. In total, the desingularisation has added $(n-1)$ rational curves of self-intersection $-2$. Hence the singular fibre consists of a cycle of $n$ rational curves, meeting transversally. Following Kodaira, we refer to this fibre type as $\boldsymbol{\I_n}$.

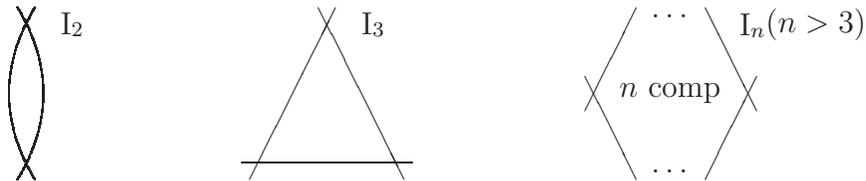
\begin{figure}[ht!]
\setlength{\unitlength}{.45in}
\begin{picture}(10,2.7)(0,-0.3)

\thinlines

\qbezier(0.6,0)(0,1)(0.6,2)
\qbezier(0.4,0)(1,1)(0.4,2)
\put(0.9,1.8){\makebox(0,0)[l]{$\I_2$}}

\put(3,0.2){\line(1,0){2}}
\put(3.1,0){\line(1,2){1}}
\put(3.9,2){\line(1,-2){1}}
\put(4.4,1.8){\makebox(0,0)[l]{$\I_3$}}

\put(7,0.8){\line(1,2){0.6}}
\put(7,1.2){\line(1,-2){0.6}}
\put(8.4,0){\line(1,2){0.6}}
\put(8.4,2){\line(1,-2){0.6}}
\put(7.77,1.9){\makebox(0,0)[l]{$\hdots$}}
\put(7.77,0.1){\makebox(0,0)[l]{$\hdots$}}
\put(7.4,1){\makebox(0,0)[l]{$n$ comp}}
\put(8.8,1.8){\makebox(0,0)[l]{$\I_n (n>3)$}}

\end{picture}
\caption{Singular fibre of type $\I_n\; (n>1)$}
\label{Fig:In}
\end{figure}

\subsection{Additive reduction}
\label{ss:add}

We now let $t\mid a_2$ in (\ref{eq:Tate-1}).
We have to distinguish whether $(0,0)$ is a surface singularity, i.e.~whether $t\mid a_6'$. If the characteristic is different from 2 and 3, then this is equivalent to 
\[
 v(\Delta)>2.
\]
In characteristics 2 and 3, however, the picture is not as uniform. It is visible from (\ref{eq:disc_ext}) that $v(\Delta)\geq 3$ if char$(k)=3$. In characteristic 2, one even has $v(\Delta)\geq 4$ in case of additive reduction. This is due to the existence of wild ramification which we will briefly comment on in section \ref{ss:wild}.
In any characteristic, if $(0,0)$ is a smooth surface point, then the singular fibre is a cuspidal rational curve -- type $\mathbf{\II}$, cf.~Fig.~\ref{Fig:I_1,II}.

If $(0,0)$ is a surface singularity, we sketch the possible singular fibres resulting from the desingularisation process.
There are three possibilities for the exceptional divisor of the first blow-up: 
\begin{enumerate}
 \item a rational curve of degree two, meeting the strict transform of the cuspidal curve tangentially in one point;
\item
Two lines, possibly conjugate in a quadratic extension of $k$, meeting the strict transform of the cuspidal curve in one point;
\item a double line.
\end{enumerate}
In the first two cases, we have reached the desingularisation and refer to type $\boldsymbol{\III}$ resp.~$\boldsymbol{\IV}$. 
The third case requires further blow-ups, each introducing lines of multiplicity up to six and self-intersection $-2$.
The resulting non-reduced fibres are visualised in the following figure. Multiple components appear in thick print.
Unless they have a label, their multiplicity will be two.

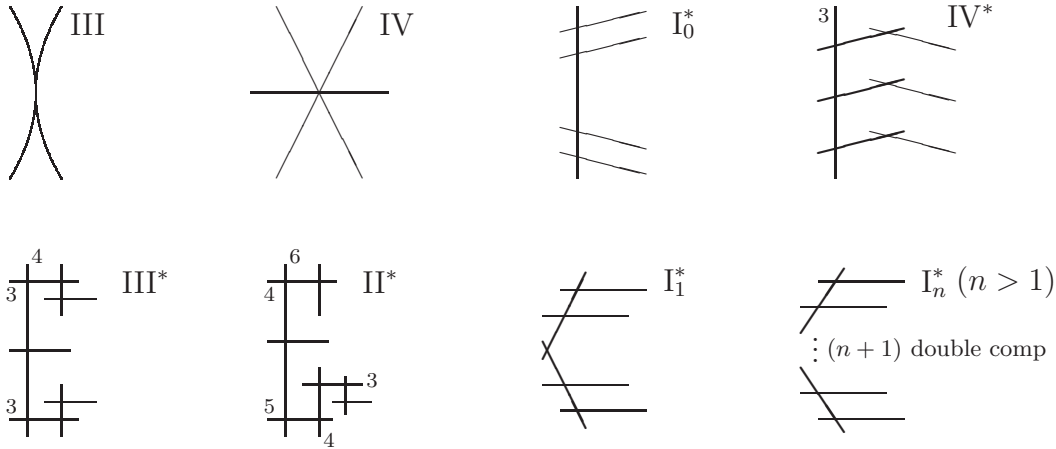
\begin{figure}[ht!]
\setlength{\unitlength}{.45in}
\begin{picture}(10,5.8)(1,-0.3)

\thinlines

\qbezier(0.6,3)(0,4)(0.6,5)
\qbezier(0,3)(0.6,4)(0,5)
\put(0.7,4.8){\makebox(0,0)[l]{$\III$}}

\thinlines
\put(2.8,4){\line(1,0){1.6}}
\put(3.1,3){\line(1,2){1}}
\put(3.1,5){\line(1,-2){1}}
\put(4.3,4.8){\makebox(0,0)[l]{$\IV$}}

\thinlines

\put(6.4,4.7){\line(4,1){1}}
\put(6.4,4.4){\line(4,1){1}}
\put(6.4,3.3){\line(4,-1){1}}
\put(6.4,3.6){\line(4,-1){1}}

\thicklines
\put(6.6,5){\line(0,-1){2}}
\put(7.7,4.8){\makebox(0,0)[l]{$\I_0^*$}}

\thinlines
\put(10,4.75){\line(4,-1){1}}
\put(10,4.15){\line(4,-1){1}}
\put(10,3.55){\line(4,-1){1}}

\thicklines
\put(9.6,5){\line(0,-1){2}}
\put(9.4,4.5){\line(4,1){1}}
\put(9.4,3.9){\line(4,1){1}}
\put(9.4,3.3){\line(4,1){1}}

\put(10.9,4.9){\makebox(0,0)[l]{$\IV^*$}}
\put(9.4,4.9){\makebox(0,0)[l]{{\tiny 3}}}

\thinlines

\put(0.4,1.6){\line(1,0){0.6}}
\put(0.4,0.4){\line(1,0){0.6}}

\thicklines
\put(0.2,2){\line(0,-1){2}}
\put(0.25,2.1){\makebox(0,0)[l]{{\tiny 4}}}

\put(0,1.8){\line(1,0){0.8}}
\put(-0.05,1.65){\makebox(0,0)[l]{{\tiny 3}}}

\put(0,1){\line(1,0){0.7}}
\put(0,0.2){\line(1,0){0.8}}
\put(-0.05,0.35){\makebox(0,0)[l]{{\tiny 3}}}

\put(0.6,2){\line(0,-1){0.6}}
\put(0.6,0){\line(0,1){0.6}}

\put(1.3,1.8){\makebox(0,0)[l]{$\III^*$}}

\thinlines
\put(3.75,0.4){\line(1,0){0.45}}

\thicklines
\put(3.2,2){\line(0,-1){2}}
\put(3.25,2.1){\makebox(0,0)[l]{{\tiny 6}}}

\put(3,1.8){\line(1,0){0.8}}
\put(2.95,1.65){\makebox(0,0)[l]{{\tiny 4}}}

\put(3,1.1){\line(1,0){0.7}}
\put(3,0.2){\line(1,0){0.75}}
\put(2.95,0.35){\makebox(0,0)[l]{{\tiny 5}}}

\put(3.6,2){\line(0,-1){0.6}}

\put(3.6,0){\line(0,1){0.8}}
\put(3.65,-0.05){\makebox(0,0)[l]{{\tiny 4}}}

\put(3.4,0.6){\line(1,0){0.7}}
\put(4.15,0.65){\makebox(0,0)[l]{{\tiny 3}}}

\put(3.9,0.7){\line(0,-1){0.45}}

\put(4.1,1.8){\makebox(0,0)[l]{$\II^*$}}

\thinlines
\put(6.4,1.7){\line(1,0){1}}
\put(6.2,1.4){\line(1,0){1}}
\put(6.4,0.3){\line(1,0){1}}
\put(6.2,0.6){\line(1,0){1}}

\thicklines
\put(6.2,0.9){\line(1,2){0.5}}
\put(6.2,1.1){\line(1,-2){0.5}}

\put(7.6,1.8){\makebox(0,0)[l]{$\I_1^*$}}

\thinlines
\put(9.4,1.8){\line(1,0){1}}
\put(9.2,1.5){\line(1,0){1}}
\put(9.4,0.2){\line(1,0){1}}
\put(9.2,0.5){\line(1,0){1}}

\thicklines
\put(9.2,1.2){\line(2,3){0.5}}
\put(9.2,0.8){\line(2,-3){0.5}}

\put(10.6,1.8){\makebox(0,0)[l]{$\I_n^*\;(n>1)$}}
\put(9.32,1.1){\makebox(0,0)[l]{$\vdots$}}
\put(9.5,1){\makebox(0,0)[l]{\scriptsize{$(n+1)$ double comp}}}

\end{picture}
\caption{Reducible additive singular fibres}
\label{Fig:add}
\end{figure}

\subsection{Additive fibres in characteristics different from 2,3}
\label{s:add_not_2,3}

If the characteristic differs from 2 and 3, then we can determine the type of the singular fibre directly from the discriminant and very few local information -- as opposed to having to  go through Tate's algorithm in full detail. In essence, this is due to the Weierstrass form (\ref{eq:WF}) that we can reduce to. Here we only have to consider the vanishing orders of $a_4$ and $a_6$ next to $\Delta$.

Assume that $v(\Delta)=n>0$. Then the type of the singular fibre can be read off from the following table. A single entry means equality, while otherwise we will indicate the possible range of valuations.

\begin{table}[ht!]
$$
\begin{array}{c|cc}
\hline
 \text{fibre type} & v(a_4) & v(a_6)\\
\hline
\I_0
 & 
\begin{cases}
\;\;0\\ \geq 0
\end{cases}
& 
\begin{matrix}
\geq 0\\ 0
\end{matrix}
\\
\I_n (n>0) & 0 & 0\\
\II & \geq 1 & 1\\
\III & 1 & \geq 2\\
\IV & \geq 2 & 2\\
\I_0^* &
\begin{cases}
 \;\;2 \\ \geq 2
\end{cases}
& 
\begin{matrix}
\geq 3\\ 3
\end{matrix}\\
\I_{n-6}^* (n>6) & 2 & 3\\
\IV^* & \geq 3 & 4\\
\III^* & 3 & \geq 5\\
\II^* & \geq 4 & 5\\
\hline
\end{array}
$$
\caption{Vanishing orders of Weierstrass form coefficients}
\end{table}

The following relation is worth noticing:
\begin{eqnarray}\label{eq:number}
 \# \begin{pmatrix}
 \text{components of}\\
 \text{singular fibre}
 \end{pmatrix}
  =
\begin{cases}
 v(\Delta) & \text{ in the multiplicative case}; \\
v(\Delta)-1 & \text{ in the additive case};
\end{cases}
\end{eqnarray}

\subsection{Wild ramification}
\label{ss:wild}

The notion of wild ramification was introduced to account for a possible discrepancy in the additive case in (\ref{eq:number}).
Following Ogg's foundational paper \cite{Ogg} (see also \cite{T-Saito}), we denote the index of wild ramification at an additive fibre by $\delta_v$:
\[
\delta_v = v(\Delta) - 1 - \# (\text{components of an additive fibre}).
\]
In characteristics 2 and 3, some additive fibre types imply higher vanishing order of $\Delta$ than predicted by the number of components. 
We have seen this happen for the cuspidal rational curve (type $\II$) in \ref{ss:add}. 
In order to determine the fibre type, one thus has to go through Tate's algorithm -- as opposed to the other characteristics where we read off the fibre type from discriminant and Weierstrass form.

If a fibre type admits wild ramification, then the index of wild ramification is unbounded within all elliptic surfaces.
By a simple case-by-case analysis (see \cite[Prop.~16]{SS2}), one finds  the following information on the index of wild ramification where again a single entry indicates equality:

\begin{table}[ht!]
$$
\begin{array}{c|c|c}
\hline
\text{fibre type} & p=2 & p=3\\
\hline

\I_n (n\geq 0) & 0 & 0\\

\II & \geq 2 & \geq 1\\

\III & \geq 1 & 0\\

\IV & 0 & \geq 1\\

\I_n^* \;
\begin{matrix}
(n=1) \\ (n\neq 1)
\end{matrix}
 & 
 \begin{matrix}
 1 \\
 \geq 2
 \end{matrix} & 
 \begin{matrix} 
 0\\ 0
 \end{matrix}\\

\IV^* & 0 & \geq 1\\

\III^* & \geq 1 & 0\\

\II^* & \geq 1 & \geq 1 \\
\hline
\end{array}
$$
\caption{Wild ramification indices}
\end{table}

\subsection{Example: Normal form for j}
\label{ss:ex_normal}

Recall the normal form for an elliptic surface of given j-invariant from \ref{ss:normal_j}. Here we want to interpret the normal form itself as an elliptic surface over $\PP^1$ with parameter $j=t$. Hence the generic fibre is defined over $k(t)$. An integral model is obtain from (\ref{eq:normal_j}) by a simple scaling of $x$ and $y$ as in (\ref{eq:scale}):
\begin{eqnarray}\label{eq:normal_int}
 S:\;\;\;  y^2 + (t-12^3)\,x\,y = x^3 - 36\,(t-12^3)^3\,x - (t-12^3)^5.
\end{eqnarray}
We want to determine the singular fibres of $S$. We first compute the discriminant
\[
 \Delta = t^2\,(t-12^3)^9.
\]
The fibres at $0$ and $12^3$ are easily seen to be additive. If char$(k)\neq 2, 3$, then translations in $y$ and subsequently $x$ yield the Weierstrass form
\[
 y^2 = x^3 - \frac 1{48}\,t\,(t-12^3)^3\,x +\frac 1{864}\,t\,(t-12^3)^5.
\]
By the above table, the singular fibres have type $\II$ at $t=0$ and $\III^*$ at $t=12^3$. If char$(k)=2$ or $3$, then the two additive fibres collapse so that $v_{t=0}(\Delta)=11$. Tate's algorithm only terminates at fibre type $\II^*$. Hence there is wild ramification of index $1$.
The extra fibre component is due to the collision of all base points mod $2$ and $3$ (cf.~\ref{ss:cubic_pencil}).

In any characteristic, we obtain an integral model at $\infty$ by the variable change
\begin{eqnarray}\label{eq:normal-oo}
 t\mapsto 1/s,\;\;\; x\mapsto x/s^2,\;\;\; y\mapsto y/s^3.
\end{eqnarray}
This results in the discriminant $\Delta=s\,(1-12^3\,s)^9$. Since the vanishing order of $\Delta$ at $s=0$ is one, there has to be a singular fibre of type $\I_1$, a nodal rational curve.

\subsection{Minimal Weierstrass form}
\label{ss:min}

From the table in section \ref{s:add_not_2,3} we also see why Tate's algorithm terminates: If it returns none of the above fibre types, then 
\[
 v(a_4)\geq 4,\;\;\;  v(a_6)\geq 6.
\]
But then we can apply the change of variables 
\[
x \mapsto t^2\,x,\;\;\; y\mapsto t^3\,y 
\]
to obtain an isomorphic equation that is integral at $t$. (For the generalised Weierstrass form (\ref{eq:NF}), this requires $v(a_i)\geq i$ after suitable translations of $x, y$.)
Such a transformation lets $v(\Delta)$ drop by 12, hence it can only happen a finite number of times. We call this process \textbf{minimalising} and the resulting equation the \textbf{minimal Weierstrass form} (locally at $t=0$). 

\smallskip

The notion of minimality involves a little subtlety. 
Namely, we work in the coordinate ring $k[C]$ of a chosen affine open of the base curve $C$. 
For instance, if the base curve $C$ is isomorphic to $\PP^1$, then $k[C]=k[t]$ and every ideal in $k[C]$ is principal. 
In consequence, an elliptic surface over $\PP^1$ admits a globally minimal Weierstrass form. In general, however, $k[C]$ need not be a principal ideal domain, and thus there might not be a globally minimal Weierstrass form.

As a strange twist, the Tate algorithm itself can also have the converse effect and make a minimal Weierstrass form non-minimal.
For instance, in the multiplicative case the translation in $x$ yielding (\ref{eq:Tate-m}) may well be non-minimal at $\infty$ if $m$ is big compared to the degrees of the coefficients $a_i$ (cf.~\ref{ss:IP^1}).

\subsection{Application: Elliptic surfaces without singular fibres}
\label{ss:no}

We give an example of an elliptic surface without singular fibres which is nonetheless not isomorphic to a product of curves.
The example builds on the local nature of a minimal Weierstrass form.

Let $C$ be a hyperelliptic curve of genus $g(C)>0$. We can give $C$ by a polynomial $h \in k[u]$ of degree at least three without multiple factors:
\[
 C:\;\;\; v^2 = h(u).
\]
Let $u_0$ denote any ramification point of the double cover $C\to \PP^1$, so $h(u) = (u-u_0)\,h'$.
Consider the following elliptic surface $S$ over $C$:
\[
 S:\;\;\; y^2 = x^3 + (u-u_0)^2\,x + (u-u_0)^3.
\]
Then $S$ has discriminant $\Delta = c\,(u-u_0)^6$, but $S$ does not have a singular fibre at $(u_0,0)$. The above Weierstrass form for $S$ is not minimal at $u_0$, since locally $h'$ is invertible. Hence we can write
\[
 S:\;\;\; y^2 = x^3 + \left(\frac{v^2}{h'}\right)^2\,x + \left(\frac{v^2}{h'}\right)^3.
\]
Then rescaling $x, y$ eliminates the local parameter $v$. The resulting surface is smooth and thus locally minimal at $(u_0,0)$.

\subsection{Elliptic surfaces over the projective line}
\label{ss:IP^1}

In this survey, we will mostly consider elliptic surfaces with base curve $\PP^1$. Such surfaces have a globally minimal generalised Weierstrass form. In terms of the Weierstrass form with polynomial coefficients, minimality requires 
\[
 v(a_4)< 4 \;\;\text{ or }\;\; v(a_6)<6\;\;\; \text{ at each place of } \PP^1.
\]
After minimalising at all finite places, we fix the smallest integer $n$ such that $\deg(a_i)\leq ni$. As in \ref{ss:ex_normal}, we derive a local equation at $\infty$ with coefficients 
\[
a_i' = s^{ni}\,a_i(1/s).
\]
Alternatively, we can homogenise the coefficients $a_i(t)$ as polynomials in two variables $s,t$ of degree $n\,i$. Then the discriminant is a homogeneous polynomial of degree $12 n$. This approach leads to the common interpretation of the minimal Weierstrass form of an elliptic surface over $\PP^1$ with section as a hypersurface in weighted projective space $\PP[1,1,2n,3n]$. Here the non-trivial weights refer to $x$ and $y$ respectively.
This interpretation allows one to apply several results from the general theory of hypersurfaces in weighted projective spaces to our situation (cf.~\ref{ss:N-L}).

\smallskip

The integer $n$ has an important property: It determines how an elliptic surface over $\PP^1$ with section fits into the classification of projective surfaces. We let $\kappa$ denote the Kodaira dimension.

\begin{center}
\begin{tabular}{ccll}
 $n=1$ && rational elliptic surface, & $\kappa=-\infty$,\\
$n=2$ && K3 surface, & $\kappa=0$,\\
$n>2$ && honestly elliptic surface, & $\kappa=1$.
\end{tabular}
\end{center}
The invariant $n$ is equal to the arithmetic genus $\chi$ which will be introduced in \ref{ss:Euler}.

In general, elliptic surfaces over curves of positive genus do also have Kodaira dimension one.
We will cover rational elliptic surfaces and elliptic K3 surfaces with section in some detail in sections \ref{s:RES} and \ref{s:K3}.

\section{Base change and quadratic twisting}

This section is devoted to the study of base change as one of the most fundamental operations in the context of elliptic surfaces.
There are immediate applications to the problem of constructing specific elliptic surfaces that will recur throughout this survey.
Base change also motivates other considerations such as quadratic twisting or inseparability.

\subsection{}

Base change provides a convenient method to produce new elliptic surfaces from old ones.
The set-up is as follows:
Suppose we have an elliptic surface
\[
 S \stackrel{f}{\longrightarrow} C.
\]
In order to apply a base change, we only need another projective curve $B$ mapping surjectively to $C$.
Formally the base change of $S$ from $C$ to $B$ then is defined as a fibre product:
$$
\begin{array}{ccc}
 S\times_C B & \longrightarrow & B\\
\downarrow && \downarrow\\
S & \stackrel{f}{\longrightarrow} & C
\end{array}
$$
In practice, we simply pull-back the Weierstrass form (or in general the equation) of $S$ over $C$ via the morphism $B\to C$. Clearly this replaces smooth fibres by smooth fibres.

\subsection{Singular fibres under base change}
\label{ss:base_change}

The effect of a base change on the singular fibres depends on the local ramification of the morphism $B\to C$. Singular fibres at unramified points are replaced by a fixed number of copies in the base change
(the number being the degree of the morphism).
If there is ramification, we have to be more careful. Of course the vanishing orders of the polynomials of the Weierstrass form and of the discriminant multiply by the ramification index. 
This suffices to solve the base change problem for multiplicative fibres:
a base change of local ramification index $d$ replaces a fibre of type $\I_n (n\geq 0)$ by a fibre of type $\I_{nd}$.
For additive fibres, however, the pull-back of the Weierstrass form might become non-minimal.

\smallskip

In the absence of wild ramification, the singular fibre resulting from the base change can be predicted exactly. For instance, if we have a Weierstrass form, then this can be read off from the vanishing orders of $a_4, a_6$ and $\Delta$. In particular it is also evident how often we have to minimalise. 

Depending on the index $d$ of ramification modulo some small integer, Table \ref{Tab:ram} collects the behaviour of singular fibres without wild ramification:

\begin{table}[ht!]
$$
\begin{array}{|c|c|c|}
\hline
\II, \II^* & d\mod 6 & \text{fibre}\\
\hline
& 1 & \II\\
& 2 & \IV\\
&3 & \I_0^*\\
&4 & \IV^*\\
&5 & \II^*\\
&6 & \I_0\\
\hline
\end{array}
\;\;\;\;
\begin{array}{|c|c|c|}
\hline
\III, \III^* & d\mod 4 & \text{fibre}\\
\hline
&1 & \III\\
&2 & \I_0^*\\
&3 & \III^*\\
&4 & \I_0\\
\hline
\multicolumn{2}{c}{}\\
\multicolumn{2}{c}{}\\
\end{array}
\;\;\;\;
\begin{array}{|c|c|c|}
\hline
\IV, \IV^* & d\mod 3 & \text{fibre}\\
\hline
&1 & \IV\\
&2 & \IV^*\\
&3 & \I_0\\
\hline
\hline
\I_n^* & d\mod 2 & \text{fibre}\\
\hline
&1 & \I_{nd}^*\\
&2 & \I_{nd}\\
\hline
\end{array}
$$
\caption{Singular fibres under ramified base change}
\label{Tab:ram}
\end{table}

In the presence of wild ramification, the local analysis can be much more complicated. We will see an example in \ref{ss:ex_quad}. Confer \cite{MT} for a general account of that situation.

By inspection of the tables, additive fibres are \textbf{potentially semi-stable}: after a suitable base change, they are replaced by semi-stable fibres, i.e.~multiplicative fibres of type $\I_n ~(n\geq 0)$.

\subsection{Base change by the j-map}

Often it is convenient to consider the j-invariant of an elliptic surface $S$ over $C$ as a morphism from $C$ to $\PP^1$.  
To emphasise this situation, we will also refer to the j-map.  Assume that $j \neq const.$
If $j: C\to\PP^1$ is separable, then it is unramified outside a finite sets of points.
Usually the ramification locus includes $0$ and $12^3$ and often also $\infty$.
Recall that these are exactly the places of the singular fibres of the normal form for the j-invariant.

Thus the question arises to which extent elliptic surfaces admit an interpretation as base change from the normal form via the j-map.
In this context, it is crucial that the j-invariant determines a unique elliptic curve up to isomorphism only under the assumption that the ground field $K$ is algebraically closed (Thm.~\ref{Thm:j}). 
Since here we work over function fields, there might in general be some discrepancy between the given elliptic surface with j-map $j$ and the base change of the normal form by $j$. By \ref{ss:j}, this discrepancy is related to quadratic twisting.

\subsection{Quadratic twisting}

Two elliptic surfaces with the same j-map have the same singular fibres up to some quadratic twist. In particular, the smooth fibres are isomorphic over $\bar k$, the algebraic closure of the ground field.

If char$(k)\neq 2$, then any elliptic surface with section admits an extended Weierstrass form (\ref{eq:ext_WF}). Hence quadratic twisting can be understood in analogy with (\ref{eq:quadr_twist}):
\[
S_d:\;\;\; y^2 = x^3 + da_2x^2+d^2a_4x+d^3a_6. 
\]
Here we point out the effect of a quadratic twist on the singular fibres. 
Again, this can be read off directly from Tate's algorithm:
\begin{eqnarray}\label{eq:twist-fibre}
&& \I_n \leftrightarrow \I_n^*,\;\;\; \II \leftrightarrow \IV^*,\;\;\; \III \leftrightarrow \III^*,\;\;\; \IV \leftrightarrow \II^*.
\end{eqnarray}
Note that any two elliptic surfaces that are quadratic twists of each other become isomorphic after a suitable finite base change. 
From \ref{ss:base_change} and (\ref{eq:twist-fibre}), it is immediate that this only requires even ramification order at all twisted fibres. 

In characteristic $2$, there is a different notion of quadratic twisting, related to the Artin-Schreier map $t\mapsto t^2+t$ (see~for instance \cite[p.~10]{SS2}).

\subsection{Example: Quadratic base change of normal form for j}
\label{ss:ex_quad}

As in \ref{ss:ex_normal}, we consider the normal form for given j-invariant as an elliptic surface over $\PP^1$ with coordinate $t$.
To look into an example we want to apply the following quadratic base change
\begin{eqnarray*}
\pi:\;\; \PP^1 & \to & \;\;\;\PP^1\\
t\;\; & \mapsto & t^2+12^3.
\end{eqnarray*}
The base change ramifies at $12^3$ and $\infty$. 
By \ref{ss:base_change}, the base changed surface $S'$ has singular fibres $\II, \II, \I_0^*, \I_2$ unless char$(k)=2,3$. To verify this, we pull back the integral model (\ref{eq:normal_int}) via $\pi^*$:
\[
 S':\;\;\;  y^2 + t^2\,x\,y = x^3 - 36\,t^6\,x - t^{10}.
\]
This elliptic curve is not minimal at $t=0$. A change of variables $x\mapsto x\,t^2, y\mapsto y\,t^3$ yields the minimal form
\[
 S':\;\;\;  y^2 + t\,x\,y = x^3 - 36\,t^2\,x - t^{4}
\]
with $\I_0^*$ fibre at $t=0$ and $\I_2$ at $\infty$.
If char$(k)\neq 2$, then we can complete the square on the left-hand side:
\[
 S':\;\;\;  y^2  = x^3 + \frac 14\, t^2\,x^2 - 36\,t^2\,x - t^{4}.
\]
Now we can apply quadratic twists by choosing an even number of places in $\PP^1$. For instance, twisting at $0$ and $\alpha$ in effect moves the singular fibre of type $\I_0^*$ from $t=0$ to $\alpha$:
\[
 \tilde S:\;\;\;  y^2  = x^3 + \frac 14\, t\,(t-\alpha)\,x^2 - 36\,(t-\alpha)^2\,x - t\,(t-\alpha)^{3}.
\]
By choosing $\alpha=\pm 24\sqrt{-3}$, we can also merge fibres of type $\II$ and $\I_0^*$ to obtain type $\IV^*$. Similarly, twisting at $0$ and $\infty$ results in a fibre of type $\I_2^*$ at $\infty$.

We conclude by noting the singular fibres and ramification indices of $S'$ in characteristics $2$ and $3$. In either characteristic, $S'$ has two singular fibres, one of which at $\infty$ has type $\I_2$. The other singular fibre at $0$ has type $\III^*$ with ramification index one if char$(k)=2$, and $\IV^*$ with ramification index two if char$(k)=3$. Both claims are easily verified with Tate's algorithm.

\subsection{Construction of elliptic surfaces}

The ideas from this section can also be used in the reversed direction to construct elliptic surfaces over a given curve $C$ with prescribed singular fibres. Namely one has to find a polynomial map
\[
 C \to \PP^1
\]
whose ramification at  $0, 12^3, \infty$ is compatible with the singular fibres.
Here compatibility might involve a quadratic twist.
For instance, an elliptic surface over $\PP^1$ with singular fibres of types $\II, \II, \I_2^*$ comes from the normal form after the base change in \ref{ss:ex_quad}, but one has to apply the quadratic twist at $0$ and $\infty$ after the base change.

As a special case, we are led to consider polynomial maps to $\PP^1$ which are unramified outside three points (usually normalised as $0,1,\infty$).
These maps are often called Belyi maps. 
Belyi proved that a projective curve can be defined over a number field if and only if it admits a Belyi map. 
We will see in \ref{ss:mono} how this problem can be translated into a purely combinatorial question.
Thus we will be able to answer existence questions by relatively easy means.

\subsection{Uniqueness}

By \ref{ss:j}, an elliptic curve is determined by its j-invariant if it is defined over an algebraically closed field.
With elliptic surfaces resp.~curves over function fields $k(C)$, we have seen that quadratic twisting causes some serious trouble.
Obviously, another obstruction occurs when $j$ is constant -- despite the surface not being isomorphic to a product.
Such elliptic surfaces are called isotrivial, see \ref{ss:isotrivial} for details.

If on the other hand $j$ is not constant, we deduce that the elliptic surface is uniquely determined (up to $\bar k$-isomorphism) by the pair $(j, \Delta)$.
For semi-stable surfaces (i.e.~without additive fibres), this criterion can be improved as follows:

\begin{Theorem}
\label{Thm:isom-ss}
Let $S, S'$ denote semi-stable elliptic surfaces with section over the same projective curve $C$. Then $S$ and $S'$ are isomorphic as elliptic surfaces if and only if $\Delta=u^{12} \Delta'$ for some $u\in k^*$.
\end{Theorem}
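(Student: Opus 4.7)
The plan is to prove each direction separately; the $(\Rightarrow)$ direction is essentially formal, and the substance lies in $(\Leftarrow)$.

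For $(\Rightarrow)$: an isomorphism $S\cong S'$ of elliptic surfaces over $C$ corresponds to a global admissible change of Weierstrass coordinates, whose scaling parameter $u$ is a unit on the projective curve $C$, hence $u\in H^0(C,\mathcal{O}_C^\times)=k^\times$. By the transformation rules of \S\ref{ss:j}, $\Delta=u^{12}\Delta'$.

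For $(\Leftarrow)$: since $k=\bar k$, every element of $k^\times$ is a twelfth power, so the condition $\Delta=u^{12}\Delta'$ amounts to $\Delta/\Delta'\in k^\times$. I pick such a $u$ and apply the admissible transformation $x\mapsto x/u^2,\,y\mapsto y/u^3$ on $S$ to reduce to $\Delta=\Delta'$. Because both surfaces are semi-stable, the vanishing-order table in \S\ref{s:add_not_2,3} classifies the singular fibres as $\I_{v_p(\Delta)}$ at each zero of $\Delta$, so the singular fibres of $S$ and $S'$ coincide in location and type; the existence of at least one such fibre (by convention) makes both $j$-maps non-constant.

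The next step is to establish $j=j'$. Given this, the uniqueness principle recalled just before the theorem identifies $S$ and $S'$ as twists of each other over $k(C)$. Generically (i.e.\ for $j\notin\{0,12^3\}$) such a twist is quadratic, specified by $d\in k(C)^\times/(k(C)^\times)^2$; by (\ref{eq:twist-fibre}) if $v_p(d)$ were odd at some $p$ then the $\I_m$-fibre there would become an $\I_m^*$-fibre, contradicting semi-stability of $S'$. Hence the divisor of $d$ is everywhere even, so $d=e^2c$ with $e\in k(C)^\times$ and $c\in k^\times$; the factor $e$ is absorbed into a further admissible change of variables, and since $k$ is algebraically closed $c$ is itself a square, so the twist is trivial and $S\cong S'$. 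The analogous argument with cubic, quartic and sextic twists handles the special cases $j\in\{0,12^3\}$.

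The main obstacle will be the assertion $j=j'$. My approach would be a local-to-global argument at the multiplicative fibres: with uniformiser $t$ at an $\I_n$-fibre one has $v_t(c_4)=v_t(c_6)=0$, and the relation $c_4^3-c_6^2=1728\Delta$ together with the matching Kodaira type pins down the leading terms of $c_4,c_6$ modulo the admissible scaling $c_4\mapsto u^4c_4,\,c_6\mapsto u^6c_6$. Iterating in $t$ and patching across all multiplicative places determines $c_4$ globally, hence $j=c_4^3/\Delta$. Semi-stability is essential here: at an additive fibre the higher vanishing of $c_4,c_6$ leaves genuine additional freedom, which is consistent with the fact that additive configurations arise as twists of semi-stable ones via (\ref{eq:twist-fibre}), so the theorem itself cannot hold without the semi-stable hypothesis.
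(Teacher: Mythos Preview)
The paper states Theorem~\ref{Thm:isom-ss} without proof, so there is nothing to compare against directly; I evaluate your attempt on its own terms.

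Your architecture is correct. The forward direction is fine, and once $j=j'$ is known your twist argument is clean (indeed, the text immediately preceding the theorem already records that $(j,\Delta)$ determines the surface, which would finish at once). The special cases $j\equiv 0,12^3$ you mention cannot occur here: a semi-stable surface with a singular fibre has non-constant $j$, so only quadratic twists are in play.

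The gap is in your argument for $j=j'$, and it is a real one. Your ``local-to-global'' step claims that at an $\I_n$-fibre the relation $c_4^3-c_6^2=1728\Delta$ together with the fibre type pins down the Taylor expansion of $(c_4,c_6)$ modulo scaling, and that iterating and patching determines $c_4$ globally. This is not so. After you normalise the constant terms to $(1,1)$ using the global scaling, the order-one relation reads
\[
3a_1-2b_1 \;=\; 1728\cdot(\text{coefficient of }t\text{ in }\Delta),
\]
one linear equation in the two unknowns $a_1,b_1$; and the same underdeterminacy persists at every higher order: at order $m$ you get one new equation in the two new coefficients $a_m,b_m$. Thus the space of formal local solutions is infinite-dimensional, not a single orbit. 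Globally you have $10\chi+2$ coefficients of $(c_4,c_6)$ subject to $\sum n_i=12\chi$ non-linear conditions, which is over-determined for $\chi\ge 1$---consistent with uniqueness, but in no way a proof of it. In effect you have rephrased the problem as: the auxiliary isotrivial curve $Y^2=X^3-1728\Delta$ over $k(C)$ has, up to its automorphism group, exactly one integral point of the required degree with $X$ non-vanishing along $\Delta=0$. That is precisely the content of the theorem, and your local iteration does not establish it. A genuine argument needs a different global input.
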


Over $\bar k$, the theorem can be rephrased as an equality of divisors on the curve $C$ (again only valid for semi-stable elliptic surfaces):
\[
 (\Delta)=(\Delta').
\]
Here we have to distinguish from isomorphisms of the  underlying surfaces which forget about the structure of an elliptic fibration.
For instance, K3 surfaces may very well admit several non-isomorphic elliptic fibrations (cf.~12.2).

\subsection{Inseparable base change}

Since the discriminant encodes the bad places of an elliptic curve over a global field, Thm.~\ref{Thm:isom-ss} brings us back to the theme of Shafarevich's finiteness result for elliptic curves over $\Q$ (Thm.~\ref{Thm:Shafa}).
A proof of the function field analogue would only require to show that there are only finitely many discriminant with given zeroes on the curve $C$.
However, such a general statement is bound to fail in positive characteristic.
This failure is due to the existence of inseparable base change.

Namely, let the elliptic surface $S\to C$ be defined over the finite field $\F_q$. 
Then we can base change $S$ by the Frobenius morphism $C\to C$ which raises coordinates to their $q$-th powers.
Clearly this does not change the set of bad places, but only the vanishing orders of $\Delta$.
For instance, at the $\I_n$ fibres the vanishing orders are multiplied by $q$.
Hence the inseparable base change results in a fibre of type $\I_{qn}$.

\subsection{Conductor}

We explain one possibility to take inseparability and wild ramification into account.
We associate to the discriminant $\Delta$ a divisor $\mathcal{N}$ on the base curve $C$ which we call the \textbf{conductor}:
\[
 \mathcal N = \sum_{v\in C} u_v\,v\;\;\; \text{ with }\;\;\; u_v=
\begin{cases}
 0, & \text{ if the fibre at $v$ is smooth};\\
1, & \text{ if the fibre at $v$ is multiplicative};\\
2+\delta_v, & \text{ if the fibre at $v$ is additive}.\\
\end{cases}
\]
By definition $(\Delta)-\mathcal N$ is effective, in particular~$\deg(\Delta)\geq \deg(\mathcal N)$.
The two degree are related by the following result:

\begin{Theorem}[Pesenti-Szpiro {\cite[Thm.~0.1]{P-S}}]
\label{Thm:PS}
Let $S\to C$ be a non-isotrivial elliptic surface over $\bar\F_p$. Let the j-map have inseparability degree $e$. Then
\[
\deg(\Delta)\leq 6 \,p^e (\deg(\NN) +2g(C)-2).
\]
\end{Theorem}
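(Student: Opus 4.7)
The plan is to bound $v(\Delta)$ locally in terms of the pole order of the $j$-map and the conductor exponent $u_v$, then bound the total pole degree of $j$ by applying Riemann--Hurwitz to its separable part while exploiting the moduli-theoretic ramification forced above $j=0$ and $j=1728$.

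First, a case-by-case check of the Kodaira classification shows that for every $v\in C$,
\[
v(\Delta) \;\leq\; -v(j) + u_v + 8.
\]
At smooth fibres both sides vanish; at $\I_n$ and $\I_n^*$ fibres one has $-v(j) = n$ and the remaining contribution is at most $u_v + 8$; at the other additive fibres $v(j)\geq 0$ and $v(\Delta) = \#(\text{components}) + 1 + \delta_v \leq 8 + u_v$. Summing and using $\deg(j^*\infty) = \deg(j)$ together with $u_v \geq 1$ at each bad place yields
\[
\deg(\Delta) \;\leq\; \deg(j) + 9\deg(\NN),
\]
so it suffices to bound $\deg(j)$ by a universal multiple of $p^e(\deg\NN + 2g(C) - 2)$.

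Since $\bar\F_p$ is perfect, factor $j = J^{p^e}$ with $J : C \to \PP^1$ separable of degree $d = \deg(j)/p^e$ and apply Riemann--Hurwitz:
\[
2g(C) - 2 \;\geq\; -2d + \sum_v (e_v(J) - 1),
\]
the slack accommodating wild ramification. The key input is moduli-theoretic: at a smooth fibre with $j(v) = 0$, the identity $j = -1728\,(4a_4)^3/\Delta$ combined with $\Delta(v) \neq 0$ forces $v(j) \in 3\Z$, hence (for $p \neq 3$) $e_v(J) \in 3\Z$; similarly $e_v(J) \in 2\Z$ at smooth fibres above $1728$ (for $p \neq 2$). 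Combined with $\sum_{v \in J^{-1}(q)} e_v = d$, these divisibilities yield
\[
N_0 \;\leq\; \tfrac{1}{3}d + \tfrac{2}{3} N_{0,\mathrm{bad}}, \quad N_{1728} \;\leq\; \tfrac{1}{2}d + \tfrac{1}{2} N_{1728,\mathrm{bad}}, \quad N_\infty \;\leq\; \deg\NN,
\]
where $N_q := |J^{-1}(q)|$ and $N_{q,\mathrm{bad}}$ counts bad fibres (each $\leq \deg\NN$).

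Restricting $\sum_v(e_v - 1)$ in Riemann--Hurwitz to $v \in J^{-1}(\{0,1728,\infty\})$ gives $3d - (N_0 + N_{1728} + N_\infty) \leq 2g(C) - 2 + 2d$, i.e.\ $d \leq N_0 + N_{1728} + N_\infty + 2g(C) - 2$. Substituting the above estimates produces $(1 - \tfrac{1}{3} - \tfrac{1}{2}) d = d/6 \leq \deg\NN + 2g(C) - 2$ after absorbing the $N_{q,\mathrm{bad}}$ contributions into $\deg\NN$, so $\deg(j) = p^e d \leq 6 p^e(\deg\NN + 2g(C) - 2)$. Combined with the first step this yields the claim up to enlargement of absolute constants. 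The hard part is characteristics $2$ and $3$: there the stabilisers at $j = 0,1728$ are themselves wild, the divisibility argument for $e_v(J)$ at smooth fibres above these points can fail when $p$ divides the expected index, and the wild contribution $\delta_v$ to $u_v$ must be matched carefully against the wild Riemann--Hurwitz terms for $J$ to preserve the constant $6$ in the final bound.
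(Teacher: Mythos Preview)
The paper does not prove this theorem; it is stated with a citation to Pesenti--Szpiro \cite[Thm.~0.1]{P-S}, with only the remark that the characteristic-zero version (without the factor $p^e$) was known earlier. So there is no proof in the paper to compare against, and your outline is in fact the standard route to results of this type: relate $\deg(\Delta)$ to $\deg(j)$ via the local Kodaira classification, factor $j$ through Frobenius to isolate the separable part, and bound the separable degree by Riemann--Hurwitz using the forced ramification of the $j$-line over $0$, $1728$, and $\infty$.

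That said, your write-up has two genuine gaps relative to the statement as given. First, you do not obtain the constant $6$: your local bound $v(\Delta)\leq -v(j)+u_v+8$ is deliberately slack (and as written it even fails at smooth fibres where $j$ vanishes to high order---you need $\max(0,-v(j))$ rather than $-v(j)$), and after combining with the Riemann--Hurwitz step you explicitly concede ``up to enlargement of absolute constants.'' But the theorem asserts the sharp constant $6$; getting it requires a tighter local comparison (in the semistable case one has $v(\Delta)=-v(j)$ on the nose, and the additive contribution must be controlled exactly by the conductor rather than by a crude $+8$). Second, you correctly identify that characteristics $2$ and $3$ are where the real work lies---the automorphism groups at $j=0,1728$ become wild, the divisibility constraints on $e_v(J)$ can break down, and the wild part of Riemann--Hurwitz must be matched against $\delta_v$---but you stop short of carrying this out. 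Pesenti--Szpiro's contribution is precisely to handle this uniformly; your sketch recovers the classical characteristic-zero argument but does not close the gap that their paper closes.
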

In characteristic zero the above result has been known to hold after removing the factor $p^e$
 (see \cite{ShEMS} and also \cite[p.~287]{Si}).
Once we understand the structure of the N\'eron-Severi group of elliptic surfaces, we will apply the theorem to prove a function field anlogue of Shafarevich's finiteness result over $\Q$ in \ref{ss:finite}.

\section{Mordell-Weil group and N\'eron-Severi lattice}
\label{s:NS}

In \ref{ss:s-p} we have seen how a rational point on the generic fibre gives rise to a section on the corresponding elliptic surface and vice versa. 
In this section we investigate this relation in precise detail.
Along the way, we will also compute many fundamental invariants of elliptic surfaces such as canonical divisor, arithmetic genus, Euler number, Betti and Hodge numbers.

\subsection{}

We fix some notation and terminology: 
As usual, we let $S\to C$ be an elliptic surface over $k$ with generic fibre $E$ over $k(C)$.
The $K$-rational points $E(K)$ form a group which is traditionally called \textbf{Mordell-Weil group}. We will usually denote the points on $E$ by $P, Q$ etc. 

Each point $P$ determines a section $C\to S$ which we interprete as a divisor on $S$. To avoid confusion, we shall denote this curve by $\bar P$.
Without further distinguishing, we will also consider $\bar P$ as an element in the \textbf{N\'eron-Severi group} of $S$
\[
 \NS(S) = \{\text{divisors}\}/\approx.
\]
Here $\approx$ denotes \textbf{algebraic equivalence}. For instance, all fibres of the elliptic fibration $S\to C$ are algebraically equivalent.
The rank of $\NS(S)$ is called the \text{Picard number}:
\[
 \rho(S) = \mbox{rank}(\NS(S)).
\]
By the Hodge index theorem, $\NS(S)$ has signature $(1,\rho(S)-1)$.

\subsection{Mordell-Weil group vs.~N\'eron-Severi group}

We now state the three fundamental results that relate the Mordell-Weil group and the N\'eron-Severi group of an elliptic surface with section.
All theorems require our assumption that the elliptic surface has a singular fibre.

\begin{Theorem}
\label{Thm:MW}
 $E(K)$ is a finitely generated group.
\end{Theorem}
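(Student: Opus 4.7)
The plan is to deduce finite generation of $E(K)$ from finite generation of the N\'eron--Severi group $\NS(S)$ (the N\'eron--Severi theorem, available for every smooth projective surface), by constructing a surjective comparison homomorphism with essentially trivial kernel.

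I would first introduce the \emph{trivial subgroup} $T \subseteq \NS(S)$ generated by the zero section $\bar O$, the class $F$ of a smooth fibre, and all irreducible components of singular fibres not meeting $\bar O$. By Kodaira's classification each reducible fibre has only finitely many components, so $T$ is finitely generated, and hence so is $\NS(S)/T$. I would then define $\psi \colon E(K) \to \NS(S)/T$ by $\psi(P) = [\bar P] \bmod T$ and verify that $\psi$ is a group homomorphism. The key ingredient is the geometric description of the group law on $E$: whenever $P + Q + R = O$ in $E(K)$, there is $f \in k(E) = k(S)$ with divisor $(P)+(Q)+(R)-3(O)$ on $E$; viewed on $S$, its divisor equals $\bar P + \bar Q + \bar R - 3\bar O + V$ with $V$ vertical. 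Every prime summand of $V$ either lies in $T$ outright or contributes to a fibre class $F \equiv \sum m_i \Theta_i \pmod T$, so $V \in T$ and $\psi$ is a homomorphism.

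Next, I would show $\psi$ is surjective and has finite (in fact trivial) kernel. Surjectivity: any class $D \in \NS(S)$ restricts to $\mathrm{Pic}(E) = E(K) \oplus \Z[O]$, yielding a point $P \in E(K)$; the difference $D - \bar P$ restricts trivially to the generic fibre, hence is supported on fibres and represents an element of $T$ modulo $\bar O$. For the kernel, an element $[\bar P] \in T$ intersected with a smooth fibre pins down the $\bar O$-coefficient, and restriction to $E$ forces $P = O$ in $\mathrm{Pic}^0(E)$ modulo torsion; the torsion of $E(K)$ is itself finite because $E(K)[n] \hookrightarrow E[n](\bar K) \cong (\Z/n)^2$ and distinct torsion sections cannot meet, bounding the torsion globally via intersection theory on $S$. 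Hence $E(K)$ is an extension of a subgroup of the finitely generated group $\NS(S)/T$ by a finite torsion group, and is therefore finitely generated.

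The main obstacle is the kernel step, where the hypothesis of a singular fibre is indispensable: for a product $S = E_0 \times C$ (no singular fibres) one has $E(K) = \mathrm{Mor}_k(C, E_0) \supseteq E_0(\bar k)$, which is not finitely generated. The singular fibre prevents this degeneracy and is what allows the Hodge-index-type intersection arguments to separate horizontal sections from vertical classes cleanly.
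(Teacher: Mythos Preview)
Your overall strategy matches the paper's: deduce Theorem~\ref{Thm:MW} from finite generation of $\NS(S)$ via the comparison $E(K) \cong \NS(S)/T$ (the paper's Theorem~\ref{Thm:E-NS}). Your verification that $P \mapsto \bar P \bmod T$ is a homomorphism and your surjectivity argument are essentially the paper's construction of the inverse map in \S6.8, run in the opposite direction.

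The gap is in the kernel step. You write that ``restriction to $E$ forces $P = O$ in $\mathrm{Pic}^0(E)$'', but $\NS(S)$ is defined modulo \emph{algebraic} equivalence, and an algebraic equivalence $\bar P \approx \bar O + bF + V$ on $S$ does not by itself restrict to a \emph{linear} equivalence $P \sim O$ on $E$: a priori the difference $\bar P - (\bar O + bF + V)$ lies in $\mathrm{Pic}^0(S)$, and you must know that $\mathrm{Pic}^0(S)$ restricts trivially to the generic fibre. This is precisely the content of the paper's Theorem~\ref{Thm:Pic-var}, $\mathrm{Pic}^0(S) \cong f^*\mathrm{Pic}^0(C)$, and it is exactly here that the singular-fibre hypothesis does its work (in the product case $\mathrm{Pic}^0(S)$ contains $\mathrm{Pic}^0(E_0)$, which restricts nontrivially---the very counterexample you flag in your last paragraph). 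Your proof sketch invokes the hypothesis only as a vague appeal to ``Hodge-index-type intersection arguments'', which does not supply this key fact.

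The separate torsion discussion is both unnecessary and incomplete: once the kernel is trivial there is nothing more to say, and if you retreat to ``finite kernel'' then bounding $E(K)[n]$ for each $n$ does not bound the full torsion subgroup without further argument. The clean fix is to insert (or cite) the statement $\mathrm{Pic}^0(S) = f^*\mathrm{Pic}^0(C)$ before the kernel computation; then restriction to $E$ genuinely yields $P \sim O$, the kernel is trivial, and the torsion detour can be deleted.
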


This result is a  special case of the Mordell-Weil theorem, in generality for abelian varieties over suitable global fields (cf.~\cite[\S 6]{Lang}, \cite[\S 4]{Serre}).
Here we will sketch the geometric argument from \cite{ShMW}. 
The first step is to prove the corresponding result for the N\'eron-Severi group.

\begin{Theorem}
\label{Thm:NS}
$\NS(S)$ is finitely generated and torsion-free.
\end{Theorem}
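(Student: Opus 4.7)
The claim splits into two parts: finite generation and torsion-freeness. The hypotheses of a section and a singular fibre are not used for the first, but are essential for the second.

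For finite generation I would invoke the classical Theorem of the Base of N\'eron--Severi--Lang, which asserts that $\NS(X)$ is a finitely generated abelian group for any smooth projective variety $X$ over an algebraically closed field. The rank $\rho(S)$ is bounded above by $h^{1,1}(S)$ via the Hodge index theorem (or its $\ell$-adic analogue), so there is nothing further to verify for this half.

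For torsion-freeness my plan is cohomological. The first Chern class induces an injection
\[
c_1\colon \NS(S) \hookrightarrow H^2(S,\Z),
\]
(respectively $H^2_{\text{\'et}}(S,\Z_\ell(1))$ for $\ell\neq\mathrm{char}(k)$), so the torsion subgroup of $\NS(S)$ embeds into $H^2(S,\Z)_{\mathrm{tors}}$. By the universal coefficient theorem, $H^2(S,\Z)_{\mathrm{tors}} \cong H_1(S,\Z)_{\mathrm{tors}}$. I would then reduce the computation of $H_1(S,\Z)$ to that of the base: namely, for an elliptic fibration $f\colon S\to C$ with a section and at least one singular fibre one has $\pi_1(S)\cong \pi_1(C)$. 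The section realises $\pi_1(C)$ as a quotient of $\pi_1(S)$, while the local monodromy around a singular fibre kills the image of the generic fibre's $\pi_1$. Passing to abelianisations gives $H_1(S,\Z)\cong H_1(C,\Z) \cong \Z^{2g(C)}$, which is torsion-free. Hence $H^2(S,\Z)_{\mathrm{tors}}=0$, and a fortiori $\NS(S)_{\mathrm{tors}}=0$.

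The main obstacle is the topological input $\pi_1(S)\cong \pi_1(C)$, which genuinely uses both standing conventions, and which requires additional care in positive characteristic (where one works with the tame/\'etale fundamental group and the $\ell$-adic Leray spectral sequence for $f$). A more geometric alternative is to write any divisor $D$ modulo $\approx$ as the sum of a vertical part (components of fibres, including the whole fibre class) and a section, via restricting to the generic fibre and using the section to identify $\mathrm{Pic}^0(E)\cong E$; this exhibits $\NS(S)$ as an extension
\[
0\to T \to \NS(S) \to E(K)\to 0,
\]
where $T$ is the lattice spanned by the zero section and fibre components. One can then verify torsion-freeness by showing the extension splits, but that essentially repackages the same content and is logically used later to derive Thm.~\ref{Thm:MW} from Thm.~\ref{Thm:NS} rather than the other way round.
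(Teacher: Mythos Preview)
Your approach is sound over $\C$ but takes a genuinely different route from the paper. The paper does not touch $\pi_1(S)$ or torsion in $H^2(S,\Z)$ at all; instead it reduces both claims to the single geometric statement that on an elliptic surface with section and a singular fibre, algebraic and numerical equivalence coincide (Thm.~\ref{Thm:a-n}). Since the cycle map sends $\NS(S)/{\equiv}$ injectively into the finite-dimensional $\Q_\ell$-vector space $H^2_{\text{\'et}}(S,\Q_\ell)$, that quotient is automatically finitely generated and torsion-free; all the work goes into proving $\approx\,=\,\equiv$. The argument for the latter is intersection-theoretic: given $D\equiv 0$, Riemann--Roch together with $\chi(S)>0$ and Serre duality force $K_S-D$ to be vertical, the canonical bundle formula then makes $D$ a rational multiple of a fibre, and intersecting with the zero section pins the multiple down to zero. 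This is where both standing conventions enter (the section for the last step, the singular fibre for $\chi>0$).

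The trade-offs are real. The paper's route is uniform in all characteristics and delivers Thm.~\ref{Thm:a-n} as a by-product, which is used constantly afterwards to compute in $\NS(S)$ via intersection numbers alone. Your topological route is pleasant over $\C$ but has two soft spots. First, the claim ``local monodromy around a singular fibre kills the image of $\pi_1(F)$'' is too quick: a single $I_1$ fibre only kills one vanishing cycle, so $\pi_1(S)\cong\pi_1(C)$ (while true under the conventions) needs a more careful argument or a citation rather than the one-line sketch. Second, in positive characteristic the injection you write as $c_1\colon\NS(S)\hookrightarrow H^2_{\text{\'et}}(S,\Z_\ell(1))$ only controls $\ell$-primary torsion; varying $\ell$ handles all torsion prime to $p$, but your outline gives no mechanism to exclude $p$-torsion in $\NS(S)$ when $\mathrm{char}(k)=p$. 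The paper's argument sidesteps this entirely by working with $\Q_\ell$-coefficients and pushing the burden onto the purely algebraic equality $\approx\,=\,\equiv$.
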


The finiteness part is again valid in more generality for projective varieties as a special case of the theorem of the base \cite[\S 5]{Lang}.
On an elliptic surface, one can use intersection theory to prove both claims (cf.~Thm.~\ref{Thm:a-n}).
The connection between Thm.~\ref{Thm:MW} and \ref{Thm:NS} is provided by a third result:

\begin{Theorem}
\label{Thm:E-NS}
Let $T$ denote the subgroup of $\NS(S)$ generated by the zero section and fibre components. Then the map $P\mapsto \bar P \mod T$ gives an isomorphism
\[
 E(K) \cong \NS(S)/T.
\]
\end{Theorem}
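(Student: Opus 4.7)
The plan is to produce an explicit inverse $\psi\colon\NS(S)/T\to E(K)$ to $\phi$ by restricting divisor classes to the generic fibre $E$ and invoking the Abel--Jacobi identification $\mbox{Pic}^1(E)\cong E(K)$, $[P]\mapsto P$. Concretely, for $D\in\mbox{Pic}(S)$ with $n=D\cdot F$ (where $F$ is a smooth fibre), define $\psi(D)$ to be the unique $P\in E(K)$ satisfying $D|_E\sim[P]+(n-1)[O]$ in $\mbox{Pic}(E)$; such a $P$ exists uniquely because every degree-one divisor class on the elliptic curve $E$ is of the form $[P]$ for a unique $P\in E(K)$. The proof then breaks into four items: (i) $\psi$ descends to $\NS(S)/T$; (ii) $\psi\circ\phi=\mathrm{id}_{E(K)}$; (iii) $\phi\circ\psi=\mathrm{id}_{\NS(S)/T}$; and (iv) $\phi$ is a group homomorphism.

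For (i), each fibre component $C_i$ satisfies $C_i\cdot F=0$ and $C_i|_E=0$, so $\psi(C_i)=O$, and likewise $\psi(\bar O)=O$; hence $\psi$ kills $T$. Well-definedness on $\NS(S)$, i.e.\ vanishing on $\mbox{Pic}^0(S)$, rests on the key geometric input that for an elliptic surface with section $\mbox{Pic}^0(S)=f^*\mbox{Pic}^0(C)$, whose elements are supported in fibres and therefore restrict to zero on $E$. Granted this, (ii) is immediate from $\bar P|_E=[P]$, and this already yields the injectivity of $\phi$.

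For surjectivity (iii), given a representative $D$ of a class in $\NS(S)$ with $n=D\cdot F$, set $P=\psi(D)$; then by construction $D-\bar P-(n-1)\bar O$ restricts to a principal divisor on $E$, say $\mathrm{div}_E(g)$ for some $g\in k(E)=k(S)$. Subtracting $\mathrm{div}_S(g)$ on $S$ leaves a divisor whose restriction to $E$ vanishes, which must be supported on the finitely many singular fibres and is thus a sum of fibre components. Consequently $D\equiv\bar P\pmod T$ in $\NS(S)$. Property (iv) follows from exactly the same argument applied to $\overline{P+Q}+\bar O-\bar P-\bar Q$: its restriction to $E$ is $[P+Q]+[O]-[P]-[Q]$, which is principal by the very definition of the group law on $E$, so its class on $S$ lies in $T$.

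The main obstacle, I expect, is the pair of closely linked geometric inputs: that $\mbox{Pic}^0(S)$ is entirely pulled back from $\mbox{Pic}^0(C)$, and that every divisor whose restriction to $E$ is principal is, modulo a principal divisor on $S$, a sum of fibre components. Both ultimately rest on the identification $k(S)=k(E)$, which lets one extend rational functions from the generic fibre to the whole surface, together with the classification of vertical divisors as sums of fibre components. This is the geometric heart of the statement: without the first input the kernel of $\NS(S)\to\mbox{Pic}(E)$ could be larger than $T$, and without the second one cannot canonically reduce a divisor to a combination of a section and fibre components.
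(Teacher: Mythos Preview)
Your proposal is correct and follows essentially the same route as the paper: construct the inverse $\psi$ by restricting a divisor to the generic fibre $E$ and applying Abel's theorem, then identify the kernel as $T$ together with divisors algebraically equivalent to zero, the latter handled via $\mbox{Pic}^0(S)\cong\mbox{Pic}^0(C)$. One small phrasing slip: a divisor whose restriction to $E$ vanishes is supported on fibres, not necessarily only on \emph{singular} fibres, but since smooth fibres are algebraically equivalent to $F\in T$ this does not affect the conclusion.
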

The following terminology has established itself for elliptic surfaces: fibre components are called \textbf{vertical} divisors, sections \textbf{horizontal}. In this terminology, the above theorem states that $\NS(S)$ is generated by vertical and horizontal divisors
(as opposed to involving multisections).


\begin{figure}[ht!]
\setlength{\unitlength}{.45in}
\begin{picture}(9,4.5)(-0.3,0)
\thicklines

\put(0.5,1.2){\line(1,0){6}}

{\color{blue}
\put(0.5,2.7){\line(1,0){0.48}}
\put(5.06,2.7){\line(1,0){0.4}}
\put(5.62,2.7){\line(1,0){0.88}}
\put(1.13,2.7){\line(1,0){0.6}} 
\put(1.88,2.7){\line(1,0){3.05}}}
\thinlines

%

\put(1.8,4){\line(0,-1){3}}
\put(1,2.8){\line(2,-3){1}}
\put(1,2.2){\line(2,3){1}}

\qbezier(5,2.5)(5,1)(6,4)
\qbezier(5,2.5)(5,4)(6,1)

\qbezier(3,2.5)(3,3.5)(3.5,3)
\qbezier(3.5,3)(4,2.5)(4,4)

\qbezier(3,2.5)(3,1.5)(3.5,2)
\qbezier(3.5,2)(4,2.5)(4,1)

\thinlines
\put(0,0.75){\framebox(7,3.5){}}

\put(0,0){\line(1,0){7}}
\put(8,0){\makebox(0,0)[l]{$C$}}
\put(8,2.5){\makebox(0,0)[l]{$X$}}
\put(8.1,1.75){\vector(0,-1){1}}

\put(6.4,1.5){\makebox(0,0)[l]{$O$}}
\put(4,0){\circle*{.1}}

\end{picture}
\caption{Horizontal and vertical divisors}
\label{Fig:HV}
\end{figure}
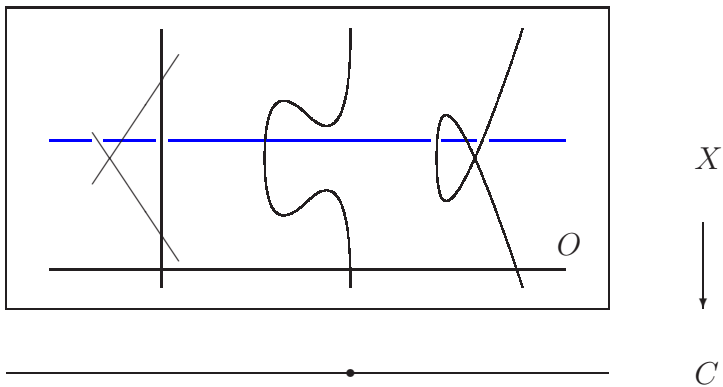

The idea here is to prove Thm.s \ref{Thm:NS} and \ref{Thm:E-NS}, since then Thm.~\ref{Thm:MW} follows immediately.
In the sequel, we sketch the main line of proof.
Details can be found in \cite{ShMW}.
We will restrict ourselves to those ideas which shed special light on elliptic surfaces or which are particularly relevant for our later issues.

\subsection{Numerical Equivalence}

On any projective surface, algebraic equivalence implies \textbf{numerical equivalence} $\equiv$ where we identify divisors with the same intersection behaviour.
Hence the intersection of divisors defines a symmetric bilinear pairing on $\NS(S)$.
It endows $\NS(S)$ (up to torsion) 
with the structure of an integral lattice, the \textbf{N\'eron-Severi lattice}.
The reader is referred to \cite{CS} for generalities on lattices; 
note that in this survey we are only concerned with \textbf{non-degenerate lattices} (which will sometimes be stated explicitly in order to stress the condition, but often be omitted).
In the following, we will use the phrases N\'eron-Severi group and N\'eron-Severi lattice interchangeably. 

\begin{Lemma}
 Modulo numerical equivalence, the N\'eron-Severi group is finitely generated.
\end{Lemma}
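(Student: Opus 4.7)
The plan is to deduce finite generation of $\NS(S)/{\equiv}$ from two separate facts: torsion-freeness (which is automatic) and finite rank (the substantive part). Together these give a free abelian group of finite rank, in particular one that is finitely generated.

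Torsion-freeness follows directly from the definition. If $nD \equiv 0$ for some positive integer $n$, then for every divisor $D'$ we have $n\,(D \cdot D') = 0$, so $D \cdot D' = 0$ and hence $D \equiv 0$ already. Consequently $\NS(S)/{\equiv}$ is torsion-free.

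For finite rank, I first observe that any torsion element of $\NS(S) = \mathrm{Div}(S)/{\approx}$ must already be numerically trivial: a nonzero multiple of it is algebraically (hence numerically) equivalent to zero, and the cancellation argument above then forces the element itself to be numerically zero. Thus $\NS(S)/{\equiv}$ is a quotient of $\NS(S)/\mathrm{tors}$, and it suffices to bound the rank of $\NS(S)$. My route is via the cycle class map $c_1\colon \mathrm{Pic}(S) \to H^2_{\text{\'et}}(S, \Q_\ell)$ for any prime $\ell \neq \mathrm{char}(k)$. Its kernel, tensored with $\Q$, equals $\mathrm{Pic}^0(S) \otimes \Q$ (Lefschetz $(1,1)$ over $\C$; via the Kummer sequence and the representability of $\mathrm{Pic}^0(S)$ as an abelian variety in positive characteristic). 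Hence $\NS(S) = \mathrm{Pic}(S)/\mathrm{Pic}^0(S)$ embeds modulo torsion into the finite-dimensional $\Q_\ell$-vector space $H^2_{\text{\'et}}(S, \Q_\ell)$, giving $\mathrm{rk}\,\NS(S) \leq b_2(S) < \infty$. Combined with torsion-freeness, this shows $\NS(S)/{\equiv}$ is free of finite rank.

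The main obstacle is the cohomological input $\ker(c_1)_{\Q} = \mathrm{Pic}^0(S)_{\Q}$, namely Lefschetz $(1,1)$ and its positive-characteristic analogue. A more self-contained alternative specific to the elliptic surface setting would be to decompose each divisor as a $\Z$-linear combination of the zero section $O$, a general fibre $F$, the finitely many components of the singular fibres, and a ``horizontal remainder'' that restricts to a rational point on the generic fibre $E/k(C)$. Controlling the horizontal remainders modulo numerical equivalence is, however, essentially the content of the $\MW$ theorem for $E$ (Theorem \ref{Thm:MW}), which is subsequently deduced \emph{from} the present lemma via Theorem \ref{Thm:E-NS}. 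For logical coherence the cohomological argument is therefore preferable at this stage.
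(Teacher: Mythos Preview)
Your argument is correct and uses the same essential tool as the paper---the cycle class map into $\ell$-adic cohomology---but you take a slightly longer path. The paper characterises $\ker\gamma$ directly as the group of numerically trivial classes: since $\gamma$ is compatible with intersection and cup product, and cup product on $H^2$ is non-degenerate by Poincar\'e duality, a class maps to zero if and only if it pairs trivially with everything. This yields the embedding $\NS(S)/{\equiv}\hookrightarrow H^2(S)$ in one stroke, giving both finite generation and torsion-freeness simultaneously. Your route instead identifies $\ker(c_1)_\Q$ with $\mathrm{Pic}^0(S)_\Q$ via Lefschetz $(1,1)$ (or its \'etale analogue), which is a heavier input than the pairing compatibility; you then need the separate torsion-freeness step and the observation that $\NS(S)/{\equiv}$ is a quotient of $\NS(S)/\mathrm{tors}$. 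Both work, but the paper's version is more self-contained and avoids the Lefschetz-type statement altogether.
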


The lemma is a direct consequence of the existence of the cycle map
\[
 \gamma:\;\;\; \NS(S) \to H^2(S).
\]
Here one can work with $\ell$-adic \'etale cohomology in general ($\ell$ a prime different from the characteristic). 
In particular $H^2(S)$ is a finite-dimensional $\Q_\ell$-vectorspace, equipped with cup-product which is a non-degenerate pairing with values in $H^4(S)\cong\Q_\ell$. 
Notably, $\gamma$ preserves the pairing.
In consequence, the kernel of $\gamma$ comprises exactly the elements which are numerically equivalent to zero.
But then $\NS(S)/\equiv$ embeds into a finite-dimensional vector space.
Hence this quotient is finitely generated and torsion free.
Thus, Thm.~\ref{Thm:NS} will follow once we have proved the following result:

\begin{Theorem}
\label{Thm:a-n}
 On an elliptic surface, algebraic and numerical equivalence coincide.
\end{Theorem}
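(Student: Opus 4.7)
\medskip

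\noindent\textbf{Plan.} The implication ``algebraic $\Rightarrow$ numerical'' is automatic on any smooth projective variety, since intersection numbers are constant in algebraic families; the content of the theorem is the converse. On a smooth projective surface the kernel of $\NS(S) \twoheadrightarrow \mathrm{Num}(S) := \mathrm{Div}(S)/{\equiv}$ equals the torsion subgroup of $\NS(S)$: torsion classes are killed by the $\Z$-valued pairing, and conversely Matsusaka's theorem that numerically trivial divisors are algebraically torsion supplies the other inclusion. Hence the theorem is equivalent to the statement that $\NS(S)$ is torsion-free, which is what I would prove.

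For torsion-freeness I would pass to the Picard scheme, using the identification
$\NS(S)_{\mathrm{tors}} = \mathrm{Pic}^{\tau}(S)/\mathrm{Pic}^{0}(S)$. A non-trivial class of order $n$ coprime to $\operatorname{char}(k)$ corresponds to a non-trivial connected finite \'etale $\mu_{n}$-cover $\tilde{S} \to S$. Pulling back along the section $\bar{O}: C \to S$ produces an \'etale cover of $C$, which is already captured by $f^{*}\mathrm{Pic}^{0}(C) \subset \mathrm{Pic}^{0}(S)$ and hence is trivial in $\NS(S)$. So the question reduces to analyzing the ``new'' contribution in the Leray spectral sequence
$$
E_{2}^{p,q} = H^{p}_{\text{\'et}}(C, R^{q} f_{*}\mu_{n}) \Longrightarrow H^{p+q}_{\text{\'et}}(S, \mu_{n}),
$$
namely to verifying that $H^{0}(C, R^{1} f_{*}\mu_{n}) = 0$. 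This vanishing follows from the running convention that $S$ carries at least one singular fibre: the local monodromy at that fibre acts non-trivially on $R^{1} f_{*}\mu_{n}$, as is visible from the explicit description of the fibre types produced by Tate's algorithm, so the sheaf has no non-zero global monodromy-invariants. The case $p \mid n$ is parallel but requires flat cohomology of $\mu_{n}$ in place of \'etale cohomology.

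The main obstacle is the monodromy calculation at the singular fibre, which must be carried out uniformly in $\operatorname{char}(k)$ and across the full Kodaira list. In characteristic zero this is classical Picard--Lefschetz; in positive characteristic one must verify the analogous \'etale statement fibre-type by fibre-type, relying on the local models coming out of Tate's algorithm (and, in the case of wild ramification, on a separate inspection). Once torsion-freeness is established, combining it with the finiteness of $\NS(S)/{\equiv}$ already obtained from the cycle map $\gamma$ yields Theorem~\ref{Thm:NS}.
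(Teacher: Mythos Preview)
Your reduction to torsion-freeness of $\NS(S)$ is correct, and the Leray spectral sequence set-up is reasonable; using the section $\bar O$ to split $f^*$ on each $H^i(-,\mu_n)$, the five-term sequence in fact identifies $\NS(S)[n]$ with $H^0(C, R^1 f_* \mu_n)$, so that vanishing is exactly what must be shown. The gap is in your justification of the vanishing. The inference ``non-trivial local monodromy at one singular fibre $\Rightarrow$ no global monodromy-invariants'' is false: at an $I_m$ fibre the monodromy is unipotent, conjugate to $\left(\begin{smallmatrix} 1 & m \\ 0 & 1 \end{smallmatrix}\right)$, and its invariants on $(\Z/n)^2$ are non-zero (indeed the monodromy is the identity on $(\Z/n)^2$ whenever $n\mid m$). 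For a concrete instance, take the extremal rational elliptic surface with configuration $[1,1,5,5]$: modulo $5$ the $I_5$ monodromies are trivial, the two $I_1$ monodromies are mutually inverse by the product relation on $\PP^1$, and the global invariants over $C^\circ$ are $\Z/5\neq 0$. Yet here $H^0(C, R^1 f_* \mu_5)=0$ (as it must be, since $\NS$ of a rational surface is free). What prevents the invariant section over $C^\circ$ from extending is not monodromy but the structure of $R^1 f_*\mu_5$ at the $I_5$ points, where the cospecialisation map from the stalk $H^1(F_v,\mu_5)$ to the inertia-invariants of the generic stalk fails to be surjective. To make your strategy work you would therefore need a fibre-by-fibre analysis of these stalks and (co)specialisation maps across all Kodaira types---considerably more than a monodromy computation---and the wildly ramified cases in characteristics $2,3$ would remain.

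The paper's argument bypasses all of this. Given $D\equiv 0$, Riemann--Roch combined with $\chi(S)>0$ (this is where the singular-fibre hypothesis enters, via Corollary~\ref{cor:ag}) and Serre duality shows that $K_S-D$ is linearly equivalent to an effective divisor; since that divisor meets $F$ trivially it is vertical. The negative-definiteness of the fibre lattices (Lemma~\ref{Lem:Dynkin}) then forces any numerically trivial vertical divisor to be algebraically equivalent to a multiple of $F$; by the canonical bundle formula the same holds for $D$, and $D.\bar O=0$ pins the multiple down to zero. This is elementary intersection theory, uniform in all characteristics, with no case analysis on fibre types.
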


This result is very convenient for practical reasons, since we can now solve problems concerning divisor classes in $\NS(S)$ simply by calculating intersection numbers.

\subsection{The trivial lattice}

Thm.~\ref{Thm:E-NS} introduced the \textbf{trivial lattice} $T$, generated by the zero section and fibre components. Since any two fibres are algebraically equivalent, we only have to consider a general fibre and fibre components not met by the zero section. We set up some notation for the remainder of this paper. As usual it refers to an elliptic surface $S\stackrel{f}{\longrightarrow} C$ with zero section $O$.
\begin{center}
\begin{tabular}{ll}
 $F$ & a general fibre\\
$F_v$ & the fibre $f^{-1}(v)$ above $v\in C$\\
$m_v$ & the number of components of the fibre $F_v$\\
$\Sigma$ & the points of $C$ underneath singular fibres: $\Sigma=\{v\in C; F_v$ is singular$\}$\\
$R$ & the points of $C$ underneath reducible fibres: $R=\{v\in C; F_v$ is reducible$\}$\\
$\Theta_{v,0}$ & the component of $F_v$ met by the zero section -- the zero component\\
$\Theta_{v,i}$ & the other components of $F_v$ ($i=1,\hdots,m_v-1$)\\
$T_v$ & the lattice generated by fibre components in $F_v$ not meeting the zero section:\\
& $T_v=\langle \Theta_{v,i};\;\; 1\leq i\leq m_v-1\rangle.$
\end{tabular}
\end{center}
In this notation, the trivial lattice $T\subseteq\NS(S)$ is defined as the orthogonal sum
\begin{eqnarray}\label{eq:Triv}
 T = \langle \bar O, F \rangle \oplus \bigoplus_{v\in R} T_v.
\end{eqnarray}
\begin{Proposition}
\label{Prop:triv_lattice}
The divisor classes  of $\{\bar O, \;F, \;\Theta_{v,i};\;\; v\in R,\; 1\leq i\leq m_v-1\}$ form a $\Z$-basis of $T$. In particular,
\[
 \mbox{rank}(T) = 2 +\sum_{v\in R} (m_v-1).
\]
\end{Proposition}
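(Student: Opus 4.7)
The plan is to prove the proposition by showing that the listed divisor classes both span $T$ and are $\Z$-linearly independent, and then to read off the rank.

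Spanning is immediate from the very definition (\ref{eq:Triv}): $\langle \bar O, F\rangle$ is visibly generated by the first two elements, and each summand $T_v$ is by definition generated by $\Theta_{v,1},\dots,\Theta_{v,m_v-1}$. So the real content is $\Z$-linear independence. Since $\NS(S)$ is torsion-free (Thm.~\ref{Thm:NS}), the subgroup $T$ is torsion-free as well, so independence over $\Z$ follows from independence over $\Q$, which in turn will follow once I show that the intersection pairing restricted to $T$ is non-degenerate. The entire argument is therefore a calculation of intersection numbers among the proposed generators, using Thm.~\ref{Thm:a-n} to pass freely between algebraic and numerical equivalence.

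The first step is to check that $T$ decomposes as an orthogonal direct sum into the block $\langle \bar O,F\rangle$ and the individual $T_v$. This relies on three intersection facts: (i) two distinct fibres on $S$ are disjoint, so moving $F$ to a fibre different from $F_v$ gives $F\cdot\Theta_{v,i}=0$; (ii) the zero section meets $F_v$ only along $\Theta_{v,0}$, so $\bar O\cdot\Theta_{v,i}=0$ for $i\geq 1$; and (iii) for $v\neq v'$, the divisors $\Theta_{v,i}$ and $\Theta_{v',j}$ lie in disjoint fibres. Hence it suffices to check non-degeneracy on each block separately.

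For the block $\langle \bar O,F\rangle$ we use $F^2=0$ (two general fibres are algebraically equivalent and disjoint), $\bar O\cdot F=1$ (the section meets each fibre transversally in one point), and $\bar O^2<0$ (a section of a non-trivial elliptic fibration has strictly negative self-intersection; this will later be computed as $-\chi(S)$, but for the present argument only the non-vanishing matters). The Gram matrix thus has determinant $-1$, so $\bar O$ and $F$ are linearly independent. For the block $T_v$ we invoke Zariski's lemma: on the reducible fibre $F_v=\sum n_i\,\Theta_{v,i}$ with $n_i>0$, the intersection pairing on $\langle\Theta_{v,0},\dots,\Theta_{v,m_v-1}\rangle$ is negative semi-definite with one-dimensional kernel generated by $F_v$. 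Removing $\Theta_{v,0}$ (which has non-zero coefficient in $F_v$, in fact $n_0=1$) eliminates the kernel, so $T_v$ is negative definite and in particular non-degenerate.

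The hard part, conceptually, is the negative semi-definiteness input for $T_v$; everything else is bookkeeping with fibres and sections. Once the three blocks are known to be non-degenerate and pairwise orthogonal, their orthogonal sum is non-degenerate, which proves the independence claim. The rank statement then follows by counting: two generators from $\langle\bar O,F\rangle$ plus $m_v-1$ generators from each $T_v$, summed over $v\in R$, giving $\mathrm{rank}(T)=2+\sum_{v\in R}(m_v-1)$.
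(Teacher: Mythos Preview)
Your proof is correct and follows the same overall strategy as the paper: decompose $T$ into the orthogonal blocks $\langle\bar O,F\rangle$ and the $T_v$, then verify that each block has non-degenerate Gram matrix. The only substantive difference is in how you treat the $T_v$: the paper identifies each $T_v$ with a root lattice of Dynkin type (Lemma~\ref{Lem:Dynkin}) and reads off negative-definiteness from that classification, whereas you appeal directly to Zariski's lemma. Your route is slightly more self-contained because it does not presuppose Kodaira's list of fibre types; the paper's route buys the explicit discriminants of the $T_v$ as a byproduct.

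Two minor clean-ups. The value of $\bar O^2$ plays no role in the determinant of the block $\langle\bar O,F\rangle$: the Gram matrix $\begin{pmatrix}\bar O^2 & 1\\ 1 & 0\end{pmatrix}$ has determinant $-1$ regardless, so your aside about needing $\bar O^2\neq 0$ is superfluous. And you do not need to invoke Thm.~\ref{Thm:NS} or Thm.~\ref{Thm:a-n}: a non-singular Gram matrix already forces $\Z$-linear independence, using only the easy implication that algebraic equivalence implies numerical equivalence. Dropping those citations also avoids any worry about the order in which these results are established in the paper.
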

To see that the given divisor classes form a basis, one can compute the intersection matrix $M$ of $T$ and verify that $\det(M)\neq 0$. 
Here it suffices to consider the separate summands from (\ref{eq:Triv}).
The first summand has intersection form
\[
 \begin{pmatrix}
  \bar O^2 & 1\\
1 & 0
 \end{pmatrix}
\]
This matrix has determinant $-1$ and signature $(1,1)$. We claim that all other summands $T_v$ are negative-definite even lattices, so that $T$ has signature $(1,1+\sum_{v\in R}(m_v-1))$.

\subsection{Dynkin diagrams}
\label{ss:Dynkin}

Recall that every component of a reducible fibre is a rational curve of self-intersection $-2$. We associate a graph to each fibre by drawing a vertex for each component and connecting two vertices by an edge for each intersection point of the corresponding components.
For instance, type $\I_n$ for $n>1$ gives a circle of $n$ vertices, and the same applies to $\III$ ($n=2$) and $\IV$ ($n=3$).

It is an elementary observation going through all fibre types that these graphs correspond exactly to the extended Dynkin diagrams $\tilde A_m, \tilde D_m, \tilde E_m$. 
One can recover the Dynkin diagrams $A_m, D_m, E_m$ by omitting the vertex corresponding to the zero component and the edges attached to it.
The following figure sketches the Dynkin diagrams.
The circles  indicate components intersecting the zero component  (as opposed to the balls).

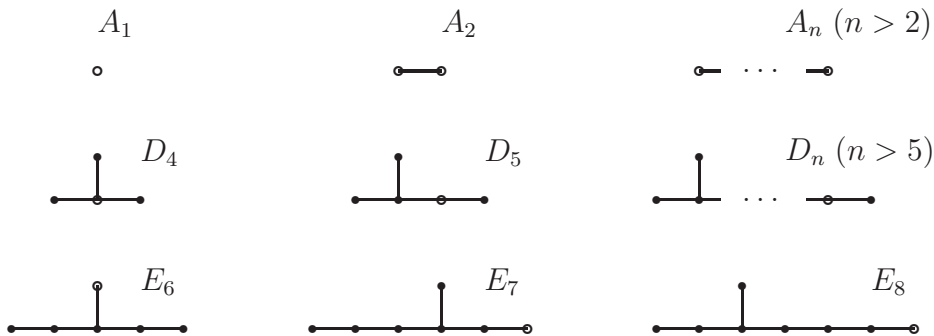
\begin{figure}[ht!]
\setlength{\unitlength}{.45in}
\begin{picture}(10,4.5)(0.4,-0.3)
\thicklines

  \put(1,3){\circle{.1}}
    \put(1,3.55){\makebox(0,0)[l]{${A}_1$}}

\put(4.5,3){\line(1,0){0.5}}
\multiput(4.5,3)(0.5,0){2}{\circle{.1}}
    \put(5,3.55){\makebox(0,0)[l]{${A}_2$}}

\put(8,3){\line(1,0){0.25}}
\multiput(8,3)(1.5,0){2}{\circle{.1}}
  \put(9.25,3){\line(1,0){0.25}}
    \put(9,3.55){\makebox(0,0)[l]{${A}_n\;(n>2)$}}
      \put(8.5,3){\makebox(0,0)[l]{$\hdots$}}

\put(0.5,1.5){\line(1,0){1}}
\multiput(0.5,1.5)(1,0){2}{\circle*{.1}}
\put(1,1.5){\line(0,1){0.5}}
  \put(1,2){\circle*{.1}}
  \put(1,1.5){\circle{.1}}
    \put(1.5,2.05){\makebox(0,0)[l]{${D}_4$}}

\put(4,1.5){\line(1,0){1.5}}
\multiput(4.5,1.5)(1,0){2}{\circle*{.1}}
\put(4.5,1.5){\line(0,1){0.5}}
  \put(4.5,2){\circle*{.1}}
    \put(4,1.5){\circle*{.1}}
  \put(5,1.5){\circle{.1}}
    \put(5.5,2.05){\makebox(0,0)[l]{${D}_5$}}

\put(7.5,1.5){\line(1,0){0.75}}
\multiput(7.5,1.5)(0.5,0){2}{\circle*{.1}}
\put(8,1.5){\line(0,1){0.5}}
  \put(8,2){\circle*{.1}}
  \put(9.5,1.5){\circle{.1}}
  \put(9.25,1.5){\line(1,0){0.75}}
    \put(10,1.5){\circle*{.1}}
    \put(9,2.05){\makebox(0,0)[l]{${D}_n\;(n>5)$}}
      \put(8.5,1.5){\makebox(0,0)[l]{$\hdots$}}

\put(0,0){\line(1,0){2}}
\multiput(0,0)(0.5,0){5}{\circle*{.1}}
\put(1,0){\line(0,1){0.5}}
  \put(1,0.5){\circle{.1}}
    \put(1.5,0.55){\makebox(0,0)[l]{${E}_6$}}

\put(3.5,0){\line(1,0){2.5}}
\multiput(3.5,0)(0.5,0){5}{\circle*{.1}}
\put(5,0){\line(0,1){0.5}}
  \put(5,0.5){\circle*{.1}}
  \put(6,0){\circle{.1}}
    \put(5.5,0.55){\makebox(0,0)[l]{${E}_7$}}

\put(7.5,0){\line(1,0){3}}
\multiput(7.5,0)(0.5,0){6}{\circle*{.1}}
\put(8.5,0){\line(0,1){0.5}}
  \put(8.5,0.5){\circle*{.1}}
  \put(10.5,0){\circle{.1}}
    \put(10,0.55){\makebox(0,0)[l]{${E}_8$}}
  

%

\end{picture}
\caption{Dynkin diagrams}
\label{Fig:Dynkin}
\end{figure}

\begin{Lemma}
\label{Lem:Dynkin}
 Each Dynkin diagram defines an even negative-definite lattice, denoted by the same symbol. 
  The determinants are:
\[
 \det(A_n) = (-1)^n\,(n+1),\;\;\; \det(D_m) = (-1)^m\,4\;\; (m\geq 4),
\]
\[
\det(E_6)=3,\;\;\;\det(E_7)=-2,\;\;\;\det(E_8)=1.
\]
\end{Lemma}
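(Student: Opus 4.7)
The plan is a direct computation with the Gram matrices, exploiting the tree structure of the Dynkin diagrams.

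First, for evenness: in each diagram the associated Gram matrix $M$ has $-2$ on the diagonal and entries in $\{0,1\}$ off-diagonal. Hence for any $x = (x_1,\dots,x_n)\in\Z^n$ the quadratic form evaluates to
$$
x^T M x = -2\sum_{i} x_i^2 + 2\sum_{i\sim j} x_i x_j \;\in\; 2\Z,
$$
which settles evenness at once for all four families.

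Next, for the determinants I would proceed by a leaf-recursion. Every Dynkin diagram listed is a tree, so it contains a vertex $v$ of degree one; let $w$ denote its unique neighbor. Expanding $\det(M)$ along the row of $v$ (which has only the entries $-2$ in the diagonal slot and $1$ in the slot of $w$) gives
$$
\det(M)= -2\det(M') - \det(M''),
$$
where $M'$ is the Gram matrix of the diagram with $v$ removed and $M''$ the one with both $v$ and $w$ removed. For $A_n$ this is the tridiagonal recursion $d_n=-2 d_{n-1}-d_{n-2}$ with $d_1=-2$, $d_2=3$, giving $d_n=(-1)^n(n+1)$ by induction. For $D_m$ I would remove one of the two short leaves; $M'$ becomes the Gram matrix of $A_{m-1}$ while $M''$ is a disjoint union of an $A_1$ and an $A_{m-3}$. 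Substituting the $A$-type formula yields $\det(D_m)=(-1)^m\cdot 4$. For $E_6,E_7,E_8$, removing the end of the long arm reduces each case to determinants of $D_m$ and $A_k$, and three short computations give $3,\,-2,\,1$.

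Finally, for negative-definiteness I would apply Sylvester's criterion to $-M$: it suffices to order the vertices so that every initial segment spans a subdiagram of one of the types just handled, and then check that the leading principal minor of size $k$ has sign $(-1)^k$. For $A_n$ the natural linear ordering works directly from the determinant formula. For $D_m$, I would traverse the long arm first (giving $A_1,A_2,\dots,A_{m-1}$ as principal minors), then adjoin the remaining fork vertex, whose addition produces $D_m$ with determinant $(-1)^m\cdot 4$. For $E_n$, I would list the long-arm vertices first and append the branch vertex last, so that the principal minors are $A$- and $D$-type and the final one is the $E_n$ determinant itself; all signs then alternate as required. The main obstacle is purely organizational — tracking the signs of the principal minors so that Sylvester applies cleanly in the $D_m$ and $E_n$ cases. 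If this bookkeeping becomes cumbersome, an equally good fallback is to exhibit the classical Euclidean root-lattice realizations of $A_n,D_m,E_n$, where positive-definiteness is immediate from their construction as sublattices of $\Z^N$, and then simply negate the inner product.
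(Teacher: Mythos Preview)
Your proof is correct and considerably more detailed than the paper, which states the lemma without proof as a standard fact about root lattices. The leaf-recursion for the determinants and the Sylvester-criterion argument for definiteness are the canonical elementary approach, and your computations check out.

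One small remark on the $E_n$ case: with the natural ordering that lists the full path through the long and medium arms first and appends the single off-path vertex last, all intermediate principal minors are of $A$-type (not $D$-type), so the sign check is even simpler than you suggest. Either way the argument goes through, and your fallback via the classical Euclidean realizations of $A_n$, $D_m$, $E_n$ as sublattices of $\Z^N$ is equally valid and arguably cleaner for definiteness.
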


Note that in each case, the absolute value of the determinant equals the number of simple components of the singular fibre (including the zero component).
The multiplicities $n_i$ of the other components of the singular fibre (which one can compute with Tate's algorithm) are uniquely determined by the condition
\[
 F \approx \Theta_0 + \sum_{i=1}^{m_v-1} n_i\Theta_i,\;\;\;  F^2=F.\Theta_j=\left(\Theta_0 + \sum_{i=1}^{m_v-1} n_i\Theta_i\right)^2 =0.
\]
The lattices $A_n, D_n, E_n$ above are called the (negative) {\bf  root lattices} of type 
$A_n, \ldots$ (cf.~\ref{ss:root1}).
The positive-definite  lattices which are obtained from them 
by changing the sign of the intersection form  
are also called the (positive) {\bf  root lattices}, and they are often denoted by the same symbol (see e.g. \cite{CS}).  
It will be clear from the context which (negative or positive) is meant in the statements in this survey.
Most of the time (such as in this section), we will refer to the negative root lattices;
but specifically in section \ref{s:MWL}, we define Mordell-Weil lattices in such a way that they are positive-definite.

Definite lattices have been classified to some extent.
For instance, 
the root lattice $E_8$ with the two signs of the intersection form gives the unique positive-definite and negative-definite even unimodular lattice of rank 8. 
Such classifications will play an important role in two areas: the study of rational elliptic surfaces (cf.~section \ref{s:MWL}) and the classification of elliptic fibrations on a given K3 surface (cf.~\ref{ss:Nishi}).


\subsection{Canonical divisor}
\label{ss:can_bundle}

In order to prove that algebraic and numerical equivalence coincide on an elliptic surface $S$, it is useful to know the canonical divisor $K_S$.
The following formula goes back to Kodaira for complex elliptic surfaces. A characteristic-free proof was given in \cite{BM}.
The statement involves the Euler characteristic $\chi(S)=\chi(S,\OO_S)$ which we will investigate further in the next section.
In this survey, we shall refer to $\chi =\chi (S)$  as the \textbf{arithmetic genus} of $S$. 

\smallskip

\textbf{N.B.} 
 In the theory of algebraic surfaces, the invariant
 $p_a = \chi-1$ is usually called the arithmetic genus, but we find it more convenient 
 to adopt our definition in this survey.

\begin{Theorem}[Canonical bundle formula]
\label{Thm:can}
 The canonical bundle of an elliptic surface $S\stackrel{f}{\longrightarrow}C$ is given by
\[
 \omega_S = f^*(\omega_C\otimes\mathcal{L}^{-1})
\]
where $\mathcal L$ is a certain line bundle of degree $-\chi(S)$ on $C$. In particular, we have
\[
 K_S \approx (2g(C)-2+\chi(S))\,F,\;\;\; K_S^2=0.
\]
\end{Theorem}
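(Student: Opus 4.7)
The plan is to prove the formula by first establishing a relative version and then converting it via the standard identity $\omega_S\cong\omega_{S/C}\otimes f^{*}\omega_C$. The relative dualising sheaf $\omega_{S/C}$ on the proper flat morphism $f\colon S\to C$ is the central object; setting $\mathcal L:=R^{1}f_{*}\OO_S$, Grothendieck-Serre duality identifies $f_{*}\omega_{S/C}\cong\mathcal L^{-1}$, and the goal is to show that the evaluation map $f^{*}f_{*}\omega_{S/C}\to\omega_{S/C}$ is an isomorphism of line bundles on $S$.

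The main input is fibrewise triviality of $\omega_{S/C}$. By flatness every fibre $F_v$ has arithmetic genus $1$, and by Kodaira's classification recalled in the previous section every component $\Theta$ of a reducible singular fibre is a smooth rational $(-2)$-curve. Adjunction on $S$ therefore gives $K_S\cdot\Theta=0$ for each such $\Theta$, and $K_S\cdot F_v=2p_a(F_v)-2-F_v^{2}=0$ for an irreducible fibre; combined with $F_v\cdot\Theta=0$, adjunction for $F_v\hookrightarrow S$ shows that $\omega_{F_v}=(\omega_S\otimes\OO_S(F_v))|_{F_v}$ has degree zero on every component. Serre duality on $F_v$ gives $h^{0}(\omega_{F_v})=h^{1}(\OO_{F_v})=p_a(F_v)=1$, so a nonzero global section has empty zero divisor on any component where it is not identically zero; by connectedness of the Kodaira dual graph, the section must therefore be nowhere vanishing. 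Hence $\omega_{F_v}\cong\OO_{F_v}$ uniformly in $v$.

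Given the uniform fibrewise triviality, Grauert's theorem (constancy of $h^{0}$) ensures that $f_{*}\omega_{S/C}$ is a line bundle and that the evaluation map is an isomorphism fibrewise, hence globally. Tensoring with $f^{*}\omega_C$ yields $\omega_S\cong f^{*}(\omega_C\otimes\mathcal L^{-1})$. The degree $\deg\mathcal L=-\chi(S)$ then drops out of the Leray identity $\chi(\OO_S)=\chi(\OO_C)-\chi(\mathcal L)$ together with Riemann-Roch on $C$. The numerical statement $K_S\approx(2g(C)-2+\chi(S))F$ follows because the pull-back of a degree-$d$ divisor on $C$ is algebraically equivalent to $dF$, and $K_S^{2}=0$ because $F^{2}=0$.

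The step I expect to require the most care is the uniform fibrewise triviality of $\omega_{F_v}$: for an irreducible but singular fibre (types $\I_1$, $\II$) or a reducible fibre, the argument that a degree-zero line bundle with a nonzero section is actually trivial is not formal, and is where one genuinely uses that the fibre is ``of canonical type''; a characteristic-free treatment moreover requires the argument of \cite{BM} in place of Kodaira's original transcendental proof.
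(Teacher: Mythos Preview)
Your proof is correct and follows the modern sheaf-theoretic route (relative dualising sheaf, fibrewise triviality of $\omega_{F_v}$, Grauert/base change, then Leray plus Riemann--Roch for the degree), which is essentially the Bombieri--Mumford argument the paper itself cites for the characteristic-free case. The paper's own sketch is organised somewhat differently: it phrases the key step in divisor-theoretic language, first observing that $K_S\cdot F=0$ by adjunction so that $K_S$ is ``vertical'', and then invoking Zariski's Lemma (negative semi-definiteness of the intersection form on fibre components, with kernel spanned by the full fibre) to force $K_S$ to be a multiple of $F$ within each fibre; the degree computation via spectral sequences and Riemann--Roch is the same as yours. The two approaches are really the same argument viewed from different angles: your fibrewise-triviality step and the paper's Zariski's Lemma step both encode the fact that the fibres are of canonical type, and you are right to flag this as the step requiring the most care (particularly for non-reduced fibres). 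Your route has the mild advantage of yielding the sheaf isomorphism $\omega_S\cong f^{*}(\omega_C\otimes\mathcal L^{-1})$ directly, rather than first establishing the numerical statement and then upgrading.
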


The main idea of proof is that $K_S$ is vertical. This comes from the fact that $F.K_S=0$ by adjunction.
Then one applies Zariski's Lemma to show that $K_S$ is a fibre multiple.
Finally one computes the degree with the help of spectral sequences and the Riemann-Roch theorem.

\smallskip

As an application, we compute the self-intersection of any section by the adjunction formula:
\begin{Corollary}
\label{Cor:P^2}
For any $P\in E(K)$, we have $\bar P^2=-\chi(S)$.
\end{Corollary}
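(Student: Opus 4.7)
The plan is to apply the adjunction formula to the curve $\bar P \subset S$, using the canonical bundle formula from Theorem \ref{Thm:can} together with the defining property of a section.

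First I would observe that since $P$ is a section, the composition $f \circ \pi_P = \mathrm{id}_C$ exhibits $\bar P$ as the image of an isomorphism $\pi_P \colon C \to \bar P \subset S$. In particular $\bar P$ is a smooth curve of genus $g(\bar P) = g(C)$, and it meets every fibre transversally in a single point, so the intersection number with a general fibre is
\[
 \bar P \cdot F = 1.
\]

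Next I would compute $\bar P \cdot K_S$ using the canonical bundle formula. Theorem \ref{Thm:can} yields $K_S \approx (2g(C) - 2 + \chi(S))\,F$, hence
\[
 \bar P \cdot K_S = (2g(C) - 2 + \chi(S))\, \bar P \cdot F = 2g(C) - 2 + \chi(S).
\]

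Finally, the adjunction formula for the smooth curve $\bar P$ on the smooth surface $S$ gives
\[
 2 g(\bar P) - 2 = \bar P^2 + \bar P \cdot K_S.
\]
Substituting $g(\bar P) = g(C)$ and the value of $\bar P \cdot K_S$ just computed, the $2g(C) - 2$ terms cancel and we are left with $\bar P^2 = -\chi(S)$, as claimed. There is no real obstacle here; the only subtle point is being sure that $\bar P$ is a smooth irreducible curve isomorphic to $C$, which is immediate from the definition of a section and the requirement $f \circ \pi_P = \mathrm{id}_C$.
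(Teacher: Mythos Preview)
Your argument is correct and is exactly the one the paper intends: the text preceding the corollary says only ``we compute the self-intersection of any section by the adjunction formula,'' and you have simply spelled out that computation, using $K_S \approx (2g(C)-2+\chi(S))F$ from Theorem~\ref{Thm:can} together with $\bar P \cdot F = 1$ and $g(\bar P)=g(C)$ (the latter having been established earlier in \S\ref{ss:s-p}).
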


\subsection{Arithmetic genus and Euler number}
\label{ss:Euler}

We want to review a formula for the Euler number $e(S)$ of an elliptic surface $S$. Here one can think of the topological Euler number in case we are working over the complex numbers; in general we work with the alternating sum of the Betti numbers, the dimensions of the $\ell$-adic \'etale cohomology groups.

For the fibres of an elliptic surface, we thus obtain
\[
 e(F_v) = \begin{cases}
         0, & \text{ if $F_v$ is smooth};\\
m_v, & \text{ if $F_v$ is multiplicative};\\
m_v+1, & \text{ if $F_v$ is additive}.
        \end{cases}
\]
By (\ref{eq:number}), this local Euler number of the fibre agrees exactly with the vanishing order of the discriminant if there is no wild ramification. 
Otherwise it involves a contribution from the index of wild ramification $\delta_v$ at $v$.
Recall from \ref{ss:wild} that $\delta_v=0$ unless char$(k)=2,3$ and $F_v$ is additive.

\begin{Theorem}[{\cite[Prop.~5.16]{CD}}]
\label{Thm:Euler_number}
 For an elliptic surface $S$ over $C$, we have
\[
 e(S) = \sum_{v\in C} (e(F_v) + \delta_v).
\]
\end{Theorem}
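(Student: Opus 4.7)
The plan is to exploit additivity of the Euler characteristic under a stratification of $S$ by the fibration structure. Write $\Sigma\subset C$ for the (finite) set of points with singular fibre, and set $C^\circ = C\setminus\Sigma$, $S^\circ = f^{-1}(C^\circ)$. Since étale cohomology with compact supports is additive on locally closed decompositions, one has
\[
 e(S) \;=\; e(S^\circ) \;+\; \sum_{v\in\Sigma} e(F_v).
\]
Because $e(F_v)=0$ for every smooth fibre, extending the sum to all of $C$ costs nothing; thus it suffices to prove $e(S^\circ) = \sum_{v\in\Sigma}\delta_v$ (and that $\delta_v=0$ on the smooth locus, which is clear).

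To compute $e(S^\circ)$, I would apply the Leray spectral sequence for $f^\circ\colon S^\circ\to C^\circ$ to the constant sheaf $\Q_\ell$. Since $f^\circ$ is smooth and proper with geometrically connected genus-one fibres, the sheaves $R^0f^\circ_*\Q_\ell$ and $R^2f^\circ_*\Q_\ell$ are constant of rank one, while $\mathcal F := R^1f^\circ_*\Q_\ell$ is lisse of rank two on $C^\circ$. Additivity of Euler characteristic on the $E_2$-page gives
\[
 e(S^\circ) \;=\; 2\,e(C^\circ) \;-\; e(C^\circ, \mathcal F).
\]
In characteristic zero one invokes topological local triviality: $\mathcal F$ has finite monodromy around each $v\in \Sigma$ but no wild part, so $e(C^\circ,\mathcal F)=2\,e(C^\circ)$ and hence $e(S^\circ)=0$, giving the classical Kodaira formula $e(S)=\sum_v e(F_v)$ with no correction term.

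In positive characteristic, the necessary input is the Grothendieck--Ogg--Shafarevich formula, which states
\[
 e(C^\circ,\mathcal F) \;=\; 2\,e(C^\circ) \;-\; \sum_{v\in \Sigma}\bigl(\mathrm{Sw}_v(\mathcal F)+\mathrm{drop}_v(\mathcal F)\bigr),
\]
where $\mathrm{Sw}_v$ is the Swan conductor and $\mathrm{drop}_v(\mathcal F)=2-\dim \mathcal F_{\bar v}^{I_v}$ is the dimension drop of the inertia invariants. Substituting back yields
\[
 e(S^\circ) \;=\; \sum_{v\in\Sigma}\bigl(\mathrm{drop}_v(\mathcal F)+\mathrm{Sw}_v(\mathcal F)\bigr).
\]
The final step is a local identification: for each singular fibre, the Ogg--Saito formula (\cite{Ogg}, \cite{T-Saito}) asserts that the conductor exponent at $v$ equals
\[
 f_v := \mathrm{drop}_v(\mathcal F)+\mathrm{Sw}_v(\mathcal F) \;=\; v(\Delta) - m_v + 1,
\]
i.e.\ exactly $e(F_v)+\delta_v$ by the definition of $\delta_v$ and the component count (\ref{eq:number}). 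Combining this with the earlier decomposition yields
\[
 e(S) \;=\; \sum_{v\in\Sigma}\bigl(e(F_v)+\delta_v\bigr) \;=\; \sum_{v\in C}\bigl(e(F_v)+\delta_v\bigr),
\]
as claimed.

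The main obstacle is the last local step: identifying the Swan conductor of $\mathcal F$ at each additive fibre in characteristic $2$ or $3$ with the combinatorial defect $\delta_v$. This is precisely the content of Ogg's original paper (corrected by T.~Saito), and it is where all the wild-ramification subtleties are concentrated; once it is granted, the global argument is a one-line application of additivity and Grothendieck--Ogg--Shafarevich. A completely elementary alternative, avoiding GOS, is to verify the formula fibre-by-fibre by explicit resolution (using the description of the singular fibres from Tate's algorithm, Kodaira's classification, and a direct CW-computation), which over $\C$ reduces to the topological Euler-number computation of each Kodaira fibre type, and in positive characteristic requires the additional input of Ogg to absorb the wild-ramification correction.
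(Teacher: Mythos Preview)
The paper does not give its own proof; it cites \cite[Prop.~5.16]{CD}. Your overall strategy (stratify, apply Leray on the smooth part, invoke Grothendieck--Ogg--Shafarevich, then Ogg--Saito locally) is the standard route and does lead to a correct proof, but the execution contains two errors that do not cancel.

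First, the Grothendieck--Ogg--Shafarevich formula for compactly supported cohomology of a lisse rank-$r$ sheaf on $C^\circ$ reads $\chi_c(C^\circ,\mathcal F)=r\,\chi_c(C^\circ)-\sum_{v}\mathrm{Sw}_v(\mathcal F)$, with \emph{no} drop term; the version with $\mathrm{drop}_v$ computes $\chi(C,j_*\mathcal F)$, a different quantity. With the correct formula one obtains $e_c(S^\circ)=\sum_v\mathrm{Sw}_v(\mathcal F)$, not $\sum_v f_v$. Second, your identity $f_v=e(F_v)+\delta_v$ is false: for $\I_n$ one has $f_v=1$ but $e(F_v)+\delta_v=n$, and for type $\III$ one has $f_v=2$ but $e(F_v)+\delta_v=3$. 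Indeed, feeding your expression $e(S^\circ)=\sum_v f_v$ into the decomposition together with your identity would yield $\sum_v(2e(F_v)+\delta_v)$ rather than the target. The correct local input is simply $\mathrm{Sw}_v(\mathcal F)=\delta_v$: Ogg--Saito gives $f_v=v(\Delta)-m_v+1$, and since $f_v=\mathrm{drop}_v+\mathrm{Sw}_v$ with $\mathrm{drop}_v\in\{1,2\}$, comparison with the paper's $\delta_v=v(\Delta)-1-m_v$ yields $\mathrm{Sw}_v=\delta_v$ at additive places (and both vanish at multiplicative ones). Then $e(S)=\sum_v e(F_v)+\sum_v\mathrm{Sw}_v=\sum_v(e(F_v)+\delta_v)$ as desired. (Minor aside: the local monodromy at $\I_n$ is unipotent, not of finite order; tameness is the property you actually need.)
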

Here the sum is actually finite, running over the singular fibres of $S\to C$, i.e.~$v\in \Sigma$.
By assumption, the elliptic surface has a singular fibre, hence $e(S)>0$.
As a corollary, we obtain the arithmetic genus $\chi(S)$ through Noether's formula
\[
 12\,\chi(S) = K_S^2 + e(S).
\]

\begin{Corollary}
\label{cor:ag}
 For an elliptic surface $S$, we have
\[
 \chi(S) = \frac 1{12}\, e(S) >0.
\]
\end{Corollary}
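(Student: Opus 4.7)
The plan is to combine the two structural results just established, namely the canonical bundle formula (Theorem~\ref{Thm:can}) and the Euler number formula (Theorem~\ref{Thm:Euler_number}), via Noether's formula
\[
12\,\chi(S) = K_S^2 + e(S),
\]
which is stated in the line immediately preceding the corollary and holds for any smooth projective surface.

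First I would substitute $K_S^2 = 0$ into Noether's formula; this equality is part of Theorem~\ref{Thm:can}, and follows because $K_S$ is (numerically) a rational multiple of a fibre $F$, and $F^2 = 0$. This immediately yields the identity $\chi(S) = \tfrac{1}{12}\,e(S)$.

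For the strict positivity, I would appeal to Theorem~\ref{Thm:Euler_number}, which expresses
\[
e(S) \;=\; \sum_{v\in C}\bigl(e(F_v)+\delta_v\bigr) \;=\; \sum_{v\in\Sigma}\bigl(e(F_v)+\delta_v\bigr),
\]
the sum effectively running over the singular fibres. The convention adopted in the paper (immediately after the definition of an elliptic surface, in Section~\ref{s:ell_surf}) guarantees that $S$ has at least one singular fibre $F_v$. For any such fibre, inspection of Kodaira's list gives $e(F_v)\ge 1$ (in fact $e(F_v) = m_v$ in the multiplicative case, which is $\ge 1$ since $F_v$ is singular, and $e(F_v) = m_v + 1 \ge 2$ in the additive case), while $\delta_v \ge 0$. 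Hence each summand is non-negative and at least one is strictly positive, so $e(S) > 0$.

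There is no real obstacle here: the corollary is essentially a bookkeeping consequence of the two preceding theorems, and the only thing one needs to be a bit careful about is invoking the standing convention that the elliptic surface has a singular fibre, without which the conclusion $\chi(S) > 0$ would fail (a product $E\times C$ would have $e = 0$).
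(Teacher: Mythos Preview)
Your proof is correct and follows exactly the paper's own approach: the paper derives the identity from Noether's formula together with $K_S^2=0$ (Theorem~\ref{Thm:can}), and establishes $e(S)>0$ from Theorem~\ref{Thm:Euler_number} combined with the standing convention that $S$ has a singular fibre. There is nothing to add.
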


The proof of Thm.~\ref{Thm:a-n} now proceeds as follows. Assuming $D\equiv 0$, one shows that $K_S-D$ is vertical using Riemann-Roch with $\chi(S)>0$ and Serre duality. 
Then Lemma \ref{Lem:Dynkin} implies that  $K_S-D$ is algebraically equivalent to some fibre multiple.
By the canonical bundle formula, this also holds for $D$.
But then the degree has to be zero since $O.D=0$ by assumption.
Hence $D\approx 0$.

\smallskip

As we have seen, Thm.~\ref{Thm:a-n} implies Thm.~\ref{Thm:NS}.
It remains to prove Thm.~\ref{Thm:E-NS}.

\subsection{Sections vs.~horizontal components}

In order to prove Thm.~\ref{Thm:E-NS}, we shall exhibit the inverse of the map
\begin{eqnarray*}
E(K) & \to & \NS(S)/T\\
P & \mapsto & \bar P \mod T.
\end{eqnarray*}
For this purpose, it will be convenient to view the generic curve $E$ as a curve on $S$.
We start by defining a homomorphism
\[
\mbox{Div(S)} \to \mbox{Div(E)}
\]
as follows: Any divisor $D$ on $S$ decomposes into a horizontal part, consisting of sections and multisections, and a vertical divisor consisting of fibre components:
\[
D= D' + D'',\;\;\; D' \text{ horizontal},\;\; D'' \text{ vertical}.
\]
Then the horizontal part $D'$ and $E$ intersect properly, giving a divisor on $E$ of degree $D'.E$. This ($K$-rational) divisor is called the restriction of $D$ to $E$:
\[
D|_E := D' \cap E \in \mbox{Div}(E).
\]
In terms of \textbf{linear equivalence} $\sim$ (on $E$ resp.~$S$), it is easy to see that
\[
D|_E \sim_E 0 \Leftrightarrow D \sim_S D'' \text{ for some vertical } D''.
\]
By Abel's theorem for $E$ over $K$, the divisor $D$ thus determines a unique point $P\in E(K)$ by the following linear equivalence of degree zero divisors:
\[
D|_E - (D'.E)\, O \sim_E P-O.
\]
Writing $\psi(D)=P$, we obtain a homomorphism 
\[
\psi: \mbox{Div}(S) \to E(K).
\]
The kernel of $\psi$ can be seen to be
\[
\ker(\psi) = \langle D\in\mbox{Div}(S); D\approx 0\rangle + \Z \bar O + \langle D \in\mbox{Div}(S); D \text{ vertical}\rangle.
\]
Hence $\psi$ induces the claimed isomorphism
\[
\psi:\;\; \NS(S)/T \cong E(K).
\]
Recall that we introduced the terminology ``horizontal divisor'' for sections on an elliptic surface.
The above construction identifies any divisor $D$ (or its horizontal component $D'$) with the horizontal divisor $\bar P$.

All these considerations will be made more explicit in section \ref{s:MWL} in order to endow the Mordell-Weil group (modulo torsion) with a lattice structure.

\subsection{Picard variety}

A crucial step in the proof of Thm.~\ref{Thm:E-NS} concerns the Picard varieties of the elliptic surface $S$ and the base curve $C$.
Recall that generally on a projective variety $X$, the Picard variety Pic$^0(X)$ arises as the following quotient:
\[
\mbox{Pic}^0(X) =  \langle D\in\mbox{Div}(X); D\approx 0\rangle /  \langle D\in\mbox{Div}(X); D\sim 0\rangle.
\]
More specifically, if $X$ is a curve, then $D\approx 0 \Leftrightarrow \deg(D)=0$, and Pic$^0(X)$ is the Jacobian of $X$.

In the case of an elliptic surface $S\stackrel{f}{\to} C$, pull-back with $f^*$ defines an injection 
\[
f^*:\;\;\; \mbox{Pic}^0(C)\hookrightarrow \mbox{Pic}^0(S),
\]
since the section $\pi:\; C \to S$ provides a left inverse $\pi^*$. With the first bit of the above consideration, one can in fact show that $f^*$ is surjective:

\begin{Theorem}
\label{Thm:Pic-var}
For an elliptic surface $S$ over $C$ with section, Pic$^0(S)\cong$ Pic$^0(C)$.
\end{Theorem}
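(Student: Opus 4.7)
The plan is to prove surjectivity of the pullback $f^*:\mathrm{Pic}^0(C)\hookrightarrow\mathrm{Pic}^0(S)$, since injectivity is already established by the left inverse $\pi^*$ coming from the section. So I start with a class in $\mathrm{Pic}^0(S)$, represented by a divisor $D\in\mathrm{Div}(S)$ with $D\approx 0$, and try to show $D\sim_S f^*(D_0)$ for some degree-zero divisor $D_0$ on $C$.

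The key tool is the homomorphism $\psi:\mathrm{Div}(S)\to E(K)$ introduced just above the theorem, which induces the isomorphism $\NS(S)/T\cong E(K)$. Since $D\approx 0$, its class in $\NS(S)/T$ is zero, so $\psi(D)=O$. Unravelling the definition: because $D.F=0$, the restriction $D|_E$ has degree zero on $E$, and $\psi(D)=O$ then means $D|_E\sim_E 0$. By the kernel description of $\psi$ recalled in the sketch of Thm.~\ref{Thm:E-NS}, $D|_E\sim_E 0$ is equivalent to $D\sim_S D''$ for some vertical divisor $D''$. Combined with $D\approx 0$, we now have a vertical divisor $D''$ with $D''\approx 0$, in particular $D''.\Theta_{v,j}=0$ for every fibre component $\Theta_{v,j}$.

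The remaining step is to identify such a $D''$ with the pullback of a divisor on $C$. Write $D''=\sum_v\sum_i a_{v,i}\Theta_{v,i}$. For each fibre $F_v$, the intersection matrix of the components $\Theta_{v,0},\ldots,\Theta_{v,m_v-1}$ is the extended Cartan matrix of the corresponding affine Dynkin diagram (cf.~\ref{ss:Dynkin}), whose $\Q$-kernel is one-dimensional and spanned by the multiplicity vector $(n_{v,0},\ldots,n_{v,m_v-1})$. Since $n_{v,0}=1$, these multiplicities are coprime, so the $\Z$-kernel is exactly $\Z\cdot F_v$. Hence the restriction of $D''$ to $F_v$ equals $c_v F_v$ for some integer $c_v$, and thus $D''=\sum_v c_v F_v = f^*\bigl(\sum_v c_v\,[v]\bigr)=f^*(D_0)$ with $D_0=\sum_v c_v\,[v]\in\mathrm{Div}(C)$. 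Finally, intersecting with the zero section $\bar O$ gives $0=D''.\bar O=\sum_v c_v=\deg(D_0)$, so $D_0\in\mathrm{Div}^0(C)$ and $[D]=f^*([D_0])$, proving surjectivity.

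The main obstacle is the third step, i.e.~showing that a vertical divisor numerically orthogonal to all fibre components must be $\Z$-linearly supported on full fibres. The content here is the coprimality of the multiplicities $n_{v,i}$ in each affine Dynkin diagram, which is clean once one reads it off Kodaira's classification but is what forces the integrality (rather than only rationality) of the coefficient $c_v$. Everything else is a direct application of the already-established isomorphism $\psi:\NS(S)/T\cong E(K)$ and the decomposition of divisors into horizontal and vertical parts.
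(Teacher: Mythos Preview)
Your argument is circular. You invoke the isomorphism $\NS(S)/T\cong E(K)$ (Thm.~\ref{Thm:E-NS}) to conclude that $D\approx 0$ forces $\psi(D)=O$, but the paper explicitly presents Thm.~\ref{Thm:Pic-var} as ``a crucial step in the proof of Thm.~\ref{Thm:E-NS}'': the inclusion $\{D:D\approx 0\}\subset\ker\psi$ in the kernel description is precisely what has not yet been justified at this point. In fact that inclusion is equivalent to the surjectivity you want. Unwinding, $\psi(D)=O$ means $D|_E\sim_E 0$, which by the stated equivalence says $D\sim_S D''$ with $D''$ vertical; your remaining (correct) lattice argument then gives $D''=f^*(D_0)$ with $\deg D_0=0$. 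So the entire content of the theorem lies in the implication $D\approx 0\Rightarrow D|_E\sim_E 0$, which you have assumed rather than proved. Contrary to your closing paragraph, the integrality of the fibre coefficients is not the obstacle --- that part is routine.

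The missing ingredient is an independent argument that the restriction $\mathrm{Pic}^0(S)\to\mathrm{Pic}^0(F_v)\cong F_v$ vanishes for every smooth closed fibre. In \cite{ShMW} this uses that these are morphisms of abelian varieties over $k$ varying with $v\in C$: were they nonzero, rigidity of abelian subvarieties would force all smooth fibres to be canonically isomorphic to a fixed quotient of $\mathrm{Pic}^0(S)$, trivialising $S^\circ\to C^\circ$ and contradicting the standing convention that $S$ has a singular fibre. Alternatively one can import $q(S)=g(C)$ directly from \cite[Cor.~5.2.2]{CD}, so that $\dim\mathrm{Pic}^0(S)=\dim\mathrm{Pic}^0(C)$ and the injective morphism $f^*$ between connected abelian varieties of equal dimension is automatically surjective. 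Once $D|_E\sim_E 0$ is in hand, your steps from ``$D\sim_S D''$ vertical'' onward are fine and match the intended line.
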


\subsection{Betti and Hodge numbers}

From Thm.~\ref{Thm:Pic-var} (or \cite[Cor.~5.2.2]{CD}), we deduce that an elliptic surface $S$ and its base curve $C$ have the same first Betti number:  
\[
b_1(S)= b_1(C).
\]
Then Poincar\'e duality allows us to express the second Betti number of $S$ through the Euler number:
\[
b_2(S) = e(S) - 2\,(1-b_1(C)).
\]
For a  complex elliptic surface $S$, we can describe the Hodge diamond explicitly.
We shall give its entries, the dimensions 
\[
h^{i,j}=\dim H^j(S, \Omega_S^i),
\]
in terms of the genus of the base curve $C$ and the arithmetic genus of $S$ (or equivalently the Euler number).
For this we abbreviate
\[
g=g(C) = q(S), \;\;\; \chi(S)=\chi,\;\;\; p_g=p_g(S) = h^{2,0}(S) = \chi-1+g.
\]
For the Euler number, we compare two expressions: $e(S)=12\chi$ and  the definition of $e(S)$ as alternating sum of Betti numbers to derive
\[
e(S) = 12\chi = 2 \chi - 2 g + h^{1,1}(S).
\]
Hence the Hodge diamond of a complex elliptic surface takes the shape
$$
\begin{array}{ccccc}
&& 1 &&\\
& g && g &\\
p_g && 10\chi+2g && p_g\\
& g && g &\\
&& 1 &&
\end{array}
$$

\subsection{Picard number}
\label{ss:Pic_number}

As a corollary of Thm.~\ref{Thm:E-NS}, we obtain a formula for the Picard number of an elliptic surface with section. Sometimes, this formula is referred to as Shioda-Tate formula.

\begin{Corollary}
\label{Cor:ST}
Let $S$ be an elliptic surface with section. Denote the generic fibre by $E$. Then
\[
\rho(S) = \mbox{rank } T + \mbox{rank } E(K) = 2 + \sum_{v\in R} (m_v-1) +  \mbox{rank } E(K).
\]
\end{Corollary}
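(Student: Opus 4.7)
The plan is to deduce the formula directly from the structural results already established, namely Theorem~\ref{Thm:E-NS} (identifying $E(K)$ with $\NS(S)/T$) and Proposition~\ref{Prop:triv_lattice} (giving the rank of the trivial lattice). There is essentially no new geometric input required; the statement is a rank count applied to the exact sequence hidden in Theorem~\ref{Thm:E-NS}.

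More precisely, I would first invoke Theorem~\ref{Thm:NS}, which guarantees that $\NS(S)$ is a finitely generated abelian group (in fact torsion-free). Since $T \subseteq \NS(S)$, the subgroup $T$ is also finitely generated, and Theorem~\ref{Thm:E-NS} gives the short exact sequence
\[
0 \longrightarrow T \longrightarrow \NS(S) \longrightarrow E(K) \longrightarrow 0.
\]
Taking ranks (which is additive on short exact sequences of finitely generated abelian groups) yields
\[
\rho(S) = \operatorname{rank}(\NS(S)) = \operatorname{rank}(T) + \operatorname{rank}(E(K)).
\]

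To finish, I substitute the explicit rank of $T$ computed in Proposition~\ref{Prop:triv_lattice}, namely
\[
\operatorname{rank}(T) = 2 + \sum_{v \in R}(m_v - 1),
\]
where the ``$2$'' accounts for the classes of the zero section $\bar O$ and a general fibre $F$, and the sum accounts for the non-identity components $\Theta_{v,i}$ ($1 \le i \le m_v - 1$) of the reducible fibres. Combining the two displays gives the desired equality
\[
\rho(S) = 2 + \sum_{v \in R}(m_v - 1) + \operatorname{rank}(E(K)).
\]

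There is no real obstacle: the only mild subtlety is ensuring that one may speak of ranks unambiguously, which is exactly why Theorem~\ref{Thm:NS} (finite generation and, importantly, torsion-freeness of $\NS(S)$) must be cited first. All of the hard work — identifying horizontal divisors modulo fibre components and the zero section with points on the generic fibre, and establishing that fibre components in distinct reducible fibres together with $\bar O$ and $F$ are linearly independent in $\NS(S)$ — has already been carried out in Theorem~\ref{Thm:E-NS} and Proposition~\ref{Prop:triv_lattice}.
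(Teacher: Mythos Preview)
Your proposal is correct and matches the paper's own approach: the paper simply presents Corollary~\ref{Cor:ST} as an immediate consequence of Theorem~\ref{Thm:E-NS}, and your argument spells out exactly that deduction via additivity of rank on the short exact sequence $0 \to T \to \NS(S) \to E(K) \to 0$, together with the explicit rank of $T$ from Proposition~\ref{Prop:triv_lattice}. There is nothing to add.
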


Here we can also express the rank of the trivial lattice in terms of the Euler number. From Prop.~\ref{Prop:triv_lattice} and Thm.~\ref{Thm:Euler_number}, we deduce
\[
e(S) = \sum_{v\in \Sigma} e(F_v) = \sum_{v\in \Sigma} (m_v+\delta_v) + \#\{v\in \Sigma; F_v \text{ is additive}\}.
\]
Hence we obtain
\begin{eqnarray*}
\mbox{rank } T  & = &  e(S) - \#\{v\in \Sigma; F_v \text{ is multiplicative}\}\\
&& \;\;\;\;\;\;\;\;\;\;\;\; - 2\,\#\{v\in \Sigma; F_v \text{ is additive}\} - \sum_{v\in \Sigma} \delta_v.
\end{eqnarray*}

We conclude by recalling that over the complex numbers, Lefschetz' bound 
\[
\rho(S) \leq h^{1,1}(S)
\]
holds true, while in general, we only have Igusa's inequality
\[
\rho(S) \leq b_2(S).
\]
With Cor.~\ref{Cor:ST}, these inequalities translate into estimates for the number of fibre components that an elliptic surface of given Euler number may admit.

\subsection{Finiteness}
\label{ss:finite}

Combined with Thms \ref{Thm:isom-ss} and~\ref{Thm:PS},
the results from this section enable us to prove the following function field anlogue of Shafarevich's finiteness result over $\Q$ (Thm.~\ref{Thm:Shafa}):

\begin{Theorem}
\label{Thm:Shafa'}
Assume that the field $k$ is algebraically closed.
Let $C$ be a projective curve over $k$ and $\mathcal S$ a finite set of places on $C$.
Up to isomorphism, there are only finitely many elliptic surfaces $S\to C$ with section satisfying the following conditions:
\begin{enumerate}
 \item 
$S$ has good reduction, i.e.~smooth fibres outside $\mathcal S$;
\item
$S\to C$ is separable and not isotrivial: the j-map is not constant and separable;
\item
there is no wild ramification.
\end{enumerate}
\end{Theorem}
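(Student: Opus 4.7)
The plan is to combine Pesenti--Szpiro's inequality with the semi-stable uniqueness result (Theorem~\ref{Thm:isom-ss}). First I would bound the arithmetic genus $\chi(S)$. Since $j$ is separable, its inseparability degree vanishes and Theorem~\ref{Thm:PS} reduces to
\[
\deg(\Delta) \leq 6\bigl(\deg(\NN) + 2g(C) - 2\bigr).
\]
From the definition of the conductor and the no-wild-ramification hypothesis, $u_v \leq 2$ for $v \in \mathcal S$ and $u_v = 0$ elsewhere, so $\deg(\NN) \leq 2|\mathcal S|$. Moreover, without wild ramification the local contribution $e(F_v)$ to the Euler number equals $v_v(\Delta)$ at every singular fibre, so Theorem~\ref{Thm:Euler_number} and Corollary~\ref{cor:ag} yield $12\,\chi(S) = e(S) = \deg(\Delta)$. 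Hence $\chi(S)$ is bounded by a constant depending only on $C$ and $\mathcal S$.

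Next, the divisor $(\Delta)$ is effective, supported in $\mathcal S$, and of bounded degree, so only finitely many such divisors occur. For each one I would choose a tame Galois cover $\pi:C' \to C$ with ramification indices at every point of $\mathcal S$ divisible by $12$; since $k$ is algebraically closed, such a cover exists and can be arranged to be coprime to the characteristic. Tame base change preserves the absence of wild ramification, and by Table~\ref{Tab:ram} every additive fibre of $S$ becomes multiplicative on $\pi^*S$, so $\pi^*S$ is semi-stable on $C'$. Its arithmetic genus equals $\deg(\pi)\,\chi(S)$, still bounded; its discriminant divisor has support in $\pi^{-1}(\mathcal S)$ and bounded degree, so only finitely many such divisors occur. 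Theorem~\ref{Thm:isom-ss} then shows that $\pi^*S$ is determined by its discriminant divisor (the scalar $u \in k^*$ being absorbed by admissible transformations), giving finitely many possibilities for $\pi^*S$.

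It remains to descend from $\pi^*S$ on $C'$ to $S$ on $C$. For a fixed semi-stable $\pi^*S$, the elliptic surfaces $S/C$ pulling back to it form a set of twists parametrised by $H^1(\mathrm{Gal}(C'/C),\,\mathrm{Aut}_{C'}(\pi^*S))$, which is finite because both the Galois group and the fibration-preserving automorphism group of a semi-stable elliptic surface with section are finite. The main technical obstacle I foresee lies here: one must check that every such twist really corresponds to an elliptic surface satisfying the original hypotheses (in particular that it has a section and meets the bad-reduction condition over $\mathcal S$), and that the relevant automorphism group is bounded uniformly across the finitely many $\pi^*S$. These points are standard but delicate in positive characteristic; the tameness of $\pi$ and the hypothesis that $k$ is algebraically closed are precisely what make the argument go through.
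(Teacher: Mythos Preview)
Your argument in the semi-stable case is exactly the one the paper gives: bound $\deg(\Delta)=e(S)$ via Pesenti--Szpiro, list the finitely many effective divisors of that degree supported on $\mathcal S$, and invoke Theorem~\ref{Thm:isom-ss} to conclude that each such divisor arises from at most one surface. The paper in fact stops there, treating only the semi-stable case, and merely points to the Mordell--Weil lattice approach of \S\ref{ss:finite'} as an alternative partial route.

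Your reduction of the general (additive) case to the semi-stable one via a tame base change and Galois descent goes beyond what the paper actually proves, and the strategy is sound; two small points need adjustment. First, in characteristic $2$ or $3$ you cannot choose ramification indices simultaneously divisible by $12$ and coprime to $p$. The fix is that the no-wild-ramification hypothesis already excludes most additive types in these characteristics (cf.\ the table in \S\ref{ss:wild}): in characteristic~$2$ only $\IV,\IV^*$ survive, in characteristic~$3$ only $\III,\III^*,\I_n^*$, so a tame ramification index of $3$ resp.\ $4$ at each point of $\mathcal S$ already forces semi-stability of $\pi^*S$ by Table~\ref{Tab:ram}. Second, $\chi(\pi^*S)$ is in general only $\leq \deg(\pi)\cdot\chi(S)$, not equal, since additive fibres lose Euler-number contribution when they become smooth or multiplicative after minimalisation; but the inequality is all your argument needs. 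The descent step is unproblematic: for a non-isotrivial elliptic surface with section over $C'$, the group of $C'$-automorphisms preserving the section is just $\{\pm1\}$, so $H^1$ of the finite Galois group with these coefficients is finite, and you recover only finitely many $S$ over $C$ from each $\pi^*S$.
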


We shall only describe the proof in the semi-stable case.
Let $s=\#\mathcal S$.
Since $\Delta$ has support $\subset\mathcal S$, the conductor has degree $\deg(\mathcal N)\leq s$ by semi-stability.
Hence Thm.~\ref{Thm:PS} yields the bound
\[
 \deg(\Delta) = e(S) \leq 6 (s+2g(C)-2).
\]
This tells us how to bound the possible configurations of singular fibres -- either using the trivial bound from Thm.~\ref{Thm:Euler_number} (which is immanent in the above formula) or the improved bounds from the previous section depending on the characteristic.
At any rate, there are only finitely many possible configurations and thus also only finitely many possibilities for $\Delta$.
(The last fact relies on the specification of the set $\mathcal S$ containing all bad places.)
But by Thm.~\ref{Thm:isom-ss}, each admissible choice of $\Delta$ belongs to at most one semi-stable elliptic surface over $C$, if any.

For a conceptual partial approach in the spirit of Shafarevich's original proof that is based on the theory of Mordell-Weil lattices, see \ref{ss:finite'}.

\subsection{Example: Mordell-Weil group of the normal form}
\label{ss:MW-normal}

Let us return to the normal form $S$ for given j-invariant, considered as an elliptic surface over $\PP^1$ with coordinate $t$.
Since $S$ can be written as a cubic pencil (cf.~\ref{ss:cubic_pencil}), it follows that $S$ is rational.
At any rate, the singular fibres give $e(S)=12$ and thus $b_2(S)=10$ by the previous section. This gives an upper bound for the Picard number $\rho(S)$ which we will see immediately to be attained.

\smallskip

We start by investigating the trivial lattice.
By Cor.~\ref{Cor:P^2}, we have $\bar O^2=-\chi(S)=-1$.
Hence the sublattice of $T$ generated by $\bar O$ and $F$ is isomorphic to $\langle 1\rangle \oplus \langle -1\rangle$.
In consequence, the trivial lattice depends on the characteristic as follows:
\[
 T = \langle 1\rangle \oplus \langle -1\rangle \oplus
\begin{cases}
 E_7, & \text{ if char}(k)\neq 2,3;\\
E_8, & \text{ if char}(k)= 2,3.
\end{cases}
\]
If char$(k)=2,3$, then $T$ thus has rank ten and discriminant $-1$. Since $\NS(S)$ is an integral lattice, these conditions imply $\NS(S)=T$.

In case char$(k)\neq 2,3$, then $T$ has only rank nine and discriminant $2$. In addition, we find the following section on the integral model (\ref{eq:normal_int}) of $S$:
\[
 P = \left(- \frac 1{36}\, (t-12^3)^2,\;\, \dfrac{3+2\sqrt{2}}{216}\, (t-12^3)^3\right).
\]
This point is induced from one of the simple base point of the corresponding cubic pencil -- which has $x=-1/36$.
It is easily checked that the other simple base point induces $-P$.
Hence we claim that $E(K) = \langle P\rangle$. To prove this, we consider the sublattice of $\NS(S)$ generated by $\bar P$ and the trivial lattice.

\smallskip

By Thm.~\ref{Thm:a-n}, the study of sublattices of $\NS(S)$ basically amounts to the computation of intersection numbers.
In the affine chart (\ref{eq:normal_int}), $\bar P$ and $\bar O$ do not meet since $P$ is given by polynomials. 
On the other hand, one easily checks that also in the chart at $\infty$, the section is polynomial.
Indeed, on the integral model from (\ref{eq:normal-oo}), the section is given as
\[
 P = \left(- \frac 1{36}\, (1-12^3s)^2,\;\, \dfrac{3+2\sqrt{2}}{216}\, (1-12^3s)^3\right).
\]

Hence $\bar P.\bar O=0$.
Using Tate's algorithm, one finds that $P$ meets the non-zero simple component of the $\III^*$ fibre.
The resulting intersection matrix of size $10\times 10$ has determinant $-1$. 
As before, we deduce that 
\[
 \NS(S) = T + \Z\bar P.
\]
Thm.~\ref{Thm:E-NS} then gives the claim $E(K)=\langle P\rangle$.

\smallskip

The above example indicates that the Tate algorithm might actually be needed in order to find the fibre component met by a section.
Of course, this will always be a simple component, so for the present fibre type as well as $\I_2, \II,  \III, \II^*$, there is not much choice, but for the remaining fibre types the task becomes non-trivial.

\section{Torsion sections}
\label{s:torsion}

Obviously there is a substantial difference between torsion sections and so-called infinite sections.
On an elliptic surface, this can be detected lattice theoretically by the means of the previous chapter.
In the following we will exploit torsion sections in detail.
They admit a rich theory specifically due to the connection with universal elliptic curves,
but they will also be interesting to us for other reasons, for instance in the context of singular fibres or automorphisms of surfaces.

\subsection{}
\label{ss:torsion1}
In the previous example, we have seen an elliptic surface with a non-torsion section in characteristics other than $2, 3$.
We will also refer to non-torsion sections as infinite sections.
Here's how one can see that $P$ is not torsion: Otherwise, we would have $mP=O$ for some integer $m$.
But then $m\cdot \bar P\in T$ by Thm.~\ref{Thm:E-NS}.
In particular, adjoining $P$ to $T$ produces an overlattice of the same rank, i.e.~rank nine. 
Since the intersection matrix has rank ten, we obtain a contradiction.

\smallskip

Conversely, the existence of a  torsion section is equivalent to the property that $T$ does not embed primitively into $\NS(S)$. That is, the quotient $\NS(S)/T$ is not torsion-free. Denote by $T'$ the \textbf{primitive closure} of $T$ inside $\NS(S)$:
\[
 T' = (T\otimes\Q) \cap \NS(S).
\]
With this overlattice of $T$, we easily deduce the following statements:
\begin{Lemma}
\label{Lem:tor}
 Let $P\in E(K)$ be torsion. Then $\bar P\in T'$.
\end{Lemma}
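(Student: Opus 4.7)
The plan is to deduce the statement directly from Theorem \ref{Thm:E-NS} together with the torsion-freeness of $\NS(S)$ (Theorem \ref{Thm:NS}), by unwinding the definition of the primitive closure $T'$.

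First, I would invoke the isomorphism
\[
\psi:\; E(K) \xrightarrow{\;\sim\;} \NS(S)/T,\qquad P \mapsto \bar P \bmod T
\]
of Theorem \ref{Thm:E-NS}. Assuming $P$ is torsion of order $m$, so $mP = O$ in $E(K)$, applying $\psi$ gives $m\bar P \equiv \bar O \pmod{T}$. Since $\bar O \in T$ by construction of the trivial lattice (see \eqref{eq:Triv}), this yields $m\bar P \in T$.

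Next I would pass to $\NS(S)\otimes \Q$: the relation $m\bar P \in T$ shows that $\bar P \in \tfrac1m\, T \subset T\otimes\Q$. Combined with the obvious inclusion $\bar P \in \NS(S)$, this places $\bar P$ inside $(T\otimes\Q)\cap\NS(S) = T'$, as desired.

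There is essentially no obstacle to this argument; the only subtlety worth flagging is that one must know $\NS(S)$ is torsion-free in order to identify $T\otimes\Q$ with a subspace of $\NS(S)\otimes\Q$ unambiguously, but this is supplied by Theorem \ref{Thm:NS}. Thus the lemma is a direct corollary of the structural results already established in Section \ref{s:NS}.
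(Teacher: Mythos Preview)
Your argument is correct and matches the paper's implicit reasoning: the paper does not spell out a proof but simply remarks that the lemma is ``easily deduced'' from the isomorphism $E(K)\cong\NS(S)/T$ of Theorem~\ref{Thm:E-NS} and the definition of the primitive closure, which is exactly what you have written out. One small quibble: torsion-freeness of $\NS(S)$ is needed not so much to embed $T\otimes\Q$ into $\NS(S)\otimes\Q$ (tensoring with $\Q$ is exact) as to view $\NS(S)$ itself inside $\NS(S)\otimes\Q$ so that the intersection defining $T'$ makes sense, but this does not affect the validity of your proof.
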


\begin{Corollary}
\label{Cor:T-bar}
In the above notation, 
\[
E(K)_{\mbox{tors}}\cong T'/T.
\]
\end{Corollary}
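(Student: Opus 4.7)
The plan is to combine Theorem \ref{Thm:E-NS} with the definition of the primitive closure $T'$, using that $\NS(S)$ is torsion-free by Theorem \ref{Thm:NS}. The isomorphism $\psi : \NS(S)/T \xrightarrow{\sim} E(K)$ of Theorem \ref{Thm:E-NS} is a homomorphism of abelian groups, hence it identifies torsion subgroups on both sides. Thus the task reduces to identifying $(\NS(S)/T)_{\text{tors}}$ with $T'/T$.

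First I would observe the purely lattice-theoretic statement: for any sublattice $T$ of a torsion-free abelian group $\NS(S)$, an element $\bar{x} \in \NS(S)/T$ is torsion if and only if $n x \in T$ for some $n \geq 1$, which by definition is equivalent to $x \in (T \otimes \Q) \cap \NS(S) = T'$. Hence $(\NS(S)/T)_{\text{tors}} = T'/T$.

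Second, I would transfer this across $\psi$. In one direction, if $D \in T'$, then $nD \in T$ for some $n \geq 1$, so $n\,\psi(D \bmod T) = \psi(nD \bmod T) = O$, showing that $\psi$ sends $T'/T$ into $E(K)_{\text{tors}}$. The reverse inclusion is precisely Lemma \ref{Lem:tor}: any torsion $P \in E(K)$ satisfies $\bar P \in T'$, so $\psi^{-1}(P) = \bar P \bmod T$ lies in $T'/T$. Combining both inclusions gives $\psi(T'/T) = E(K)_{\text{tors}}$, and since $\psi$ is injective we obtain the claimed isomorphism $E(K)_{\text{tors}} \cong T'/T$.

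There is no real obstacle here; the statement is essentially the torsion-subgroup shadow of Theorem \ref{Thm:E-NS} together with Lemma \ref{Lem:tor}, and the only minor point to verify is the elementary characterization of the torsion of $\NS(S)/T$ via the primitive closure, which hinges on the torsion-freeness of $\NS(S)$ established in Theorem \ref{Thm:NS}.
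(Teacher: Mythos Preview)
Your proposal is correct and matches the paper's implicit argument: the paper states the corollary without proof, treating it as an immediate consequence of Lemma~\ref{Lem:tor} and Theorem~\ref{Thm:E-NS}, and your write-up spells out exactly this reasoning. Note that your second paragraph is slightly redundant with the first, since the lattice-theoretic identification $(\NS(S)/T)_{\text{tors}}=T'/T$ together with the isomorphism of Theorem~\ref{Thm:E-NS} already gives both inclusions; invoking Lemma~\ref{Lem:tor} separately is not needed, though it does no harm.
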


\subsection{Group structure of singular fibres}

The structure of the N\'eron model induces a group structure on \emph{all} fibres, not only the smooth one.
On a singular fibre, the smooth points form an algebraic group scheme over the base curve.
Here the subgroup scheme of the identity component is
\[
 \mathbb{G}_m,\; \text{if $F_v$ is multiplicative};\;\;\; \mathbb{G}_a,\; \text{if $F_v$ is additive}.
\]
The quotient by this subgroup scheme is a finite abelian group which we denote by $G(F_v)$ depending on the fibre type.

\begin{Lemma}
\label{Lem:group_sing}
The singular fibres of elliptic surfaces admit the following group structure: 
$$
\begin{array}{rll}
\text{multiplicative} & \mathbb{G}_m \times G(F_v): & G(\I_n) \cong \Z/n\Z\\
\text{additive} & \mathbb{G}_a \times G(F_v): & G(\I_{2m}^*) \cong (\Z/2\Z)^2\\
&& G(\I_{2m+1}^*) \cong \Z/4\Z,\\
&& G(\II)\cong G(\II^*)\cong \{0\}\\
&& G(\III) \cong G(\III^*) \cong \Z/2\Z,\\
&& G(\IV)\cong G(\IV^*)\cong \Z/3\Z.
\end{array}
$$
\end{Lemma}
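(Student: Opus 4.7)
The plan is to exhibit $F_v^{\text{sm}}$ as the special fibre of the N\'eron model, identify its identity component, and read off the component group from the explicit Kodaira fibres displayed in Section~4.

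First I would invoke the N\'eron model construction: over the completion of $\OO_{C,v}$, the smooth locus $F_v^{\text{sm}}$ inherits the structure of a commutative algebraic group, with identity component $F_v^{\text{sm},0}$ a connected smooth one-dimensional commutative group scheme over the residue field. The classification of such groups (under our assumption that $F_v$ is singular, which excludes elliptic curves) leaves only $\mathbb{G}_m$ and $\mathbb{G}_a$; which one occurs is detected by the tangent cone at the singular point of $F_v$. Two distinct tangent directions (a node) give $\mathbb{G}_m$, while a single tangent direction (a cusp or a non-reduced intersection) gives $\mathbb{G}_a$. This matches exactly the dichotomy multiplicative/additive produced by Tate's algorithm in \ref{ss:add}, and explains the two columns in the table.

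Next I would compute the order of $G(F_v)$ by counting the simple components of $F_v$. A point lying on a component of multiplicity $n_i \geq 2$ cannot be a smooth point of the relative model, so only the components appearing with coefficient one in $F_v \approx \Theta_0+\sum n_i\Theta_i$ contribute to $F_v^{\text{sm}}$; each such component contributes one coset of $F_v^{\text{sm},0}$. Inspecting Figures~\ref{Fig:In} and \ref{Fig:add} gives the counts: $n$ simple components in $\I_n$; four (the ``leaves'' of the $\tilde D_{n+4}$ diagram) in $\I_n^*$; one in $\II$, $\II^*$; two in $\III$, $\III^*$; three in $\IV$, $\IV^*$. These are exactly the claimed orders of $G(F_v)$.

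It remains to pin down the group structure, which is forced for cyclic groups of prime order or of order one. For $\I_n$, the cyclic symmetry of the $n$-gon together with the translation action of $F_v^{\text{sm},0}=\mathbb{G}_m$ on its cosets yields $G(\I_n)\cong \Z/n\Z$. The only genuine obstacle is the $\I_n^*$ case, where the order $4$ could a priori realize either $\Z/4\Z$ or $(\Z/2\Z)^2$. Here I would use the identification $G(F_v)\cong T_v^{\vee}/T_v$ with the discriminant group of the negative-definite root lattice of Lemma~\ref{Lem:Dynkin}: the fibre components $\Theta_{v,i}$ span $T_v\cong D_{n+4}$ in this case, and the standard discriminant calculation gives $D_{2m+4}^{\vee}/D_{2m+4}\cong (\Z/2\Z)^2$ and $D_{2m+5}^{\vee}/D_{2m+5}\cong \Z/4\Z$, matching the parity statement for $\I_{2m}^*$ and $\I_{2m+1}^*$. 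The duality between simple components and cosets under the intersection pairing (which underwrites this identification) is the main technical obstacle, and is the step one would want to defer to the later development of Mordell--Weil lattices; short of it, one has to perform a direct local analysis of the $\I_n^*$ N\'eron model to see the $\Z/4\Z$ structure for odd $n$, which is exactly the kind of explicit computation the survey tends to suppress.
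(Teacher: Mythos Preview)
The paper does not actually prove this lemma: it is stated as a known fact from the theory of N\'eron models, with only the preceding paragraph identifying the identity component as $\mathbb{G}_m$ or $\mathbb{G}_a$ according to the multiplicative/additive dichotomy. No argument for the component group is given or sketched; the result is simply quoted.

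Your outline is therefore more than the paper itself supplies, and it is correct. One caution on the $\I_n^*$ step: the identification $G(F_v)\cong T_v^\vee/T_v$ that you invoke is stated in the paper only much later (the Example in \ref{ss:discs}), and is presented there as a \emph{consequence} of Lemma~\ref{Lem:group_sing} rather than as an independent input. So if you want a self-contained argument in the order the paper develops things, you should either do the direct discriminant computation for $D_{n+4}$ (routine: write the standard generators of the dual lattice and check the parity of $n$ governs whether the quotient is cyclic) or, as you already suggest, fall back on the explicit N\'eron-model description of the $\I_n^*$ fibre. Either route is fine; just be aware that citing the discriminant-group identification from within this survey would be circular as written.
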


\subsection{Simple components vs.~sections}

By definition of the N\'eron model, the group structures of the generic fibre and the special fibres are compatible.
Here we only consider the simple fibre components met by a section. Since the simple components of a fibre exactly form the group $G(F_v)$, we derive the following property:

\begin{Lemma}
\label{Lem:sect-comp}
Consider the map $\psi: E(K) \to \prod_{v\in R} G(F_v)$, taking a section to the respective fibre components that it meets. Then $\psi$ is a group homomorphism. 
\end{Lemma}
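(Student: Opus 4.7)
The plan is to exploit the fact that the smooth locus of the fibration $f:S\to C$ is a group scheme over $C$ whose generic fibre is $E$. Concretely, let $S^\#\subset S$ be the open subscheme obtained by removing the singular points of every singular fibre. Then $S^\#\to C$ is smooth, and by the theory of N\'eron models, it carries a unique structure of a commutative group scheme over $C$ whose restriction to the generic point is $E$ with its usual group law. Fibre-wise, $S^\#_v$ is the disjoint union of the simple components of $F_v$; its identity component $(S^\#_v)^0$ meets $\Theta_{v,0}$ and is isomorphic to $\mathbb{G}_m$ or $\mathbb{G}_a$ according to whether $F_v$ is multiplicative or additive, and the quotient $S^\#_v/(S^\#_v)^0$ is exactly the finite group $G(F_v)$ described in Lemma~\ref{Lem:group_sing}.

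Next I would observe that any section $P\in E(K)$ extends to a section of $S^\#\to C$. The key point is that $\bar P$ avoids the singular points of every fibre because it meets each $F_v$ transversally in a \emph{simple} component. This follows from the intersection computation
\[
\bar P . F_v \;=\; \bar P . F \;=\; \bar O . F \;=\; 1,
\]
using algebraic equivalence of all fibres together with $\bar O.F=1$. Hence the assignment $P\mapsto\bar P$ identifies $E(K)$ with a subgroup of $S^\#(C)$, and by the N\'eron mapping property (together with Thm.~\ref{Thm:E-NS}) this identification is compatible with the group structures on both sides.

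With this in hand, the map $\psi$ of the lemma factors at each $v\in R$ as
\[
\psi_v:\; E(K) \;=\; S^\#(C) \;\xrightarrow{\;\mathrm{ev}_v\;}\; S^\#_v \;\twoheadrightarrow\; S^\#_v/(S^\#_v)^0 \;=\; G(F_v),
\]
where $\mathrm{ev}_v$ sends a section to its value at $v$ and the second arrow is the quotient by the identity component. Both arrows are group homomorphisms: the first because the group law on $S^\#(C)$ is defined pointwise via the group scheme structure, and the second because $(S^\#_v)^0$ is a closed subgroup scheme of $S^\#_v$. Taking the product over $v\in R$ then yields the desired homomorphism $\psi$.

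The main obstacle in this route is the legitimacy of the N\'eron-model identification in our generality (arbitrary perfect residue fields, possibly positive characteristic); this is standard but nontrivial and requires citing either N\'eron's original construction or Bosch--L\"utkebohmert--Raynaud. A more hands-on alternative would be to take three points $P,Q,R\in E(K)$ with $P+Q+R=O$, lift this relation to a linear equivalence $\bar P+\bar Q+\bar R-3\bar O\sim V$ with $V$ vertical on $S$ (as in the proof of Thm.~\ref{Thm:E-NS}), and then restrict to each reducible fibre $F_v$ to read off the corresponding relation in $G(F_v)$; this avoids invoking the global group scheme structure, but at the cost of a fibrewise case-by-case verification.
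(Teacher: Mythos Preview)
Your proof is correct and follows essentially the same route as the paper. The paper's argument consists of the single sentence preceding the lemma---``By definition of the N\'eron model, the group structures of the generic fibre and the special fibres are compatible''---and you have simply unpacked this: identifying the smooth locus $S^\#$ with the N\'eron model, checking that sections land there, and factoring $\psi_v$ through evaluation and the component-group quotient.
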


\subsection{Simple components vs.~torsion sections}

Cor.~\ref{Cor:T-bar} indicates that a torsion section $P\neq O$ has to meet some fibre non-trivially.
By this, we mean that $P$ intersects a \textbf{non-zero component} (i.e.~different from the zero component) of some fibre.
Hence Lemma \ref{Lem:sect-comp} specialises as follows:
\begin{Corollary}
\label{Cor:tor-phi}
Restricted to the torsion subgroup of $E(K)$, the group homomorphism $\varphi$ is injective:
\[
 E(K)_{\text{tors}} \stackrel{\psi}{\hookrightarrow} \prod_{v\in R} G(F_v).
\]
\end{Corollary}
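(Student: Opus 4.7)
The plan is to derive from $\psi(P)=0$ enough intersection-theoretic information to force $\bar P = \bar O$ in $\NS(S)$, hence $P=O$ via the isomorphism in Thm.~\ref{Thm:E-NS}. The hypothesis $\psi(P)=0$ translates into saying that $P$ meets only the zero component $\Theta_{v,0}$ of each reducible fibre, equivalently
\[
\bar P.\Theta_{v,i}=0 \quad \text{for all } v\in R,\ i\geq 1.
\]
Since $P$ is torsion, Lemma~\ref{Lem:tor} gives $\bar P\in T'\subset T\otimes\Q$, so using the $\Z$-basis from Prop.~\ref{Prop:triv_lattice} I can write
\[
\bar P = a\bar O + b\, F + \sum_{v\in R}\sum_{i\geq 1} c_{v,i}\,\Theta_{v,i}
\]
in $\NS(S)\otimes\Q$, with $a,b,c_{v,i}\in\Q$.

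Next I would pin down these coefficients one block at a time. Intersecting the displayed formula with $\Theta_{v,j}$ for fixed $v\in R$ and $j\geq 1$, the left-hand side vanishes by assumption, and on the right $\bar O.\Theta_{v,j}=0$ (because $\Theta_{v,0}$ is by definition the component met by $O$) and $F.\Theta_{v,j}=0$ (pick the general fibre $F$ disjoint from $F_v$ using algebraic equivalence of fibres), so that
\[
0 = \sum_{i\geq 1} c_{v,i}\,(\Theta_{v,i}.\Theta_{v,j}) \qquad \text{for every } j\geq 1.
\]
Since $T_v$ is negative-definite by Lemma~\ref{Lem:Dynkin}, its Gram matrix is non-degenerate and all $c_{v,i}$ must vanish. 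This reduces the expansion to $\bar P = a\bar O + b F$. Intersecting with $F$ gives $1 = a\cdot(\bar O.F) + 0 = a$, hence $a=1$, and combining Cor.~\ref{Cor:P^2} with $(\bar O + b F)^2 = -\chi + 2b$ forces $b=0$.

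At this point $\bar P = \bar O$ in $\NS(S)\otimes\Q$; torsion-freeness of $\NS(S)$ (Thm.~\ref{Thm:NS}) upgrades this to equality in $\NS(S)$ itself. Since $\bar O\in T$, the isomorphism $E(K)\cong \NS(S)/T$ from Thm.~\ref{Thm:E-NS} yields $P=O$, establishing injectivity of $\psi$ on $E(K)_{\text{tors}}$. The only step requiring real care is the bookkeeping of the intersections $\bar O.\Theta_{v,j}$ and $F.\Theta_{v,j}$ against non-zero components; both drop out immediately from the respective definitions, so I do not expect any serious obstacle. A more conceptual alternative would be to argue that the inclusion $T'/T\hookrightarrow T^\ast/T$ composed with the canonical identification $T^\ast/T \cong \bigoplus_v T_v^\ast/T_v \cong \bigoplus_v G(F_v)$ (using that $\langle\bar O,F\rangle$ is unimodular since it has determinant $-1$) coincides with $\psi$, but the direct computation above is self-contained and avoids this extra machinery.
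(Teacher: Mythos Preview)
Your proof is correct, but it proceeds along a genuinely different route from the paper's own argument. The paper defers the proof to \S\ref{ss:appl}, where it uses the height pairing from the Mordell-Weil lattice theory: for two torsion sections $P,Q$ with $\psi(P)=\psi(Q)$, one has $\mbox{contr}_v(P)=\mbox{contr}_v(Q)=\mbox{contr}_v(P,Q)$ at every place, so the three relations $h(P)=h(Q)=\langle P,Q\rangle=0$ combine to give $\bar P.\bar Q=-\chi<0$; since $\bar P,\bar Q$ are irreducible curves, this forces $\bar P=\bar Q$ as curves on $S$. By contrast, you work directly inside $T\otimes\Q$ via Lemma~\ref{Lem:tor}, expand $\bar P$ in the basis of Prop.~\ref{Prop:triv_lattice}, and use only the nondegeneracy of each $T_v$ together with $\bar P.F=1$ and $\bar P^2=-\chi$ to pin down $\bar P=\bar O$ in $\NS(S)$. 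Your argument is more elementary in that it relies solely on material from \S\ref{s:NS}--\S\ref{s:torsion} and avoids the correction-term machinery of \S\ref{s:MWL}; the paper's argument, on the other hand, yields the slightly stronger conclusion that $\bar P$ and $\bar Q$ coincide as actual curves (not merely as classes), and it showcases the height pairing as a computational tool. Your parenthetical alternative via $T'/T\hookrightarrow T^\vee/T\cong\bigoplus_v T_v^\vee/T_v$ is also valid and is essentially the lattice-theoretic repackaging of what you did by hand.
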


The corollary is best-known in characteristic zero.
A rigorous characteristic-free proof follows from the theory of Mordell-Weil lattices, see \ref{ss:appl}.

\begin{Remark}
\label{Rem:pec}
If the characteristic $p\geq 0$ does not divide the order of the torsion section $P$, then it follows from the theory of the N\'eron model \cite{Neron} that $\bar P$ and $\bar O$ are disjoint. 
Without the assumption, the statement is not valid
as supersingular elliptic curves in characteristic $p$ have no $p$-torsion. 
An example for an elliptic surface with peculiar $p$-torsion was given in~\cite[App.~2]{OS}.
\end{Remark}

\subsection{Narrow Mordell-Weil group}
\label{ss:narrow_MW}

The previous result yields a bound on the size of the torsion subgroup of $E(K)$. 
In particular, the order of a torsion section cannot exceed the least common multiple $m$ of the annihilators of the $G(F_v)$.

Generally, this tells us that upon multiplication by $m$ on $E$, no section meets the trivial lattice anymore. 
Hence all these multiples lie in the following subgroup of $E(K)$:

\begin{Definition}
The \textbf{narrow Mordell-Weil group} $E(K)^0$ consists of all those sections which meet the zero component of every fibre:
\[
 E(K)^0 = \ker(\psi) = \{P\in E(K); P \text{ meets every fibre at }\Theta_0\}.
\]
\end{Definition}

In terms of the narrow Mordell-Weil group, our previous observation can be rephrased as follows:
\[
 m E(K) \subseteq E(K)^0.
\]

\subsection{Sections vs.~automorphisms}

We will gain a much better understanding of torsion sections by the following interpretation:
Given a section $P\in E(K)$, we define an automorphism $t_P$ of the underlying surface $S$ by translation by $P$:
This is well-defined on the smooth fibres and can be extended to the singular fibres.
Note that translation by a section does not define an automorphism of the elliptic surface, since the zero-section is not preserved, but of the underlying surface where we forget the elliptic structure.
However, $t_P$ respects the elliptic fibration, i.e.~$f=f\circ t_P$.

\smallskip

Often, an elliptic fibration with an infinite section allows one to conclude that some projective surface has infinite automorphism group.
For instance, this holds true in characteristic $\neq 2,3$ 
for the normal form for given j-invariant
 by \ref{ss:MW-normal}.

\subsection{Quotient by torsion section}
\label{ss:quot-sect}

If $P$ is a torsion section, then $t_P$ defines an automorphism of finite order $m$ on the underlying algebraic surface $S$.
On the generic fibre, $t_P$ operates fixed point free.
This translates to all smooth fibres unless $p\mid m$, cf.~Remark \ref{Rem:pec}.
In any case, the quotient is an isogenous elliptic curve $E'$ over $k(C)$. 
If $k$ contains the $m$th roots of unity, then $E'$ is endowed with a $k$-rational $m$-torsion point
which induces the dual isogeny $E'\to E$.
The composition of the two isogenies is just multiplication by $m$ if $p\nmid m$, resp.~
by $p^{2e}m'$ if $m=p^em'$ with $p\nmid m'$.

The Kodaira-N\'eron model of $E'$ is another elliptic surface $S'$ over $C$ with section.
If $p\nmid m$, this surface is obtained from the quotient $S/\langle t_P\rangle$ by resolving the ordinary double point singularities resulting from the fixed points of $t_P$:
\[
 S' = \widetilde{S/\langle t_P\rangle}.
\]
Because of the isogeny $E\to E'$, the elliptic surfaces $S, S'$ are often also called isogenous.
Thanks to the dual isogeny, the surfaces share the same Betti numbers.
In fact, their Lefschetz numbers have to coincide as well;
in consequence $\rho(S)=\rho(S')$ (cf.~\cite{Inose} for a similar result for complex K3 surfaces).

\subsection{Relation between singular fibres}

The quotient $S/\langle t_P\rangle$ is best studied when the characteristic $p$ does not divide $m$, the order of the section $P$, since then the operation is separable.
One can study the action of $t_P$ on the singular fibres componentwise.
This is very convenient, since each component is a rational curve.
Since the quotient has to give rise to one of Kodaira's fibre types after the desingularisation, one can classify the possible actions on the singular fibres.

\subsubsection{Multiplicative fibres}

For simplicity, let us consider only the case where the order $m$ of $P$ is prime.
The general case can easily be deduced from this simplification.

If $P$ meets the zero component of an $\I_n$ fibre, then each component is fixed by $t_P$. Hence there are exactly $n$ fixed points at the intersections of the components. Each of them attains an $A_{m-1}$ singularity in the quotient. Hence the desingularisation results in a fibre of type $\I_{mn}$.

If $P$ does not meet the zero section, then $m\mid n$ by Cor.~\ref{Cor:tor-phi}. In particular, $t_P$ rotates the singular fibre (i.e.~the cycle of rational curves) by an angle of $2\pi/m$. Thus $t_P$ acts fixed-point free and  identifies $m$ components. The resulting fibre in the quotient surface $S'$ has type $\I_{n/m}$.

\subsubsection{Additive fibres}

If the torsion section $P$ were to meet the zero component of an additive fibre, then this would impose a second fixed point apart from the node (or cusp). It is easily checked that the resolution cannot result in one of Kodaira's types of singular fibres (under the assumption $p\nmid m$). Hence $P$ has to meet the singular fibre non-trivially. 

\begin{Lemma}
\label{Lem:torsion-add}
Let $E(K)'$ denote the prime to $p$-torsion in $E(K)$ where $p=$char$(K)$. Then any additive fibre $F_v$ gives an injection
\[
 E(K)' \hookrightarrow G(F_v).
\]
\end{Lemma}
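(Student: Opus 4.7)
My plan is to exploit the N\'eron model structure on the singular fibre together with Remark~\ref{Rem:pec}. For an additive place $v$, the smooth part $F_v^{\mathrm{sm}}$ is an algebraic group whose identity component is $\mathbb{G}_a$ and whose component group is $G(F_v)$, so there is a short exact sequence
\[
0 \to \mathbb{G}_a(k) \to F_v^{\mathrm{sm}}(k) \to G(F_v) \to 0.
\]
Restriction of a section to its point of intersection with $F_v$ yields a specialization homomorphism $\sigma_v: E(K) \to F_v^{\mathrm{sm}}(k)$, and composing with the projection to the component group gives precisely the $v$-component of the homomorphism $\psi$ from Lemma~\ref{Lem:sect-comp}, which I will still denote $\psi_v$.

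Now I would take $P \in E(K)'$ with $\psi_v(P) = 0$ and aim to show $P = O$. By hypothesis $P$ has some order $m$ coprime to $p = \mathrm{char}(k)$. The vanishing of $\psi_v(P)$ places $\sigma_v(P)$ in the identity component $\mathbb{G}_a(k)$, and since $\sigma_v$ is a group homomorphism, $\sigma_v(P)$ is $m$-torsion there. But $\mathbb{G}_a$ carries no non-trivial torsion of order prime to $p$ (no torsion at all in characteristic $0$, and only $p$-power torsion in characteristic $p$), so $\sigma_v(P) = 0$. Geometrically, this says that $\bar P$ meets the fibre $F_v$ at the same point as the zero section $\bar O$, i.e.~$\bar P \cap \bar O \neq \emptyset$.

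At this stage I would invoke Remark~\ref{Rem:pec}: because $P$ is torsion of order coprime to $p$, the divisors $\bar P$ and $\bar O$ are disjoint on $S$. This contradicts the conclusion of the previous step unless $P = O$. Hence $\psi_v$ is injective on $E(K)'$, as claimed. The only real subtlety is the appeal to N\'eron model theory for two facts: the group structure on $F_v^{\mathrm{sm}}$ that makes $\sigma_v$ a homomorphism, and the disjointness statement in Remark~\ref{Rem:pec}; once both are in hand, the argument above is immediate, and in particular the additive hypothesis enters only through the identification of the identity component with $\mathbb{G}_a$ (the multiplicative case would fail, since $\mathbb{G}_m$ does carry prime-to-$p$ torsion).
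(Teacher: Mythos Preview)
Your argument is correct and complete. You use the specialisation homomorphism $\sigma_v$ into the smooth locus of the additive fibre, observe that the identity component $\mathbb{G}_a$ has no prime-to-$p$ torsion, and then invoke the disjointness of $\bar P$ and $\bar O$ from Remark~\ref{Rem:pec} to force $P=O$. Everything you need (the group-scheme structure on $F_v^{\mathrm{sm}}$ making $\sigma_v$ a homomorphism, and the disjointness of prime-to-$p$ torsion sections from $O$) is indeed supplied by the theory of the N\'eron model, as you note.

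The paper's route is different. It is embedded in the paragraph immediately preceding the lemma: one considers the translation automorphism $t_P$ and the quotient surface $S/\langle t_P\rangle$. If a prime-to-$p$ torsion section $P$ met the zero component of an additive fibre, then $t_P$ would fix that component and acquire an extra fixed point beyond the node/cusp; resolving the quotient would then produce a fibre not on Kodaira's list, a contradiction. Hence $P$ meets a non-identity component, i.e.\ $\psi_v(P)\neq 0$, and since $\psi_v$ is a group homomorphism the injectivity follows. Your approach is more self-contained and avoids the case analysis of fibre types under quotient; the paper's approach, on the other hand, slots naturally into the surrounding discussion of \S\ref{ss:quot-sect}--7.8, which is organised around what happens to singular fibres on the quotient $S/\langle t_P\rangle$, and yields as a by-product the information needed there about how additive fibres transform.
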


The lemma is a direct consequence of Cor.~\ref{Cor:tor-phi}, since $\varphi$ is a group homomorphism.
In consequence, additive fibres can easily rule out torsion  (prime to $p$), like fibres of type $\II$ and $\II^*$ or combinations of other fibre types.
The condition $e(S)=e(S')$ imposes further restrictions on the singular fibres.
These restrictions apply particularly to the possible configurations of multiplicative fibres (cf.~section \ref{s:RES}).

\subsection{Universal elliptic curves}

In characteristic zero, torsion points give rise to universal elliptic curve.
Here the universal property refers to some level $N>3$ as follows:
$Y_1(N)$ is an elliptic curve over some function field $K$ over $\Q$ such that
\begin{itemize}
 \item $E(K)\cong \Z/N\Z$, and
\item conversely, any elliptic curve with  a rational $N$-torsion point is a member of the family.
\end{itemize}
The assumption $N>3$ guarantees that the universal elliptic curve $Y_1(N)$ forms a unique one-dimensional family over the modular curve $X_1(N)$.
By filling in suitable singular fibres through the Kodaira-N\'eron model, we can thus interpret $Y_1(N)$ as an elliptic surface with $N$-torsion in the Mordell-Weil group.
The universal property then guarantees that any other elliptic surface with an $N$-torsion section arises from $Y_1(N)$ through a base change.

These universal elliptic curves have important applications to arithmetic and in particular to modular forms.
Namely, one can relate all newforms of level $N$ and weight $>2$ to (desingularisations of) the self fibre products of $Y_1(N)$ by \cite{Deligne}.
In this arithmetic context, it becomes important that $Y_1(N)$ admits a model over $\Z[\frac 1N]$.

\subsection{Tate's normal form}
\label{ss:univ-few}

In order to explicitly parametrise universal elliptic curves, it is convenient to apply two normalisations. Starting from an elliptic curve in generalised Weierstrass form (\ref{eq:NF}) with section $P$, we first translate the section to $(0,0)$. 
Then $a_6\equiv 0$, and $P$ is two-torsion if and only if $a_3\equiv 0$. 
Otherwise a translation of $y$ yields $a_4\equiv 0$.
The resulting equation is Tate's normal form for a cubic with a rational point $P=(0,0)$:
\[
E:\;\; y^2 + a_1 \,x\,y+a_3\,y = x^3 + a_2\,x^2.
\]
As a reconfirmation, we check $-P=(0,-a_3)\neq P$, so $P$ is not 2-torsion.

\subsubsection{3-Torsion}
\label{ss:3-tor}

Since $-2P=(a_2,0)$, the point $P$ has order 3 if and only if $a_2\equiv 0$. 
But then there is still much freedom in choosing $a_1, a_3$ -- too much freedom actually in order to have a universal elliptic curve.
Moreover, there are two principal possibilities for the j-invariant: If $a_1\equiv 0$, then $j\equiv 0$, otherwise $j$ is not constant.
The following two curve can be regarded as minimal in some sense. (Namely, they are extremal, cf.~section \ref{s:RES-extr}.)
\begin{eqnarray*}
 \IV^*, \I_3, \I_1: & S: & y^2+xy+ty=x^3\\
\IV^*, \IV: & S': & y^2+t\,y=x^3
\end{eqnarray*}
Both surfaces are apparently rational since their function fields equal $k(x,y)$.
In either case, the trivial lattice $T$ has rank ten. Since $e(S)=e(S')=12$ and $\rho=10$, we can deduce from Cor.~\ref{Cor:ST} that the Mordell-Weil rank is zero. Since $T$ has discriminant $-9$, there can only be 3-torsion in either case:
\[
 E(K) = E'(K) = \{O, P, -P\}.
\]

\subsubsection{Higher torsion}
\label{ss:univ-tor}

If $a_2\not\equiv 0$, then we can rescale $x$ and $y$ in such a way that $a_2=a_3$. Writing
\[
 b=-a_2=-a_3,\;\;\; c=1-a_1,
\]
we can express the coefficients of $nP$ through $b,c$.
For the first few multiples, we obtain
$$
\begin{array}{rcccrcccrcc}
 P & = & (0,0) && 2P & = &  (b, bc) && 3P & = & (c,b-c)\\
-P & = & (0,b) && -2P & = & (b,0) && -3P & = & (c,c^2)
\end{array}
$$
From the coefficients, we obtain affine equations in $b,c$ for the modular curves parametrising elliptic curves with a torsion point.

For instance, for \textbf{4-torsion}, we obtain $c=0$. The resulting elliptic surface over the affine $b$-line has reducible singular fibres $\I_4$ at $0$ and $\I_1^*$ at $\infty$.

Similarly, \textbf{5-torsion} implies $b=c$. In consequence, there are two fibres of type $\I_5$ at $0$ and $\infty$.

Subsequently, \textbf{6-torsion} is parametrised by the conic $b=c+c^2$ while \textbf{7-torsion} yields a nodal cubic. The corresponding elliptic surface of level 7 is the first non-rational surface -- it is a K3 surface.

\smallskip

The above elliptic surfaces are well-defined in any characteristic, but the fibres degenerate in characteristics $p\mid N$. In fact, there the j-map becomes inseparable (cf.~\cite[\S 12]{KM}).

\subsection{2-Torsion and quadratic twisting}
\label{ss:univ-2}

The problems with 2-torsion are two-fold. On the one hand, we experience the same phenomenon with different j-invariants -- constant vs.~non-constant -- as in the 3-torsion case.
But there is also a similar structural difference related to quadratic twisting.

\smallskip

We work with fields of characteristic other than $2$. Hence we can reduce to an extended Weierstrass form (\ref{eq:ext_WF}).
Here the $2$-torsion points, if any, are given by the zeroes $\alpha_i$ of the right-hand side polynomial in $x$ -- with $y$-coordinate zero, since the lines through $O$ are exactly the vertical lines (and the line at $\infty$).

Now consider the quadratic twist by some $d\in K$ as in (\ref{eq:quadr_twist}).
It is immediate that the resulting elliptic curve has $2$-torsion points $(d\,\alpha_i,0)$.
In other words, quadratic twists preserve the $2$-torsion.
Thus the question for a universal elliptic curve with a $2$-torsion section or full $2$-torsion in the Mordell-Weil group is ill-posed, since we can always twist at arbitrary places.

\smallskip

Note that in general, quadratic twists of elliptic surface introduce $\I_0^*$ fibres.
Hence it makes sense to ask for elliptic surface with $\Z/2$- or $(\Z/2)^2$-torsion sections and the minimal number of $\I_0^*$ fibres.
All these surfaces are rational by inspection of their function fields.

For $(\Z/2)^2$, this leads to the so-called Legendre family on the one hand
\[
 S:\;\;\; y^2 = x\,(x-1)\,(x-t)
\]
with singular fibres $\I_2$ at $t=0,1$ and $\I_2^*$ at $\infty$. 

\smallskip

On the other hand, we can fix any elliptic curve $E$ over $k$. Since $k$ is algebraically closed, $E$ has full 2-torsion over $k$. Then consider the quadratic twist by $t$. This yields an elliptic surface over $\PP^1$ with coordinate $t$. It has two singular fibres of type $\I_0^*$ at the two places of ramification, $0$ and $\infty$, and constant j-invariant $j=j(E)$. In particular, these elliptic surface form a one-dimensional family parametrised by $j$.

For a single 2-torsion section, we derive two surfaces without $\I_0^*$ fibres:
\begin{eqnarray*}
S: & y^2 = x^3 + x^2 + t x & \III^*, \I_2, \I_1\\
S': & y^2 = x^3 + t\,x & \III^*, \III
\end{eqnarray*}
Both admit the section $(0,0)$. As before, one finds that $E(K)=E'(K)=\{O, (0,0)\}$. Note that $E'$ has j-invariant $j=12^3$.
As for $S$, we can still twist the generic fibre quadratically at two of the singular places without introducing any $\I_0^*$ fibres.

\subsection{}

We take this opportunity to note that with respect to non-2-torsion, quadratic twisting has exactly the opposite effect: no sections are preserved. This is related to quadratic base change as follows:

Let $S\to C$ be an elliptic surface and $C'\to C$ a double cover with set of ramification points $\mathcal S$.
Then the Mordell-Weil group of the base change $S'$ is generated (up to finite index, a $2$-power) by the independent Mordell-Weil groups of $S$ and its quadratic twist $\hat S$ over $\mathcal S$:
\[
2 E(k(C')) \hookrightarrow E(k(C)) \oplus \hat E(k(C)) \hookrightarrow E(k(C')).
\]
This decomposition holds true up to two-torsion
because the two given Mordell-Weil groups correspond exactly to the invariant and anti-invariant sections under the deck transformation.

\subsection{Elliptic modular surfaces}
\label{ss:EMS}

Over $\C$ there is direct construction of the elliptic surfaces associated to the universal elliptic curves.
This was introduced by the second author in \cite{ShEMS}.

In brief, let $\Gamma\subset SL_2(\Z)$ denote a subgroup of finite index.
If $-1\not\in\Gamma$, then one can associate a complex elliptic surface $S(\Gamma)$ to $\Gamma$ in a canonical way.
For the congruence subgroups $\Gamma_1(N)$, we recover the elliptic surface associated to $Y_1(N)$.

The common theme is a generalisation of the relation with modular forms of weight 3. Namely, there is an intrinsic isomorphism
\[
 H^{2,0}(S(\Gamma)) \cong S_3(\Gamma)
\]
between holomorphic 2-forms on $S(\Gamma)$ and cusp forms of weight 3 for $\Gamma$.
However, if $\Gamma$ is not a congruence subgroup, then there is no general way to translate this isomorphism into arithmetic properties, say using the $L$-function of $S(\Gamma)$ over some number field.
Instead, there are conjectural Atkin--Swinnerton-Dyer congruence relations, but these have only been proven in very special cases so far.

We remark that this construction was generalised by M.~Nori in \cite{Nori} to generalised modular elliptic surfaces.
Over $\C$ these have always finite Mordell-Weil group, thus yielding most extremal elliptic surfaces (see \ref{ss:extremal}).

\section{Rational elliptic surfaces}
\label{s:RES}

This section initiates our study of rational elliptic surfaces.
The previous sections allow us to develop a fairly precise picture.
Another aspect will be added by the introduction of Mordell-Weil lattices, see section \ref{s:MWL}.

\subsection{L\"uroth's theorem}

Let $S\stackrel{f}{\longrightarrow}C$ be an elliptic surface over $k$. The fibration induces an inclusion of function fields
\[
 k(C)\subseteq k(S).
\]
If $S$ is rational, L\"uroth's theorem implies that $C\cong \PP^1$.
Thus any rational elliptic surface is fibred over the projective line.
Alternatively, we could have argued with Thm.~\ref{Thm:Pic-var}.
Before we continue the investigation of rational elliptic surfaces, we derive a general fact for elliptic surfaces over $\PP^1$.

\subsection{Elliptic surfaces over the projective line cont'd}
\label{ss:IP^1'}

As a consequence of the Tate algorithm, we have seen in \ref{ss:IP^1} that every elliptic surface with a section over $\PP^1$ has a globally minimal generalised Weierstrass form (\ref{eq:NF}). Equivalently, there is $n\in\N$ such that 
\begin{itemize}
 \item $ \deg(a_i)\leq ni$ for all i, but
\item
there is some $i$ such that $\deg(a_i)\geq (n-1)i$ (minimality at $\infty$) and 
\item 
for any finite place on $\PP^1$ with valuation $v$, there is some $i$ such that $v(a_i)<i$ (minimality at finite places).
\end{itemize}
Since the indices refer to some weight, the discriminant of a globally minimal elliptic surface $S$ has degree 
\[
 \deg(\Delta)=12n.
\]
On the other hand, we have at any place
\[
 v(\Delta) = e(F_v) + \delta_v
\]
by definition of the wild ramification in \ref{ss:wild}. 
Hence Thm.~\ref{Thm:Euler_number} gives
\[
 \deg(\Delta) = e(S) = 12\chi(S).
\]
Thus we deduce the factor $n$ of the weights of the coefficients $a_i$ of the generalised Weierstrass form:
\[
  n=\chi(S).
\]

\subsection{Canonical bundle}
\label{ss:RES-can}

For an elliptic surface $S$ with section over $\PP^1$, the canonical bundle formula in \ref{ss:can_bundle} returns
\[
 K_S = (\chi(S)-2)\,F
\]
in $\NS(S)$. 
A rational surface has $K_S$ anti-ample. 
Recall that $\chi(S)>0$ since we assumed the existence of a singular fibre.
Hence any rational elliptic surface has 
\[
 \chi(S) = 1,\;\;\; e(S)=12.
\]

\subsection{Normal form of a rational elliptic surface}

Combining \ref{ss:IP^1'} and \ref{ss:RES-can}, we deduce that a rational elliptic surface $S$ with section is given by a generalised Weierstrass form (\ref{eq:NF})
\[
 S:\;\;\; y^2 + a_1 \,x\,y+ a_3\,y \, = \, x^3 + a_2 \,x^2 + a_4 \,x+a_6.
\]
with $\deg(a_i)\leq i$. 
Conversely, such a generalised Weierstrass form defines a rational elliptic surface if and only if it has a singular fibre.
That is, no admissible transformation makes each $a_i$ into an $i$th power.
In particular, this holds if the discriminant $\Delta$ is not a twelfth power.

\smallskip

The last characterisation is an equivalence if char$(k)\neq 2,3$ because then there is no wild ramification.
Since there would be only one singular fibre $F_\infty$, we would have $\deg(\Delta)=e(F_\infty)$.
Hence $F_\infty$ would have at least 11 components.
But then the trivial lattice would have rank$(T)\geq 12>\rho(S)=10$, contradiction.

In characteristics $2, 3$, however, there are (rational) elliptic surfaces with exactly one singular fibre.
Hence it is necessary to check the minimality at the twelve-fold zero of $\Delta$.

\subsection{Cubic pencils}

Our first examples for (rational) elliptic surfaces were cubic pencils. 
This characterisation is in fact complete:

\begin{Proposition}[{\cite[Thm.~5.6.1]{CD}}]
Any rational elliptic surface $S$ is isomorphic to the blow-up of $\PP^2$ in the base points of a cubic pencil.
\end{Proposition}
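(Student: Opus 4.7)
My plan is to combine the canonical bundle formula with the classification of smooth projective rational surfaces. By Thm.~\ref{Thm:can} and $\chi(S)=1$ (cf.~\ref{ss:RES-can}), we have $K_S\equiv -F$ where $F$ denotes the fibre class; in particular $K_S^2=0$, $-K_S$ is effective, and $|F|$ is a base-point-free pencil of elliptic curves. Since $S$ is rational, $\rho(S)=b_2(S)=10$.

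Next I would invoke the classification: every smooth projective rational surface is obtained from a minimal model---either $\PP^2$ or a Hirzebruch surface $\F_n$ with $n\neq 1$---by a sequence of blow-ups at closed, possibly infinitely near, points. Each blow-up decreases $K^2$ by one, so coming from $\PP^2$ requires exactly $9$ blow-ups and coming from $\F_n$ exactly $8$. The case of an $\F_n$-model can be reduced to the $\PP^2$-case by elementary transformations: $\F_1$ is itself $\PP^2$ blown up at a point, and for $n\geq 2$ one can use standard birational modifications to convert a blow-up of $\F_n$ in $8$ points into a blow-up of $\PP^2$ in $9$ points. Hence there is a birational morphism $\sigma\colon S\to\PP^2$ realising $S$ as $\PP^2$ blown up at nine (possibly infinitely near) points $p_1,\dots,p_9$.

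To finish, take the standard basis of $\NS(S)$ associated to $\sigma$: the hyperplane class $H=\sigma^*\OO_{\PP^2}(1)$ together with total transforms $E_1,\dots,E_9$ of the successive exceptional divisors, satisfying $H^2=1$, $H.E_i=0$, $E_i^2=-1$, and $E_i.E_j=0$ for $i\neq j$. Then $K_S=-3H+\sum E_i$, and hence $F=-K_S=3H-\sum E_i$. Under $\sigma$, a general member of $|F|$ pushes forward to a plane cubic passing through $p_1,\dots,p_9$ with the prescribed multiplicities, and as $F$ varies in $|F|$ we obtain a cubic pencil on $\PP^2$ with base locus $\{p_1,\dots,p_9\}$. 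Thus $S$ is isomorphic to $\PP^2$ blown up at the base points of this cubic pencil, and the elliptic fibration on $S$ is the strict transform of the pencil.

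The main obstacle is the reduction to a $\PP^2$-model, i.e.\ rewriting a blow-up of a higher Hirzebruch surface as a blow-up of $\PP^2$. A helpful input here is that every section $\bar P$ is a $(-1)$-curve by Cor.~\ref{Cor:P^2} with $\chi(S)=1$, so Castelnuovo contractions are in plentiful supply. A secondary point is checking that the pushed-forward linear system is genuinely a cubic pencil (rather than a fixed cubic times a line), which follows from $F^2=0$, $h^0(F)=2$, and the fact that $|F|$ is already base-point-free on $S$.
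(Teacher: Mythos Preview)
Your overall strategy matches the paper's, but there is a genuine gap at exactly the point you flag as ``the main obstacle.'' The assertion that ``for $n\geq 2$ one can use standard birational modifications to convert a blow-up of $\F_n$ in $8$ points into a blow-up of $\PP^2$ in $9$ points'' is \emph{false} in general. For instance, take $\F_{100}$ blown up at eight general points on the positive section $\sigma_+$. This is a smooth rational surface with $\rho=10$ and $K^2=0$, yet it carries the proper transform of $\sigma_-$, a smooth rational curve of self-intersection $-100$. A short computation (adjunction plus Cauchy--Schwarz on the multiplicities $m_1,\dots,m_9$) shows that no blow-up of $\PP^2$ at nine points, however special, can contain a smooth rational $(-100)$-curve. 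So no sequence of elementary transformations will produce a morphism to $\PP^2$ here. The plentiful supply of sections as $(-1)$-curves does not help: this example is not an elliptic surface, and that is precisely the point.

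What is missing is the one ingredient the paper singles out: on a rational elliptic surface $S$ one has $K_S=-F$ with $F$ nef, so adjunction gives $D^2=-2+F\cdot D\geq -2$ for every smooth rational curve $D$. This bound is what makes your reduction work. It forces any minimal model of $S$ to be $\PP^2$, $\F_0$, or $\F_2$ (since the proper transform of the negative section of $\F_n$ would have self-intersection $\leq -n$). In the $\F_2$ case it moreover forces all eight blown-up points to lie off $\sigma_-$, so a single elementary transformation at one of them lands on $\F_1$ and hence on $\PP^2$; the $\F_0$ case is similar. Once you insert this step, the rest of your argument (writing $F=3H-\sum E_i$ and pushing $|F|$ forward to a cubic pencil) is correct and agrees with the paper's sketch.
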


The proof is based on the consequence of the adjuction formula that any smooth rational curve $D$ on $S$ has self-intersection $D^2\geq -2$.
Blowing down $(-1)$-curves successively to a minimal model $X$ (in the sense of algebraic surfaces), 
one can show that $S$ also admits a morphism $\pi$ to $\PP^2$.
But then $\pi(K_S)=K_{\PP^2}\sim -3H$ for a hyperplane section $H$.
Since $F=-K_S$, the images of the fibres are plane cubic curves and generally smooth. (``The pencil descends from $S$ to $\PP^2$.'')

\subsection{Frame}

Even lattices have been classified to great extent. Here we investigate whether $\NS(S)$ is even for an elliptic surface -- or whether it admits meaningful even sublattices.

\smallskip

Recall the trivial lattice $T$ as a sublattice of $\NS(S)$. This lattice may vary greatly with the elliptic surface, but it always contains the  rank two lattice generated by zero section and general fibre: 
\[
 T\supseteq \langle \bar O, F\rangle = \begin{pmatrix}
                                        - \chi & 1\\ 1 & 0
                                       \end{pmatrix}
\]
Note that this sublattice $L$ is \textbf{even}, i.e.~$D^2\in 2\Z$ for all $D\in L$, if and only if $\chi(S)$ is even:
\[
 \langle \bar O, F\rangle = \begin{pmatrix}
                                        - \chi & 1\\ 1 & 0
                                       \end{pmatrix}
\cong \begin{cases}
       \langle 1\rangle \oplus \langle -1\rangle, & \text{ if $\chi(S)$ is odd};\\
U = \begin{pmatrix}
     0 & 1\\1 & 0
    \end{pmatrix}, &
\text{ if $\chi(S)$ is even}.
      \end{cases}
\]
The even lattice $U$ is often called the \textbf{hyperbolic plane}.
Thus $\NS(S)$ may not be even for an elliptic surface. However, $\NS(S)$ always contains an even sublattice of corank two:
the \textbf{frame} 
\[
 W(S)= \langle \bar O, F\rangle^\bot \subset \NS(S).
\]

\begin{Lemma}
\label{Lem:O,F_purp}
For any elliptic surface $S$ with section, the frame $W(S)$
is a negative-definite even lattice of rank $\rho(S)-2$. 
\end{Lemma}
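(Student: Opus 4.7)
The plan is to verify each of the three claims (rank, definiteness, evenness) in turn, using the structural results already developed: the intersection form on $\langle \bar O, F\rangle$, the Hodge index theorem, the canonical bundle formula, and adjunction.

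First I would pin down the rank. The intersection form of $\langle \bar O, F\rangle$ is
\[
\begin{pmatrix} \bar O^2 & 1 \\ 1 & 0 \end{pmatrix} = \begin{pmatrix} -\chi(S) & 1 \\ 1 & 0 \end{pmatrix},
\]
which has determinant $-1$, hence is unimodular of signature $(1,1)$. In particular $\langle \bar O, F\rangle$ is non-degenerate, so $\NS(S) \otimes \Q$ splits as $(\langle \bar O, F\rangle\otimes\Q) \oplus (W(S)\otimes\Q)$; this gives $\mathop{\rm rank} W(S) = \rho(S) - 2$.

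Next I would deduce that $W(S)$ is negative-definite. By the Hodge index theorem $\NS(S)$ has signature $(1, \rho(S)-1)$. Restricting to the orthogonal decomposition above, and since $\langle \bar O, F\rangle$ already accounts for signature $(1,1)$, the complementary summand $W(S)$ must have signature $(0, \rho(S)-2)$, i.e.\ is negative-definite.

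The key step, and the only non-formal one, is evenness. I would prove the stronger statement that $D^2 \equiv D.K_S \pmod 2$ for every $D \in \NS(S)$, and then conclude using the canonical bundle formula. Since the map $D \mapsto D^2 + D.K_S \bmod 2$ is additive on $\NS(S)$ (by bilinearity of intersection modulo $2$ with a fixed class), it suffices to check on a generating set; and $\NS(S)$ is generated by classes of irreducible curves. For an irreducible curve $D$, adjunction gives $D^2 + D.K_S = 2p_a(D) - 2 \in 2\Z$. Now for $D \in W(S)$, Theorem~\ref{Thm:can} yields $K_S \approx (2g(C)-2+\chi(S))\,F$, so $D.K_S = (2g(C)-2+\chi(S))(D.F) = 0$, whence $D^2 \in 2\Z$. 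This completes the proof.

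The most delicate point is the additivity/generation argument for evenness: one has to be careful that $\NS(S)$ really is spanned by irreducible curve classes (which follows from writing any divisor as a difference of effective divisors), and that adjunction is applied with the correct notion of arithmetic genus (an integer even for singular irreducible curves). Once those subtleties are acknowledged, the rest is formal.
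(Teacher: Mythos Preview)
Your proof is correct and follows essentially the same route as the paper: the Hodge index theorem together with the signature $(1,1)$ of $\langle \bar O, F\rangle$ gives rank and negative-definiteness, and the canonical bundle formula $K_S \approx cF$ combined with adjunction gives evenness. If anything, your evenness argument is more careful than the paper's, which writes $D^2 = 2g(D)-2$ directly for an arbitrary $D \in W(S)$; your reduction to irreducible curves via the additivity of $D \mapsto D^2 + D.K_S \bmod 2$ is the honest way to make that step rigorous.
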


\begin{proof}
By the algebraic index theorem, $\NS(S)$ has signature $(1,\rho(S)-1)$. But  $\langle \bar O, F\rangle$ is visibly indefinite, so its orthogonal complement has signature $(0,\rho(S)-2)$.

Letting $D\in  \langle \bar O, F\rangle^\bot$, we have $D.F=D.K_S=0$. The adjunction formula yields $D^2=2g(D)-2$.
\end{proof}

\subsection{}
\label{ss:ortho-frame}

Let $\varphi$ denote the orthogonal projection in $\NS(S)$ with respect to $U$
\[
 \varphi: \NS(S) \to W(S).
\]
Then the image of a section $P$ under $\varphi$ is
\[
 \varphi(\bar P) = \bar P - \bar O - (\chi(S) + \bar P.\bar O)\,F.
\]
In particular this element has self-intersection
\begin{eqnarray}\label{eq:height-ortho}
 \varphi(\bar P)^2 = - (2\,\chi(S) + 2 \bar P.\bar O).
\end{eqnarray}
This orthogonal projection should be compared to the definition of the Mordell-Weil lattices in \ref{ss:MWL-ortho};
the latter will be based on the orthogonal projection with respect to $T(S)$ in $\NS(S)\otimes\Q$.

\smallskip

As soon as $\chi(S)>1$, the relation (\ref{eq:height-ortho}) allows us to recover (the root lattices of) the singular fibres from the root lattice of the frame:
\[
 W(S)_{\mbox{root}} = \{x\in W(S);\; x^2=-2\}
\]

\begin{Lemma}
\label{Lem:frame-root}
Assume that $\chi(S)>1$. Then $\sum_{v\in R} T_v = W(S)_{\mbox{root}}$.
\end{Lemma}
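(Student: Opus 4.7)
The plan is to verify both inclusions separately. The inclusion $\sum_{v\in R}T_v \subseteq W(S)_{\mathrm{root}}$ is immediate: every generator $\Theta_{v,i}$ ($v\in R$, $i\geq 1$) is a smooth rational $(-2)$-curve which, as a non-zero component of a fibre, is orthogonal to both $\bar O$ and $F$, so it is a root of $W(S)$.

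For the reverse inclusion, I would take $x\in W(S)$ with $x^2=-2$ and set $P := \psi(x)\in E(K)$ via Thm~\ref{Thm:E-NS}, so that $x-\bar P\in T$. Writing the difference in the basis $\bar O, F, \Theta_{v,i}$ and solving for the $\bar O$- and $F$-coefficients from $x.F=x.\bar O=0$ (using $\bar O^2=-\chi$, Cor~\ref{Cor:P^2}) pins them down uniquely:
\[x \;=\; \bar P - \bar O - (\chi + \bar P.\bar O)\,F + \tilde m, \qquad \tilde m\in\bigoplus_{v\in R}T_v,\]
and $x^2=-2$ becomes $-2\chi - 2\bar P.\bar O + 2\bar P.\tilde m + \tilde m^2 = -2$. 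The case $P=O$ is easy: it collapses to $x=\tilde m$ with $\tilde m^2=-2$, exhibiting $x$ as a root of $\sum T_v$. Assume now $P\neq O$, and introduce $\tau_P\in\bigoplus T_v\otimes\Q$ characterised by $\tau_P.\Theta_{v,i}=\bar P.\Theta_{v,i}$ for all $i\geq 1$. Since a section meets each fibre at a single simple component $\Theta_{v,k(v)}$, concretely $\tau_P=\sum_{v\in R,\,k(v)\geq 1}\Theta_{v,k(v)}^*$, a sum of dual basis elements (fundamental weights) of the root lattices $T_v$. A short computation using $\bar P.\tilde m=\tau_P.\tilde m$ and $\bar P.\tau_P=\tau_P^2$ rewrites the constraint as
\[(\tilde m + \tau_P)^2 \;=\; \tau_P^2 + 2\chi + 2\bar P.\bar O - 2.\]

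The decisive step, and the place where the hypothesis $\chi>1$ must enter, is the claim that $\tau_P$ is the shortest representative of its coset in $(\bigoplus T_v)^*/\bigoplus T_v$. This is the main obstacle: it rests on the observation that for each Kodaira fibre type, the dual basis element $\Theta_{v,k(v)}^*$ attached to a simple (multiplicity-one) non-zero component is a minuscule fundamental weight of the ADE root lattice $T_v$, and minuscule weights realise the shortest vectors of their discriminant-group cosets. I would verify this by a brief case-by-case inspection of the Kodaira fibres (for $\I_n$ all non-zero components are minuscule in $A_{n-1}$; for $\I_n^*$ the three non-zero simple corners give the vector and half-spinor weights of $D_{n+4}$; for $\IV^*,\III^*$ they give the $27$-dimensional or $56$-dimensional minuscule weight of $E_6,E_7$). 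Granting this, $(\tilde m+\tau_P)^2\leq\tau_P^2$, forcing $2\chi + 2\bar P.\bar O\leq 2$. Since $\bar P.\bar O\geq 0$ for $P\neq O$ (distinct effective sections meet non-negatively), we conclude $\chi\leq 1$, contradicting $\chi>1$. Hence $P=O$, so $x\in\sum T_v$, as claimed.
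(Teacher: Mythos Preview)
Your proof is correct. The paper itself only sketches the argument by pointing to relation~(\ref{eq:height-ortho}), namely $\varphi(\bar P)^2=-(2\chi+2\bar P.\bar O)$, and leaves the details to the reader; you start from exactly this decomposition $x=\varphi(\bar P)+\tilde m$ and carry it through. The substantive step you add---that the dual basis vector $\Theta_{v,k(v)}^*$ attached to a simple non-zero component is a minuscule fundamental weight, hence a shortest coset representative---is precisely the ingredient needed to force $(\tilde m+\tau_P)^2\leq\tau_P^2$ and close the argument. Your case check over the Kodaira types is accurate (for $\II^*$ there is no non-zero simple component, so nothing to check).

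A remark on alternatives: a more geometric route, also compatible with the paper's spirit, is via Riemann--Roch. For $x\in W(S)$ with $x^2=-2$ one has $x.K_S=0$, so $h^0(x)+h^0(K_S-x)\geq\chi-1\geq 1$. If $x$ is effective then $x.F=0$ forces $x$ to be vertical and $x.\bar O=0$ forces all coefficients of zero components to vanish, whence $x\in\bigoplus T_v$. If instead $K_S-x$ is effective, it is again vertical; writing it as $cF+m'$ with $m'\in\bigoplus T_v$ and intersecting with $\bar O$ gives $c=\chi-2$, so $x=-m'\in\bigoplus T_v$. This avoids the minuscule-weight check at the cost of invoking Riemann--Roch. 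Both approaches are standard; yours has the merit of being purely lattice-theoretic once the decomposition is in hand.
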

In particular this shows that if $\chi(S)>1$, then the trivial lattice can be computed as 
\[
 T(S) = U  \oplus  W(S)_{\mbox{root}}.
\]
On a rational elliptic surface (i.e.~$\chi(S)=1$), the frame behaves completely differently as we will see in the next section.

\subsection{N\'eron-Severi lattice of a rational elliptic surface}
\label{ss:MWL_RES}

Since a rational elliptic surface $S$ is isomorphic to $\PP^2$ blown up in nine points, $\NS(S)$ has rank ten and discriminant $-1$. The latter property is called \textbf{unimodular}. Inside $\NS(S)$, we have the indefinite lattice generated by zero section and general fibre and its orthogonal complement, the frame $W(S)= \langle \bar O, F\rangle^\bot$. By definition, both sublattices are unimodular.
Moreover, $W(S)$ is even and negative definite by Lemma \ref{Lem:O,F_purp}.
It follows from lattice theory that there is only one such lattice up to isometry.
This lattice corresponds to the Dynkin diagram $E_8$.
Hence there is an isomorphism
\[
W(S) = \langle \bar O, F\rangle^\bot \cong E_8.
\]
We point out the special case where $\bar O$ and $F$ already generate the trivial lattice:
\begin{Corollary}
Let $S$ be a rational elliptic surface without reducible fibres. Then the orthogonal projection in $\NS(S)$ with respect to $\bar O, F$ induces an isomorphism
\[
 E(K) \cong E_8
\]
\end{Corollary}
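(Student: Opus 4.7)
The plan is to assemble the corollary from ingredients already in place: the Shioda--Tate formula (Cor.~\ref{Cor:ST}), the identification $E(K)\cong \NS(S)/T$ (Thm.~\ref{Thm:E-NS}), the identification $W(S)\cong E_8$ just established, and a standard lattice-splitting argument exploiting unimodularity of the relevant sublattice.

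First I would unpack what the hypotheses give. Since $S$ has no reducible fibres, the set $R$ in (\ref{eq:Triv}) is empty, so the trivial lattice collapses to $T=\langle \bar O,F\rangle$. Because $S$ is rational we have $\chi(S)=1$, so by Cor.~\ref{Cor:P^2} the Gram matrix of $T$ is $\left(\begin{smallmatrix}-1 & 1\\ 1 & 0\end{smallmatrix}\right)$, which has determinant $-1$; thus $T$ is unimodular of signature $(1,1)$. Combined with Thm.~\ref{Thm:E-NS} and Cor.~\ref{Cor:T-bar} (the latter forces $E(K)_{\mathrm{tors}}=0$ since $T$ is already primitively embedded), we obtain a torsion-free isomorphism of abelian groups $E(K)\cong \NS(S)/T$.

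Next I would invoke the lattice-theoretic fact that any unimodular primitive sublattice $L\subseteq M$ of a non-degenerate integral lattice splits off as an orthogonal direct summand: $M=L\oplus L^{\perp}$. The verification is standard — the dual basis of $L$ lives in $L$ itself by unimodularity, so for every $m\in M$ the element $m-\sum_i (m.e_i^*)\,e_i$ is integral and lies in $L^\perp$. Applying this to $T\subseteq \NS(S)$ yields the orthogonal decomposition $\NS(S)=T\oplus W(S)$. Consequently the orthogonal projection $\varphi:\NS(S)\to W(S)$ is a well-defined surjective group homomorphism with kernel exactly $T$, inducing a group isomorphism $\NS(S)/T\xrightarrow{\sim} W(S)$. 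Explicitly, on a section this projection is given by the formula in \ref{ss:ortho-frame}, namely $\varphi(\bar P)=\bar P-\bar O-(1+\bar P.\bar O)F$.

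Chaining these isomorphisms yields
\[
E(K)\;\xrightarrow{\;\bar P\mapsto\bar P\bmod T\;}\;\NS(S)/T\;\xrightarrow{\;\varphi\;}\;W(S)\;\cong\; E_8,
\]
where the last isomorphism is the classification statement of $E_8$ as the unique even negative-definite unimodular lattice of rank $8$, which was used in the displayed identification of $W(S)$ preceding the corollary. The main (and really only) obstacle is the orthogonal splitting $\NS(S)=T\oplus W(S)$; it is essential here that no reducible fibres appear, for otherwise $T$ would pick up additional root summands and could fail to be unimodular, so that $\varphi$ landed only in $W(S)\otimes\Q$ and torsion in $E(K)$ could obstruct surjectivity onto $W(S)$.
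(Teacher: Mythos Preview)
Your proof is correct and follows essentially the same approach as the paper, which treats the corollary as an immediate consequence of the preceding identification $W(S)\cong E_8$: once one observes that the hypothesis of no reducible fibres collapses the trivial lattice to $T=\langle\bar O,F\rangle$, which is unimodular, the orthogonal splitting $\NS(S)=T\oplus W(S)$ and Thm.~\ref{Thm:E-NS} give the result. You have simply made the splitting argument and the vanishing of torsion explicit.
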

For a general cubic pencil, the corollary tells us that the nine (disctinct) base points induce eight independent sections after choosing one of them as the zero section.

In the general situation where all fibres are irreducible, one can thus endow $E(K)$ with a lattice structure. 
Up to torsion, this approach can be pursued in full generality, 
leading to the notion of Mordell-Weil lattices which we introduce in section \ref{s:MWL}.

\section{Extremal rational elliptic surfaces}
\label{s:RES-extr}

Before continuing our investigation of rational elliptic surfaces, we turn to an important special class among these surfaces in detail -- extremal elliptic surfaces.
Their relevance lies in several different aspects such as classification questions and arithmetic properties.

\subsection{Maximal Picard number}

We first introduce a more general notion that governs extremality of elliptic surfaces.
Recall the bounds for the Picard number by Lefschetz and Igusa from \ref{ss:Pic_number}
\[
\rho(S) \leq h^{1,1}(S)\;\;\; \text{ resp. } \;\;\;
\rho(S) \leq b_2(S).
\]
If the respective bound is attained, we refer to the Picard number as \textbf{maximal}.
For elliptic surfaces,
one can see that both bounds are attained infinitely often by (purely inseprable in characteristic $p$) base change from rational elliptic surfaces.
For rational elliptic surfaces, both bounds coincide and are always attained.

\subsection{Extremal elliptic surfaces}
\label{ss:extremal}

Elliptic surfaces with maximal Picard number can be distinguished according to the contribution from the Mordell-Weil group. 
A  special case occurs when the trivial lattice generates the N\'eron-Severi lattice up to finite index.

\begin{Definition}
An elliptic surface $S$ with section is called \textbf{extremal} if $\rho(S)$ is maximal and $E(K)$ is finite.
\end{Definition}

One should think of extremal elliptic surfaces as very isolated and rare. 
Over the complex numbers, this holds with the exception of a few isotrivial families (cf.~\ref{ss:isotrivial}), but generally speaking there are quite a few examples. 
In postive characteristic, however, it follows from Thm.~\ref{Thm:PS} that essentially all extremal elliptic surfaces arise through inseparable base change.

\smallskip

By Thm.~\ref{Thm:E-NS}, the index of the trivial lattice $T(S)$ inside $\NS(S)$ is exactly the size of the Mordell-Weil group of the extremal elliptic surface $S$:
\begin{eqnarray}\label{eq:T-index}
 \mbox{disc}(T(S)) = (\# E(K))^2 \, \mbox{disc}(\NS(S)).
\end{eqnarray}

Often extremal elliptic surfaces are related to subgroups of $\mbox{SL}_2(\Z)$.
We have seen instances of this in Section \ref{s:torsion}, see in particular \ref{ss:EMS}.

\subsection{Extremal rational elliptic surfaces}

A rational elliptic surface $S$ has unimodular N\'eron-Severi group. If $S$ is extremal, (\ref{eq:T-index}) thus simplifies as
\[
 \mbox{disc}(T(S)) = -(\# E(K))^2
\]
Since the discriminants of the full trivial lattice $T(S)$ and the sublattice of the frame $W(S)$ which is the orthogonal complement of the unimodular sublattice generated by $\bar O, F$ agree up to sign, we obtain the following information about the singular fibres:
\begin{eqnarray}\label{eq:RES-extr}
\prod_{v\in R} \mbox{disc}(T_v) = (\# E(K))^2.
\end{eqnarray}
Here we employ the convention that the irreducible fibres have discriminant $1$.

\smallskip

On the other hand, we can compare the Euler number from Thm.~\ref{Thm:Euler_number}
\[
 12 = e(S) = \sum_v (e(F_v)+\delta_v)
\]
against the rank of the trivial lattice from Prop.~\ref{Prop:triv_lattice}:
\[
 10 = \mbox{rank}(T(S)) = 2 + \sum_v (m_v-1).
\]
Subtracting yields:
\begin{eqnarray}\label{eq:e(F)}
 4 = \sum_v \begin{cases}
            0, & \text{ if $F_v$ is smooth};\\
1, & \text{ if $F_v$ is multiplicative};\\
2+\delta_v, & \text{ if $F_v$ is additive}.
           \end{cases}
\end{eqnarray}
In particular, we deduce that there are at most four singular fibres.
If there is no wild ramification (e.g.~if char$(k)\neq 2,3)$, there are in fact exactly three cases:
\begin{enumerate}
 \item 4 multiplicative singular fibres;
\item 2 multiplicative and 1 additive singular fibres;
\item
2 additive singular fibres.
\end{enumerate}
For either case, we have seen examples in \ref{ss:3-tor} -- \ref{ss:univ-2}.
In the presence of wild ramification, all  combinations of fewer singular fibres (involving at least one additive fibre) occur as well.

\subsection{Classification}

One classifies extremal rational elliptic surfaces in terms of the configurations of singular fibres which we write as tuples 
\[
[n_1, n_2,\hdots]\;\;
\text{ with entries }\;\;
1,2,\hdots, 0^*, 1^*, \dots, \II, \III, \hdots, \II^*
\]
representing the fibre types.

\subsubsection{Existence}

Relation (\ref{eq:e(F)}) yields a finite list of possible configurations of singular fibres.
One can rule out many of these configurations already by (\ref{eq:RES-extr}).
Additionally we only need the injection from Lemma \ref{Lem:torsion-add}.
To prove the existence of the remaining configurations, one has to find explicit equations.
This can be accomplished with Weierstrass equations and base change \cite{MP1}.
It seems that the first semi-stable example was exhibited by the Hesse pencil in \cite{ShEMS}, realising the elliptic modular surface for $\Gamma(3)$.
The first complete list of semi-stable surfaces is due to Beauville using cubic pencils \cite{B}.
(For the close connection between extremal elliptic surfaces and the abc-conjecture, see \ref{ss:dessin} and \cite{Sh-abc}.)


\subsubsection{Mordell-Weil group}

In each case, the configuration determines the Mordell-Weil group.
One can use quotients by (translation by) the torsion sections to limit the possible orders.
It turns out that there is always only one abelian group of order given by the singular fibres respecting these bounds.
(This is not valid  in general; for instance, it fails for extremal elliptic K3 surfaces -- and so does the uniqueness.)

\subsubsection{Uniqueness}

To prove uniqueness, one can often argue with universal elliptic curves.
Here we obviously have to exclude the one-dimensional family with configuration $[0^*, 0^*]$.
A helpful fact is also that the quotient by a torsion section is an isogenous surface with a section of the same order.
On the other hand, upon deriving explicit Weierstrass equations, uniqueness up to M\"obius transformations and admissible transformation of $x$ and $y$ is always more or less immediate.

\begin{Theorem}[Miranda--Persson {\cite{MP1}}]
For any possible configuration except for $[0^*, 0^*]$, there exists a unique extremal rational elliptic surface with pre-determined Mordell-Weil group.
\end{Theorem}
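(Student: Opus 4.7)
The plan is to prove the theorem in four stages: enumerate the admissible configurations, establish existence, pin down the Mordell-Weil group, and then deduce uniqueness.

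First I would use the two numerical constraints established in the preceding discussion to compile a finite list of candidate fibre configurations. Relation (\ref{eq:e(F)}) forces the total weighted Euler contribution to equal $4$, which already cuts the search to finitely many tuples $[n_1,n_2,\ldots]$. Next I would impose (\ref{eq:RES-extr}): since $S$ is rational, $\prod_{v\in R}\mathrm{disc}(T_v)$ must be a perfect square, which eliminates several a priori possibilities (such as a lone $\I_n$ with $n$ not a square, once combined with a permissible partner). Finally, Lemma \ref{Lem:torsion-add} forces the prime-to-$p$ torsion to embed into every additive fibre's component group $G(F_v)$ from Lemma \ref{Lem:group_sing}, killing further cases where the fibre types are incompatible (e.g.\ excluding $\II$ and $\II^*$ from contributing torsion at all).

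For existence I would split into the semi-stable and mixed/additive cases. In the semi-stable case the cleanest construction is via cubic pencils in $\PP^2$: specific choices of two cubics (following Beauville) produce each of the admissible four-multiplicative configurations, and the associated elliptic surface inherits the prescribed $E(K)$ from the structure of the base points. For configurations containing additive fibres I would appeal to \ref{ss:ex_normal} and \ref{ss:ex_quad}: start from the normal form for $j$ and apply a suitable base change of $\PP^1$ tailored to the desired ramification over $\{0,12^3,\infty\}$, followed if necessary by quadratic twists to match the required fibre types via (\ref{eq:twist-fibre}); Table \ref{Tab:ram} tells me exactly which base change degrees produce which additive fibres. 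Sections with prescribed torsion order are then supplied by the universal families discussed in \ref{ss:univ-tor} and \ref{ss:univ-2} (Tate's normal form), after specialising the parameter.

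For the Mordell-Weil group I would combine Cor.~\ref{Cor:ST} with (\ref{eq:T-index}): since $\rho(S)=10$ and the trivial lattice already has rank $10$ (being orthogonal to no infinite section by maximality of $\rho$ and finiteness of $E(K)$), $E(K)$ is pure torsion with $\#E(K) = \sqrt{|\mathrm{disc}(T(S))|}$. Cor.~\ref{Cor:tor-phi} together with Lemma \ref{Lem:group_sing} then restricts $E(K)$ to a subgroup of the product of the component groups of prescribed order, and in each case a direct check shows that only one abelian group of that order actually fits. To uniqueness: for configurations containing enough torsion, invoke the universal property of $Y_1(N)$ from \ref{ss:univ-tor} -- the surface must arise by base change from $Y_1(N)$, and the configuration dictates the base change uniquely (the excluded $[0^*,0^*]$ is precisely the one-parameter family of quadratic twists of a fixed $E$ over $\PP^1$, cf.~\ref{ss:univ-2}). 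In the remaining cases I would write down an explicit globally minimal generalised Weierstrass form from \ref{ss:IP^1}, use the positions of the singular fibres together with M\"obius transformations of $\PP^1$ to normalise them to $\{0,1,\infty\}$ or $\{0,\infty\}$, and then use admissible transformations (\ref{eq:scale}) to reduce the remaining coefficients to a unique normal form.

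The main obstacle will be the uniqueness step in characteristics $2$ and $3$, where wild ramification allows configurations with fewer singular fibres than (\ref{eq:e(F)}) would predict in characteristic zero, and where the j-map can become inseparable. Here Thm.~\ref{Thm:isom-ss} (the semi-stable rigidity via the discriminant divisor) does not apply directly, and one must instead argue case by case through Tate's algorithm to verify that the minimal Weierstrass form is determined by the fibre configuration and $E(K)_{\mathrm{tors}}$ up to admissible change of variables; the peculiar $p$-torsion phenomenon noted in Remark \ref{Rem:pec} has to be controlled separately so that the torsion sections still meet distinct components as predicted by Cor.~\ref{Cor:tor-phi}.
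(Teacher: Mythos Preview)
Your proposal is correct and follows essentially the same four-part strategy that the paper sketches: enumerate via (\ref{eq:e(F)}) and prune by (\ref{eq:RES-extr}) and Lemma~\ref{Lem:torsion-add}; construct explicit Weierstrass models (Beauville's cubic pencils in the semi-stable case, base change and twisting in the additive cases); determine $E(K)$ from the order $\sqrt{|\mathrm{disc}(T(S))|}$ together with the component-group constraints; and deduce uniqueness via universal elliptic curves or direct normalisation of the Weierstrass form. The one place where the paper's emphasis differs slightly is in pinning down the Mordell-Weil group: rather than relying solely on Cor.~\ref{Cor:tor-phi}, the paper stresses the use of the quotient by a torsion section (\ref{ss:quot-sect}), which gives an isogenous rational elliptic surface and thereby bounds the torsion more sharply in ambiguous cases (e.g.\ distinguishing $\Z/4$ from $(\Z/2)^2$); you may want to add this tool explicitly, since the embedding into $\prod_v G(F_v)$ alone does not always resolve the group structure.
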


In the sequel, we give the possible configurations without wild ramification and elaborate on a few of the details of the proof of the theorem.
The classification in characteristic $2$ or $3$ involving wild ramification
has been established by W.~Lang in \cite{Lang-I}, \cite{Lang-II}, but unfortunately there are some typos in the equations, so the interested reader is advised to compare with \cite{Ito} for instance.

\subsection{Case (1)}

In the multiplicative case, the criterion in (\ref{eq:RES-extr}) suffices to rule out all non-existing configurations. 
The next table lists the existing configurations.
We also give the Mordell-Weil group and describe the degeneration of the singular fibres in special characteristics (cf.~\ref{ss:univ-tor}).

$$
\begin{array}{cccc}
 \text{configuration} & MW & \mbox{char}(k) & \text{degeneration}\\
\hline
[1,1,1,9] & \Z/3 & \neq 3 & [\II,9]\\

[1,1,2,8] & \Z/4 & \neq 2 & [\III,8]\\

[1,2,3,6] & \Z/6 & \neq \begin{cases} 2\\3
                         \end{cases} &
\begin{matrix}[2,\IV,6]\\
 [\III,3,6]
\end{matrix}\\

[1,1,5,5] & \Z/5 & \neq 5 & [\II,5,5]\\

[2,2,4,4] & \Z/4\times \Z/2 & \neq 2 & --\\

[3,3,3,3] & (\Z/3)^2 & \neq 3 & --
\end{array}
$$

\subsection{Case (2)}

In the mixed case of additive and multiplicative fibres, Lemma~\ref{Lem:torsion-add} becomes important.
For instance, that lemma directly shows that the exceptional entries in the previous table are impossible outside the given characteristic.
Hence we shall not reproduce them in the following list.
For shortness, it only includes those configuration that exist in characteristic zero.
By reduction, the same holds outside chararacteristics $2,3$.
In the remaining characteristics, the existence of wild ramification complicates the classification.

$$
\begin{array}{cc}
 \text{configuration} & MW\\
\hline
[1,1,\II^*] & \{0\}\\

[1,2,\III^*] & \Z/2\\

[1,3,\IV^*] & \Z/3
\end{array}
\;\;\;\;\;\;
\begin{array}{cc}
 \text{configuration} & MW\\
\hline
[1,1,4^*] & \Z/2\\

[1,4,1^*] & \Z/4\\

[2,2,2^*] & (\Z/2)^2
\end{array}
$$

\subsection{Case (3)}

The additive configurations are treated exactly along the same lines. Outside characteristics $2,3$, we obtain

$$\begin{array}{cc}
 \text{configuration} & MW\\
\hline
[\II,\II^*] & \{0\}\\

[\III,\III^*] & \Z/2
\end{array}
\;\;\;\;\;\;
\begin{array}{cc}
 \text{configuration} & MW\\
\hline
[\IV,\IV^*] & \Z/3\\

[0^*, 0^*] & (\Z/2)^2
\end{array}
$$

\subsection{Isotrivial elliptic surfaces}
\label{ss:isotrivial}

All the rational elliptic surfaces in case (3) have constant j-invariant. 
Such fibrations are called \textbf{isotrivial}.
The name has its origin in the property that after a suitable finite base change, the surface becomes isomorphic to a product of the new base curve and the (constant) general fibre.

Among elliptic fibrations, the isotrivial ones play a special role.
We have seen in the discussion of universal elliptic curves that isotrivial fibrations can come in families even if it is unexpected.

In characteristic zero,
extremal elliptic surfaces over $\PP^1$ 
are known to have discrete moduli unless $e\leq 60$.
The proof combines the infinitesimal Torelli theorem for elliptic surfaces with non-constant j-invariant by M.-H.~Sait\=o \cite{Saito} with the classification of all extremal elliptic surfaces with constant j-invariant by Kloosterman \cite{Kl}.
The exceptional Euler numbers are due to families of isotrivial elliptic surfaces with j-invariant $0$ or $12^3$.

\section{Semi-stable elliptic surfaces}
\label{s:s-s}

We have classified all extremal rational elliptic surfaces in terms of the possible configurations of singular fibres.
Now we would like to know all possible configurations in general.
Letting go the condition extremal, there will clearly be no uniqueness anymore.

We will develop part of the theory in greater generality.
For simplicity, we shall only consider semi-stable elliptic surfaces (i.e.~without additive fibres).
In case of additive fibres, the arguments have to be modified slightly.
Moreover, we will restrict ourselves to the ground field $k=\C$. 
This will enable us to use monodromy arguments and deformations.
In positive characteristic, there are further issues with inseparable base change and wild ramification that need to be addressed,
but the results do not differ essentially.

We note that the first investigation of this kind seems to be due to U.~Schmickler-Hirezebruch
who classified elliptic surfaces over $\PP^1$ with section and at most three singular fibres \cite{S-H}.
Such a configuration forces some fibre to be additive, so these elliptic surfaces will not appear in this section
as we restrict to the semi-stable case.

\subsection{J-map of a semi-stable elliptic surface}

For a semi-stable elliptic surface $S$ over $C$, the j-map
\[
 j:\;\; C\to \PP^1
\]
is a morphism of degree $e(S)=12\chi(S)$.
By definition, $j$ is branched above $0$ and $12^3$ with ramification orders divisible by $3$ and $2$, respectively.
For simplicity, we will often apply a normalisation so that the third ramification point is $1$ instead of $12^3$.
According to \ref{ss:base_change} this guarantees that in the pull-back of the normal form surface via $j$, the additive fibres $\II$ and $\III^*$ are replaced by
$4\chi$ resp.~$6\chi$ fibres of type $\I_0^*$.
Then these can be eliminated by quadratic twisting to recover the semi-stable fibration $S\to C$.
Instead of the fibre $\I_1$ at $\infty$, $S$ acquires semi-stable fibres of types $[n_1,\hdots, n_s]$
where the $n_i$ denote the ramification indices at $\infty$.

If $C=\PP^1$, then we can phrase this through the following factorisation of the discriminant:
\[
 \Delta = \prod_{i=1}^s\; (t-\alpha_i)^{n_i}\;\;\; \text{ with pairwise distinct } \alpha_i.
\]

\subsection{Monodromy representation}
\label{ss:mono}

Let $\pi: \PP^1\to \PP^1$ be a map of degree $n$ with ramification points $0, 1, \infty$ (possibly among others). It is a general fact that any such $\pi$ is determined by its monodromy representation
\[
 (\sigma_0, \sigma_1, \sigma_\infty, \tau_1,\hdots,\tau_r).
\]
Here $\sigma_i, \tau_j\in S_n$ encode the monodromy around the branch points such that
\begin{itemize}
 \item 
$\sigma_0\circ\sigma_1\circ\sigma_\infty\circ\tau_1\circ\cdots\circ\tau_r=\mbox{id}$, and
\item
the $\sigma_i, \tau_j$ generate a transitive subgroup of $S_n$.
\end{itemize}

In the case of a semi-stable elliptic surface over $\PP^1$ with arithmetic genus $\chi$, we can specify the $\sigma_i$ as follows:
\begin{itemize}
 \item 
$\sigma_0$ is the product of $4\chi$ 3-cycles,
\item
$\sigma_1$ is the product of $6\chi$ 2-cycles, and
\item
$\sigma_\infty$ is the product of $s$ cycles $\gamma_i$ of lengths $n_i$.
\end{itemize}

\subsection{Deformation of j-map}
\label{ss:deformation}

In the sequel, we shall say that a tuple $[n_1,\hdots,n_s]$ exists if there is an elliptic surface over $\PP^1$ with exactly this configuration of singular fibres.
Over $\C$, we have the following deformation argument at hand:

\begin{Lemma}
\label{Lem:ex_conf}
 Let $[n_1,\hdots,n_s]$ exist. Let $i\leq s$ and $a,b\geq 1$ such that $a+b=n_i$. Then $[n_1,\hdots,n_{i-1},a,b,n_{i+1}\hdots,n_s]$ exists.
\end{Lemma}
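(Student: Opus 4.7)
The plan is to prove Lemma~\ref{Lem:ex_conf} by modifying the monodromy data of the j-map combinatorially, within the framework of \ref{ss:mono}. By hypothesis, there is a semi-stable elliptic surface over $\PP^1$ realising $[n_1,\ldots,n_s]$; after the normalisation at the start of this section, this yields a j-map $j : \PP^1 \to \PP^1$ of degree $12\chi$ with monodromy tuple $(\sigma_0, \sigma_1, \sigma_\infty, \tau_1,\ldots,\tau_r)$, where $\sigma_\infty$ decomposes as a product $\gamma_1 \cdots \gamma_s$ of disjoint cycles with $\gamma_k$ of length $n_k$.

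The decisive step is combinatorial. Writing $\gamma_i = (x_1,\ldots,x_{n_i})$ and setting $\tau = (x_a,x_{n_i})$, one checks directly that
$$\gamma_i \cdot \tau = (x_1,\ldots,x_a)(x_{a+1},\ldots,x_{n_i}),$$
so that $\sigma_\infty^{\mathrm{new}} := \sigma_\infty \cdot \tau$ has cycle type $[n_1,\ldots,n_{i-1},a,b,n_{i+1},\ldots,n_s]$. The tuple $(\sigma_0, \sigma_1, \sigma_\infty^{\mathrm{new}}, \tau, \tau_1,\ldots,\tau_r)$ then still satisfies the product relation
$$\sigma_0 \sigma_1 \sigma_\infty^{\mathrm{new}} \tau \tau_1 \cdots \tau_r = \sigma_0 \sigma_1 \sigma_\infty \tau^2 \tau_1 \cdots \tau_r = \mathrm{id},$$
and it generates the same transitive subgroup as the original tuple (since $\sigma_\infty = \sigma_\infty^{\mathrm{new}} \tau$ and conversely).

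By the Riemann existence theorem (this is where $k=\C$ enters decisively), the new monodromy data is realised by a morphism $j' : \PP^1 \to \PP^1$ of degree $12\chi$ branched over $0, 1, \infty$ with the original ramification profile, plus one extra generic point $t_0$ with transposition monodromy. Pulling back the normal form via $j'$ produces $4\chi + 6\chi = 10\chi$ fibres of type $\I_0^*$ above the preimages of $0$ and $1$, fibres $\I_a$ and $\I_b$ at the two split preimages of $\infty$ together with the unchanged $\I_{n_k}$ for $k \neq i$, and a smooth fibre above $t_0$ (Table~\ref{Tab:ram} governs only how singular fibres of the normal form transform, and the normal form is smooth at $t_0$). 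Finally, a quadratic twist at the $10\chi$ places supporting $\I_0^*$---an even number, as required for twisting, cf.~\ref{ss:ex_quad}---converts them into smooth fibres and yields the sought semi-stable elliptic surface with configuration $[n_1,\ldots,n_{i-1},a,b,n_{i+1},\ldots,n_s]$. The main obstacle is pinpointing the correct transposition $\tau$ so that the combinatorial identity simultaneously splits $\gamma_i$ as desired and preserves the product relation; once this is observed, everything else is routine given the machinery already developed.
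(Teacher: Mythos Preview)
Your proof is correct and follows essentially the same approach as the paper: both arguments split the cycle $\gamma_i$ by multiplying with the transposition $(x_a,x_{n_i})$, verify the product-one and transitivity conditions for the enlarged monodromy tuple, and then appeal to Riemann existence to produce the new $j$-map. Your write-up is simply more explicit about the pullback-and-twist step that the paper delegates to the earlier discussion in \ref{ss:mono} and the preceding paragraph. One small overstatement: the new tuple need not generate \emph{the same} subgroup as the old one (nothing forces $\tau$ to lie in the original group), but it certainly contains the old group and is therefore transitive, which is all you need.
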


Assume that $[n_1,\hdots,n_s]$ exists. 
The proof of the lemma builds on the fact that
\[
 ((1,\hdots,a)\,(a+1,\hdots,n))\,(a,n) = (1,\hdots,n).
\]
Hence we can write the $n_i$-cycle $\gamma_i$ as the product of two disjoint cycles $\gamma', \gamma''$ of lengths $a$ resp.~$b$ times a 2-cycle exchanging the ``end points'' of $\gamma'$ and $\gamma''$.
This extra 2-cycle accounts for an additional branch point.

It is easily checked that the resulting monodromy representation has product equalling the identity and generates a transitive group.
By \ref{ss:mono} there is a map $j: \PP^1\to \PP^1$ with the claimed ramification indices at $\infty$.


\subsection{Semi-stable rational elliptic surfaces}

Lemma \ref{Lem:ex_conf} is very useful to prove the existence of elliptic surfaces with given configuration of singular fibres. 
Starting with the six semi-stable extremal rational elliptic surfaces, we obtain through the deformation lemma:

\begin{Corollary}
\label{Cor:5-tuples}
For rational elliptic surfaces, the following 5-tuples exist
$$
\begin{array}{ccccc}
 [1,1,1,1,8], & [1,1,1,2,7], & [1,1,1,3,6], & [1,1,2,2,6], & [1,1,1,4,5],\\

[1,1,2,3,5], & [1,1,2,4,4], & [1,2,2,3,4], & [2,2,2,2,4], & [1,2,3,3,3].
\end{array}
$$
\end{Corollary}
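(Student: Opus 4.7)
The plan is to deduce each of the ten 5-tuples as a single application of Lemma \ref{Lem:ex_conf} to one of the six semi-stable extremal rational configurations listed in Case (1). The first thing I would check is that the deformation provided by the lemma preserves the class of semi-stable rational elliptic surfaces. Indeed, replacing an entry $n_i$ by $a+b = n_i$ leaves $\sum_i n_i$ unchanged, and in the semi-stable setting over $\PP^1$ we have $\sum_i n_i = e(S) = 12\,\chi(S)$ by Theorem \ref{Thm:Euler_number} and Corollary \ref{cor:ag}; so starting from $\chi=1$ we land back in $\chi=1$. Moreover, the deformation simply replaces a multiplicative fibre $\I_{n_i}$ by two multiplicative fibres $\I_a$ and $\I_b$, so semi-stability persists.

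With this preservation in hand, the rest is pure bookkeeping. From the extremal configuration $[1,1,1,9]$ the four splittings $9 = 1+8,\,2+7,\,3+6,\,4+5$ yield $[1,1,1,1,8]$, $[1,1,1,2,7]$, $[1,1,1,3,6]$, $[1,1,1,4,5]$. From $[1,1,2,8]$, the splittings $8 = 2+6,\,3+5,\,4+4$ give $[1,1,2,2,6]$, $[1,1,2,3,5]$, $[1,1,2,4,4]$. From $[1,2,3,6]$, the splittings $6 = 2+4$ and $6 = 3+3$ produce $[1,2,2,3,4]$ and $[1,2,3,3,3]$ respectively. Finally, splitting one of the two $4$'s in $[2,2,4,4]$ as $2+2$ yields $[2,2,2,2,4]$. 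All ten tuples in the statement are thereby accounted for, and Lemma \ref{Lem:ex_conf} supplies the required rational elliptic surface in each case.

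There is no genuine obstacle: the argument reduces to the arithmetic matching above together with the invariance of $\sum_i n_i$ under the deformation. The only step that requires any thought is confirming that each listed 5-tuple has at least one ancestor among the six extremal 4-tuples (as opposed to, say, being reachable only from a non-extremal 4-tuple, which would force an inductive setup). A more substantive difficulty would arise only if one wished to iterate the deformation further (to $6$- or $7$-tuples), where some configurations might fail to have an extremal ancestor and existence at the prior step would first have to be established; at the level of $5$-tuples the extremal list is sufficient.
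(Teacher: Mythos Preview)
Your proposal is correct and follows exactly the approach the paper intends: the paper states that the corollary is obtained ``starting with the six semi-stable extremal rational elliptic surfaces'' and applying the deformation Lemma~\ref{Lem:ex_conf}, which is precisely what you carry out. Your explicit bookkeeping of which extremal $4$-tuple each $5$-tuple descends from, and your remark that the deformation preserves $\sum n_i = e(S) = 12$ (hence $\chi=1$), simply make explicit what the paper leaves to the reader.
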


Note that the tuple need not determine the torsion subgroup of the Mordell-Weil group. For instance, the first 5-tuple is realised by two different one-dimensional families: one has no torsion sections, while the other family arises through quadratic base change from the extremal rational elliptic surface with configuration $[1,1,4^*]$. In particular, it admits a 2-torsion section.

\begin{Corollary}
Every $s$-tuple $[n_1,\hdots,n_s]$ with $s\geq6$ and $\sum n_i=12$ exists on a rational elliptic surface.
\end{Corollary}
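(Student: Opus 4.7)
The plan is to prove the corollary by induction on $s$, with Lemma~\ref{Lem:ex_conf} as the deformation engine and the ten configurations of Corollary~\ref{Cor:5-tuples} as the base of the recursion.

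The inductive step for $s \geq 7$ is essentially formal. Given any tuple $[n_1,\ldots,n_s]$ with $\sum n_i = 12$, I would merge two entries—say $n_1$ and $n_2$—to obtain the $(s-1)$-tuple $[n_1+n_2, n_3, \ldots, n_s]$, which still has positive parts summing to $12$ and has $s-1 \geq 6$ entries. By the induction hypothesis this $(s-1)$-tuple is realized by some rational elliptic surface, and one application of Lemma~\ref{Lem:ex_conf} splits the entry $n_1+n_2$ back into $n_1,n_2$. Rationality is preserved because the degree of the j-map, equal to $12\chi$, is preserved under the deformation construction of \ref{ss:deformation}.

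The real work lies in the base case $s=6$. There are exactly eleven partitions of $12$ into six positive parts, and for each I would exhibit a merging of two entries that produces one of the ten $5$-tuples listed in Corollary~\ref{Cor:5-tuples}; Lemma~\ref{Lem:ex_conf} then yields the desired $6$-tuple. Many configurations work immediately by merging two $1$'s—for instance $[7,1,1,1,1,1]$ comes from $[1,1,1,2,7]$ and $[4,2,2,2,1,1]$ from $[2,2,2,2,4]$—but the delicate cases $[4,4,1,1,1,1]$, $[3,3,3,1,1,1]$ and $[3,3,2,2,1,1]$ require merging two equal larger entries such as $4+4=8$ (landing in $[1,1,1,1,8]$) or $3+3=6$ (landing in $[1,1,1,3,6]$ or $[1,1,2,2,6]$).

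The main obstacle is precisely this bookkeeping: Corollary~\ref{Cor:5-tuples} does not enumerate \emph{all} $5$-tuples summing to $12$, so one cannot simply invoke an arbitrary merging. The $5$-tuples $[3,3,3,2,1]$, $[4,3,3,1,1]$ and $[4,3,2,2,1]$ are absent from the list, which rules out several naive splittings. One must therefore check by hand that for each of the eleven $6$-tuples at least one of the $\binom{6}{2}=15$ mergings lands in the prescribed list—a finite but not entirely trivial verification. Once this base case is settled the induction propagates the conclusion to all $s$ up to $12$ without further input.
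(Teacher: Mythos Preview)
Your approach is correct and is exactly what the paper intends: induct on $s$ using Lemma~\ref{Lem:ex_conf}, with Corollary~\ref{Cor:5-tuples} as the base. One minor correction to your commentary: the $5$-tuples $[3,3,3,2,1]=[1,2,3,3,3]$ and $[4,3,2,2,1]=[1,2,2,3,4]$ \emph{are} in the list of Corollary~\ref{Cor:5-tuples}, so two of your ``delicate'' $6$-tuples ($[4,4,1,1,1,1]$ and $[3,3,3,1,1,1]$) are in fact handled by merging two $1$'s; the only $6$-tuple that genuinely requires merging larger entries is $[3,3,2,2,1,1]$, since $[2,2,2,3,3]$ is one of the three non-existent $5$-tuples of Proposition~\ref{Prop:missing}.
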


\subsection{}
It should be emphasised that the deformation construction is not algebraic in nature. Hence it does not provide any information about fields of definition or characteristic $p$.
For the extremal elliptic surfaces, it is immediately clear that they are defined over some number fields.
Compare Belyi's theorem which is actually the converse of a characterisation of algebraic curves over $\bar \Q$ due to Weil.

\subsection{Families}

It follows from the deformation argument that semi-stable rational elliptic surfaces with $s$ singular fibres occur in $(s-4)$-dimensional families. This statement does not generalise directly to elliptic surfaces with additive fibres, but it can be modified to work there as well as for arbitrary semi-stable elliptic surfaces.

In consequence, rational elliptic surfaces with only $\I_1$ fibres have 8 moduli. This expected number agrees with the heuristic based on the Weierstrass from (\ref{eq:WF}): the coefficients $a_4, a_6$ have $5+7=12$ coefficients. We have to subtract four degrees of freedom for applying M\"obius transformations and rescaling $x, y$ by an admissible transformation (\ref{eq:scale}).

\subsection{The remaining 5-tuples}

There are three 5-tuples of natural numbers adding up to $12$ (thus possibly realised by rational elliptic surfaces) that are not covered by Cor.~\ref{Cor:5-tuples}.
We claim that they are missing for a good reason:

\begin{Proposition}
\label{Prop:missing}
The 5-tuples $[1,2,2,2,5], [1,1,3,3,4], [2,2,2,3,3]$ do not exist.
\end{Proposition}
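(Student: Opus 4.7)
The plan is to reduce the question to a lattice-embedding problem inside $E_8$. For a rational elliptic surface $S$ with multiplicative singular fibres of type $[n_1,\ldots,n_5]$, the frame $W(S)=\langle \bar O,F\rangle^{\perp}\subset\NS(S)$ is isometric to (negative) $E_8$ by \ref{ss:MWL_RES}, and by the Shioda--Tate formula (Cor.~\ref{Cor:ST}) together with the Dynkin description of reducible fibres (\ref{ss:Dynkin}), the orthogonal sum of the root lattices $A_{n_i-1}$ (for $n_i\geq 2$) embeds isometrically into this $E_8$. Hence existence of each of the three candidate tuples is equivalent to one of the following orthogonal sublattice embeddings of rank $7$:
\[
A_1^{\oplus 3}\oplus A_4,\quad A_2^{\oplus 2}\oplus A_3,\quad A_1^{\oplus 3}\oplus A_2^{\oplus 2}\;\hookrightarrow\; E_8.
\]
Since each source lattice is generated by its length-$\sqrt{2}$ vectors, and the length-$\sqrt{2}$ vectors of $E_8$ are precisely its roots, any such embedding realises the source as a sub-root-system of $E_8$; my task is to show that none of the three does.

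For this I would use the Borel--de Siebenthal list of regular sub-root-systems of $E_8$, in particular $A_4\oplus A_4\subset E_8$, $A_3\oplus D_5\subset E_8$, and $A_2\oplus E_6\subset E_8$ (where $E_6\supset A_2^{\oplus 3}$). A standard fact---provable by inspection of the extended Dynkin diagram of $E_8$ together with the action of its Weyl group---is that sub-root-systems of $E_8$ of types $A_3,\,A_4,\,A_2^{\oplus 2}$ are unique up to $W(E_8)$-conjugacy. Combined with the three Borel--de Siebenthal sub-systems above, this identifies the root system of the orthogonal complement inside $E_8$ canonically: $A_4^{\perp}=A_4$, $A_3^{\perp}=D_5$ and $(A_2^{\oplus 2})^{\perp}=A_2^{\oplus 2}$.

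It then suffices to verify that the remaining summand cannot fit into the identified orthogonal complement. Writing the roots of $A_4$ as $\pm(e_i-e_j)$ with $1\leq i<j\leq 5$, three mutually orthogonal roots would require three pairwise disjoint index-pairs among only five indices, which is impossible, so $A_1^{\oplus 3}\not\subset A_4$. Within a single $A_2$ any two distinct roots meet at $60^\circ$ or $120^\circ$, so $A_2^{\oplus 2}$ contains at most $A_1^{\oplus 2}$ as a sub-root-system, giving $A_1^{\oplus 3}\not\subset A_2^{\oplus 2}$. Finally, realising $D_5$ by the roots $\pm e_i\pm e_j$ ($1\leq i<j\leq 5$) and taking the $A_2\subset D_5$ generated by $e_1-e_2$ and $e_2-e_3$, the roots in $D_5$ orthogonal to this $A_2$ are exactly $\pm e_4\pm e_5$, forming only $A_1^{\oplus 2}$, so no second orthogonal $A_2$ exists, giving $A_2^{\oplus 2}\not\subset D_5$. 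Each of the three non-embeddings contradicts the existence of the corresponding configuration, proving the proposition.

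The main obstacle is the uniqueness-up-to-Weyl-conjugacy of the sub-root-system embeddings $A_4,\,A_3,\,A_2^{\oplus 2}\subset E_8$: without it one would have to consider a priori different embeddings separately and the identification of the orthogonal-complement root system would fail. Once that uniqueness is in hand (a classical and purely root-theoretic input), the elementary combinatorial obstructions above finish the argument.
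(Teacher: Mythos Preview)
Your argument is correct but takes a different route from the paper's primary proof. The paper first excludes torsion in $E(K)$ (a $p$-torsion section would force the isogenous rational elliptic surface to have $e>12$), deduces $E(K)=\langle P\rangle$ of rank one, and then invokes the height formula of Section~\ref{s:MWL}: unimodularity of $\NS(S)$ forces $\mbox{disc}(T(S))\cdot h(P)=1$, but the denominators of the correction terms $\mbox{contr}_v(P)$ are bounded by the individual $n_i$, so for instance $[1,2,2,2,5]$ would require $h(P)=1/40$ while $h(P)\in\tfrac1{10}\Z$. Your approach instead works directly with the embedding $R(S)\hookrightarrow E_8$ of the fibral root lattice into the frame and rules it out by root-system combinatorics; this is precisely the Oguiso--Shioda strategy the paper cites in \cite{OS} and sets up around Thm.~\ref{Thm:Dynkin-root}, where the required uniqueness of the embeddings $A_4,\,A_3,\,A_2^{\oplus2}\hookrightarrow E_8$ is in fact recorded. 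Your route is cleaner once that uniqueness is granted and sidesteps the separate torsion discussion; the paper's route is more self-contained and showcases the Mordell--Weil lattice formalism it is developing.

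Two small remarks. First, in the case $[1,1,3,3,4]$ you also rely on uniqueness of $A_2\subset D_5$ up to $W(D_5)$, which is not among the three uniqueness statements you flag; it is immediate (an even sign change on two coordinates, available since $5\ge4$, carries $\langle e_1-e_2,\,e_2-e_3\rangle$ to $\langle e_1-e_2,\,e_2+e_3\rangle$), but should be said. Second, ``equivalent'' in your reduction overstates what you need: only the forward implication (existence of the configuration $\Rightarrow$ existence of the root-lattice embedding) is used, and only that direction is obvious.
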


The proof of the non-existence goes like this:
First we prove that the above configurations do not allow torsion in the Mordell-Weil group.
To see this, consider the quotient by translation by a hypothetical torsion section.
By \ref{ss:quot-sect}, this results in an isogenous rational elliptic surface $S'$.
In particular, we obtain in the analogous notation (with $R=R'$)
\[
 12=e(S') = \sum_{v\in R} e(F_v').
\]
But in the present situation, one easily checks that any torsion section would meet some fibres in the zero component.
On the quotient surface, these fibres of type $\I_n$ result in ``bigger'' fibres $\I_{np}$ -- causing $e(S')$ to exceed $12$.

Since the Picard number is ten and the trivial lattice has rank nine, it follows from Thm.~\ref{Thm:E-NS} that 
\[
 E(K) = \{P\} \;\;\; \text{ and }\;\;\; \NS(S) = \langle \bar P, \bar O, F, \Theta_{v,i}\rangle.
\]
Here we have not used yet that $\NS(S)$ is unimodular. One could proceed with a case-by-case analysis, going though (enough of) the possible intersection behaviours of $P$ with $O$ and the singular fibres to conclude (by computing the intersection matrices) that it is an impossible task to set up $P$ in such a way that $\NS(S)$ has discriminant $-1$. However, this would be quite a lengthy and detailed undertaking.
Instead, we turn to the theory of Mordell-Weil lattices which endows the Mordell-Weil group (up to torsion) with a lattice structure.
This will enable us to read off the answer directly.
In \cite{OS} Oguiso and Shioda proved more generally that no rational elliptic surface admits the corresponding trivial lattices.

Yet another line of proof would involve the discriminant group, but we will only introduce this technique in the context of K3 surfaces where it becomes indispensable.

\section{Mordell-Weil lattice}
\label{s:MWL}

In this section we finally introduce the notion of Mordell-Weil lattices.
It goes back independently to Elkies \cite{Elki} and one of us \cite{ShMW}.
In practice, the idea of Mordell-Weil lattices makes the relations between the N\'eron-Severi group and the Mordell-Weil group from Section \ref{s:NS} much more explicit.
As a major advantage, the extent of this theory is very feasible to actual computations.

There are many important consequences to be noted.
First we can complete the discussion of rational elliptic surfaces and classify their Mordell-Weil lattices completely (following \cite{OS}).
We will also point out the functorial behaviour of Mordell-Weil lattices such as for base change and under Galois action.

\subsection{Essential lattice}

We aim at endowing the Mordell-Weil group (up to torsion) with the structure of a positive-definite lattice.
Recall that the sections are in some sense complimentary to the trivial lattice $T(S)$ by Thm.~\ref{Thm:E-NS}.
Thus the relevant part of $\NS(S)$ sits inside the orthogonal complement of $T(S)$.
This is how we define the \textbf{essential lattice}:
\[
 L(S) = T(S)^\bot \subset \NS(S).
\]
Since $L(S)$ is included in the frame of $S$ by definition, we obtain
\begin{Lemma}
 The essential lattice $L(S)$ is even and negative definite of rank $\rho(S)-2-\sum_v (m_v-1)$.
\end{Lemma}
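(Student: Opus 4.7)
The plan is to deduce all three properties from the containment $L(S)\subseteq W(S)$ together with the rank formula for the trivial lattice that was already established in Proposition~\ref{Prop:triv_lattice}.

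First I would verify that $L(S)\subseteq W(S)$. This is immediate: since $T(S)\supseteq \langle \bar O,F\rangle$ by the definition~(\ref{eq:Triv}), taking orthogonal complements in $\NS(S)$ reverses the inclusion and yields
\[
 L(S)=T(S)^\bot \subseteq \langle \bar O,F\rangle^\bot = W(S).
\]
By Lemma~\ref{Lem:O,F_purp} the frame $W(S)$ is even and negative-definite, hence so is any sublattice; this gives the first two assertions at once.

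For the rank, I would use the fact that $T(S)$ is non-degenerate. Indeed, the sublattice $\langle \bar O,F\rangle$ has intersection form of determinant $-1$, and each $T_v$ is negative-definite by Lemma~\ref{Lem:Dynkin}, so the orthogonal decomposition in~(\ref{eq:Triv}) exhibits $T(S)$ as non-degenerate. Since $\NS(S)$ is torsion-free by Theorem~\ref{Thm:NS} and carries a non-degenerate intersection pairing (via the cycle map), the orthogonal complement of a non-degenerate sublattice has complementary rank:
\[
 \mathrm{rank}\,L(S) \;=\; \mathrm{rank}\,\NS(S)-\mathrm{rank}\,T(S)
 \;=\; \rho(S) - \Bigl(2+\sum_{v\in R}(m_v-1)\Bigr),
\]
where the second equality invokes Proposition~\ref{Prop:triv_lattice}. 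Rewriting $\sum_{v\in R}(m_v-1)=\sum_v(m_v-1)$ (irreducible fibres contribute zero) yields the claimed rank.

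There is no serious obstacle; the only point that requires a moment of care is justifying that $\mathrm{rank}\,L(S)=\rho(S)-\mathrm{rank}\,T(S)$, which rests on $T(S)$ being non-degenerate inside the non-degenerate lattice $\NS(S)$. Everything else is a direct appeal to previously established statements.
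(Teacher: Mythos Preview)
Your proof is correct and follows exactly the route the paper takes: the paper states that ``$L(S)$ is included in the frame of $S$ by definition'' and invokes Lemma~\ref{Lem:O,F_purp}, which is precisely your argument via $T(S)\supseteq\langle\bar O,F\rangle$ and the rank formula from Proposition~\ref{Prop:triv_lattice}. You have simply spelled out the details (in particular the non-degeneracy of $T(S)$ needed for the rank count) that the paper leaves implicit.
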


By lattice theory, we can compute the discriminant of the essential lattice as
\[
 |\mbox{disc}(L(S))| = \left|\dfrac{\mbox{disc}(\NS(S))}{\mbox{disc}(T(S))}\right| [\NS(S):(L(S) \oplus T(S))]^2.
\]
Hence it is not necessarily true that $L(S) \oplus T(S)=\NS(S)$ (although this will hold true if $T(S)$ is unimodular, e.g.~if there are no reducible fibres). Thus it is unclear a priori whether there is a non-trivial group homomorphism $E(K)\to L(S)$.

\subsection{}

To circumvent subtleties of finite index embeddings as sketched above, we shall now tensor all lattices in consideration with $\Q$. 
Generally we shall abreviate this by a subscript $\Q$: 
\[
 \NS(S)_\Q = \NS(S) \otimes_\Z \Q,\;\; T(S)_\Q, L(S)_\Q, \hdots.
\]
Tensoring makes the lattices into $\Q$ vector spaces -- which does not do any harm since $\NS(S)$ is torsion-free by Thm.~\ref{Thm:NS}.
Note that the subgroup of torsion sections embeds into $T(S)_\Q$ by Lemma \ref{Lem:tor}.

\subsection{}
\label{ss:MWL-ortho}
Define the vector space homomorphism
\[
 \varphi:\;\; \NS(S)_\Q \to L(S)_\Q
\]
as the orthogonal projection with respect to the sub-vector space $T(S)_\Q$. 
(Compare the analogous construction involving $U$ and the frame $W(S)$ in \ref{ss:ortho-frame}.)
For an element $D\in\NS(S)$, the image $\varphi(D)$ is uniquely determined by the universal properties
\[
 \varphi(D)\,\bot\, T(S)_\Q,\;\;\; \varphi(D)\equiv D\mod T(S)_\Q.
\]
\begin{Lemma}
 The orthogonal projection $\varphi$ gives the unique map $E(K)\rightarrow L(S)_\Q$ with the above properties.
\end{Lemma}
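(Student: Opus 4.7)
The plan is to reduce the lemma to the orthogonal direct sum decomposition $\NS(S)_\Q = T(S)_\Q \oplus L(S)_\Q$ and then read off well-definedness, the two properties, and uniqueness in turn. The key input needed before any of this is that $T(S)$ is non-degenerate, for otherwise the orthogonal projection is not well-defined in the first place.

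First I would observe that $T(S)$ has signature $(1,1+\sum_{v\in R}(m_v-1))$, as sketched after Prop.~\ref{Prop:triv_lattice} ($\langle \bar O, F\rangle$ is hyperbolic of signature $(1,1)$, and each $T_v$ is negative definite by Lemma~\ref{Lem:Dynkin}). In particular the intersection form restricted to $T(S)_\Q$ is non-degenerate, so $\NS(S)_\Q = T(S)_\Q \oplus L(S)_\Q$ as an orthogonal direct sum, and $\varphi$ is the unique $\Q$-linear map with $\varphi(D)\in L(S)_\Q$ and $D-\varphi(D)\in T(S)_\Q$ for all $D$. In particular $\varphi$ vanishes identically on $T(S)_\Q$.

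Next I would check well-definedness of $P\mapsto \varphi(\bar P)$. By Thm.~\ref{Thm:E-NS}, the class $\bar P\in\NS(S)$ is pinned down by $P\in E(K)$ only modulo $T(S)$. But $\varphi$ kills $T(S)_\Q$, so $\varphi(\bar P)$ only depends on $\bar P$ modulo $T(S)$, hence only on $P$. The two asserted properties $\varphi(\bar P)\perp T(S)_\Q$ and $\varphi(\bar P)\equiv \bar P\pmod{T(S)_\Q}$ are then nothing but the defining properties of the orthogonal projection.

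Finally, for uniqueness, suppose $\psi:E(K)\to L(S)_\Q$ also satisfies both properties. For each $P\in E(K)$ the element $\psi(P)-\varphi(\bar P)$ lies in $L(S)_\Q$ (difference of two elements of $L(S)_\Q$) and simultaneously in $T(S)_\Q$ (difference of two lifts of $\bar P$ modulo $T(S)_\Q$); by the orthogonal decomposition $L(S)_\Q\cap T(S)_\Q = 0$, so $\psi(P)=\varphi(\bar P)$. There is no real obstacle: the whole argument is formal once the non-degeneracy of $T(S)$ is in hand, and the latter is immediate from the signature calculation already invoked in the proof of Prop.~\ref{Prop:triv_lattice}.
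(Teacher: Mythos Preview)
Your argument is correct and follows the same approach as the paper: both rely on the orthogonal decomposition $\NS(S)_\Q = T(S)_\Q \oplus L(S)_\Q$ coming from the non-degeneracy of $T(S)$, and deduce uniqueness from the fact that the two properties pin down the $L(S)_\Q$-component of $\bar P$. The paper's own justification is the single sentence ``The uniqueness follows from the universal property since (upon tensoring $E(K)$ by $\Q$) it identifies two vectorspaces of the same dimension''; your version spells this out more carefully, and in particular you correctly flag that non-degeneracy of $T(S)$ is what makes the orthogonal projection exist.

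One small correction: your well-definedness step rests on the claim that ``the class $\bar P\in\NS(S)$ is pinned down by $P$ only modulo $T(S)$''. That is a misreading of Thm.~\ref{Thm:E-NS}. The section $\bar P$ is a specific curve on $S$ (the closure of $P$ in the Kodaira--N\'eron model, cf.~\ref{ss:s-p}), hence a specific class in $\NS(S)$; the theorem only says that the assignment $P\mapsto \bar P\bmod T$ is an isomorphism onto $\NS(S)/T$, not that $\bar P$ itself is ambiguous. So there is no well-definedness issue to resolve, and that paragraph can simply be dropped. The remainder of your proof stands unchanged.
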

The uniqueness follows from the universal property since (upon tensoring $E(K)$ by $\Q$) it identifies two vectorspaces of the same dimension.

\subsection{}
We can describe the map $\varphi$ explicitly.
For this purpose, we let $A_v$ denote the intersection matrix of the non-zero components of the (reducible) fibre $F_v$ at $v\in C$:
\[
 A_v = (\Theta_{v,i}, \Theta_{v,j})_{1\leq i,j\leq m_v-1}.
\]
Then a section $P\in E(K)$ is mapped to
\[
 \varphi(P) = \bar P - \bar O - (\bar P.\bar O + \chi(S))\,F - \sum_v (\Theta_{v,1},\hdots,\Theta_{v,m_v-1}) A_v^{-1} \begin{pmatrix}
                                                                                                                       \bar P.\Theta_{v,1}\\
\vdots\\
\bar P.\Theta_{v,m_v-1}
                                                                                                                      \end{pmatrix}
\]
Obviously $\varphi(P)\equiv \bar P\mod T(S)_\Q$. 
Therefore the precise shape of $\varphi(P)$ is verified by checking that $\varphi$ sits in the orthogonal complement of $T(S)_\Q$.

The inverse matrix $A_v^{-1}$ is in general not integral, but the coefficients are made integral by multiplying with disc$(T_v)$.
This shows that in general the definition of $\varphi$ requires tensoring with $\Q$ unless there are no reducible fibres.
Since the narrow Mordell-Weil group $E(K)^0$ does not meet any non-zero fibre components by definition, we deduce that its image under $\varphi$ is in fact integral:
\begin{Corollary}
 The orthogonal projection $\varphi$ defines an embedding $E(K)^0\hookrightarrow L(S)$.
\end{Corollary}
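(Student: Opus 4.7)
The plan is to verify three properties: integrality of $\varphi(P)$ for $P\in E(K)^0$, additivity of $\varphi$ restricted to the Mordell-Weil group, and injectivity on $E(K)^0$.

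For integrality, I would start from the explicit formula for $\varphi(P)$ displayed immediately above the corollary. The defining property of $E(K)^0$ is that $\bar P$ meets only the zero component of every reducible fibre, so $\bar P.\Theta_{v,i}=0$ for every $v\in R$ and every $i\geq 1$. Plugging this in, the entire correction sum involving the (generally non-integral) matrices $A_v^{-1}$ vanishes and the formula collapses to
$$
\varphi(P)=\bar P-\bar O-(\bar P.\bar O+\chi(S))\,F,
$$
which is visibly an integral class in $\NS(S)$. Combined with the orthogonality $\varphi(P)\perp T(S)_\Q$ coming from the universal property of $\varphi$, this lands $\varphi(P)$ in $T(S)^\bot\cap \NS(S)=L(S)$.

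For additivity, I would exploit that $\varphi:\NS(S)_\Q\to L(S)_\Q$ is $\Q$-linear with kernel $T(S)_\Q$. By Theorem \ref{Thm:E-NS}, the assignment $P\mapsto \bar P\bmod T$ is a group homomorphism $E(K)\to \NS(S)/T$, so for any $P,Q\in E(K)$ the difference $\bar P+\bar Q-\overline{P+Q}$ lies in $T\subset T(S)_\Q$ and is therefore annihilated by $\varphi$. Consequently $\varphi(P+Q)=\varphi(P)+\varphi(Q)$, and in particular the restriction $\varphi|_{E(K)^0}: E(K)^0\to L(S)$ is a group homomorphism.

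For injectivity, suppose $P\in E(K)^0$ satisfies $\varphi(P)=0$. The integrality computation above then forces $\bar P=\bar O+c\,F$ in $\NS(S)$ for some $c\in\Z$, so $\bar P$ lies in the trivial lattice $T(S)$. The isomorphism $E(K)\cong \NS(S)/T$ from Theorem \ref{Thm:E-NS} immediately implies $P=O$. The only step that requires any care is integrality: one must recognise that the narrowness hypothesis is precisely what kills the fractional fibre-correction terms in the formula for $\varphi$; once that observation is made, additivity and injectivity follow formally from the structural results already established in Section \ref{s:NS}.
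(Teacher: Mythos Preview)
Your proof is correct and the integrality step matches the paper exactly: the observation that narrowness kills the $A_v^{-1}$-correction terms is precisely the paper's argument (stated in the sentence immediately preceding the corollary).

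Where you differ slightly is in scope and organisation. The paper treats this corollary as asserting only that $\varphi$ lands $E(K)^0$ in the integral lattice $L(S)$; the homomorphism property and the identification $\ker\varphi=E(K)_{\text{tor}}$ are deferred to the subsequent Proposition, which the paper says follows from the universal property of $\varphi$ together with Abel's theorem. You instead prove additivity and injectivity directly here, by invoking Theorem~\ref{Thm:E-NS} (the isomorphism $E(K)\cong\NS(S)/T$) to see that $\bar P+\bar Q-\overline{P+Q}\in T$ and that $\bar P\in T$ forces $P=O$. This is entirely legitimate and arguably cleaner at this point in the exposition, since Theorem~\ref{Thm:E-NS} has already been established (and its proof already used Abel's theorem), so there is no need to invoke Abel again. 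The two routes are equivalent in content; yours is just more self-contained for the corollary as a standalone statement.
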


\subsection{}
We now consider the full Mordell-Weil group.
The following statement can easily be derived from the universal property of the orthogonal projection and from Abel's theorem:

\begin{Proposition}
 The map $\varphi:\; E(K)\to L(S)_\Q$ is a group homomorphism with kernel $\ker(\varphi)=E(K)_\text{tor}$.
\end{Proposition}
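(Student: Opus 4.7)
The plan is to reduce everything to the $\Q$-linearity of the orthogonal projection together with Theorem~\ref{Thm:E-NS}.

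First I would use the universal property recalled just before the proposition: for $D\in\NS(S)_\Q$, the image $\varphi(D)\in L(S)_\Q$ is characterised by $\varphi(D)\equiv D\pmod{T(S)_\Q}$. Since $L(S)_\Q\cap T(S)_\Q=0$ and $\NS(S)_\Q=T(S)_\Q\oplus L(S)_\Q$, this realises $\varphi$ as the unique $\Q$-linear projection onto the second summand; in particular $\varphi$ is additive on $\NS(S)_\Q$.

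Next I would deduce additivity of $P\mapsto \varphi(\bar P)$. The element $\varphi(\bar P)+\varphi(\bar Q)$ lies in $L(S)_\Q$ and, by additivity of $\varphi$, is congruent modulo $T(S)_\Q$ to $\bar P+\bar Q$. By Theorem~\ref{Thm:E-NS} the map $E(K)\to \NS(S)/T$, $P\mapsto \bar P\bmod T$, is a group homomorphism, so $\bar P+\bar Q\equiv \overline{P+Q}\pmod{T}\subset T(S)_\Q$. Hence $\varphi(\bar P)+\varphi(\bar Q)$ lies in $L(S)_\Q$ and is congruent to $\overline{P+Q}$ modulo $T(S)_\Q$, so uniqueness forces it to equal $\varphi(\overline{P+Q})$.

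The kernel computation runs along the same lines. We have $\varphi(\bar P)=0$ iff $\bar P\in T(S)_\Q$; since $\NS(S)$ is torsion-free by Theorem~\ref{Thm:NS} and $T\subset\NS(S)$, this is equivalent to the existence of some $n\geq 1$ with $n\bar P\in T$, which by Theorem~\ref{Thm:E-NS} is equivalent to $nP=O$ in $E(K)$, i.e.\ to $P\in E(K)_{\text{tor}}$.

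The only step that deserves genuine attention is the additivity of $P\mapsto \bar P\bmod T$. This is already packaged in Theorem~\ref{Thm:E-NS}, but its geometric content is Abel's theorem on the generic fibre: if $R=P+Q$ in $E(k(C))$, then $(P)+(Q)\sim (R)+(O)$ on $E$, so the divisor $\bar P+\bar Q-\overline{P+Q}-\bar O$ restricts to a principal divisor on $E$ and is therefore linearly equivalent to a vertical divisor on $S$, hence lies in $T$. Everything else in the argument is formal manipulation in $\NS(S)_\Q$.
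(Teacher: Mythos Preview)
Your argument is correct and follows precisely the route the paper indicates: the universal property of the orthogonal projection gives additivity on $\NS(S)_\Q$, and Theorem~\ref{Thm:E-NS} (whose content you rightly identify as Abel's theorem on the generic fibre) supplies $\bar P+\bar Q\equiv\overline{P+Q}\pmod T$; the kernel computation via $\bar P\in T(S)_\Q\cap\NS(S)=T'$ is exactly Lemma~\ref{Lem:tor}. There is nothing to add.
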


We have seen in \ref{ss:narrow_MW} that for $m=\prod_v\mbox{disc}(T_v)$
\[
 mE(K)\subseteq E(K)^0.
\]
Hence the image of $E(K)$ under $\varphi$ lies in fact in $\frac 1m L(S)$, and we obtain an embedding
\[
 E(K)/E(K)_\text{tor} \hookrightarrow \frac 1m L(S).
\]

\subsection{Height pairing}
Using the orthogonal projection $\varphi$, we define a pairing on $E(K)$, the so-called \textbf{height pairing}:
\[
 P, Q\in E(K)\;\; \Rightarrow\;\;\; <P,Q> = - \varphi(P).\varphi(Q).
\]
\begin{Theorem}
The height pairing is a symmetric bilinear pairing on $E(K)$.
It induces the structure of a positive-definite lattice on $E(K)/E(K)_\text{tor}$.
\end{Theorem}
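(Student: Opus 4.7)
The plan is to deduce each property of the height pairing from the corresponding property of the intersection pairing on $\NS(S)_\Q$, via the orthogonal projection $\varphi$.

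First, I would verify symmetry and bilinearity. Symmetry is immediate: the intersection pairing on $\NS(S)_\Q$ is symmetric, and $\langle P,Q\rangle = -\varphi(P).\varphi(Q) = -\varphi(Q).\varphi(P) = \langle Q,P\rangle$. For bilinearity, I would invoke the previous Proposition stating that $\varphi: E(K)\to L(S)_\Q$ is a group homomorphism, so $\varphi(P+P') = \varphi(P)+\varphi(P')$; combined with $\Z$-bilinearity of the intersection pairing, this gives $\langle P+P',Q\rangle = \langle P,Q\rangle + \langle P',Q\rangle$, and analogously in the second argument.

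Next, to establish that the height pairing descends to $E(K)/E(K)_{\text{tor}}$, I would use that the kernel of $\varphi$ equals $E(K)_{\text{tor}}$ (by the same previous Proposition). Hence if $P$ is torsion, $\varphi(P)=0$, so $\langle P,Q\rangle = 0$ for all $Q$, meaning torsion sections are in the radical of the pairing. Therefore the height pairing is well-defined on the quotient $E(K)/E(K)_{\text{tor}}$.

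The crucial step is positive-definiteness. Here I would use that $L(S)$ is negative-definite (established earlier: $L(S)\subseteq W(S)$, and the frame $W(S)$ is negative-definite by Lemma~\ref{Lem:O,F_purp}). Tensoring with $\Q$ preserves definiteness, so $L(S)_\Q$ is negative-definite as a $\Q$-vector space with the intersection form. Thus for any nonzero $x\in L(S)_\Q$ we have $-x.x > 0$. Given a non-torsion $P\in E(K)$, $\varphi(P)\neq 0$ in $L(S)_\Q$ since $\ker\varphi = E(K)_{\text{tor}}$, and therefore
\[
\langle P,P\rangle = -\varphi(P).\varphi(P) > 0.
\]
This shows positive-definiteness on $E(K)/E(K)_{\text{tor}}$, completing the lattice structure (the image of $E(K)/E(K)_{\text{tor}}$ sits in $\tfrac{1}{m}L(S)$, so the pairing takes values in $\tfrac{1}{m^2}\Z$, but definiteness is what matters here).

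The main obstacle is really just bookkeeping: ensuring that the orthogonal projection is well-defined on all of $\NS(S)_\Q$, which relies on $T(S)_\Q$ being a non-degenerate subspace (so that $\NS(S)_\Q = T(S)_\Q \oplus L(S)_\Q$ as an orthogonal decomposition). This is guaranteed because $T(S)$ contains $\langle \bar O,F\rangle$ which is non-degenerate, and each $T_v$ is negative-definite (Lemma~\ref{Lem:Dynkin}), so $T(S)$ itself is non-degenerate of signature $(1,1+\sum_{v\in R}(m_v-1))$, and the orthogonal complement is well-defined. With that in hand, everything reduces to the properties already established in the preceding material.
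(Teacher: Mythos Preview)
Your proof is correct and follows essentially the same approach as the paper: symmetry and bilinearity come from the intersection pairing together with $\varphi$ being a group homomorphism, and positive-definiteness follows from $L(S)$ being negative-definite combined with $\ker\varphi = E(K)_{\text{tor}}$. The paper is more terse (``Symmetry and bilinearity are clear''), but your expanded justification and the bookkeeping remark on non-degeneracy of $T(S)_\Q$ are entirely in line with what the paper has set up.
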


Symmetry and bilinearity are clear. The rest follows from the fact that $L(S)$ is negative-definite, since 
\[
 <P,P>=0\;\; \Leftrightarrow \;\; \varphi(P)=0 \;\; \Leftrightarrow \;\; P\in E(K)_\text{tor}.
\]

\subsection{Mordell-Weil lattice}
We call the positive-definite lattice $E(K)/E(K)_{\text{tor}}$ with height pairing $<\cdot,\cdot>$ the \textbf{Mordell-Weil lattice} of the elliptic surface $S$. For ease of notation, we shall also denote the Mordell-Weil lattice by $\MWL(S)$ (where we suppress the elliptic fibration $S\to C$).

Note that the Mordell-Weil lattice will in general not be integral, but take values in $\Q$ (or sufficiently $\frac 1m \Z$ for the integer $m$ from \ref{ss:narrow_MW}).
In contrast, the \textbf{narrow Mordell-Weil lattice} $\MWL(S)^0$ is indeed always integral and even.

\subsection{Explicit formula}
\label{ss:height}

We shall now derive an explicit formula for the height pairing of two sections $P, Q\in E(K)$. Upon specialising to $P=Q$, we also obtain an expression for the \textbf{height} $h(P)$ of a section $P$:
\begin{eqnarray*}
 & <P,Q> = & \chi(S)+\bar P.\bar O+\bar Q.\bar O-\bar P.\bar Q - \sum_v \mbox{contr}_v(P,Q)\\
h(P) = & <P,P> = & 2\chi(S)+2 \bar P.\bar O - \sum_v \mbox{contr}_v(P).
\end{eqnarray*}
Here the correction terms contr$_v(P,Q)$ only depend on the fibre components of $F_v$ met. 
Specifically, let $P$ meet $\Theta_{v,i}$ and $Q$ meet $\Theta_{v,j}$ (with the convention that $O$ intersects $\Theta_{v,0}$ as usual). Then the local correction term at $v$ is given by
\[
 \mbox{contr}_v(P,Q) = 
\begin{cases}
 0, & \text{ if } ij=0\\
- (A_v^{-1})_{i,j}, & \text{ if } ij\neq 0.
\end{cases}
\]
The correction term for a single section $P$ is defined by contr$_v(P)=$contr$_v(P,P)$.
The inverse matrices $A_v^{-1}$ can be calculated in generality. 
We give their relevant entries (i.e.~those corresponding to the simple components) in the following table. This also verifies that the correction terms are always positive.
Throughout we employ the following conventions:
\begin{itemize}
 \item $0<i\leq j$ after exchanging $P$ and $Q$ if necessary;
\item
The components of an $\I_n$ fibre ($n>1$) are numbered cyclically according to their intersection behaviour $\Theta_0, \Theta_1,\hdots,\Theta_{n-1}$;
\item
At the additive fibres, we only number the simple components;
\item 
The simple components of an $\I_n^*$ fibre for $n>0$ are distinguished into the \textbf{near component} $\Theta_1$ which intersects the same double component as the zero component, and the \textbf{far components} $\Theta_2, \Theta_3$. 
\end{itemize}

\begin{table}[ht!]
$$
\begin{array}{cccccccc}
\text{fibre} & \;\; & \IV^* && \III^* && \I_n (n>1), \III, \IV & \I_n^*\\
\hline
\begin{matrix}
\text{Dynkin}\\
\text{diagram}
\end{matrix}
&& E_6 && E_7 && A_{n-1} & D_{n+4}\\
\hline
i=j && 4/3 && 3/2 && i(n-i)/n & \begin{cases}
                              1, & i=1\\
1+n/4, & i=2,3
                             \end{cases}\\
i<j && 2/3 && - && i(n-j)/n & 
\begin{cases}
                              1/2, & i=1\\
1/2+n/4, & i=2
                             \end{cases}
\end{array}
$$
\caption{Correction terms for the height pairing}
\end{table}
As claimed, we notice that the correction terms are always positive and become integral upon multiplication with the number of simple components of the fibre.

\subsection{First application: proof of Corollary \ref{Cor:tor-phi}}
\label{ss:appl}

With the above explicit formulas,
it is easy to prove Corollary \ref{Cor:tor-phi}.
Let $P, Q$ be torsion sections that meet every (singular) fibre at the same component.
In consequence
\[
\mbox{contr}_v(P) = \mbox{contr}_v(Q) = \mbox{contr}_v(P,Q) \;\;\; \forall\, v.
\]
Thus the torsion conditions $h(P)=h(Q)=\langle P,Q\rangle=0$ imply
\[
2\chi+2\bar P.\bar O = 2\chi+2\bar Q.\bar O=\chi+\bar P.\bar O+\bar Q.\bar O-\bar P.\bar Q.
\]
Whence $\bar P.\bar O=\bar Q.\bar O$ and $\bar P.\bar Q=-\chi$.
As $\chi>0$ by Corollary \ref{cor:ag} and $\bar P, \bar Q$ are irreducible curves on $S$, we deduce $\bar P=\bar Q$, i.e.~$P=Q$.

\subsection{}
Since the Mordell-Weil lattice is defined via the orthogonal projection in $\NS(S)_\Q$ with respect to $T(S)_\Q$, we can compute the discriminant of $\NS(S)$ as follows:
\begin{eqnarray}\label{eq:disc-NS-E}
\;\;\;\;\;\;\;\;\;\;\;\; \mbox{disc}(\NS(S)) = (-1)^{\text{rank} E(K)} \mbox{disc}(T(S))\cdot \mbox{disc}(\MWL(S))/(\# E(K)_\text{tor})^2.
\end{eqnarray}
We can apply this relation to give a simple proof of Prop.~\ref{Prop:missing}.

\smallskip

Recall that for those 5-tuples, the trivial lattice would have rank 9, so $E(K)$ would have rank 1.
Moreover, the configuration of singular fibres ruled out torsion sections.
Hence $E(K)$ would be generated by a single section $P$.
The above equality simplifies as
\begin{eqnarray}\label{eq:T-P}
  \mbox{disc}(\NS(S)) = -\mbox{disc}(T(S))\cdot h(P).
\end{eqnarray}
But then the denominator of the correction term contr$_v(P)$ does not suffice to produce an equality in (\ref{eq:disc-NS-E}): $\NS(S)$ is unimodular, but there are too many fibres with number of components divisible by $2$ or $3$. These fibres cause $T(S)$ to have highly divisible discriminant; hence, by inspection of the corresponding correction terms, the denominator of the height $h(P)$ cannot possibly be big enough to yield discriminant $-1$ in (\ref{eq:T-P}).

\subsection{}

Conversely, the above argument shows how a fibre configuration can force the existence of a section.
We illustrate this by considering the first few 5-tuples from Cor.~\ref{Cor:5-tuples}.

\subsubsection{[1,1,1,1,8]}
\label{sss:8}

In this case, there are actually two possibilities:
\begin{itemize}
 \item $E(K)$ torsion-free with generator $P$ disjoint from $O$, but meeting $\Theta_3$ of the $\I_8$ fibre: $h(P)=2-3\cdot 5/8=1/8$.
\item
$E(K)$ with 2-torsion and a section $P$ meeting $\Theta_2$, but not $O$: $h(P)=2-2\cdot 6/8=1/2$. In this case, the 2-isogenous rational elliptic surface has configuration [2,2,2,2,4].
\end{itemize}

\subsubsection{[1,1,1,2,7]}

The configuration rules out torsion sections, so $E(K)$ is generated by a single section $P$, disjoint from $O$ and intersecting the singular fibres $\I_2$ at $\Theta_1$ and $\I_7$ at $\Theta_2$: $h(P)=2-1/2-10/7=1/14$.

\subsubsection{[1,1,1,3,6]}

Since any section $P$ has height $P\in \frac 16\Z$, $\NS(S)$ can only be unimodular if there is a 3-torsion section.

\subsubsection{[1,1,2,2,6]}

Again any section $P$ has height $h(P)\in\frac 16\Z$. Hence there has to be a 2-torsion section.

\subsubsection{[1,1,2,4,4]}

With this configuration, any section has height $h(P)\in\frac 14\Z$. Hence unimodularity requires $4\mid \# E(K)_\text{tor}$ by (\ref{eq:disc-NS-E}).
However, there cannot be full 2-torsion in $E(K)$ since then there would be a 2-torsion section meeting an $\I_4$ fibre in the zero component.
Hence the quotient by the translation would have singular fibres $\I_2, \I_2, \I_8$ (from $\I_1,\I_1,\I_4$) plus two more, resulting in an Euler number exceeding 12.
We deduce $\Z/4\Z\subset E(K)$.

\smallskip

With the given tools, we can classify the Mordell-Weil lattices of all rational elliptic surfaces of Mordell-Weil rank zero or one, since they have at most one non-torsion generator. 
The full classification was established by the second author jointly with Oguiso in \cite{OS} -- we will briefly comment on the essential ingredients in the next few subsections.

For general elliptic surfaces, we can also employ the above techniques. 
But then we have also to take the transcendental part of $H^2$ into account, since $\NS(S)$ will in general not span $H^2$.
For the K3 case, see the discussion in \ref{ss:L1}, \ref{ss:Kondo}.

\subsection{}

We can relate the Mordell-Weil lattice $\MWL(S)$ and the essential lattice $L(S)$ more closely. 
For this purpose, we change the sign of the intersection form on $L(S)$ in order to make the lattice positive-definite;
the lattice with new pairing is abbreviated by $L(S)^-$.

\smallskip

In the following, we shall frequently need the dual lattice $M^\vee$  of a non-degenerate integral lattice $M$:
\[
 M^\vee = \{x\in M_\Q; (x.y) \in \Z\;\; \forall\; y\in M\}.
\]
By integrality, one clearly has $M\subseteq M^\vee$ as a sublattice of finite index $|\det(M)|$, with equality if and only if $M$ is unimodular.
Using the setup of Mordell-Weil lattices, one can show that the orthogonal projection $\varphi$ induces a commutative diagram
$$
\begin{array}{ccc}
\MWL(S)^0 & \cong & L(S)^-\\
\cap && \cap\\
\MWL(S) & \hookrightarrow & (L(S)^-)^\vee
\end{array}
$$
The bottom row embedding need not be an isomorphism; however, it is an isomorphism if $\NS(S)$ is unimodular. 
This relation proves very useful in the case of rational elliptic surfaces.

\subsection{Root lattice}
\label{ss:root1}

For an even integral negative-definite lattice $M$, the \textbf{roots} are all those elements $x\in M$ with maximal self-intersection $x^2=-2$.
The lattice $M$ is called a \textbf{root lattice} if it is generated by its roots.
For instance, the Dynkin diagrams $A_n, D_m, E_6, E_7, E_8$ encode root lattices.

To each fibre $F_v$, we can thus associate a root lattice $T_v$ spanned by the components not meeting the zero section.
For an elliptic surface $S$, we define the root lattice $R(S)$ of $S$ as the sublattice of $\NS(S)$ generated by the root lattices $T_v$:
\[
 R(S) = \oplus_v T_v.
\]
By definition, the root lattice $R(S)$ is exactly the intersection of the trivial lattice $T(S)$ and the frame $W(S)$.

\subsection{Mordell-Weil lattices of rational elliptic surfaces}

Recall that a rational elliptic surface has unimodular N\'eron-Severi group
\[
 \NS(S) \cong \langle \bar O,F\rangle \oplus E_8.
\]
Since the root lattice $R(S)$ sits imside the frame $W(S)$, we obtain an embedding of $R(S)$ into $E_8$.
From what we have seen before (in particular from $\MWL(S)\cong (L(S)^-)^\vee$), one can deduce the following relation between root lattice $R(S)$ and Mordell-Weil lattice $\MWL(S)$:
\begin{Theorem}[Shioda {\cite[Thm.~10.3]{ShMW}}]
Up to lattice automorphisms, the embedding
\[
 R(S)\hookrightarrow E_8
\]
determines both $E(K)$ as an abstract group and $\MWL(S)$ as a lattice.
\end{Theorem}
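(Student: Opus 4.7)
The plan is to unwind both invariants into explicit functions of the sublattice $R(S) \subset E_8$ and then observe that the construction is manifestly well-defined up to $O(E_8)$.

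First, since $S$ is rational, Section \ref{ss:MWL_RES} gives an orthogonal decomposition $\NS(S) = \langle \bar O, F\rangle \oplus W(S)$ with $W(S) \cong E_8$, and by definition $R(S) \subset W(S)$. The trivial lattice therefore splits as $T(S) = \langle \bar O, F\rangle \oplus R(S)$, and Theorem \ref{Thm:E-NS} yields
\[
E(K) \cong \NS(S)/T(S) \cong W(S)/R(S) \cong E_8/R(S).
\]
Thus the abstract group $E(K)$ depends only on the embedding $R(S) \hookrightarrow E_8$. The torsion subgroup is recovered as $\tilde R(S)/R(S)$, where $\tilde R(S)$ denotes the primitive closure of $R(S)$ inside $E_8$, in agreement with Corollary \ref{Cor:T-bar}.

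For the Mordell-Weil lattice, I would invoke the isomorphism $\MWL(S) \cong (L(S)^-)^\vee$, valid whenever $\NS(S)$ is unimodular---as is the case here since $\NS(S)$ has discriminant $-1$. The essential lattice is
\[
L(S) = T(S)^\perp \cap \NS(S) = R(S)^\perp \cap W(S) = R(S)^\perp \cap E_8,
\]
the orthogonality of $\langle \bar O, F\rangle$ and $W(S)$ allowing us to discard the first summand. Hence the positive-definite lattice $\MWL(S)$ is isomorphic to the dual of the sign-reversed orthogonal complement of $R(S)$ inside $E_8$, again a function only of the embedding. The ``up to lattice automorphisms'' qualifier absorbs the ambiguity in choosing the identification $W(S) \cong E_8$, i.e.~pulling back the embedding along an element of $O(E_8)$.

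The main obstacle in this chain is the identification $\MWL(S) \cong (L(S)^-)^\vee$, which rests on showing that the orthogonal projection $\varphi : E_8 \to L(S) \otimes \Q$ surjects onto $L(S)^\vee$ with kernel $\tilde R(S)$. Injectivity of the induced map $E_8/\tilde R(S) \to L(S)^\vee$ is immediate from the fact that $\tilde R(S) = L(S)^\perp \cap E_8$; surjectivity is the content of the unimodularity of $E_8$, via the standard discriminant-form gluing correspondence which identifies $\tilde R(S)^\vee/\tilde R(S)$ with $L(S)^\vee/L(S)$ and realises every element of $L(S)^\vee$ as the projection of a lattice vector in $E_8$. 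Once this surjectivity is granted, the height pairing formula of \ref{ss:height} transports the negative of the $E_8$-pairing on $L(S) \otimes \Q$ to the Mordell-Weil pairing, and the theorem follows.
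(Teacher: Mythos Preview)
Your proof is correct and follows essentially the same approach as the paper: the paper explicitly points to $\MWL(S)\cong (L(S)^-)^\vee$ (valid since $\NS(S)$ is unimodular) together with Theorem~\ref{Thm:E-NS} as the ingredients from which the result is deduced, and you have unwound these into $E(K)\cong E_8/R(S)$ and $L(S)=R(S)^\perp\cap E_8$. Your final paragraph supplying the discriminant-form gluing argument for surjectivity onto $(L(S)^-)^\vee$ is more detail than the paper provides here, but it is precisely the content of the referenced \cite[Thm.~10.3]{ShMW}.
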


\subsection{}
The previous theorem creates the possibility to classify the Mordell-Weil lattices of rational elliptic surfaces by purely lattice-theoretic means. We sum up the relevant results.

\begin{Theorem}[Dynkin]
\label{Thm:Dynkin-root}
There are exactly 70 root lattices $R\neq 0, E_8$ embedding into $E_8$.
\end{Theorem}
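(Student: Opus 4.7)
The plan is to reduce the claim to a finite combinatorial enumeration, relying on two classical inputs. First, by the Cartan--Killing classification, every root lattice is an orthogonal direct sum of irreducible root lattices, each of type $A_n$ ($n\geq 1$), $D_m$ ($m\geq 4$), or $E_6, E_7, E_8$; so the isomorphism class of an abstract root lattice $R$ is specified by the multiset of its irreducible summands, and the question reduces to which multisets are realised by a sub-root-system of $E_8$. Second, by the Borel--de Siebenthal procedure, every sub-root-system of a simple root system $L$ is conjugate under the Weyl group $W(L)$ to one obtained by deleting at least one node from the extended Dynkin diagram $\tilde L$; iterating the procedure on each connected component of the resulting diagram then produces every sub-root-system of $L$.

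Applying this to $E_8$, the first pass through Borel--de Siebenthal on $\tilde E_8$ yields the five maximal proper sub-root-systems
\[
 D_8,\quad A_8,\quad E_7\oplus A_1,\quad E_6\oplus A_2,\quad A_4\oplus A_4.
\]
One then iterates on each irreducible summand using the standard extended diagrams $\tilde A_n$ (a cycle of length $n+1$), $\tilde D_m$ and $\tilde E_6, \tilde E_7, \tilde E_8$, together with the marks of the simple roots that label which node may be deleted. The process terminates since the total rank is bounded by $8$, and it exhausts all sub-root-systems because any non-maximal one is contained in a maximal one, which itself sits inside one of the five systems above.

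The final step is to group the resulting diagrams by isomorphism type of the associated root lattice $\bigoplus_i T_{v_i}$, discard the two excluded cases $R=0$ and $R=E_8$, and tally. The answer is $70$.

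The main obstacle is bookkeeping rather than conceptual difficulty: two different descent sequences can lead to isomorphic root lattices, so one must guard against overcounting; conversely, one must check that no embedding type is missed, which is precisely what Borel--de Siebenthal (applied iteratively) guarantees. The verification that the final tally is exactly $70$ is an elementary but careful hand-enumeration, which is in essence the combinatorial content attributed to Dynkin.
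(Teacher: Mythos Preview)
The paper does not prove this theorem; it is quoted as a classical result of Dynkin and then used as input for the classification of Mordell--Weil lattices of rational elliptic surfaces (the count $70$ feeds into the $74$ cases of \cite{OS}). So there is no ``paper's own proof'' to compare against.

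Your outline is essentially Dynkin's original method and is sound. A couple of small points of precision: what you call Borel--de Siebenthal in its standard form classifies the \emph{maximal-rank} closed sub-root-systems (delete exactly one node from $\tilde L$); the passage to arbitrary sub-root-systems is then obtained by iterating on each irreducible piece and by allowing further node deletions, as you say. You should also note explicitly that in the simply-laced case root sublattices of $E_8$ coincide with closed sub-root-systems (every norm-$2$ vector of the sublattice is already a root of $E_8$), which is what makes the Lie-theoretic enumeration relevant to the lattice statement. Finally, the theorem counts abstract isomorphism types of $R$, whereas the iterative procedure naturally produces $W(E_8)$-orbits of embeddings; the paper records that five types ($A_7$, $A_3^2$, $A_5\oplus A_1$, $A_3\oplus A_1^2$, $A_1^4$) occur in two orbits each, so your ``group by isomorphism type'' step is where those coincidences must be detected. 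With these clarifications, the argument is correct; the residual work is, as you say, careful bookkeeping.
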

Here the embedding $R\hookrightarrow E_8$ is unique up to lattice automorphisms unless
\[
 R= A_7, A_3^2, A_5 \oplus A_1, A_3 \oplus A_1^2, A_1^4
\]
For each ambiguous root lattice, there are two different families of (generally semi-stable) rational elliptic surfaces associated: one with torsion sections and one without.
We have already seen this for $R=A_7$ in \ref{sss:8}.

On the other hand, there are three root lattices embedded into $E_8$ as in Thm.~\ref{Thm:Dynkin-root} which are not compatible with the rational elliptic surfaces requirement that
\[
 e(R(S)) \leq e(S) = 12.
\]
These root lattices are $A_1^4 \oplus D_4, A_1^8, A_1^7$. In consequence, there are $(70+2+5-3)=74$ possible embeddings of root lattices $R(S)$ of rational elliptic surfaces $S$ into $E_8$ up to lattice automorphism. In each case, \cite{OS} determines the exact shape of $\MWL(S), \MWL(S)^0$ and $E(K)_\text{tor}$. 

\subsection{Corollaries}

Without details, we note the following corollaries of the classification of Mordell-Weil lattices of rational elliptic surfaces in \cite{OS}.

\begin{Corollary}
\label{Cor:MW-RES}
 For a rational elliptic surface, $E(K)$ takes the following shapes as an abstract group:
$$
\begin{array}{cc}
 E(K)_\text{tor} & r=\mbox{rank } E(K)\\
\hline
\{0\} & 0\leq r\leq 8\\
\Z/2\Z & 0\leq r\leq 4\\
\Z/3\Z & 0 \leq r\leq 2\\
(\Z/2\Z)^2 & 0\leq r\leq 2\\
\Z/4\Z & 0\leq r\leq 1\\
\Z/5\Z & 0\\
\Z/6\Z & 0\\
\Z/4\Z\times\Z/2\Z & 0\\
(\Z/3\Z)^2 & 0
\end{array}
$$
\end{Corollary}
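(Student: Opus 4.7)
The strategy is to translate both rank and torsion of $E(K)$ into lattice-theoretic data inside $E_8$, then apply the classification of root sublattices. For any rational elliptic surface $S$, the N\'eron-Severi lattice is unimodular of rank $10$, with the orthogonal complement of $\langle \bar O,F\rangle$ isomorphic to $E_8$. Since the trivial lattice decomposes as $T(S)=\langle \bar O,F\rangle\oplus R(S)$ and $\langle \bar O,F\rangle$ is already unimodular and primitively embedded, the Shioda--Tate formula (Cor.~\ref{Cor:ST}) and Cor.~\ref{Cor:T-bar} give the two key identities
\[
 \text{rank}\,E(K)=8-\text{rank}\,R(S),\qquad E(K)_{\text{tor}}\cong R(S)'_{E_8}/R(S),
\]
where $R(S)'_{E_8}$ is the primitive closure of $R(S)$ in $E_8$. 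Thus the problem reduces to a purely lattice-theoretic question about root sublattices of $E_8$.

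First I would handle the torsion-free case: every rank from $0$ to $8$ occurs, with $r=8$ realised precisely when $R(S)=0$, i.e.\ when $S$ has only irreducible fibres (e.g.\ a generic cubic pencil; cf.~\ref{ss:MWL_RES}), and smaller ranks realised by Lemma~\ref{Lem:ex_conf} or direct embeddings of root lattices into $E_8$. For each prescribed non-trivial torsion group $T_0$, I would then determine the minimum possible rank of a root lattice $R\subset E_8$ whose primitive closure $R'$ satisfies $R'/R\supseteq T_0$; by the first formula above, $\text{rank}\,E(K)\leq 8-\text{rank}(R_{\min}(T_0))$. Conversely, the upper bounds are attained by an explicit rational elliptic surface: the extremal cases come from the Miranda--Persson list (Section~\ref{s:RES-extr}) and the intermediate ranks from deformations (Lemma~\ref{Lem:ex_conf}) or from universal elliptic curves with torsion (Section~\ref{s:torsion}).

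The core computation is the case-by-case lattice analysis. For instance, $\Z/5\subseteq R'/R$ forces $R\supseteq A_4\oplus A_4$ of rank $8$, since $A_4$ is the only irreducible root lattice of discriminant divisible by $5$ that embeds in $E_8$, and an isotropic element of order $5$ in the discriminant form requires two such summands; hence $r=0$. Similarly $\Z/6$ demands $A_1\oplus A_2\oplus A_5$ (rank $8$), $\Z/4\oplus\Z/2$ demands $A_1^{\oplus 2}\oplus A_3^{\oplus 2}$ (rank $8$), and $(\Z/3)^2$ demands $A_2^{\oplus 4}$ (rank $8$). For $\Z/4$ the minimum is $A_3\oplus A_1$ (rank $4$, used by $[2,4,1^*]$ after deformation), but together with the parity constraint on admissible embeddings into $E_8$ one checks it must actually be rank $7$, giving $r\le 1$. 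The remaining cases ($\Z/3$, $(\Z/2)^2$, $\Z/2$) are handled the same way by locating the smallest root sublattice of $E_8$ whose glue group contains the prescribed subgroup.

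The main obstacle will be the lower-bound half of the lattice computation: showing that no root sublattice of $E_8$ of smaller rank can produce the required torsion quotient. In principle this is forced by Dynkin's classification of the $70$ proper root sublattices of $E_8$ (Thm.~\ref{Thm:Dynkin-root}), combined with the restriction $e(R(S))\leq 12$ coming from the Euler number of a rational elliptic surface; in practice one inspects the discriminant group of each candidate and checks which cyclic or bicyclic subgroups it contains. The existence part is then immediate from the extremal surfaces (rank $0$ entries), from the universal families of Section~\ref{s:torsion} (intermediate-rank torsion entries), and from Corollary~\ref{Cor:5-tuples} together with its generalisation for more fibres (torsion-free entries of rank $\leq 8$).
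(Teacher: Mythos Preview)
Your approach is exactly the one underlying the paper's treatment: the corollary is stated there without proof as a consequence of the Oguiso--Shioda classification \cite{OS}, which proceeds by embedding $R(S)$ into $E_8$ via Dynkin's list (Thm.~\ref{Thm:Dynkin-root}) and reading off rank and torsion from each of the 74 admissible embeddings. Your two identities $\mbox{rank}\,E(K)=8-\mbox{rank}\,R(S)$ and $E(K)_{\text{tor}}\cong R(S)'/R(S)$ are correct, and your handling of the large-torsion cases ($\Z/5$, $\Z/6$, $(\Z/3)^2$, $\Z/4\times\Z/2$) is sound.

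The one passage that needs repair is the $\Z/4$ case. The lattice $A_3\oplus A_1$ cannot produce $\Z/4$ torsion: its discriminant group is $\Z/4\times\Z/2$ with form $-\tfrac34\oplus-\tfrac12$, and no element of order $4$ is isotropic (the candidates $(1,0)$ and $(1,1)$ have $q\equiv-\tfrac34$ and $q\equiv\tfrac34\bmod 2\Z$), so no even integral overlattice of index divisible by $4$ exists. Your reference to ``$[2,4,1^*]$'' is also off; the extremal configuration with $\Z/4$ is $[1,4,1^*]$, with root lattice $A_3\oplus D_5$ of rank $8$. The genuine minimum is rank $7$, realised by $A_1\oplus A_3\oplus A_3$ (configuration $[1,1,2,4,4]$, cf.~11.9.5): there the element $(1,1,1)\in\Z/2\times\Z/4\times\Z/4$ satisfies $q=-\tfrac12-\tfrac34-\tfrac34=-2\equiv 0$, generating an isotropic $\Z/4$ whose corresponding overlattice is $E_7$, primitively embedded in $E_8$. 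The ``parity constraint'' you invoke is nothing other than this isotropy condition on the discriminant form, and you should name it as such; once you do, a short scan of the rank-$\leq 6$ root sublattices of $E_8$ (all have discriminant group either lacking order-$4$ elements or with non-isotropic ones) confirms that rank $7$ is indeed minimal.
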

Note that for all torsion groups (except for $(\Z/3\Z)^2$ where one can use the Hesse pencil as in \cite{ShEMS}) we have already seen examples in \ref{ss:3-tor} -- \ref{ss:univ-2} -- often even with the right number of parameters $r=$rank $E(K)$.

\begin{Corollary}
\label{Cor:11.8}
For a rational elliptic surface, $E(K)=\langle P; h(P)\leq 2\rangle$.
\end{Corollary}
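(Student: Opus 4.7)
The plan is to exploit the isometry $\MWL(S) \cong (L(S)^-)^\vee$, which holds precisely when $\NS(S)$ is unimodular—the situation for rational elliptic surfaces. Under this isometry the class of a section $P$ corresponds to its orthogonal projection $\varphi(P)$, and the height becomes $h(P) = |\varphi(P)|^2$ measured in the positive-definite dual lattice. Hence it suffices to show that $(L(S)^-)^\vee$ is generated as an abelian group by vectors of norm at most $2$.

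First I would invoke the structural description $\NS(S)\cong \langle \bar O,F\rangle \oplus E_8$ from \ref{ss:MWL_RES}, which identifies the frame $W(S)$ with $E_8$ and realises $L(S)^-$ as the orthogonal complement $R(S)^\perp$ inside $E_8$. Since $L(S)^-$ is primitively embedded and $E_8$ is unimodular, the standard gluing principle for unimodular lattices guarantees that the orthogonal projection
\[
\pi:\;\; E_8 \twoheadrightarrow (L(S)^-)^\vee
\]
is surjective, with kernel $R(S)$; equivalently, it induces the natural isomorphism of discriminant groups $E_8/(R(S)\oplus L(S)^-) \cong (L(S)^-)^\vee/L(S)^-$.

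Next, $E_8$ is generated by its $240$ roots, all of norm $2$, and orthogonal projection never increases norm. Therefore the images $\pi(\alpha)$ of these roots generate $(L(S)^-)^\vee$ and each satisfies $|\pi(\alpha)|^2\leq 2$. Translating back via $\MWL(S)\cong (L(S)^-)^\vee$, for each root $\alpha$ we obtain a class $[P_\alpha]\in E(K)/E(K)_{\text{tor}}$ with $\varphi(P_\alpha)=\pi(\alpha)$ and hence $h(P_\alpha)=|\pi(\alpha)|^2\leq 2$. These classes generate $E(K)/E(K)_{\text{tor}}$, while $E(K)_{\text{tor}}$ consists entirely of sections of height $0$ (they lie in the kernel of $\varphi$). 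Combining the two contributions gives $E(K)=\langle P;\; h(P)\leq 2\rangle$.

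The main obstacle is the surjectivity of $\pi$ onto the \emph{full} dual lattice $(L(S)^-)^\vee$, not merely into $L(S)^-$. It rests on two inputs: primitivity of the embedding $L(S)^-\hookrightarrow E_8$, which follows because $L(S) = T(S)^\perp$ is automatically primitive in $\NS(S)$ (and hence in the summand $E_8$); and unimodularity of $E_8$, which ensures that the quotient $E_8/(R(S)\oplus L(S)^-)$ maps isomorphically onto the discriminant group $(L(S)^-)^\vee/L(S)^-$. Without the unimodularity of $\NS(S)$, one would only obtain an embedding $\MWL(S)\hookrightarrow (L(S)^-)^\vee$ of possibly smaller index, and the height bound of $2$ would no longer suffice in general—so the rationality of $S$ is essential to the conclusion.
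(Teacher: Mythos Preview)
Your argument is correct and gives a direct conceptual proof, whereas the paper simply records the statement as a corollary of the full classification in \cite{OS}: there one lists all $74$ admissible embeddings $R(S)\hookrightarrow E_8$, reads off $\MWL(S)$ in each case, and verifies the generation claim by inspection. Your route avoids this case analysis entirely by exploiting two structural facts simultaneously---that $E_8$ is generated by roots and that orthogonal projection onto a primitive sublattice of a unimodular lattice surjects onto the dual. This is more elegant and makes transparent why the bound is exactly $2=2\chi(S)$, whereas the classification approach has the advantage of yielding the much sharper information recorded in \cite{OS} (the precise shape of each $\MWL(S)$, torsion, etc.) as a by-product.

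One small inaccuracy worth noting: the kernel of $\pi:E_8\to (L(S)^-)^\vee$ is not $R(S)$ but its primitive closure $R(S)'$ inside $E_8$; these differ precisely when $E(K)$ has torsion (cf.~Cor.~\ref{Cor:T-bar}). This does not affect your proof, since you only use surjectivity of $\pi$, and that follows from the primitivity of $L(S)^-$ (which, as you observe, is automatic for an orthogonal complement) together with the unimodularity of $E_8$.
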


Even with the inequality $h(P)\leq 2$ replaced by $h(P)\leq 2\chi(S)$, the above generation result does not hold true in general (cf.~\cite[Appendix 1]{OS}).

Actually, there is a stronger result than Corollary \ref{Cor:11.8}
that follows from the classification in \cite{OS} (cf.~\cite[Thm 3.4]{Sh-int}):


\begin{Proposition}
 For a rational elliptic surface, 
\[
E(K)=\langle P; \bar P.\bar O=0\rangle.
\]
\end{Proposition}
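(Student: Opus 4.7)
The plan is to strengthen Corollary \ref{Cor:11.8} by refining the choice of height-$\leq 2$ generators to sections disjoint from $\bar O$. The starting point is the explicit height formula from \ref{ss:height}: for a rational elliptic surface ($\chi(S) = 1$),
$$ h(P) \;=\; 2 + 2\bar P.\bar O - \sum_v \text{contr}_v(P). $$
Thus every section $P$ with $\bar P.\bar O = 0$ automatically satisfies $h(P) \leq 2$, and the set $\{P \in E(K) : \bar P.\bar O = 0\}$ is a distinguished subfamily of the height-$\leq 2$ sections. Writing $E(K)_0$ for the subgroup of $E(K)$ that they generate, the proposition asserts $E(K)_0 = E(K)$.

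By Corollary \ref{Cor:11.8}, it suffices to prove that every section $P$ with $h(P) \leq 2$ lies in $E(K)_0$. For torsion sections this is immediate from Remark \ref{Rem:pec}, since a torsion section is disjoint from $\bar O$ whenever the characteristic does not divide its order (the residual small-characteristic cases can be absorbed into the classification below). For a non-torsion section $P$ with $h(P) \leq 2$ and $\bar P.\bar O \geq 1$, the constraint forces $\sum_v \text{contr}_v(P) \geq 2\bar P.\bar O \geq 2$, and inspection of the correction-term table in \ref{ss:height} shows that $P$ must meet non-zero components of several reducible fibres in a fairly restricted pattern (since each individual $\text{contr}_v$ is strictly less than $2$ except for the simple components at far distance from $\bar O$ in large $I_n^*$ fibres).

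To conclude, I would invoke the classification of Mordell-Weil lattices of rational elliptic surfaces from \cite{OS}, which exhibits, for each of the $74$ admissible embeddings $R(S) \hookrightarrow E_8$, an explicit set of generators for $E(K)$ together with their intersection data. A case-by-case inspection, parallel to the worked-out examples in \ref{sss:8} and the ensuing subsections, verifies that in every instance the generators can be chosen from $E(K)_0$. The structural reason is that the narrow Mordell-Weil lattice $\MWL(S)^0$ is isomorphic to the orthogonal complement $L(S)^- = R(S)^\perp \subset E_8$, and every vector of norm $2$ in $L(S)^-$ is realised by a narrow section with $\bar P.\bar O = 0$ and $h(P) = 2$. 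The dual $\MWL(S) = (L(S)^-)^\vee$ is then obtained by adjoining further glue classes which, via the decomposition $\NS(S) \cong U \oplus E_8$ together with the fact that $E_8$ is spanned by its $240$ roots, are representable by additional disjoint sections meeting non-zero fibre components in the pattern dictated by the embedding $R(S) \hookrightarrow E_8$.

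The main obstacle is the last step: for a generator of $\MWL(S)$ produced by the classification that naturally appears with $\bar P.\bar O \geq 1$, one must either replace it by a disjoint section representing the same Mordell-Weil class (adjusting by torsion, which itself lies in $E(K)_0$), or exhibit a supplementary disjoint section whose difference absorbs the excess intersection with $\bar O$. The lattice-theoretic principle that each root of $E_8$ either lies in $R(S)$ or projects onto a class in $\MWL(S)$ representable by a section disjoint from $\bar O$ makes this verification systematic in each of the $74$ types; carrying it out to completion is essentially a bookkeeping exercise built on the tables of \cite{OS}.
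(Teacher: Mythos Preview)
Your approach is essentially the same as the paper's: the paper does not give an argument beyond stating that the result ``follows from the classification in \cite{OS} (cf.~\cite[Thm 3.4]{Sh-int})'', and your proposal ultimately rests on the same case-by-case verification across the $74$ types. The reduction via Corollary~\ref{Cor:11.8} is a harmless detour, and your structural remark about roots of $E_8$ (each root either lies in $R(S)$ or arises as $\varphi(\bar P)$ for a section with $\bar P.\bar O=0$) is the right intuition, though you correctly recognise that making it rigorous still comes down to the \cite{OS} tables.
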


\subsection{Integral sections}
\label{ss:int-surf}

The set in the preceeding two corollaries are always finite: For the points with bounded height, this follows from the general property of definite lattices of finite rank.
For the second set, we only note that the sections disjoint from $O$ have bounded height $h\leq 2\chi(S)$.
It is this principle that allows us to generalise Siegel's theorem for elliptic curves over $\Z$.
Here we only have to define the right notion of integrality.

\smallskip

For an elliptic curve in Weierstrass form, the rational points other than $O$ are given by two rational functions 
\[
 P=(X,Y),\;\;\; X,Y\in k(C).
\]
Here we can always make the Weierstrass form integral, i.e.~after some transformations the coefficients $a_i$ live in the coordinate ring $k[C]$.
Then any notion of an integral section should certainly require $X,Y\in k[C]$.
We call such a section \textbf{polynomial}.
Note that 
\[
 X,Y\not\in k[C] \Rightarrow \bar P.\bar O>0,
\]
since $P$ and $O$ meet at the poles of $X, Y$ -- which in fact have to be common poles since the Weierstrass form is integral.
In other words, generally any section $P$ on an integral elliptic curve over $k(C)$ admits polynomials $X_0, Y_0, Z\in k[C]$ such that
\[
 P=(X_0/Z^2, Y_0/Z^3).
\]
However, this only reflects integrality in the chosen affine model. 
So $P$ could still have poles in the other affine charts -- or attain poles by minimalising.
Therefore it is convenient to write the coefficients $X, Y$ as homogeneous functions of degree $2\chi$ resp.~$3\chi$ over $C$.
On a globally minimal integral model, it thus becomes immediate that an affine polynomial section has a pole at $\infty$ if and only if 
\[
\deg(X)>2\chi, \;\;\deg(Y)>3\chi.
\]
Thus we call the section $P$ \textbf{integral} if and only if it is polynomial and respects the degree bounds after minimalising.
Equivalently, $\bar P.\bar O=0$.
Since integral sections have bounded height $h\leq 2\chi$ as we have seen, we deduce the analogue of Siegel's theorem:

\begin{Theorem}
Any integral elliptic curve admits only finitely many integral sections.
\end{Theorem}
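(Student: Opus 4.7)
The plan is to derive an a priori upper bound on the height of an integral section, then to invoke the finiteness of points of bounded norm in the positive-definite Mordell-Weil lattice. Essentially everything needed has already been assembled in Section \ref{s:MWL}, so no new input specific to the surface, the base curve, or the singular fibres should enter.

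First, I would use the characterisation from \ref{ss:int-surf} that, on a globally minimal integral model, $P$ is integral if and only if $\bar P.\bar O = 0$. Substituting this into the explicit height formula of \ref{ss:height},
\[
 h(P) = 2\chi(S) + 2\,\bar P.\bar O - \sum_v \mbox{contr}_v(P),
\]
and using that every local correction term $\mbox{contr}_v(P)$ is non-negative (visible from the table in \ref{ss:height}), I obtain the uniform bound $h(P) \leq 2\chi(S)$ valid for \emph{every} integral section $P \in E(K)$.

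Next, Theorem \ref{Thm:MW} guarantees that $E(K)$ is finitely generated, so $E(K)_{\mbox{tor}}$ is finite and $\MWL(S)$ has finite rank. By the discussion following \ref{ss:MWL-ortho}, the orthogonal projection $\varphi$ embeds $\MWL(S)$ into $\tfrac{1}{m}L(S)^-$, where $L(S)^-$ is a positive-definite integral lattice of finite rank and $m$ is the integer from \ref{ss:narrow_MW}. Since a positive-definite integral lattice of finite rank contains only finitely many elements with norm bounded by any fixed constant, and since multiplying by $m$ reduces boundedness in $\MWL(S)$ to boundedness in $L(S)^-$, the set $\{P \in E(K): h(P) \leq 2\chi(S)\}$ is finite. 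The integral sections form a subset of this set, which proves the claim.

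The argument is mostly routine bookkeeping once the height formula and the Mordell-Weil theorem are available; the only mild subtlety, certainly not a serious obstacle, is the rational-valuedness of the height pairing, which forces one to pass to the integral sublattice $m\,\MWL(S)$ before quoting the standard finiteness of lattice points in a ball. Conceptually, the proof exactly parallels the classical descent-plus-bounded-height argument for Siegel's theorem over $\Z$, with the Mordell-Weil lattice structure replacing the more technical archimedean and non-archimedean height machinery needed in the number-field setting.
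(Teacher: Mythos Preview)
Your proof is correct and follows the same approach as the paper: integral sections are exactly those with $\bar P.\bar O=0$, so the height formula gives $h(P)\leq 2\chi(S)$, and finiteness follows from positive-definiteness of the Mordell-Weil lattice. The extra care you take in passing to the integral sublattice $m\,\MWL(S)$ to handle rational heights is a fair elaboration of what the paper leaves implicit.
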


Note that we can even allow the sections to have denominators of bounded degree.
More precisely, if the bound for the $x$-coordinate is $2d$ and for the $y$-coordinate $3d$, then there still is a height inequality $h\leq 2\chi+2d$.
Hence there are only finitely many such sections. 

\smallskip

Integral points are sometimes also called everywhere integral to stress integrality in all affine charts.
They reveal interesting relations to Gr\"obner bases and deformations of singularities.
The situation on rational elliptic surfaces was recently studied in \cite{Sh-int}.

For polynomial sections, finiteness does only hold over fields of characteristic zero (cf.~\cite[\S III.12]{Si0}).
In contrast, positive characteristic sees finiteness results for polynomial sections without further assumptions failing.
For an example in every characteristic $p>3$ (where polynomial sections are in fact called integral), see for instance \cite[Prop.~9.1]{SS}.

\subsection{Finiteness revisited}
\label{ss:finite'}

With the above finiteness results for the Mordell-Weil group, we can revisit Shafarevich's finiteness problem for elliptic curves.
Recall the set-up of Thm.~\ref{Thm:Shafa} and \ref{Thm:Shafa'} that predicted good reduction outside a finite set of places $\mathcal S$.
Over $\Q$ (or rather $\Z$ after switching to an integral model), Shafarevich's line of proof was to consider all possible discriminants $\Delta$ -- there are only finitely many.
Working with an integral Weierstrass forms
\[
S:\;\;\; y^2 = x^3 - 3A\,x-2B,\;\;\; A,B\in\Z
\]
we have $\Delta = 12^3\,(A^3-B^2)$. Regrouping terms, this reads
\begin{eqnarray}
\label{eq:Shafa}
S':\;\;\; B^2 = A^3 - \Delta/12^3
\end{eqnarray}
The latter describes an elliptic curve in the affine $A,B$-plane.
By Siegel's theorem, this curve only has finitely many points with bounded denominator.

\smallskip

For elliptic surfaces over a fixed curve $C$ in characteristics $\neq 2,3$, we can pursue exactly the same approach (with $\Z$ replaced by the coordinate ring $k[C]$).
Then the Weierstrass form (\ref{eq:Shafa}) defines an (isotrivial) elliptic surface $S'$ over $k[C]$.
As a little subtlety, this very Weierstrass form may not be minimal, since $\Delta$ may have roots of high multiplicity;
however, this issue does not cause any problems with integral points: 
they attain bounded denominators upon minimalising, but the height stays bounded.
In particular, any elliptic surface over $C$ with discriminant $\Delta$ can be considered as an (almost) integral section on $S'$.
By \ref{ss:int-surf}, these sections are finite in number.

\subsection{Galois invariance}

We conclude this investigation of Mordell-Weil lattices by noting two properties that are of relevance in arithmetic settings.
These properties refer to the two ways of extending the function field $k(C)$: by extension of the base field $k$ or base change $C'\to C$.

We retain the usual notation of an elliptic curve $E$ over the functions field $K=k(C)$ of a projective curve $C$.
Let $\bar k$ denote the algebraic closure of $k$.
Tacitly, we shall denote the base extension of $E$ from $k(C)$ to $\bar k(C)$ also by $E$ (distinguished by the respective sets of rational points). 

\begin{Proposition}
For any $\sigma\in\mbox{Gal}(\bar k/k)$ and for any points $P,Q\in E(\bar k(C))$, the height pairing is Galois-invariant:
\[
 P^\sigma, Q^\sigma\in E(\bar k(C)) \;\;\; \text{ with }\;\;\; <P^\sigma, Q^\sigma> = <P,Q>.
\] 
\end{Proposition}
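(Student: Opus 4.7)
The plan is to reduce the claim to the explicit formula for the height pairing in~\ref{ss:height} and check that each term is Galois-invariant. First I would base-change to $\bar k$, writing $S_{\bar k} = S\times_{\mathrm{Spec}\,k}\mathrm{Spec}\,\bar k$, and let $\sigma\in\mathrm{Gal}(\bar k/k)$ act on $S_{\bar k}$ through the second factor. Since this action is by scheme automorphisms it preserves intersection numbers on $\NS(S_{\bar k})$, and under the dictionary of~\ref{ss:s-p} the divisor $\bar P^\sigma$ corresponds exactly to the Galois translate $P^\sigma\in E(\bar k(C))$.

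Next I would run through the summands in
\[
\langle P,Q\rangle = \chi(S) + \bar P.\bar O + \bar Q.\bar O - \bar P.\bar Q - \sum_v \mathrm{contr}_v(P,Q).
\]
The arithmetic genus is a numerical invariant. The zero section $\bar O$ and the general fibre class $F$ descend to $\NS(S)$, hence are pointwise $\sigma$-fixed, so the three intersection numbers $\bar P.\bar O$, $\bar Q.\bar O$ and $\bar P.\bar Q$ are preserved when $P,Q$ are replaced by $P^\sigma,Q^\sigma$. For the local contributions the key point is that $\sigma$ permutes the reducible places $v\in R$ by $v\mapsto v^\sigma$ and induces an isomorphism $F_v\cong F_{v^\sigma}$ respecting the Dynkin diagram and sending the component met by $\bar P$ to the component met by $\bar P^\sigma$. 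Since $\mathrm{contr}_v(P,Q)$ depends only on the fibre type and the indices of the components hit (cf.~the table in~\ref{ss:height}), the identity $\mathrm{contr}_v(P,Q)=\mathrm{contr}_{v^\sigma}(P^\sigma,Q^\sigma)$ shows that the sum over $R$ is merely reindexed by $\sigma$.

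Combining these invariances yields $\langle P^\sigma,Q^\sigma\rangle=\langle P,Q\rangle$. More conceptually, one can argue that the trivial lattice $T(S_{\bar k})$ is $\mathrm{Gal}(\bar k/k)$-stable as a subset of $\NS(S_{\bar k})$, so the orthogonal projection $\varphi$ of~\ref{ss:MWL-ortho} is Galois-equivariant and the formula $\langle P,Q\rangle=-\varphi(\bar P).\varphi(\bar Q)$ gives invariance at once. The step that actually requires care is precisely this Galois-stability of $T(S_{\bar k})$: one must check that the reducible locus is Galois-stable, which is immediate because it is cut out by the $k$-rational discriminant, and that at each place the non-identity components of $F_v$ are permuted compatibly with their intersection behaviour, reflecting the functoriality of the Kodaira--N\'eron model under base change. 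This is the main (and essentially only) obstacle; everything else is a formal consequence.
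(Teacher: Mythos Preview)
Your proposal is correct. In fact your ``more conceptual'' paragraph at the end \emph{is} the paper's proof: the paper argues that $T(S)$ is Galois-stable (the action may permute fibres and components, but fixes $T(S)$ as a whole), hence the orthogonal projection $\varphi$ satisfies $\varphi(P^\sigma)=\varphi(P)^\sigma$, and then $\langle P,Q\rangle=-\varphi(\bar P).\varphi(\bar Q)$ together with Galois-invariance of intersection numbers gives the result. Your primary route via the explicit height formula is a legitimate alternative that unwinds the same content term by term; it is slightly more hands-on but buys nothing extra, since the invariance of each summand (in particular the reindexing $\mathrm{contr}_v(P,Q)=\mathrm{contr}_{v^\sigma}(P^\sigma,Q^\sigma)$) is precisely the concrete manifestation of the Galois-stability of $T(S)$.
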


The proof of the proposition builds on the associated elliptic surface $S\to C$ over $k$. The crucial property is that the orthogonal projection $\varphi$ commutes with the Galois action:
\[
 \varphi(P^\sigma) = \varphi(P)^\sigma \;\;\forall \; P\in E(\bar k(C)).
\]
The reason for this commutativity lies in the fact that the trivial lattice $T(S)$ is Galois invariant: the Galois action may permute singular fibres and/or fibre components, but it fixes $T(S)$ as a whole.

In consequence, the proposition follows from the Galois-invariance of intersection numbers on projective varieties.

\begin{Remark}
The assumption in \cite[Prop.~8.13]{ShMW} that the base field $k$ be perfect is not necessary.
\end{Remark}

\subsection{Base change}
\label{ss:MWL-base_change}

Assume we are given a morphism of projective curves $C'\to C$ of degree $d$ over some field $k$ (not necessarily algebraically closed). By base change, any elliptic curve $E$ over $k(C)$ can also be interpreted as an elliptic curve over $k(C')$.
In particular, we can consider $k(C)$-rational points on $E$ as elements in $E(k(C'))$.
How does this base change effect the height pairing (specified by the function field over which we regard $E$)?

\begin{Proposition}
\label{Prop:base}
In the above notation, let $P, Q\in E(k(C))$. Then
\[
 <P,Q>_{E(k(C'))} = d\,<P,Q>_{E(k(C))}.
\]
\end{Proposition}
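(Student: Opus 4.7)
The plan is to exploit the geometric definition of the height pairing via the orthogonal projection in $\NS(S)_{\Q}$: the base change $\pi\colon C'\to C$ induces a morphism $\tilde{\pi}\colon S'\to S$ of Kodaira-N\'eron models of the same degree $d$, and the projection formula
\[
\tilde{\pi}^{*}D_1\cdot\tilde{\pi}^{*}D_2 \;=\; d\,(D_1\cdot D_2)
\]
for surjective morphisms of smooth projective surfaces already yields the expected scaling factor $d$. The morphism $\tilde{\pi}$ factors as $S'\to S\times_C C'\to S$ where the first arrow is the minimal desingularisation of the fibre product; in the generic situation, i.e.~when no fibre of $S$ undergoes an improvement of reduction under $\pi$, this composition is a bona fide morphism of degree~$d$.

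The first key observation is that for every $P\in E(k(C))$ the section $\bar{P}\subset S$ avoids the singular points of each fibre, and hence the singularities of $S\times_C C'$, so a local computation yields $\tilde{\pi}^{*}\bar{P}=\bar{P'}$, where $\bar{P'}$ denotes the corresponding section on $S'$. The second is that $\tilde{\pi}^{*}$ maps $T(S)_{\Q}$ into $T(S')_{\Q}$---because $\tilde{\pi}^{*}\bar{O}=\bar{O'}$, $\tilde{\pi}^{*}F=dF'$, and every non-zero fibre component pulls back to a $\Q$-linear combination of fibre components governed by Table~\ref{Tab:ram}---while symmetrically $\tilde{\pi}_{*}$ maps $T(S')_{\Q}$ into $T(S)_{\Q}$.

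These compatibilities combine to give the crucial identity $\tilde{\pi}^{*}\varphi_S(\bar{P})=\varphi_{S'}(\bar{P'})$ in $\NS(S')_{\Q}$: both sides are congruent to $\bar{P'}$ modulo $T(S')_{\Q}$ by the previous paragraph, and both are orthogonal to $T(S')_{\Q}$, since for any $u\in T(S')_{\Q}$ the projection formula gives
\[
\tilde{\pi}^{*}\varphi_S(\bar{P})\cdot u \;=\; \varphi_S(\bar{P})\cdot\tilde{\pi}_{*}u \;=\; 0,
\]
using that $\tilde{\pi}_{*}u\in T(S)_{\Q}$ and $\varphi_S(\bar{P})\perp T(S)_{\Q}$ by the universal property in \ref{ss:MWL-ortho}. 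Combining this with the projection formula applied to $\varphi_S(\bar{P})$ and $\varphi_S(\bar{Q})$ yields
\[
\langle P,Q\rangle_{E(k(C'))} \;=\; -\varphi_{S'}(\bar{P'})\cdot\varphi_{S'}(\bar{Q'}) \;=\; -d\,\varphi_S(\bar{P})\cdot\varphi_S(\bar{Q}) \;=\; d\,\langle P,Q\rangle_{E(k(C))}.
\]

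The main obstacle is the case where $\pi$ improves the reduction at some place, so that the pulled-back Weierstrass equation becomes non-minimal at $v'\in\pi^{-1}(v)$ (for instance, a type $\II$ fibre becoming $\I_0$ under ramification of index~$6$). Then $\tilde{\pi}$ exists only as a morphism from the minimal desingularisation $\tilde{X}$ of $S\times_C C'$, and $\tilde{X}\to S'$ contracts the $(-1)$-curves that arise precisely at the fibres where reduction has improved. The remedy is to run the entire argument verbatim on $\tilde{X}$ in place of $S'$, and to observe that the lifts of sections from $E(k(C))$ are disjoint from all contracted $(-1)$-curves, so that the height pairing computed on $\tilde{X}$ agrees with the intrinsic pairing on $S'$ for such sections.
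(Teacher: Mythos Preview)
Your proof is correct and follows the same strategy as the paper's: reduce to the projection formula for the degree-$d$ map between the Kodaira--N\'eron models and show that pullback commutes with the orthogonal projection $\varphi$. The paper verifies the latter by first restricting to the narrow Mordell--Weil group $E(K)^0$ and extending by bilinearity, whereas you check the universal property directly via the push-pull identity and $\tilde{\pi}_* T(S')_\Q \subset T(S)_\Q$; the paper's reduction has the side benefit of rendering the non-minimal case immediate (sections in $E(K)^0$ lie in the smooth locus of the Weierstrass model throughout, hence are untouched by resolution or contraction), but both routes are valid.
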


In essence, this property goes back to the corresponding elliptic surfaces $S', S$ with rational map $g:S'\dashrightarrow S$ of degree $d$.
For any $D,E\in\NS(S)$, we have
\[
 g^*(D).g^*(E) = d\, D.E
\]
Hence we only have to prove that pull-back from $E(k(C))$ resp.~$S$ commutes with the orthogonal projection $\varphi$.
This can be seen by restricting to the narrow Mordell-Weil group $E(K)^0$ and then applying the universal property of $\varphi$ and bilinearity.
The precise details are given in the proof of \cite[Prop.~8.12]{ShMW}.

\section{Elliptic K3 surfaces}
\label{s:K3}

Elliptic fibrations are maybe most prominently featured in the theory of K3 surfaces.
For example, K3 surfaces are singled out by the property that the same surface may admit more than one elliptic fibration (Lem.~\ref{Lem:two-fibr}).
On the other hand, there are also several conjectures concerning algebraic surfaces which are not known in general, but for elliptic K3 surfaces with section.
Throughout this section, we will explore selected aspects of elliptic K3 surfaces, 
mainly concerning existence and classification questions.
We start by reviewing some of the general theory before going into elliptic K3 surfaces (cf.~\cite{BHPV} and the references therein).

\subsection{K3 surfaces}

\begin{Definition}
A smooth surface $X$ is called K3 if
\begin{itemize}
 \item 
the canonical bundle is trivial,
\item
$h^1(X,\OO_X)=0$.
\end{itemize}
\end{Definition}

Mostly we will consider projective K3 surfaces.
The name supposedly refers to K\"ahler, Kummer and Kodaira (the reasons for these references will become clear soon).
The original definition albeit goes back to A.~Weil who considered all smooth surfaces with the same differentiable structure as a quartic in $\PP^3$.
Over $\C$, all K3 surfaces are diffeomorphic and deformation equivalent.
In particular, they are simply connected,
as conjectured by Weil and Andreotti.
The conjecture was proved by Kodaira in \cite{Kodaira-structure},
using the theory of analytic elliptic surfaces.

\begin{Example}
\begin{enumerate}[(i)]
 \item
Quartics in $\PP^3$ -- either smooth or with isolated rational double point singularities after a minimal (crepant) desingularisation;
\item
smooth intersections of degree $(2,3)$ in $\PP^4$ or $(2,2,2)$ in $\PP^5$;
\item
double sextics: double coverings of $\PP^2$ branched along a sextic curve (smooth or after resolving isolated rational double point singularities);
\item
Kummer surfaces: starting from an abelian surface $A$, we consider the quotient by the involution $-1$ and resolve the 16 ordinary double points on $A/\langle-1\rangle$ (see Example \ref{Ex:Kummer}).
\end{enumerate}
\end{Example}

The Hodge diamond of a K3 surface is easily computed with Serre duality and Noether's formula:
$$
\begin{array}{ccccc}
 && 1 &&\\
& 0 && 0 &\\
1 && 20 && 1\\
& 0 && 0 &\\
&& 1 &&
\end{array}
$$

\subsection{K3 lattice}

We deduce that a K3 surface has second Betti number $b_2=22$. Cup-product equips
 $H^2(X,\Z)$ with the structure of an integral lattice of rank $22$. Often this lattice is called \textbf{K3 lattice} and denoted by $\Lambda$.
The following properties of $\Lambda$ are well-known:
\begin{itemize}
 \item $\Lambda$ is unimodular by Poincar\'e-duality;
\item
$\Lambda$ has signature $(3,19)$ by the topological index theorem;
\item
$\Lambda$ is even by Wu's formula since the first Chern class is even.
\end{itemize}
Hence the classification of even unimodular  lattices implies that
\[
 \Lambda \cong U^3  \oplus  E_8^2.
\]
A \textbf{marking} $\phi$ of a K3 surface $X$ is given by an explicit isomorphism
\[
 \phi: H^2(X,\Z) \stackrel{\cong}{\longrightarrow} \Lambda.
\]

\subsection{Torelli theorem}

Lattice theory is a crucial tool to study K3 surfaces. 
Its importance was pointed out in the pioneering work by Pjatetki-Shapiro and Shafarevich 
\cite{PSS}.
Recall that an isometry of lattices is an isomorphism of $\Z$-modules preserving the intersection form.
A Hodge isometry is moreover required to preserve the Hodge decomposition after $\C$-linear extension.

\begin{Theorem}[Weak Torelli Theorem]
Two complex K3 surfaces $X, X'$ are isomorphic if and only if there exists a Hodge isometry
\[
 \phi: H^2(X,\Z) \stackrel{\cong}{\longrightarrow} H^2(X',\Z).
\]
\end{Theorem}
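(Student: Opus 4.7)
The plan is to separate the statement into two directions. The forward direction is immediate: any isomorphism $f: X \to X'$ of K3 surfaces induces $f^*: H^2(X',\Z) \to H^2(X,\Z)$, which is an isometry by topological invariance of cup product and a Hodge isometry by the functoriality of the Hodge decomposition under holomorphic maps.

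For the converse, the natural strategy is to reduce to the Strong Torelli Theorem of Pjatetki-Shapiro--Shafarevich \cite{PSS} (and Burns--Rapoport in the non-algebraic case): a Hodge isometry $\phi: H^2(X,\Z) \to H^2(X',\Z)$ that sends some K\"ahler class of $X$ to a K\"ahler class of $X'$ is induced by a unique isomorphism $X' \to X$. So given an arbitrary Hodge isometry $\phi$, it suffices to modify it, by composing on either side with Hodge isometries known to come from automorphisms, until its action on the real $(1,1)$-part sends the K\"ahler cone into the K\"ahler cone.

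The key input for this reduction is the chamber decomposition of the positive cone $\mathcal{C}_X \subset H^{1,1}(X,\R)$. The walls are the hyperplanes $\delta^\perp$ for $\delta \in \mathrm{NS}(X)$ with $\delta^2 = -2$; by Riemann--Roch on a K3 surface each such $\delta$ or $-\delta$ is effective, and reflections $s_\delta$ in effective nodal classes are realised by Hodge isometries coming from automorphisms (composing with $\pm 1$ to fix the connected component). The K\"ahler cone $\mathcal{K}_X$ is then a fundamental domain for the Weyl group $W(X) = \langle s_\delta \rangle$ acting on the connected component of $\mathcal{C}_X$ containing $\mathcal{K}_X$. Thus, after possibly composing $\phi$ with $-\mathrm{id}$ to match the components of the positive cones, there exists $w \in W(X')$ with $w \circ \phi (\mathcal{K}_X) = \mathcal{K}_{X'}$. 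Strong Torelli applied to $w \circ \phi$ then supplies the desired isomorphism $X' \cong X$.

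The main obstacle is of course the Strong Torelli Theorem itself, which rests on the surjectivity of the period map and the local Torelli theorem, and lies outside the scope of a survey on elliptic surfaces; I would simply quote it. The subtle bookkeeping point in the reduction is to verify that reflections in $(-2)$-classes genuinely lift to geometric automorphisms (or to $\pm$ such) rather than only to abstract lattice isometries — this is where the effectivity statement from Riemann--Roch, combined with the fact that smooth rational curves on $X$ give automorphisms of the cohomology via Picard--Lefschetz, is essential.
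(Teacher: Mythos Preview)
The paper does not actually prove the Weak Torelli Theorem; as a survey it merely states the result and cites \cite{PSS}, with a brief remark on the relation to the strong form. So there is no ``paper's own proof'' to compare against.

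Your overall plan (reduce to Strong Torelli via the chamber structure of the positive cone) is the standard one, but there is a genuine error in how you implement it. You claim that reflections $s_\delta$ in $(-2)$-classes are realised by automorphisms of the K3 surface, and single this out as the ``subtle bookkeeping point''. This is false: an automorphism of $X$ must preserve the K\"ahler cone, whereas $s_\delta$ sends $\mathcal{K}_X$ to the adjacent chamber across the wall $\delta^\perp$. The Picard--Lefschetz transformation you allude to is a monodromy diffeomorphism in a degenerating family, not a biholomorphism of the smooth fibre.

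Fortunately, you do not need this. The only property of the Weyl group elements that is required is that they are \emph{Hodge} isometries, which is immediate since each $\delta\in\NS(X)\subset H^{1,1}$ and hence $s_\delta$ acts trivially on $\T(X)$. The correct logic is: given $\phi$, choose $w\in W(X')$ (and possibly $-\mathrm{id}$) so that $\psi:=w\circ\phi$ maps $\mathcal{K}_X$ to $\mathcal{K}_{X'}$; Strong Torelli then provides an isomorphism $f:X'\to X$ with $f^*=\psi$, and in particular $X\cong X'$. The Weak Torelli statement only asks for \emph{some} isomorphism, not one inducing the original $\phi$, so there is no need to undo $w$ geometrically.
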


A Hodge isometry is called effective if it preserves the K\"ahler cone (i.e.~if $X$ ia algebraic, then the isometry preserves ample divisors).
The strong Torelli Theorem extends the weak Torelli Theorem to the point that the isomorphism $f: X'\stackrel{\cong}{\longrightarrow} X$ is uniquely determined by the condition $f^*=\phi$ if $\phi$ is effective.

The statements made in this paragraph are valid for all K3 surfaces, also non-algebraic ones (which are nonetheless K\"ahler by a theorem of Siu \cite{Siu}).

\subsection{Moduli}
\label{ss:period}

K3 surfaces admit a beautiful moduli theory. Over $\C$, they can be studied through the \textbf{period map}.
Here one fixes a nowhere vanishing holomorphic $2$-form $\omega_X$ up to scalars
and chooses a basis $\{\gamma_1,\hdots,\gamma_{22}\}$ of $H^2(X,\Z)$.
Then the period point $p_X$ associated to the K3 surface $X$ is
\[
 p_X = \left[ \int_{\gamma_1} \omega_X,\hdots,\int_{\gamma_{22}}\omega_X \right]\in\PP^{21}.
\]
In fact, the image of the period map lies in a open subset of $\PP^{20}$
as 
\[
 \int \omega_X\wedge\omega_X =0,\;\;\; \int \omega_X\wedge\bar\omega_X >0.
\]
This 20-dimensional space is called the \textbf{period domain} $\Omega$ of K3 surfaces.
The nice moduli behaviour of K3 surfaces is illustrated by the following fact:
If $X$ varies in a local complete family of K3 surfaces, parametrised by a complex manifold $\mathcal M$, then there is a local isomorphism $\mathcal M \to \Omega$.
For a global description, the degeneration at the boundary has to be taken into account, cf.~\cite{Kuli}.


Now we specialise to projective K3 surfaces. Let $\gamma\in H^2(X,\Z)$ be algebraic. Then
\[
 \int_\gamma \omega_X=0,
\]
since the integral over $\omega_X$ vanishes along any irreducible curve in $X$.
In the moduli space, this yields the intersection of $\Omega$ with a hyperplane.
Thus algebraic K3 surfaces have 19-dimensional moduli, and we achieve a stratification of the moduli space in terms of the Picard number (note that $\rho\leq 20$ in characteristic zero).

\subsection{Polarisation}

On an algebraic K3 surface, one fixes a polarisation, i.e.~an ample divisor $H$ with self-intersection $H^2=2\,d$.
For instance, the hyperplane section on a quartic in $\PP^3$ is an ample divisor of self-intersection $4$.
Similarly, on a double sextic the hyperplane section on $\PP^2$ pulls back to an ample divisor of self-intersection $2$. 
Then one commonly considers moduli of polarised K3 surfaces.

\smallskip

One can extend polarisations to the notion of \textbf{lattice polarisations}.
Here we let $M$ denote a non-degenerate even integral lattice of signature $(1,r)$. 
A K3 surface $X$ is $M$-polarised if there is a primitive embedding
\[
 M \hookrightarrow \NS(X).
\]

\begin{Proposition}[{\cite[Cor.~1.9]{Mo}}]
\label{Prop:lp}
Let $M$ be an even non-degenerate integral lattice of signature $(1,r)$. If $M$ has a primitive embedding into $\Lambda$, then $M$-polarised complex K3 surfaces admit a local moduli space of dimension $19-r$.
\end{Proposition}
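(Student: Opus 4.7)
My plan is to identify the $M$-polarised period domain explicitly as a locally closed subvariety of the projectivisation of $M^{\perp}_{\Lambda} \otimes \C$, compute its dimension by an elementary count, and then appeal to a relative local Torelli theorem to conclude.

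First I would set up the signatures. Since $M$ has signature $(1,r)$, and hence rank $r+1$, while $\Lambda$ has signature $(3,19)$, a primitive embedding $M \hookrightarrow \Lambda$ has orthogonal complement $N := M^{\perp}_{\Lambda}$ of rank $21-r$ and signature $(2,19-r)$. For any $M$-polarised K3 surface $X$ equipped with a marking $\phi: H^2(X,\Z) \stackrel{\cong}{\to} \Lambda$ compatible with the chosen embedding, every class $\gamma \in M \subseteq \NS(X)$ is algebraic, so as recalled in \ref{ss:period} the period satisfies $\int_\gamma \omega_X = 0$. Hence the class of $\omega_X$ in $\Lambda \otimes \C$ lies in $N \otimes \C$.

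This motivates defining the $M$-polarised period domain
\[
\Omega_M \;=\; \{\, [\omega] \in \PP(N \otimes \C) \mid \omega \cdot \omega = 0,\ \omega \cdot \bar\omega > 0 \,\}.
\]
Since $\PP(N \otimes \C)$ has complex dimension $20-r$, the quadric $\omega \cdot \omega = 0$ cuts out a smooth hypersurface of dimension $19-r$ (smoothness coming from non-degeneracy of the form on $N$), while the positivity condition is open and non-empty precisely because $N$ has two positive eigenvalues. Thus $\Omega_M$ is a smooth complex manifold of dimension $19-r$.

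The final step is to produce a local period map from the moduli space of marked $M$-polarised K3 surfaces to $\Omega_M$ and to show that it is a local isomorphism. Starting from the classical local Torelli theorem recalled in \ref{ss:period}, which is a local isomorphism between a $20$-dimensional Kuranishi family of marked K3 surfaces and the full period domain $\Omega \subset \PP(\Lambda \otimes \C)$, I would cut down to those deformations under which each class of $M$ stays of type $(1,1)$. Each of the $r+1$ generators of $M$ imposes one independent linear Hodge-theoretic condition on the period, so $M$-polarised deformations form a smooth submanifold of codimension $r+1$, and the restricted period map lands in $\Omega_M$. The main technical obstacle is to verify that these deformations form a smooth family of the expected dimension and that the restricted period map is still a local isomorphism; both ultimately rest on unobstructedness of K3 deformations (Bogomolov--Tian--Todorov) together with transversality of the $r+1$ Hodge conditions on the Kuranishi space, which is essentially the content of \cite{Mo}.
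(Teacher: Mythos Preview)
The paper does not supply its own proof of this proposition; it simply records the statement with a citation to Morrison \cite[Cor.~1.9]{Mo}. Your argument is the standard one and matches the approach implicit in the paper's surrounding discussion (\ref{ss:period}): restrict the period map to the orthogonal complement of $M$ in $\Lambda$, compute the dimension of the resulting period domain, and invoke local Torelli. The signature and dimension counts are correct, and you rightly flag that the remaining technical content (unobstructedness and transversality of the Hodge conditions) is deferred to \cite{Mo}.
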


\subsection{Positive characteristic}

In positive characteristic, the moduli question has a completely different flavour since there can be \textbf{supersingular} K3 surfaces with $\rho=22$. 
Surprisingly, the supersingular K3 surfaces themselves admit a nice moduli theory which was discovered by Artin in \cite{Artin} and greatly extended by  Ogus \cite{Ogus}.
Over $\bar\F_p$, the \textbf{Tate conjecture} \cite{Tate-C} implies that any K3 surface should have even Picard number.
One reason (but certainly not the only one) to consider K3 surfaces admitting an elliptic fibration with section is that
  the Tate conjecture holds true for them by \cite{ASD}.

Remarkably there are also Torelli theorems for K3 surfaces in positive characteristic.
This has been proved in the supersingular case by Ogus \cite{Ogus-Torelli} and in the ordinary case by Nygaard \cite{Ny}.
Proposition \ref{Prop:lp} has an analogue for ordinary K3 surfaces in positive characteristic in the context of canonical liftings, see
\cite{De}, \cite{Nygaard}.

\subsection{Elliptic K3 surfaces}

An elliptic K3 surface $X$ always has base curve $\PP^1$ by Thm.~\ref{Thm:Pic-var}.
If $X$ admits a section, then by \ref{ss:IP^1} there is a globally minimal generalised Weierstrass form
\[
X:\;\;  y^2 + a_1 \,x\,y+ a_3\,y \, = \, x^3 + a_2 \,x^2 + a_4 \,x+a_6\;\;\; a_i\in k[t],\;\;\deg(a_i)\leq 2i.
\]
To give an impression of the possibilities, we discuss two instructive examples.

\begin{Example}
\label{Ex:RES-K3}
One can derive elliptic K3 surfaces $X$ from rational elliptic surfaces $S$ by quadratic base change over $\PP^1$. 
Here one only has to avoid ramification at non-reduced fibres.
The quadratic twist $S'$ of $S$ at the two ramification points is also K3.
Over $\C$, the K3 surfaces $X, S'$ are isogenous in a certain sense by \cite{Inose}. 
In particular, they have the same Picard number.
Note that $\rho(X)\geq 10$ by inspection of the trivial lattice and Prop.~\ref{Prop:base}.
\end{Example}

\begin{Example}
\label{Ex:Kummer}
Let $E, E'$ denote elliptic curves. 
Consider the Kummer surface $X$ for $E\times E'$ (outside characteristic two).
There are several simple models for $X$.
For instance, if $E, E'$ are given in Weierstrass form
\[
 E:\;\;\; y^2 = f(x),\;\;\; \text{ and }\;\;\; E':\;\;\; y^2 = g(x),
\]
then $X$ can be  defined as the desingularisation of the double sextic
\[
 X:\;\;\; y^2 = f(x)\,g(x').
\]
On the other hand, the projections onto the two factors induce elliptic fibrations of $X$. Projection onto $E'$ yields the  isotrivial elliptic fibration
\begin{eqnarray}\label{eq:Kummer}
 X:\;\;\; g(t)\,y^2 = f(x)
\end{eqnarray}
which is readily converted to an extended Weierstrass form.
This elliptic fibration has four singular fibres of type $\I_0^*$ at the zeroes of $g$ and $\infty$.
There is full two-torsion in the Mordell-Weil group given by $y=0$ and the zeroes of $f(x)$.

The Picard number of the Kummer surface $X$ is determined by $E, E'$ as follows:
\[
 \rho(X) =16+ 2 + \mbox{rank Hom}(E,E'). 
\]
Over $\C$, the Picard number thus only depends on the properties whether $E, E'$ are isogenous and have complex multiplication (CM):
\begin{eqnarray}\label{eq:rho}
\rho(X)=\begin{cases}
18, & \text{if $E, E'$ are not isogenous;}\\
19, & \text{if $E, E'$ are isogenous, but do not have CM};\\
20, & \text{if $E, E'$ are isogenous and have CM.}
             \end{cases}
\end{eqnarray}
In each case, the trivial lattice stays the same:
\[
 T(X) = U  \oplus  D_4^4.
\]
Comparing with Cor.~\ref{Cor:ST}, we deduce that the Mordell-Weil group of the given isotrivial fibrations has rank 0 resp.~1 resp.~2 in the above three cases.
In Example \ref{Ex:Kummer2}, we will investigate further elliptic fibrations on such Kummer surfaces.
\end{Example}


\subsection{Quasi-elliptic fibrations}

In characteristic $2$ and $3$, smooth projective surfaces may admit quasi-elliptic fibrations: 
i.e.~there is a morphism to a projective curve such that the general fibre is a cuspidal rational curve.
In other characteristics, the smoothness assumption for the surface prevents the general fibre from being singular.
A detailed account of the theory of quasi-elliptic surfaces can be found in the book by Cossec and Dolgachev \cite{CD}.

Although many concepts from the theory of elliptic surfaces carry over to quasi-elliptic surfaces,
there are a few exceptional properties:
\begin{itemize}
 \item 
Any quasi-elliptic surface $X\to C$ with section admits a purely inseparable cover of degree $p$ by a ruled surface.
In particular, if $C\cong\PP^1$, then $X$ is unirational and thus $\rho(X)=b_2(X)$.
\item
$X$ admits only certain additive fibre types depending on the characteristic.
\item
There is a different Euler number formula than Thm.~\ref{Thm:Euler_number}, since the general fibre has $e(F)=2$.
Accordingly, there is a different notion of discriminant to detect singular fibres.
\item
The sections admit the structure of a finite  abelian group $\MW(X)$.
This group is in fact $p$-elementary, but its length may exceed two.
\item
The structure theorems from section \ref{s:NS} apply.
For instance, they give 
\[
\rho(X)=\mbox{rank}(T(X)),\;\; \MW(X)=T(X)'/T(X).
\]
\end{itemize}
Thus any ``genus one fibration'' with a section of infinite order is necessarily elliptic.
To verify this, it suffices to compute the height of the section as in \ref{ss:height}.

\begin{Example}
Consider a Kummer surface of product type as in Ex.~\ref{Ex:Kummer}.
If the cubics $f, g$ are defined over some number field and reduce nicely modulo (a prime above) $2$ without multiple zeroes, then (\ref{eq:Kummer}) defines a quasi-elliptic K3 surface with section over $\bar\F_2$.
In contrast to the usual situation, there are five singular fibres of type $\I_0^*$, namely at the zeroes of $g(t)$ including  $\infty$ and at the (double) zero of the formal derivative of $g(t)$.
\end{Example}

\subsection{}
\label{ss:L1}

It will be very convenient to have a characterisation of elliptic fibrations on K3 surfaces in terms of lattice theory.
Recall that any fibre $F$ of an elliptic surface has self-intersection $F^2=0$.
On a K3 surface $X$, the converse statement also holds true.
For ease of notation, we shall employ the following convention throughout the next two paragraphs:

\smallskip

\textbf{Disclaimer for \ref{ss:L1}, \ref{ss:Kondo}:}
In characteristic $2$ and $3$, the notion of elliptic fibration includes quasi-elliptic fibrations.

\smallskip

This disclaimer is necessary since from the abstract information given in the propositions and corollaries, it is in general impossible to detect whether the fibration is elliptic or quasi-elliptic.

\begin{Proposition}[{\cite[\S3, Thm.~1]{PSS}}]
\label{Prop:D^2=0}
A K3 surface $X$ admits an elliptic fibration (not necessarily with section) if and only if there is a divisor $0\neq D\in\NS(X)$ with $D^2=0$.
\end{Proposition}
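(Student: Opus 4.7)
The plan is to dispense with the easy direction and then build the fibration from $D$ in three stages: make $D$ effective by Riemann--Roch, replace it by a nef representative via the Weyl group, and apply the Mayer--Saint-Donat base-point-free pencil theorem.

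The forward direction is immediate: if $f: X\to\PP^1$ is an (quasi-)elliptic fibration, then $D:=F$, the class of a fibre, satisfies $F\neq 0$ and $F^2=0$ since two distinct fibres are disjoint. For the reverse direction, suppose $0\neq D\in\NS(X)$ with $D^2=0$. On a K3 surface $K_X\sim 0$ and $\chi(\OO_X)=2$, so Riemann--Roch for a line bundle $L$ with $c_1(L)=D$ gives
\[
\chi(L)=2+\tfrac12 D^2=2,
\]
and Serre duality yields $h^2(L)=h^0(-L)$. Hence $h^0(D)+h^0(-D)\geq 2$, and after replacing $D$ by $-D$ if needed we may assume $D$ is effective.

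Next I would reduce to a nef class. Any integral curve $C$ on $X$ with $C^2<0$ is forced by adjunction and $K_X\sim 0$ to be a smooth rational curve with $C^2=-2$. If $D$ is not nef, there exists such a $(-2)$-curve $C$ with $D\cdot C<0$; the reflection $s_C:D\mapsto D+(D\cdot C)C$ is an isometry, preserves effectivity (since $C$ must be a component of $D$), and strictly decreases $D\cdot H$ for a fixed ample class $H$. The Weyl group generated by the $s_C$ acts on the positive cone with the nef cone as fundamental domain, and the monotonicity of $D\cdot H$ guarantees that after finitely many reflections one obtains an effective nef class $D'$ with $(D')^2=0$. Writing $D'=kF$ with $F$ primitive and nef, the Mayer--Saint-Donat theorem \cite[Ch.~2, Prop.~3.10]{BHPV} asserts that on a K3 surface every nef line bundle of self-intersection $0$ has vanishing $H^1$ and its primitive part defines a base-point-free pencil. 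Thus $|F|$ is a base-point-free pencil yielding a surjection $\varphi_{|F|}:X\to\PP^1$.

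It remains to verify that $\varphi_{|F|}$ is a genus-one fibration. By adjunction on a K3,
\[
p_a(F)=1+\tfrac12(F^2+F\cdot K_X)=1,
\]
so every fibre has arithmetic genus one; in characteristic $\neq 2,3$ a smooth general member yields an elliptic fibration, while in characteristics $2$ and $3$ the general fibre may be a cuspidal rational curve, giving a quasi-elliptic fibration (this is precisely the content of the disclaimer).

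The main obstacle is the reduction to a nef effective representative in the second step: one must verify that iterated reflection in effective $(-2)$-curves terminates while keeping the class effective and of square zero. All the other pieces --- Riemann--Roch, adjunction, and the Mayer--Saint-Donat base-point-free pencil theorem --- are off-the-shelf tools on K3 surfaces, whereas the Weyl-group manipulation is the essential geometric input that allows one to pass from a mere numerical condition $D^2=0$ to an actual morphism.
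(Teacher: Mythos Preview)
Your proof is correct and follows essentially the same approach as the paper. The paper gives only a one-sentence sketch --- ``apply reflections and inversions to $D$ until it becomes effective; then the linear system $|D|$ induces the elliptic fibration'' --- and you have filled in precisely these details: the inversion $D\mapsto -D$ combined with Riemann--Roch to achieve effectivity, reflections in $(-2)$-curves to reach a nef class, and the Mayer--Saint-Donat analysis of $|D|$ to extract the pencil.
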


The main idea to prove the proposition is to apply reflections and inversions to $D$ until it becomes effective.
Then the linear system $|D|$ induces the elliptic fibration.

\begin{Corollary}
\label{Cor:ell-K3}
Any K3 surface with Picard number $\rho\geq 5$ admits an elliptic fibration.
\end{Corollary}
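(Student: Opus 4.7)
The plan is to reduce the statement to Proposition \ref{Prop:D^2=0} and then invoke Meyer's theorem on indefinite quadratic forms. By Proposition \ref{Prop:D^2=0}, it suffices to exhibit a nonzero divisor class $D\in\NS(X)$ with $D^2=0$. So the entire task is lattice-theoretic: find an isotropic vector in $\NS(X)$.

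First I would collect the relevant properties of $\NS(X)$ for a K3 surface. The Hodge index theorem tells us that $\NS(X)$ has signature $(1,\rho(X)-1)$, hence is indefinite as soon as $\rho(X)\geq 2$. Moreover, $\NS(X)$ embeds as a sublattice of $H^2(X,\Z)\cong \Lambda = U^3\oplus E_8^2$, which is an even lattice, so $\NS(X)$ itself is an even integral lattice. Under the hypothesis $\rho(X)\geq 5$, we therefore have an even integral lattice of rank at least $5$ and indefinite signature.

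Next I would apply Meyer's theorem: any nondegenerate indefinite integral quadratic form in at least $5$ variables represents zero nontrivially over $\Q$. Applied to $\NS(X)\otimes\Q$, this yields a nonzero vector $v\in \NS(X)\otimes\Q$ with $v\cdot v=0$. Clearing denominators produces a nonzero primitive integral class $D\in\NS(X)$ with $D^2=0$. Proposition \ref{Prop:D^2=0} then supplies the desired (possibly quasi-elliptic in characteristics $2,3$, as in the disclaimer of \ref{ss:L1}) fibration.

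The only real substantive step is the appeal to Meyer's theorem, which is the standard Hasse--Minkowski input for isotropy of indefinite rational quadratic forms in $\geq 5$ variables. In characteristic zero this is entirely classical; in positive characteristic the argument is identical since it depends only on the $\Q$-quadratic form given by the intersection pairing on $\NS(X)$. One small point to be careful about is that $D$ must be nonzero in $\NS(X)$, not just in $\NS(X)\otimes \Q$, but primitivisation makes this automatic.
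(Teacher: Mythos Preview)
Your proof is correct and follows essentially the same route as the paper: reduce to Proposition~\ref{Prop:D^2=0}, observe that $\NS(X)$ is indefinite of rank at least five, and invoke the classical fact (Meyer's theorem) that such a form represents zero nontrivially. The paper states this in one line (``it is well-known that any such quadratic form represents zero''), while you spell out the Hodge-index and denominator-clearing steps explicitly, but the argument is the same.
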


By assumption, $\NS(X)$ is an indefinite lattice of rank at least five. 
It is well-known that any such quadratic form represents zero.
Hence the corollary follows from Prop.~\ref{Prop:D^2=0}.
The general situation on algebraic K3 surfaces is as follows:
$$
\begin{array}{lcl}
\rho\geq 5 && \text{elliptic fibration}\\
\rho=2,3,4 && \text{depends on $X$}\\
\rho=1 && \text{no elliptic fibration}
\end{array}
$$

\subsection{}
\label{ss:Kondo}
The previous considerations did not address the question of a section.
However, one can easily obtain a more precise description from \cite[\S3, Thm.~1]{PSS}:

\begin{Proposition}
\label{Prop:PSS}
Let $X$ be a K3 surface.
Assume that $D$ is an effective divisor on $X$ that has the same type as a singular fibre of an elliptic fibration.
Then $X$ admits a unique elliptic fibration with $D$ as a singular fibre.
Moreover, any irreducible curve $C$ on $X$ with $D.C=1$ induces a section of the elliptic fibration. 
\end{Proposition}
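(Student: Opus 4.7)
\medskip

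\noindent\textbf{Proof plan.} The strategy is to construct the elliptic fibration directly from the linear system $|D|$, modelled on the proof of Proposition \ref{Prop:D^2=0}. First I would verify that $D^2=0$. Writing $D=\sum n_i\,\Theta_i$ with components $\Theta_i$ arranged according to one of the extended Dynkin diagrams $\tilde A_m,\tilde D_m,\tilde E_m$ of \ref{ss:Dynkin}, the multiplicities $n_i$ are precisely those dictated by $D.\Theta_j=0$ for every $j$ (this is how the $n_i$ are characterised in Lemma \ref{Lem:Dynkin} and the multiplicity formula following it). Summing against $D$ itself then yields $D^2=0$.

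Next, Riemann--Roch on the K3 surface $X$ gives
\[
\chi(\OO_X(D))=\chi(\OO_X)+\tfrac{1}{2}D.(D-K_X)=2,
\]
because $K_X\sim 0$ and $D^2=0$. Since $D$ is effective and non-zero and $X$ is projective, $-D$ cannot be effective, so Serre duality forces $h^2(X,\OO_X(D))=h^0(X,\OO_X(-D))=0$. Hence $h^0(X,\OO_X(D))\geq 2$, and $|D|$ contains a positive-dimensional family of effective divisors. I would then argue that $|D|$ is in fact a one-dimensional base-point-free linear system with connected general member: a fixed part would produce distinct effective representatives with positive mutual intersection (contradicting $D^2=0$ in the light of the shape of $D$), while any base-point-free system $|M|$ with $M^2=0$ must map $X$ to a curve, forcing $\dim|D|=1$. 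Stein factorisation of the resulting rational map then produces a morphism $\pi:X\to\PP^1$ with connected fibres, each linearly equivalent to $D$. Adjunction on a general smooth fibre $F$ gives $2p_a(F)-2=F^2+F.K_X=0$, so $p_a(F)=1$, and $\pi$ is an elliptic (or, in characteristic $2,3$, quasi-elliptic) fibration; since $D$ lies in $|D|$, it is itself one of the fibres.

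For the section statement, let $C$ be an irreducible curve on $X$ with $D.C=1$. Numerical equivalence of fibres gives $C.F=C.D=1$ for every fibre $F$ of $\pi$, so $\pi$ restricts to a surjective morphism $C\to\PP^1$ of degree one. This is birational, and by Zariski's main theorem applied to the smooth target (as in \ref{ss:s-p}) it is an isomorphism; hence $C$ defines a section of $\pi$. Uniqueness of the fibration is immediate: any elliptic fibration $\pi':X\to\PP^1$ having $D$ as a fibre has all of its fibres in the pencil $|D|$, and therefore coincides with $\pi$ up to an automorphism of $\PP^1$.

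The main obstacle I anticipate is the middle step, where one has to rule out fixed components, positive-dimensional base loci, and any higher-dimensional behaviour of $|D|$. The delicate point is that $D^2=0$ alone is not enough; one must combine it with the precise structure of $D$ as a connected configuration of $(-2)$-curves of Kodaira type, which is essentially the content of the analysis in \cite{PSS}. Once $|D|$ is shown to be a base-point-free pencil, everything else follows smoothly from adjunction, numerical equivalence of fibres, and Zariski's main theorem.
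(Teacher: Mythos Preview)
Your proposal is correct and follows the standard route that the paper itself merely references (via \cite[\S3, Thm.~1]{PSS}) without spelling out: compute $D^2=0$ from the extended Dynkin combinatorics, apply Riemann--Roch on the K3 to get $h^0(D)\ge 2$, and show $|D|$ is a base-point-free pencil inducing the fibration; the section and uniqueness statements then drop out exactly as you describe. The paper gives no independent argument beyond pointing to \cite{PSS} and the linear-system idea already sketched for Proposition~\ref{Prop:D^2=0}, so your write-up is in fact more detailed than what appears there.
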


Yet another equivalent way to phrase this abstractly is to ask for an embedding of the hyperbolic plane $U$ into $\NS(X)$:
\[
U\hookrightarrow \NS(X).
\]
If $\NS(X)=U \oplus M$, then $M$ is related to the induced elliptic fibration by \cite[Lem.~2.1]{Kondo}: Let $F$ denote any fibre. Then
\[
 M \cong F^\bot/\langle F\rangle \subset \NS(X).
\]
More specifically, if $M$ decomposes into an orthogonal sum of root lattices associated to Dynkin diagrams, then they correspond to singular fibres of the fibration by a result of Kond\=o \cite[Lem.~2.2]{Kondo}.
This last statement can also be viewed as a special case of Lem.~\ref{Lem:frame-root}.

\begin{Example}
\label{Ex:Kummer2}
On the Kummer surface of $E\times E'$ in Ex.~\ref{Ex:Kummer} (char$(k)\neq 2$), we have singled out 24 rational curves on the two isotrivial fibrations: the components of the four $\I_0^*$ fibres and the torsion sections.
This configuration is also known as the double Kummer pencil.
In Fig.~\ref{Fig:2Km}, the double components and two-torsion sections are represented by the vertical and horizontal lines.

One can easily find further divisors of Kodaira type.
For instance in \cite{SI}, a $\II^*$ fibre is found together with a section and two orthogonal effective divisors which yield further non-reduced singular fibres (see \ref{ss:SI}).
The next figure sketches an elliptic fibration with two $\IV^*$ fibres which are marked in blue and red.
For this fibration, all other curves from the double Kummer pencil serve as sections.
\end{Example}

\begin{figure}[ht!]
\begin{center}
\setlength{\unitlength}{1.08mm}
\begin{picture}(60, 47)
%
%

{\color{red}
\put(7, 10){\line(1, 0){41}}}

{\color{blue}
\put(7, 20){\line(1, 0){41}}
\put(7, 30){\line(1, 0){41}}
\put(7, 40){\line(1, 0){41}}}
%
%
{\color{blue}
\put(10, 5.1){\line(0, 1){4.4}}
\put(10, 10.6){\line(0, 1){8.9}}
\put(10, 20.6){\line(0, 1){8.9}}
\put(10, 30.6){\line(0, 1){8.9}}
\put(10, 40.6){\line(0, 1){4.4}}}

{\color{red}
\put(20, 5.1){\line(0, 1){4.4}}
\put(20, 10.6){\line(0, 1){8.9}}
\put(20, 20.6){\line(0, 1){8.9}}
\put(20, 30.6){\line(0, 1){8.9}}
\put(20, 40.6){\line(0, 1){4.4}}
\put(30, 5){\line(0, 1){4.4}}
\put(30, 10.6){\line(0, 1){8.9}}
\put(30, 20.6){\line(0, 1){8.9}}
\put(30, 30.6){\line(0, 1){8.9}}
\put(30, 40.6){\line(0, 1){4.4}}
\put(40, 5.1){\line(0, 1){4.4}}
\put(40, 10.6){\line(0, 1){8.9}}
\put(40, 20.6){\line(0, 1){8.9}}
\put(40, 30.6){\line(0, 1){8.9}}
\put(40, 40.6){\line(0, 1){4.4}}}
%
%

\put(8, 6){\line(1,1){5.5}}
{\color{blue}
\put(8, 16){\line(1,1){5.5}}
\put(8, 26){\line(1,1){5.5}}
\put(8, 36){\line(1,1){5.5}}}
%
{\color{red}
\put(18, 6){\line(1,1){5.5}}}
\put(18, 16){\line(1,1){5.5}}
\put(18, 26){\line(1,1){5.5}}
\put(18, 36){\line(1,1){5.5}}
%

{\color{red}
\put(28, 6){\line(1,1){5.5}}}
\put(28, 16){\line(1,1){5.5}}
\put(28, 26){\line(1,1){5.5}}
\put(28, 36){\line(1,1){5.5}}
%

{\color{red}
\put(38, 6){\line(1,1){5.5}}}
\put(38, 16){\line(1,1){5.5}}
\put(38, 26){\line(1,1){5.5}}
\put(38, 36){\line(1,1){5.5}}
%
\end{picture}
\end{center}
\vskip -.3cm
\caption{Double Kummer pencil}\label{fig:Km}
\label{Fig:2Km}
\end{figure}
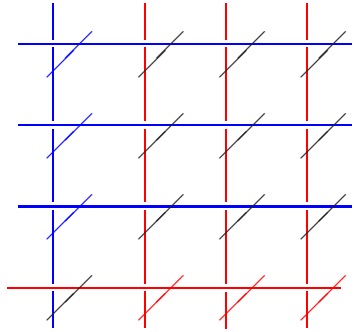

\subsection{}
With the tools exploited in the preceeding sections, we shall aim at two different kinds of classifications of elliptic K3 surfaces:
\begin{itemize}
 \item 
Configurations of singular fibres on all elliptic K3 surfaces with section;
\item
Elliptic fibrations with section on a given K3 surfaces.
\end{itemize}

These classifications will be sketched throughout the next sections.
They will require some additional techniques from lattice theory which will be introduced along the way.

\subsection{Configurations on elliptic K3 surfaces: Existence}

In Example \ref{Ex:RES-K3}, we have already seen two methods to produce elliptic K3 surfaces: by quadratic base change or quadratic twisting from rational elliptic surfaces.
However, since these surfaces do always have Picard number $\rho\geq 10$, these ideas do not suffice to cover all elliptic K3 surfaces.

Abstractly, we could also argue with Prop.~\ref{Prop:PSS} and try to find lattice polarised K3 surfaces, i.e.~with any given shape of the N\'eron-Severi group as in Prop.~\ref{Prop:lp}.
In practice, however, this may be a difficult undertaking -- often it is achieved using the theory of elliptic fibrations as developped in this section and not vice versa!

Thirdly, we have the deformation argument of \ref{ss:deformation} at our hands.
Like for rational elliptic surfaces, this argument reduces the existence problem to checking which configurations of extremal elliptic K3 surfaces can exist.
Afterwards, we will review lattice theoretic means to prove non-existence for the remaining configurations.

\subsection{Extremal elliptic K3 surfaces}
\label{ss:extr-K3}

Recall that an extremal elliptic surface has maximal Picard number, yet finite Mordell-Weil group.
Contrary to rational elliptic surfaces, we encounter on the level of K3 surfaces that the notions of extremal elliptic K3 surfaces differ in characteristic zero and $p>0$:
In characteristic zero, the maximal Picard number is $\rho=20$.
Such K3 surfaces are called \textbf{singular} in the sense of exceptional, as they are lying isolated, but dense in the moduli space of K3 surfaces.

\smallskip

Singular K3 surfaces play an improtant role in the arithmetic of K3 surfaces.
We will briefly explore their properties in \ref{ss:singular_K3}.
Note that we have already seen singular K3 surfaces in Example \ref{Ex:Kummer}:
Kummer surfaces for isogenous elliptic curves with CM.
However, we have also realised that the obvious elliptic fibrations on these Kummer surfaces are \emph{not} extremal:
they have Mordell-Weil rank two.

In contrast, extremal elliptic K3 surfaces in positive characteristic are very rare.
Since they are supersingular by definition, they can be traced back to rational elliptic surfaces by purely inseparable base change. Confer \cite{Ito} for a full treatment.

\subsection{Dessins d'enfant}
\label{ss:dessin}

In this section, we work over $\C$ (or a number field).
For an extremal elliptic fibration over $\PP^1$, all the ramification of the j-map occurs at three point $0, \infty$ and $1$ (or $12^3$ depending on the normalisation).
Hence we have in fact three equivalent representations:
\begin{enumerate}[(1)]
 \item through the j-map, a Belyi map of degree $n$;
\item
through the monodromy representation $\sigma_0, \sigma_1, \sigma_\infty\in S_n$;
\item
through the dessin d'enfant $j^{-1}([0,1])$ associated to the Belyi map $j$.
\end{enumerate}

In essence, the first maps are geometric (or arithmetic) objects while the second representations are a priori purely combinatorial.
In this respect, the dessins d'enfant could be regarded as intermediate between the other two (see for instance \cite{LZ} for details).

\smallskip

In order to prove the existence of an extremal configuration of elliptic fibres, it suffices to derive one of the three representations with the corresponding properties.
In \cite{MP2}, Miranda and Persson exhibited monodromy representations for all semi-stable extremal elliptic K3 surfaces.
Recently, Beukers and Montanus derived explicit defining equations for all these surfaces in \cite{BeuMon}.
A crucial ingredient for this explicit work was the fact that the number of inequivalent dessins d'enfant can be determined by representation theoretic means.

\smallskip

Here we shall only sketch one example, 
which is the first case in the list of  Miranda and Persson  \cite{MP2}.
Prior to \cite{BeuMon} its defining equation was determined by one of us in \cite{ShCR}.
In fact, extremal elliptic surfaces are closely related to the extreme case of the abc-theorem  and in particular to Davenport's inequality (cf.~\cite{Sh-abc}).
Over $\C$, one can thus prove existence and finiteness theorems for semi-stable extremal elliptic surfaces  over $\PP^1$ for any fixed arithmetic genus  $\chi \geq 1$.

\begin{Example}
\label{Ex:19}
An elliptic K3 surface can maximally have singular fibres of type $\I_{19}$
 or $\I_{14}^*$ (if one does not restrict to semistable case). 
In characteristic zero, this bound follows from Cor.~\ref{Cor:ST}.
For positive characteristic ($\neq 2$), the proof in \cite{S-max} combines the theory of supersingular K3 surfaces with Mordell-Weil lattices.

As for existence, we shall only prove this over some number field by drawing an appropriate dessin d'enfant.
Explicitly we only visualise the preimages of $0$ under $j$ which appear as vertices with three adjacent edges.

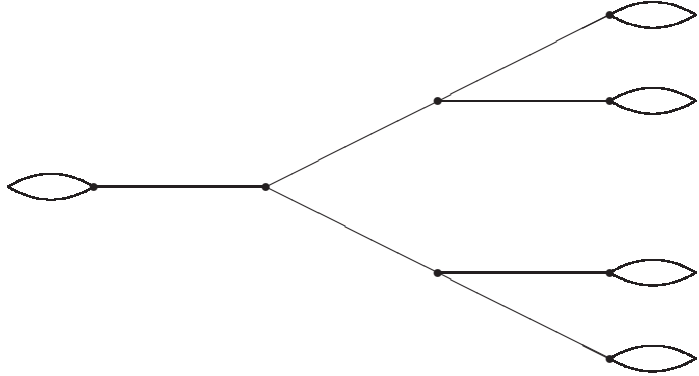
\begin{figure}[ht!]
\setlength{\unitlength}{.45in}
\begin{picture}(10,5)(0,0.5)

%
%

\thinlines

\multiput(2,3)(2,0){2}{\circle*{.1}}
\put(2,3){\line(1,0){2}}
\put(4,3){\line(2,1){4}}
\put(4,3){\line(2,-1){4}}
\multiput(6,4)(2,1){2}{\circle*{.1}}
\multiput(6,2)(2,-1){2}{\circle*{.1}}
\put(6,4){\line(1,0){2}}
\put(6,2){\line(1,0){2}}
\put(8,4){\circle*{.1}}
\put(8,2){\circle*{.1}}

\qbezier(1,3)(1.5,3.3)(2,3)
\qbezier(1,3)(1.5,2.7)(2,3)

\qbezier(8,5)(8.5,5.3)(9,5)
\qbezier(8,5)(8.5,4.7)(9,5)

\qbezier(8,4)(8.5,4.3)(9,4)
\qbezier(8,4)(8.5,3.7)(9,4)

\qbezier(8,2)(8.5,2.3)(9,2)
\qbezier(8,2)(8.5,1.7)(9,2)

\qbezier(8,1)(8.5,1.3)(9,1)
\qbezier(8,1)(8.5,0.7)(9,1)


\end{picture}
\caption{Dessin d'enfant for the $\I_{19}$ fibration}
\label{Fig:I19}
\end{figure}

Note that the dessin d'enfant is symmetric, as the (unique) elliptic surface can in fact be defined over $\Q$ \cite{ShCR}. 
Thus it also reduces nicely mod $p$ for almost all primes $p$ (cf.~\cite{SS2} where also uniqueness is proven for any characteristic $\neq 2$). 
\end{Example}

\subsection{Configurations on elliptic K3 surfaces: Non-existence}

In order to prove that elliptic K3 surfaces do not admit certain configurations of singular fibres, we will use similar techniques as in section \ref{s:s-s}; 
in particular, we will again build the quotient by torsion sections.
However, there is one substantial difference: $\NS(X)$ need not be unimodular (and unless $\rho(X)=2$, so that $\NS(X)=U$, it will not be unimodular in general).

Hence we have to develop different criteria.
The configuration of singular fibres is encoded in the trivial lattice $T(X)$ of an elliptic surface $X$ with section.
If $e(X)=12 \chi(X)$ is a multiple of $24$ (e.g.~if $X$ is K3), then the zero section has even self-intersection $\bar O^2=-\chi(X)$.
Hence zero section and general fibre generate an even unimodular lattice, the hyperbolic plane $U$. 
In particular, $\NS(X)$ is an even lattice by Lem.~\ref{Lem:O,F_purp}.
Hence we can apply techniques from the theory of even lattices as developped by Nikulin \cite{N}.
In the context of elliptic fibrations, these techniques have first been employed systematically by Miranda and Persson \cite{MP2}.

\subsection{Discriminant group and discriminant form}
\label{ss:discs}

Let $L$ denote an even integral non-degenerate lattice $L$ with quadratic form $(\cdot,\cdot)$.
We define its dual $L^\vee$ by 
\[
L^\vee = \{x\in L\otimes \Q; (x,y)\in\Z\; \forall\, y\in L\}.
\]
Then the \textbf{discriminant group} $G_L$ of $L$ is the finite abelian group
\[
 G_L = L^\vee/L.
\]
\begin{Example}
For a Dynkin diagram $W$, let $V$ be the corresponding type of singular fibre from \ref{ss:Dynkin}
(regardless of the ambiguity in case $W=A_1$ or $A_2$). 
Then the discriminant group $G_W$ of $W$ equals $G(V)$ in the notation of Lemma \ref{Lem:group_sing}.
\end{Example}

We now consider the following set-up: $N$ is an even unimodular lattice (think $\Lambda$).
We assume that the non-degenerate lattice $L$ embeds primitively into $N$.
Let $M$ denote the orthogonal complement of $L$ in $N$:
\[
 M=L^\bot \subset N.
\]

\begin{Proposition}[{\cite[Prop.~1.6.1]{N}}]
\label{Prop:disc_grp}
Under the above assumptions, $G_L\cong G_M$.
\end{Proposition}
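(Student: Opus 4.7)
The plan is to construct the isomorphism explicitly via the ``glue group'' $N/(L\oplus M)$ and exploit unimodularity of $N$ to show it projects isomorphically onto both discriminant groups.

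First I would set up the basic picture. Since $L$ is primitively embedded in $N$ and $M=L^\bot\subset N$, both $L$ and $M$ are non-degenerate, and $L\oplus M$ has finite index in $N$, so $L_\Q\oplus M_\Q = N_\Q$. Any $n\in N$ decomposes uniquely as $n=\ell^*+m^*$ with $\ell^*\in L_\Q$, $m^*\in M_\Q$; pairing with an arbitrary element of $L$ gives $(n,\ell)=(\ell^*,\ell)\in\Z$, so $\ell^*\in L^\vee$, and likewise $m^*\in M^\vee$. This yields a canonical injection
\[
 \iota:\;\; N/(L\oplus M) \hookrightarrow L^\vee/L \oplus M^\vee/M = G_L\oplus G_M.
\]
Write $H=\iota(N/(L\oplus M))$ and consider the two coordinate projections $\pi_L\colon H\to G_L$ and $\pi_M\colon H\to G_M$.

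Second, I would prove that $\pi_L$ (and, by symmetry, $\pi_M$) is an isomorphism. For injectivity, if $(\ell^*,m^*)\in N$ with $\ell^*\in L$, then $m^*=n-\ell^*\in N$; since $m^*\in M_\Q$ and $M$ is primitive in $N$ (because $L$ is), we get $m^*\in M$, so the class in $H$ is trivial. For surjectivity I would exploit unimodularity: the short exact sequence
\[
 0\to L\to N\to N/L\to 0
\]
has torsion-free quotient by primitivity of $L$, hence it splits as abelian groups, so applying $\mathrm{Hom}(-,\Z)$ keeps it exact and gives a surjection $N^\vee\twoheadrightarrow L^\vee$. Unimodularity identifies $N\cong N^\vee$ via the bilinear form, turning this into the restriction surjection $N\to L^\vee$, $n\mapsto (n,\cdot)|_L$. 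Given $\ell^*\in L^\vee$, lift it to some $n\in N$; then $m^*:=n-\ell^*\in M_\Q$, and pairing with $M$ shows $m^*\in M^\vee$, so $\iota^{-1}(\bar\ell^*,\bar m^*)\in N/(L\oplus M)$ projects onto $\bar\ell^*$.

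Composing the isomorphisms $\pi_M\circ\pi_L^{-1}\colon G_L\to G_M$ yields the desired isomorphism of discriminant groups. The main (and essentially only) point where some care is required is the surjectivity of $\pi_L$: this is exactly where the unimodularity of $N$ enters, since without it the restriction map $N\to L^\vee$ would generally have cokernel of size $|\det N|$, and $\iota$ would only embed $H$ as a proper (Lagrangian-type) subgroup of $G_L\oplus G_M$ rather than realize it as the graph of an isomorphism.
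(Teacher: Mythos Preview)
Your proof is correct and follows the standard glue-group argument. The paper itself does not supply a proof of this proposition; it merely quotes the result from \cite[Prop.~1.6.1]{N}, so there is no in-paper argument to compare against. One small remark: you justify primitivity of $M$ in $N$ by writing ``because $L$ is'', but $M=L^\bot$ is automatically primitive as an orthogonal complement, regardless of $L$. Primitivity of $L$ is what you genuinely need for the symmetric half (injectivity of $\pi_M$, via $L_\Q\cap N=L$) and for the surjectivity of $N\to L^\vee$; your use of it there is correct.
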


Most of the time, the above result is applied to the full N\'eron-Severi group of a given surface $X$ (with $H^2(X,\Z)$ even).
Here, however, we are mainly concerned with the configuration of singular fibres.
Hence we have to take the subtlety into account that the trivial lattice $T(X)$ of an elliptic surface need not be primitive in $\NS(X)$.
By \ref{ss:torsion1},  we obtain the primitive closure $T(X)'$  after saturating $T(X)$ by adding the torsion sections.

The basic idea now is to compare the length of $G_{T(X)}$ against the maximum possible length of the discriminant group of $(T(X)')^\bot$. The latter equals $b_2(X)-\mbox{rank}(T(X))$. 
We now specialise to the K3 case where $\Lambda$ is in fact even of rank $b_2(X)=22$:

\smallskip

\textbf{Length criterion:}
Let $X$ be an elliptic K3 surface with section.
If the $p$-length of $G_{T(X)}$ exceeds $22-\mbox{rank}(T(X))$, then $X$ admits a $p$-torsion section.

\smallskip

But then we can consider the quotient by (translation by) the $p$-torsion section.
Whenever the Euler numbers of the resulting singular fibres do not sum up to $24$, we establish a contradiction.

\begin{Lemma}
\label{Lem:9-tuples}
The configurations $[1, 2^7, 9], [2^7,3,7], [2^7,5^2]$ do not exist.
\end{Lemma}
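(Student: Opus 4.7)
The plan is to apply the length criterion just stated to each hypothetical configuration, extract a non-trivial $2$-torsion section, and derive a contradiction by examining the isogenous K3 surface obtained as the quotient by translation.

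First I compute the discriminant data. In each of the three cases $\mathrm{rank}\,T(X)=2+\sum_v(m_v-1)=17$, so the corank in the K3 lattice is $22-17=5$. The reducible fibres are all multiplicative, hence $G_{T(X)}\cong\bigoplus_v G(\I_{n_v})$ splits as
\[
G_{T(X)}\cong
\begin{cases}
(\Z/2)^{7}\oplus\Z/9 & \text{for }[1,2^{7},9],\\
(\Z/2)^{7}\oplus\Z/3\oplus\Z/7 & \text{for }[2^{7},3,7],\\
(\Z/2)^{7}\oplus(\Z/5)^{2} & \text{for }[2^{7},5^{2}].
\end{cases}
\]
Its $2$-length equals $7$ in each case, exceeding $5$, so the length criterion yields a non-trivial $2$-torsion section $P\in E(K)$.

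Next I locate $P$ on the fibres. By Cor.~\ref{Cor:tor-phi}, the image $\psi(P)$ in $\prod_v G(F_v)$ is non-zero and annihilated by $2$. Hence $P$ meets the zero component of every reducible fibre of odd valency (respectively the $\I_9$, or $\I_3$ and $\I_7$, or both $\I_5$'s), while meeting non-zero components of $k$ of the $\I_2$ fibres for some $1\le k\le 7$. Passing to $S':=\widetilde{X/\langle t_P\rangle}$, which is again an elliptic K3 surface by~\ref{ss:quot-sect} and so has $e(S')=24$, the componentwise analysis of~\ref{ss:quot-sect} replaces every $\I_n$ fibre of $X$ at which $P$ meets the zero component by an $\I_{2n}$ fibre (this includes the irreducible $\I_1$ in the first configuration), while each of the $k$ special $\I_2$'s becomes an $\I_1$. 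Summing local Euler numbers gives in every case
\[
e(S')\;=\;4(7-k)+k+20\;=\;48-3k,
\]
where the constant $20$ arises respectively as $e(\I_2)+e(\I_{18})$, $e(\I_6)+e(\I_{14})$, or $2e(\I_{10})$. Equating with $24$ would force $k=8$, contradicting $k\le 7$. Hence no elliptic K3 surface with one of these configurations exists.

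The main obstacle is that the fibre-by-fibre recipe of~\ref{ss:quot-sect} is set up in characteristic different from $2$; in characteristic~$2$ the quotient by $t_P$ is inseparable and the local transformations of $\I_n$ fibres need to be re-derived. However, the invariance $e(S')=e(X)=24$ still follows from the coincidence of Betti numbers of isogenous surfaces, and the Euler-counting gap of at least~$3$ leaves ample room to absorb the characteristic-$2$ adjustments with only a routine check.
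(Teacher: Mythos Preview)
Your argument is correct and follows the same route as the paper: invoke the length criterion to force a $2$-torsion section, observe that at each fibre of odd valency this section must meet the identity component, and then contradict $e(S')=24$ on the quotient K3. The paper phrases the last step as a lower bound $e(S')\ge 7\cdot 1 + 2\cdot 10 = 27$, while you compute $e(S')=48-3k$ exactly and rule out $k=8$; these are the same contradiction.

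One remark: the final paragraph about characteristic $2$ is unnecessary. The whole discussion of configurations on elliptic K3 surfaces in this section is set over $\C$ (the length criterion itself uses the embedding into the analytic K3 lattice $\Lambda$), so the inseparability issue simply does not arise. You can drop that paragraph entirely.
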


\begin{proof}
By the length criterion, each of these configurations implies the existence of a 2-torsion section. But then the odd fibres imply that the quotient has Euler number at least $7\cdot 1 + 2\cdot 10=27$, contradiction.
\end{proof}

For all other $n$-tuples ($n\geq 9$), Miranda and Persson \cite{MP2} prove existence through the deformation argument from \ref{ss:deformation}.
On the other hand, all 6-, 7- and 8-tuples deforming to either of those from Lemma \ref{Lem:9-tuples} cannot exist. 
Proving non-existence for other $n$-tuples with $n=6,7,8$ requires some further criteria (see \cite{MP2}).
Here we shall only introduce the discriminant form which will be of later use.

\smallskip

The \textbf{discriminant form} $q_L$ of an even, integral and non-degenerate lattice $L$ is defined as the induced quadratic form on the discriminant group:
\begin{eqnarray*}
q_L: G_L & \to & \Q/2\Z\\
x & \mapsto & x^2 \mod 2\Z.
\end{eqnarray*}

\begin{Proposition}[{\cite[Prop.~1.6.1]{N}}]
\label{Prop:disc_form}
Under the assumptions of Prop.~\ref{Prop:disc_grp}, $q_L = -q_M$.
\end{Proposition}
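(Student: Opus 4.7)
The plan is to revisit the explicit construction of the isomorphism $G_L \cong G_M$ from the preceding proposition and then exploit the fact that $N$ is an \emph{even} lattice.

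First, I would recall how the isomorphism $\phi\colon G_L \to G_M$ is realised. Since $L \oplus M$ sits in $N$ with finite index and $N$ is unimodular, the perpendicularity $L \perp M$ in $N_\Q$ yields inclusions
\[
L \oplus M \subseteq N \subseteq L^\vee \oplus M^\vee,
\]
so that the image $\bar N$ of $N$ in $(L^\vee \oplus M^\vee)/(L \oplus M) = G_L \oplus G_M$ is a subgroup. The proof of Prop.~\ref{Prop:disc_grp} shows that $\bar N$ is the graph of an isomorphism $\phi\colon G_L \to G_M$: for every $x \in L^\vee$ there is a unique class $y \in M^\vee/M$ with $x+y \in N$, and one sets $\phi(x \bmod L) = y \bmod M$.

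Second, for such a pair $(x,y)$ with $x+y \in N$, the orthogonality $L \perp M$ extends $\Q$-linearly to $L_\Q \perp M_\Q$, hence $(x,y)=0$, and therefore
\[
(x+y,x+y) = (x,x) + (y,y).
\]
Because $N$ is even, the left-hand side lies in $2\Z$. Reducing modulo $2\Z$ gives
\[
q_L(x \bmod L) + q_M(y \bmod M) \equiv 0 \pmod{2\Z},
\]
i.e.\ $q_L(\xi) = -q_M(\phi(\xi))$ for every $\xi \in G_L$. This is precisely the statement $q_L = -q_M$ under the identification $\phi$.

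There is essentially no obstacle: the argument is a two-line calculation once the construction of $\phi$ is unravelled. The only point requiring mild care is the sign convention in the definition of $\phi$ (one could equally well set $\phi(x \bmod L) = -y \bmod M$), but since $q_M$ is a quadratic form we have $q_M(y) = q_M(-y)$, so the identity $q_L = -q_M$ is unaffected by this choice.
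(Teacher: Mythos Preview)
Your argument is correct and is exactly the standard one (due to Nikulin): the graph of $N$ in $G_L\oplus G_M$ furnishes the isomorphism $\phi$, and evenness of $N$ together with $L\perp M$ forces $q_L(\xi)+q_M(\phi(\xi))\equiv 0\pmod{2\Z}$. The paper does not supply its own proof of this proposition but simply cites \cite[Prop.~1.6.1]{N}; your write-up reproduces that argument faithfully.
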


Nikulin moreover showed that the genus of an even lattice (i.e.~its isogeny class) is determined by signature and discriminant form \cite[Cor.~1.9.4]{N}.
There are specific cases where the genus of a lattice consists of a single class.
For definite lattices, this typically requires that both rank and (the absolute value of the) discriminant are relatively small (cf.~\cite[\S 15, Cor.~22]{CS}).
For indefinite lattices, the situation differs significantly:

\begin{Proposition}[Kneser {\cite{Kneser}}, Nikulin {\cite[Cor.~1.13.3]{N}}]
\label{Prop:KN}
Let $L$ be an indefinite even lattice.
If the length of $G_L$ does not exceed $\mbox{rank}(L)-2$, then the genus of $L$ consists of a single class.
\end{Proposition}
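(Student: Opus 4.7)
The plan is to reduce the statement to the classical machinery of spinor genera and strong approximation, and then use the length hypothesis to kill the local obstructions to passing from spinor genus to genus.

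First I would recall the two-step stratification: within a single genus one organizes classes into \emph{spinor genera} (equivalence classes under the action of the kernel of the spinor norm $\theta\colon O_A(L)\to J_\Q/(J_\Q)^2$), and within each spinor genus one asks for global equivalence. The Eichler--Kneser strong approximation theorem for spin groups says that as soon as $L$ is indefinite of rank $\geq 3$, every spinor genus in the genus of $L$ contains exactly one isomorphism class. Since the hypothesis $\ell(G_L)\leq \mathrm{rank}(L)-2$ combined with $L$ being non-degenerate and indefinite forces $\mathrm{rank}(L)\geq 3$ (the rank-$2$ indefinite cases are already ruled out unless $G_L=0$, which is the unimodular case and is classical), this first step is available essentially for free.

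So the substance is to show, under the length hypothesis, that the genus contains a \emph{single} spinor genus, equivalently that the finite abelian group
\[
\Sigma(L)\;=\;J_\Q\,\big/\,\Q^{*}\cdot\theta\bigl(O(L\otimes\R)\bigr)\cdot\prod_{p}\theta\bigl(O(L_p)\bigr)
\]
is trivial. Because $L$ is indefinite, the real factor is $\theta(O(L\otimes\R))=\R^{*}$, so the sign obstruction disappears immediately and it remains only to prove that $\mathbb{Z}_p^{*}\subset \theta(O(L_p))$ for every prime $p$. The key lemma I would establish for this is: if $\mathrm{rank}(L)\geq \ell(G_L)+2$, then at every prime $p$ the $\Z_p$-lattice $L_p$ admits an orthogonal decomposition
\[
L_p \;\cong\; M_p \;\oplus\; H_p,
\]
where $H_p$ is a unimodular $\Z_p$-lattice of rank $\geq 2$. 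Indeed, the $p$-length of $G_{L_p}=G_L\otimes\Z_p$ is at most $\ell(G_L)\leq \mathrm{rank}(L)-2$, so a Jordan decomposition of $L_p$ necessarily contains a unimodular block of rank at least $2$. A rank-$2$ unimodular $\Z_p$-lattice represents a hyperbolic plane (for $p$ odd this is automatic once it is isotropic, and it always is in rank $\geq 2$; for $p=2$ the even/odd subtlety still produces enough reflections). Reflections along vectors in $H_p$ then have spinor norms filling $\Z_p^{*}/(\Z_p^{*})^{2}$, so $\theta(O(L_p))\supseteq \Z_p^{*}$, as needed. This kills the local factors in $\Sigma(L)$, and combined with the real surjectivity the quotient collapses.

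I expect the genuine obstacle to be the prime $p=2$. Odd primes are straightforward because the Jordan decomposition of $L_p$ is diagonal and the discriminant form detects the length exactly, so the existence of a rank-$\geq 2$ unimodular block is essentially a counting argument on Smith normal form of the Gram matrix. At $p=2$ one must work with the more intricate $2$-adic Jordan blocks (including the non-diagonal even blocks of shape $\bigl(\begin{smallmatrix}0&1\\1&0\end{smallmatrix}\bigr)$ and $\bigl(\begin{smallmatrix}2&1\\1&2\end{smallmatrix}\bigr)$), and one has to be careful that ``length'' of the $2$-part of $G_L$ indeed controls the number of non-unimodular Jordan constituents (the even-lattice hypothesis is what saves the argument here, since the discriminant form $q_L$ rather than merely $G_L$ carries the right invariant). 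Once the local splitting lemma is secured at $p=2$, the spinor-norm computation proceeds as for odd primes, and the theorem follows by combining Eichler--Kneser with the vanishing of $\Sigma(L)$.
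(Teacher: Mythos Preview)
The paper does not prove this proposition; it is stated as a known result with references to Kneser and Nikulin, and is simply used as a black box in the subsequent discussion of Nishiyama's technique. So there is no ``paper's own proof'' to compare against.

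Your outline is the standard argument and is essentially correct: reduce to spinor genera via Eichler--Kneser strong approximation (available since $\mathrm{rank}(L)\geq 3$), then show $\Sigma(L)=0$ by checking that $\Z_p^*\subset\theta(O(L_p))$ at every prime. The local splitting lemma you state --- that the length bound forces a unimodular Jordan block of rank $\geq 2$ at each prime --- is exactly the right mechanism. One small inaccuracy: a rank-$2$ unimodular $\Z_p$-lattice need not be a hyperbolic plane (for odd $p$ there are two isometry classes, distinguished by whether the discriminant is a square), and it need not be isotropic. What you actually need, and what is true, is that any rank-$2$ unimodular $\Z_p$-lattice for odd $p$ represents every $p$-adic unit primitively (since a non-degenerate binary form over $\F_p$ represents all of $\F_p^*$, then lift by Hensel), so reflections already fill $\Z_p^*/(\Z_p^*)^2$. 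At $p=2$ your caution is warranted: the even hypothesis is indeed what makes the $2$-adic Jordan analysis line up with the length of $G_L$, and one has to verify the spinor-norm surjectivity separately for the two even unimodular rank-$2$ blocks. With that correction the argument goes through, and it is precisely the proof one finds in the cited sources.
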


\subsection{}
\label{ss:Shimada}

There is also a purely lattice-theoretic approach towards configurations of singular fibres.
Namely we consider lattice polarised K3 surfaces and apply Prop.~\ref{Prop:lp}.
If the trivial lattice of a given configuration embeds primitively into $\Lambda$, then this suffices to prove existence by Kond\=o's argument from \ref{ss:Kondo}.

The argument becomes more involved as soon as the configuration forces sections by the length criterion.
But then Lemma \ref{Lem:frame-root} shows how to recover the singular fibres from the frame.
This approach has been pursued with great success by Shimada and Zhang:
\begin{itemize}
 \item 
In \cite{Shimada}, Shimada classifies all possible configurations on elliptic K3 surfaces including the torsion subgroups of $\MW$. 
For the extra criterion implementing torsion sections, confer \cite[Thm.~7.1]{Shimada}.
\item
In \cite{SZ}, Shimada and Zhang classify all extremal elliptic K3 surfaces.
This classification extends \cite{MP2} in two directions: the authors also treat configurations that are not semi-stable, and they determine Mordell-Weil groups
 and transcendental lattices (cf.~\ref{ss:singular_K3}).
\end{itemize}

\subsection{Elliptic fibrations}
By the Kodaira-Enriques classification of algebraic surfaces (cf.~\cite{BHPV}),
only K3 surfaces and abelian surfaces have trivial canonical bundle.
In consequence, any other surface can admit at most one elliptic fibration with section:

\begin{Lemma}
\label{Lem:two-fibr}
Assume that the surface $S$ admits two distinct elliptic fibrations with section which are not of product type. Then $S$ is a K3 surface. 
\end{Lemma}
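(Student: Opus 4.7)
The plan is to use the canonical bundle formula on both fibrations to pin down $K_S$, and then conclude via the numerical invariants. Let $f_i: S\to C_i$ be the two fibrations with general fibres $F_i$ ($i=1,2$). Theorem~\ref{Thm:can} applied to each gives
\[
 K_S \approx (2g(C_1)-2+\chi(S))\,F_1 \approx (2g(C_2)-2+\chi(S))\,F_2,
\]
where $\chi(S)=\chi(\mathcal{O}_S)$ does not depend on the choice of fibration. Set $a_i=2g(C_i)-2+\chi(S)$.

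The key step is to show $F_1\cdot F_2>0$. Since $F_1$ is a smooth irreducible curve, the restriction $f_2|_{F_1}:F_1\to C_2$ is either constant or surjective. If it were constant for the general fibre $F_1$ of $f_1$, then one obtains a rational, hence regular, map $q:C_1\to C_2$ with $f_2=q\circ f_1$; comparing connected fibres on both sides forces $q$ to be an isomorphism, contradicting the distinctness of the two fibrations. So $f_2|_{F_1}$ is surjective and $F_1\cdot F_2=\deg(f_2|_{F_1})>0$. Intersecting the relation $a_1 F_1\approx a_2 F_2$ with $F_1$ and using $F_1^2=0$ yields $a_2(F_1\cdot F_2)=0$, hence $a_2=0$; symmetrically $a_1=0$. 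Thus $K_S\approx 0$.

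By hypothesis, neither fibration is of product type, so each admits a singular fibre, and Corollary~\ref{cor:ag} gives $\chi(S)>0$. The vanishing $a_i=0$ then reads $\chi(S)=2-2g(C_i)>0$, forcing $g(C_1)=g(C_2)=0$ and $\chi(S)=2$. Theorem~\ref{Thm:Pic-var} applied to either fibration yields $q(S)=g(C_i)=0$, so $\mathrm{Pic}^0(S)=0$; the algebraic equivalence $K_S\approx 0$ therefore promotes to $\omega_S\cong\mathcal{O}_S$. Together with $h^1(S,\mathcal{O}_S)=q(S)=0$, this is exactly the definition of a K3 surface, and we are done. (Equivalently, $\chi(\mathcal{O}_S)=2$ isolates the K3 case among surfaces with numerically trivial canonical class in the Kodaira--Enriques classification.)

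The main obstacle is the verification that $F_1\cdot F_2>0$ for distinct fibrations; everything else is a direct computation with the canonical bundle formula and the existence of a section. Once that positivity is in place, the numerical invariants $(\chi,q,K_S)$ line up uniquely with those of a K3 surface, ruling out the abelian (and Enriques, bielliptic) alternatives automatically.
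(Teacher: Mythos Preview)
Your proof is correct and shares its starting point with the paper: apply the canonical bundle formula (Theorem~\ref{Thm:can}) to both fibrations to obtain $K_S\approx a_1F_1\approx a_2F_2$. The paper argues contrapositively---if $K_S\not\approx 0$ then $a_i\neq 0$, so $F_1$ and $F_2$ are proportional and the fibrations coincide---and then, having reduced to $K_S$ trivial, invokes the Kodaira--Enriques classification (K3 or abelian) and rules out the abelian case via the non-product hypothesis. You instead establish $F_1\cdot F_2>0$ directly from distinctness and deduce $a_1=a_2=0$ by intersection; from $a_i=0$ and $\chi(S)>0$ you then read off $g(C_i)=0$, $\chi(S)=2$, $q(S)=0$, and hence $\omega_S\cong\mathcal{O}_S$, verifying the K3 axioms without appealing to the surface classification. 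Your route is slightly more self-contained, and your explicit treatment of $F_1\cdot F_2>0$ spells out what the paper's one-line sketch leaves implicit; the paper's route is shorter once one is willing to cite the classification.
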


\begin{proof}
If $K_S$ is not trivial, then the canonical bundle formula (Thm.~\ref{Thm:can}) implies that any two fibres of the two fibrations are algebraically equivalent. 
Hence the fibrations are isomorphic.
If $K_S$ is trivial, then $S$ cannot be abelian, since the fibrations are not of product type; thus $S$ is K3.
\end{proof}

If we let go the condition of a section,
then there are also Enriques surfaces to be considered.
Enriques surfaces (outside characteristic two) 
are quotients of K3 surfaces by fixed point-free involutions.
They can be studied purely in terms of their universal covers -- which return the original K3 surfaces.
In particular an elliptic fibration on an Enriques surface (without section) induces an elliptic fibration 
on the covering K3 surface (which may well admit a section).
For general details, see \cite{CD}.
Remarkably, several classification problems for elliptic fibrations and beyond have first been solved for Enriques surfaces (see \cite{BP} for an instance).


\subsection{Elliptic fibrations on a K3 surface}
\label{ss:Nishi}

On a single K3 surface, the elliptic fibrations with section can be classified by several means.
To our knowledge, Oguiso was the first to establish a complete classification of elliptic fibrations on complex Kummer surfaces for $E\times E'$ as in Example \ref{Ex:Kummer} in case $E, E'$ are not isogenous \cite{Oguiso}.
Oguiso pursued an essentially geometric technique
based on finiteness arguments of Sterk \cite{Sterk} that also allowed him to detect (infinitely many) non-isomorphic fibrations with the same configuration of singular fibres.
On the other hand, Sterk showed in \cite{Sterk} that any K3 surface $X$ does only have finitely many elliptic fibrations up to $\mbox{Aut}(X)$.

\smallskip

Another approach towards this problem is based on lattice theoretic ideas. 
An essential ingredient is a gluing technique which can be traced back to Witt and Kneser \cite{Kne}.
From this technique and the classifcation of definite unimodular lattices of rank 24 in \cite{Nie},
Nishiyama developed an elegant method to classify all elliptic fibrations on a given K3 surface in \cite{Nishi}.
This technique moreover builds on a converse of Nikulin's results in \ref{ss:discs}:

\begin{Theorem}[{\cite[Cor.~1.6]{N}}]
\label{Thm:Nik}
Let $L, M$ be even non-degenerate integral lattices such that
\[
 G_L\cong G_M,\;\;\; q_L=-q_M.
\]
Then there exists a unimodular overlattice $N$ of $L\oplus M$ such that both $L$ and $M$ embed primitively into $N$ and are each other's orthogonal complement:
\[
 L = M^\bot\subset N,\;\;\; M=L^\bot\subset N.
\]
\end{Theorem}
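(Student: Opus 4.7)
The plan is to construct $N$ explicitly as an overlattice of $K := L \oplus M$ corresponding to the graph of an anti-isometry $\phi: G_L \xrightarrow{\sim} G_M$. The underlying principle is the standard correspondence between even overlattices of a given non-degenerate even lattice $K$ inside $K \otimes \Q$ and isotropic subgroups of its discriminant group $G_K$ equipped with $q_K$: given an isotropic subgroup $H \subset G_K$, the preimage of $H$ under $K^\vee \twoheadrightarrow G_K$ is an even overlattice $\tilde K \supseteq K$ with $\tilde K \subseteq K^\vee$, and conversely every even overlattice in $K^\vee$ arises this way. I would assume this correspondence, which is Proposition 1.4.1 of Nikulin and is routine to verify.

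By hypothesis we have an isomorphism $\phi: G_L \to G_M$ with $q_M \circ \phi = -q_L$. Inside $G_K = G_L \oplus G_M$ carrying the form $q_L \oplus q_M$, consider the graph
\[
\Gamma_\phi \;=\; \{(x, \phi(x)) : x \in G_L\} \;\subset\; G_K.
\]
Evaluating the form gives $(q_L \oplus q_M)(x,\phi(x)) = q_L(x) + q_M(\phi(x)) = q_L(x) - q_L(x) = 0$, so $\Gamma_\phi$ is isotropic. Define $N$ to be the even overlattice of $K$ corresponding to $\Gamma_\phi$ via the correspondence above.

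It then remains to verify three things. First, that $N$ is unimodular: for any isotropic $H \subset G_K$, the discriminant group of the corresponding overlattice $\tilde K$ is $H^\perp / H$, so it suffices to show $\Gamma_\phi^\perp = \Gamma_\phi$ in $G_K$. The containment $\Gamma_\phi \subseteq \Gamma_\phi^\perp$ holds because $\Gamma_\phi$ is isotropic and the associated bilinear form $b_L \oplus b_M: G_K \times G_K \to \Q/\Z$ pulled back to $\Gamma_\phi \times \Gamma_\phi$ vanishes (using $b_L = -b_M \circ (\phi \times \phi)$, which follows from $q_L = -q_M \circ \phi$ by polarisation). Conversely, if $(a,b) \in \Gamma_\phi^\perp$, then $b_L(a,x) + b_M(b, \phi(x)) = 0$ for all $x \in G_L$, which rewrites as $b_M(\phi(a) - b, \phi(x)) = 0$ for all $x$; non-degeneracy of $b_M$ and surjectivity of $\phi$ force $b = \phi(a)$, so $(a,b) \in \Gamma_\phi$. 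A count then yields $|G_N| = |G_K|/|\Gamma_\phi|^2 = |G_L|\cdot|G_M|/|G_L|^2 = 1$. Second, for primitivity of $L \hookrightarrow N$: a class in $N/(L\oplus M)$ is represented by $(l^*, m^*) \in L^\vee \oplus M^\vee$ with $m^* \bmod M = \phi(l^* \bmod L)$; if such an element lies in $L \otimes \Q$ then $m^* = 0$, hence $\phi(l^*\bmod L)=0$, hence $l^* \in L$ by injectivity of $\phi$. Thus $(L\otimes\Q)\cap N = L$, which is primitivity. The argument for $M$ is symmetric. Third, $L^\perp = M$ in $N$: an element $(l^*, m^*) \in N$ is orthogonal to all of $L$ iff $l^*$ pairs to zero with every element of $L$ in $L\otimes\Q$, i.e., iff $l^* = 0$, and then the graph condition forces $m^* \in M$, so the orthogonal complement is exactly $M$.

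I expect the main technical obstacle to be the bookkeeping in the unimodularity step—specifically, checking rigorously that $\Gamma_\phi^\perp \subseteq \Gamma_\phi$ using only the bilinear form deduced from the quadratic forms (since $q$ lives in $\Q/2\Z$ while $b$ lives in $\Q/\Z$, and the polarisation identity requires some care with the factor of $2$). Everything else reduces to formal manipulations once the correspondence between isotropic subgroups and overlattices is in hand.
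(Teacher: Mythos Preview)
Your proof is correct and is precisely the standard argument due to Nikulin. Note, however, that the paper does not actually prove this theorem: it is stated as a citation of \cite[Cor.~1.6]{N} and used as a black box in the discussion of Nishiyama's technique, so there is no ``paper's own proof'' to compare against. What you have written is essentially Nikulin's original construction (isotropic graph of the anti-isometry inside $G_L\oplus G_M$, then the overlattice correspondence of \cite[Prop.~1.4.1]{N}), and all three verification steps---unimodularity via $\Gamma_\phi^\perp=\Gamma_\phi$, primitivity, and the orthogonal-complement claim---are carried out correctly.
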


Of course, we can apply the above theorem to the N\'eron-Severi lattice of any K3 surface $X$.
What makes elliptic fibrations special is the fact that the frame $W(X)$ is even and negative-definite of rank $\rho(X)-2$ by Lem.~\ref{Lem:O,F_purp}.
Hence we can try to embed the frame into a negative-definite lattice.
This brings the advantage that negative-definite lattices of small rank have been fully classified.
Note that in the present case, $W(X)$ has rank at most $18$, so it suffices to consider unimodular negative-definite lattices of rank $24$.
Luckily these so-called \textbf{Niemeier lattices} admit a fairly  simple classification in terms of their root lattices:

\begin{Theorem}[{\cite[Satz 8.3]{Nie}}]
A negative-definite even unimodular lattice $N$ is determined by its root lattice $N_{\mbox{root}}$ up to isometry.
There are 24 possibilities for $N$.
\end{Theorem}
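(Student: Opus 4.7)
The plan is to switch signs and work with positive-definite even unimodular lattices of rank $24$ (the so-called Niemeier lattices). The strategy proceeds in two main parts: first classify the possible root systems $R = N_{\text{root}}$, then show that each such $R$ determines $N$ uniquely, with the rootless case yielding the Leech lattice.

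First I would establish structural constraints on $R$. Decompose $R = \bigoplus_i R_i$ into irreducible root lattices, each of ADE type. The key input is that the theta series
\[
\Theta_N(\tau) = \sum_{x \in N} q^{(x,x)/2}, \qquad q = e^{2\pi i\tau},
\]
is a modular form of weight $12$ for $\mathrm{SL}_2(\Z)$, hence lies in the two-dimensional space spanned by $E_{12}$ and $\Delta$. Comparing $q$- and $q^2$-coefficients with the corresponding expressions for the root system part (a theorem due to Venkov), one deduces that either $R = \emptyset$ or $R$ has full rank $24$ and every irreducible component $R_i$ has the same Coxeter number $h$, with $|R_i^{(2)}| = h \cdot \mathrm{rank}(R_i)$. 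A direct enumeration of tuples $(R_1,\ldots,R_s)$ with $\sum_i \mathrm{rank}(R_i) = 24$ and equal Coxeter numbers produces exactly $23$ root systems.

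Second, for each such $R$ I would analyze the chain $R \subseteq N \subseteq R^\vee$. The quotient $N/R$ is an isotropic subgroup of the discriminant group $G_R = R^\vee/R$ equipped with its $\Q/2\Z$-valued discriminant form $q_R$, and $N$ is unimodular if and only if $N/R$ is maximal isotropic of order $\sqrt{|G_R|}$. By the converse to Nikulin's theorem (Prop.~\ref{Prop:disc_form}, Thm.~\ref{Thm:Nik}), classifying $N$ with a given root sublattice $R$ then reduces to classifying such maximal isotropic ``glue codes'' up to the action of the orthogonal group $O(R)$. A case-by-case check, using the known automorphism groups and the explicit structure of $q_{A_n}$, $q_{D_m}$, $q_{E_k}$, shows that in each of the $23$ cases there is exactly one $O(R)$-orbit of maximal isotropic subgroups, producing $23$ Niemeier lattices with non-empty root system.

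Finally, the rootless case must yield a unique lattice, the Leech lattice $\Lambda_{24}$; this is the main obstacle. Existence follows from the classical construction via the binary Golay code. Uniqueness, however, requires showing that any $24$-dimensional positive-definite even unimodular lattice with no norm-$2$ vectors is isometric to $\Lambda_{24}$. One way is to invoke the Minkowski–Siegel mass formula: the total mass of genus $\mathrm{II}_{0,24}$ is known exactly, and subtracting the contributions $1/|\mathrm{Aut}(N)|$ of the $23$ lattices found above leaves precisely $1/|\mathrm{Co}_0|$, forcing a single additional class. Alternatively one can argue geometrically via the minimum vectors of norm $4$ and reconstruct a Golay-code structure inside $N/2N$. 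Combined with the previous step this gives exactly $24$ isometry classes, as claimed.
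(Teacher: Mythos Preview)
The paper does not give a proof of this theorem: it is quoted verbatim from Niemeier \cite{Nie} as an input to Nishiyama's technique, in keeping with the survey character of the paper (cf.~the introduction: ``we will rarely give complete proofs''). So there is no ``paper's own proof'' to compare against.

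Your outline is essentially the modern streamlined account (Venkov's modular-form argument constraining the root system, gluing via maximal isotropic subgroups of $G_R$, and a separate treatment of the Leech lattice). Two remarks are worth making. First, this is historically anachronistic: Niemeier's original 1973 argument predates Venkov's uniform treatment and is more computational. Second, in your gluing step you should be explicit that the condition on $N/R \subset G_R$ is not only that it be maximal isotropic, but also that the resulting overlattice $N$ satisfies $N_{\mathrm{root}} = R$ exactly, i.e.\ no new roots arise from the glue vectors; this is where the genuine case-checking lies, and it is not automatic from the isotropy condition alone. With that caveat the sketch is sound.
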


Given an elliptic K3 surface $X$, Nishiyama aims at embedding the frames of all elliptic fibrations into Niemeier lattices.
For this purpose he first determines an even negative-definite lattice $M$ such that
\[
 q_M = -q_{\NS(X)},\;\;\; \mbox{rank}(M)+\rho(X)=26.
\]
By Thm.~\ref{Thm:Nik}, $M \oplus W$ has a Niemeier lattice as an overlattice for each frame $W$ of an elliptic fibration on $X$.
Thus one is bound to determine the (inequivalent) primitive embeddings of $M$ into Niemeier lattices.
To achieve this, it is essential to consider the root lattices involved (as shown in \cite{Nishi}).
In each case, the orthogonal complement of $M$ gives the corresponding frame $W$.
Then one can read off Mordell-Weil lattice (plus the torsion in $\MW$) and singular fibres from $W$ by
\[
 T(X) = U \oplus W_{\mbox{root}},\;\;\; 
\MWL(X) = W/W_{\mbox{root}}'
\]

\subsection{Transcendental lattice}

The existence of a negative-definite lattice $M$ as above was subsequently proven by Nishiyama in \cite{Nishi-Saitama}.
The argument starts from the \textbf{transcendental lattice} of $X$, 
i.e.~the orthogonal complement of $\NS(X)$ in $H^2(X,\Z)$ with respect to cup-product:
\[
\T(X) = \NS(X)^\bot \subset H^2(X,\Z).
\]
In general, $\T(X)$ is an even lattice of rank $r=22-\rho(X)$ and signature $(2,20-\rho(X))$.
Let $t=r-2$.
By \cite[Thm.~1.12.4]{N}, $\T(X)[-1]$ admits a primitive embedding into the following indefinite unimodular lattice:
\[
 \T(X)[-1] \hookrightarrow U^t  \oplus  E_8.
\]
Then we define $M$ as the orthogonal complement of $\T(X)[-1]$ in $U^t \oplus E_8$.
By construction, this lattice is negative definite of rank $t+6=r+4=26-\rho(X)$ with discriminant form
\[
 q_M = -q_{\T(X)[-1]} = q_{\T(X)} = -q_{\NS(X)}.
\]
Hence $M$ takes exactly the shape required for Nishiyama's technique in \ref{ss:Nishi}.

\subsection{}

It remains to discuss when the above technique guarantees the existence of an elliptic fibration with fixed frame $W$ on the given K3 surface $X$.

\smallskip

First it is instructive to note that in any case there is some K3 surface with the elliptic fibration specified by $W$.
Apply Thm.~\ref{Thm:Nik} to $\T(X)$ and $U\oplus W$ to derive a primitive embedding of $U\oplus W$ into the K3 lattice $\Lambda$.
By Prop.~\ref{Prop:lp}, there is a family of K3 surfaces with N\'eron-Severi group $\NS=U\oplus W$.
In particular, each of these K3 surfaces admits an elliptic fibration with section and frame $W$ by \ref{ss:Kondo}.

\smallskip

Now let us turn to the specific K3 surface $X$.
Assume that we have found a candidate frame $W$ for an elliptic fibration on $X$  by Nishiyama's technique.
By construction, $\NS(X)$ and $U\oplus W$ have the same signature and discriminant form.
In particular, they lie in the same genus (cf.~\ref{ss:discs}).
We need to decide whether this genus consists of a single class.
In order to apply Prop.~\ref{Prop:KN} to the complex K3 case,
the decisive property is that $q_{\NS(X)}=-q_{\T(X)}$ by Prop.~\ref{Prop:disc_form}.
Thus $\NS(X)$ and $\T(X)$ have the same length, 
and the lattice of bigger rank (unless both have rank $11$) satisfies the condition of Prop.~\ref{Prop:KN}.

\begin{Lemma}[{\cite[Cor.~2.9, 2.10]{Mo}}]
Let $X$ be a complex K3 surface.
\begin{enumerate}[(a)]
\item
If $\rho(X)\geq 12$, then $\NS(X)$ is uniquely determined by signature and discriminant form.
\item
If $\rho(X)\leq 10$, then $\T(X)$ is uniquely determined by signature and discriminant form.
\end{enumerate}
\end{Lemma}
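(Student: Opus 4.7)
The plan is to reduce both statements to the Kneser--Nikulin single-class criterion (Proposition~\ref{Prop:KN}). The key observation is that on any complex K3 surface, $H^2(X,\Z)$ is unimodular and $\T(X) = \NS(X)^\bot$, so by Proposition~\ref{Prop:disc_grp} the discriminant groups $G_{\NS(X)}$ and $G_{\T(X)}$ are isomorphic. In particular they share a common length $\ell$, and because a finite abelian quotient of a free $\Z$-module can be generated by no more than the rank of that module, we obtain the two-sided bound
\[
 \ell \;\leq\; \min\bigl(\mbox{rank}\,\NS(X),\, \mbox{rank}\,\T(X)\bigr) \;=\; \min\bigl(\rho(X),\,22-\rho(X)\bigr).
\]
Moreover $\NS(X)$ has signature $(1,\rho(X)-1)$, hence is indefinite once $\rho(X)\geq 2$, while $\T(X)$ has signature $(2,20-\rho(X))$, hence is indefinite once $\rho(X)\leq 19$.

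First I would prove part (a). Under the hypothesis $\rho(X)\geq 12$ the lattice $\NS(X)$ is indefinite and
\[
 \ell \;\leq\; 22-\rho(X) \;\leq\; 10 \;\leq\; \rho(X)-2 \;=\; \mbox{rank}\,\NS(X) - 2,
\]
so Proposition~\ref{Prop:KN} applies to $\NS(X)$ and tells us that its genus consists of a single isometry class. Since the genus of an even lattice is determined by signature and discriminant form (Nikulin, cf.~\ref{ss:discs}), the class of $\NS(X)$ is determined by these invariants.

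Part (b) is the symmetric argument applied to $\T(X)$. For $\rho(X)\leq 10$ the lattice $\T(X)$ is indefinite and
\[
 \ell \;\leq\; \rho(X) \;\leq\; 10 \;\leq\; (22-\rho(X))-2 \;=\; \mbox{rank}\,\T(X) - 2,
\]
so again Proposition~\ref{Prop:KN} yields that the genus of $\T(X)$ consists of a single class, and uniqueness from signature and discriminant form follows as before. There is no serious obstacle to overcome: the whole argument is a bookkeeping exercise that lines up the length bound with the rank bound required by Kneser--Nikulin, and the symmetric roles of $\NS(X)$ and $\T(X)$ explain why the dividing line falls naturally at $\rho(X)=11$, the case where both lattices have rank $11$ and neither inequality is automatic.
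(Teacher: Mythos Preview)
Your proposal is correct and follows essentially the same approach as the paper: the paragraph immediately preceding the Lemma notes that $q_{\NS(X)}=-q_{\T(X)}$, hence $\NS(X)$ and $\T(X)$ share the same length, and then observes that the lattice of larger rank satisfies the hypothesis of Proposition~\ref{Prop:KN}. You have simply spelled out the inequalities in full; the one small point you leave implicit is that $\NS(X)$ and $\T(X)$ are even (as primitive sublattices of the even lattice $\Lambda$), which is needed to invoke Proposition~\ref{Prop:KN}.
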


Applied to the above situation arising from Nishiyama's technique,
the lemma implies that $\NS(X)$ and $U\oplus W$ are isomorphic 
if $\rho(X)\geq 12$.
By \ref{ss:Kondo}, $X$ thus admits an elliptic fibration with frame $W$.

\subsection{}

Nishiyama applied this technique to several K3 surfaces.
For instance, he determined all elliptic fibrations on the complex Kummer surfaces from Example \ref{Ex:Kummer} for the following cases (thus also recovering Oguiso's result \cite{Oguiso}).
$$
\begin{array}{ccc}
\hline
E, E' & \rho(X) & \text{no.~of fibrations}\\
\hline
E\cong E', j(E)=0 & 20 & 30\\
E\cong E', j(E)=12^3 & 20 & 25\\
E\cong E' \text{ without CM} & 19 & 34\\
E\not\sim E' & 18 & 11\\
\hline
\end{array}
$$
Subsequently, the classification begun by Oguiso in \cite{Oguiso} was taken up by Kuwata and Shioda.
In \cite{KS} they derive defining equations and elliptic parameters for all fibrations in the last case.

\subsection{}

Given a K3 surface $X$ one can also ask the general question whether $X$ admits any elliptic fibration with section at all.
Over $\C$ there is a uniform answer if the Picard number is big enough:

\begin{Lemma}
Every complex K3 surface of Picard number at least $13$ admits an elliptic fibration with section.
\end{Lemma}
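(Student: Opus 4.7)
The plan is to use the reformulation of Proposition~\ref{Prop:PSS} recalled in~\ref{ss:Kondo}: it will suffice to exhibit an embedding of the hyperbolic plane $U \hookrightarrow \NS(X)$; the existence of an elliptic fibration with section then follows automatically from \cite[\S3, Thm.~1]{PSS}.

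First I will observe that $\NS(X)$ is an even indefinite lattice of rank $\rho(X) \geq 13$. Evenness is immediate from the primitive embedding $\NS(X) \hookrightarrow H^2(X,\Z) \cong \Lambda$ into the even unimodular K3 lattice, and indefiniteness is clear from the signature $(1, \rho(X)-1)$. Next I will bound the length of its discriminant group: by Proposition~\ref{Prop:disc_grp} applied to the primitive embedding $\NS(X) \hookrightarrow \Lambda$ with orthogonal complement $\T(X)$, we have $G_{\NS(X)} \cong G_{\T(X)}$, and the length of $G_{\T(X)}$ is bounded by its rank $22 - \rho(X) \leq 9$. Thus $\mbox{rank}(\NS(X)) - \ell(G_{\NS(X)}) \geq \rho(X) - (22 - \rho(X)) = 2\rho(X) - 22 \geq 4$, i.e.~$\mbox{rank}(\NS(X)) \geq \ell(G_{\NS(X)}) + 3$.

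The key step is then to invoke Nikulin's splitting theorem \cite[Cor.~1.13.5]{N}: any even indefinite lattice $L$ with $\mbox{rank}(L) \geq \ell(G_L) + 3$ admits $U$ as an orthogonal direct summand, $L \cong U \oplus L'$. This is the natural companion to the uniqueness statement recalled as Proposition~\ref{Prop:KN}, and the shift from $+2$ in the hypothesis of the uniqueness statement to $+3$ in that of the splitting statement is precisely what separates the sharp threshold $\rho(X) \geq 13$ here from the threshold $\rho(X) \geq 12$ in the Morrison lemma recorded just above. Applied to $\NS(X)$, this yields $\NS(X) \cong U \oplus N$ for some negative-definite even lattice $N$ of rank $\rho(X)-2$, giving the primitive embedding $U \hookrightarrow \NS(X)$ and hence, by Proposition~\ref{Prop:PSS}, the desired elliptic fibration with section. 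The main technical obstacle is that Nikulin's splitting theorem is not explicitly stated in the excerpt; however, it can be deduced by combining the uniqueness of $\NS(X)$ in its genus (Proposition~\ref{Prop:KN}, which holds already for $\rho(X) \geq 12$) with the existence, via Nikulin's general lattice existence theorem \cite[Thm.~1.10.1]{N}, of \emph{some} representative of this genus of the form $U \oplus M$, where $M$ is any even negative-definite lattice of rank $\rho(X)-2$ with discriminant form $q_M = q_{\NS(X)}$. The existence of such an $M$ requires only that $\mbox{rank}(M) \geq \ell(G_M)$ together with a sign condition on $\sigma(q_M) \bmod 8$; both are automatic under our hypotheses, as $\sigma(q_{\NS(X)}) \equiv 2 - \rho(X) \bmod 8$ is compatible with the negative-definite signature $(0,\rho(X)-2)$.
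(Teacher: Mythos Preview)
Your proof is correct and follows essentially the same route as the paper: bound the length of $G_{\NS(X)}$ by $\mbox{rank}\,\T(X)=22-\rho(X)\leq 9$, verify $\mbox{rank}\,\NS(X)\geq \mbox{length}+3$, and invoke Nikulin's splitting result \cite[Cor.~1.13.5]{N} to obtain $U\hookrightarrow\NS(X)$, whence an elliptic fibration with section. The additional derivation you give at the end (recovering the $U$-splitting from genus uniqueness plus Nikulin's existence theorem) is not needed---the paper simply cites \cite[Cor.~1.13.5]{N} directly---but it is a sound explanation of why the threshold is $\rho\geq 13$ rather than $12$.
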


The claim follows directly from a result of Nikulin \cite[Cor.~1.13.5]{N}.
In detail, Nikulin derived the following implication that relates the rank of  an indefinite even integral lattice $L$ with its length, i.e.~the minimum number of generators of the discriminant group $G_L=L^\vee/L$:
\[
\mbox{rank}(L)\geq \mbox{length}(L) +3 \Longrightarrow U \hookrightarrow L.
\]
In the present situation, the length of $\NS(X)$ is trivially bounded by both the rank of $\NS(X)$ and the rank of $\T(X)$.
The latter is $22-\rho(X)\leq 9$ by assumption.
Hence the inequality of rank against length holds for $\NS(X)$, and the embedding of $U$ into $\NS(X)$ gives an elliptic fibration with section.

\section{Arithmetic of elliptic K3 surfaces}
\label{s:arith}

Throughout this paper, we have tried to emphasise the importance of elliptic fibrations for arithmetic considerations.
In this section, we point out several applications to K3 surfaces.
We start by reviewing singular K3 surfaces.
This naturally leads to questions of Mordell-Weil ranks (\ref{ss:Kuwata}) and modularity (\ref{ss:mod}).
We shall also briefly look into connections with Shimura curves, abelian surfaces with real multiplication and rational points.

\subsection{Singular K3 surfaces}
\label{ss:singular_K3}

Over $\C$, the maximal Picard number for a K3 surface $X$ is $\rho(X)=20$.
The K3 surfaces attaining this bound are called \textbf{singular}.
We have seen specific examples in two instances:
\begin{itemize}
\item 
extremal elliptic K3 surfaces (\ref{ss:extr-K3}, Example \ref{Ex:19});
\item
Kummer surfaces of isogenous CM-elliptic curves (Example \ref{Ex:Kummer}).
\end{itemize}
Note that the first examples constitute a finite set up to isomorphism.
Hence we shall concentrate on the Kummer surfaces.

\smallskip

For a K3 surface $X$ over $\C$, recall the   lattice $\T(X)$, i.e.~the orthogonal complement of $\NS(X)$ in $H^2(X,\Z)$ with respect to cup-product:
\[
\T(X) = \NS(X)^\bot \subset H^2(X,\Z).
\]
In general, $\T(X)$ is an even lattice of signature $(2,20-\rho(X))$.
For a singular K3 surface, the transcendental lattice is positive definite of rank two.
Thus a singular K3 surface $X$ is uniquely determined by $\T(X)$ up to isomorphism due to the Torelli Theorem.
The intersection form on $\T(X)$  provides a useful link to class group theory -- and thereby to elliptic curves with complex multiplication (CM).

\subsection{}
Let us first consider \textbf{singular abelian surfaces}, i.e.~complex abelian surfaces $A$ with maximal Picard number $\rho(A)=4$. 
One defines the transcendental lattice $\T(A)$ as above.
In \cite{SM}, Shioda and Mitani showed that every singular abelian surfaces is isomorphic to the product of isogenous CM elliptic curves.
The curves can be given explicitly in terms of $\T(A)$ as follows.
The intersection form on $\T(A)$ is expressed uniquely up to conjugation in $SL_2(\Z)$ by a quadratic form
\begin{eqnarray}\label{eq:Q}
Q(A)=
\begin{pmatrix}
2a & b\\
b & 2c
\end{pmatrix},
\;\;\;\;\; a,b,c\in\Z.
\end{eqnarray}
We denote its discriminant by $d=b^2-4ac<0$.
By \cite{SM}, $A$ is isomorphic to the product of the following elliptic curves $E, E'$ (written as complex tori):
\begin{eqnarray}\label{eq:E}
E=E_\tau,\;\;\tau = \dfrac{-b+\sqrt{d}}{2a},\;\;\;\;\; E'=E_{\tau'},\;\;\tau' = \dfrac{b+\sqrt{d}}2.
\end{eqnarray}

By construction, $E$ and $E'$ are isogenous with CM in $K=\Q(\sqrt{d})$.
In particular, this result implies that every singular abelian surface can be defined over the Hilbert class field $H(d)$ (a certain abelian Galois extension of $K$ with prescribed ramification and Galois group $Cl(d)$), since the same holds for $E$ and $E'$ by Shimura's work \cite[\S 7]{Shimura}.

\subsection{}

The first approach towards singular K3 surfaces was to consider \textbf{Kummer surfaces} of singular abelian surfaces as in Example \ref{Ex:Kummer}.
In general, the transcendental lattice of a Kummer surface $\Km(A)$ is obtained from the abelian surface $A$ by multiplying the intersection form $Q(A)$ on $\T(A)$ by a factor of $2$
by \cite{PSS}:
\[
 \T(\Km(A)) = \T(A)[2].
\]
Hence any singular K3 surface $X$ with $\T(X)$ two-divisible (as an even lattice) can be realised as a Kummer surface for appropriate $E, E'$.
However, the transcendental lattice of a singular K3 surface need not be two-divisible as we will see for the following extremal elliptic K3 surface.

\begin{Example}
Consider the elliptic K3 surface $X$ with an $\I_{19}$ fibre from Example \ref{Ex:19}.
By Thm.~\ref{Thm:E-NS}, $X$ has N\'eron-Severi group
\[
 \NS(X) = U  \oplus  A_{18}.
\]
As its orthogonal complement in $H^2(X,\Z)$, the transcendental lattice $\T(X)$ has discriminant $d=-19$.
As $\Q(\sqrt{-19})$ has class number one, $Q(X)$ is uniquely represented by the quadratic form
\[
 Q(X) = \begin{pmatrix}
         2 & 1\\ 1 & 10
        \end{pmatrix}.
\]
In particular, $Q(X)$ is not two-divisible.
Hence $X$ cannot be isomorphic to a Kummer surface over $\C$.
\end{Example}

\subsection{Shioda-Inose structure}
\label{ss:SI}

This failure of the Kummer map to be surjective was overcome by Shioda and Inose by means of elliptic fibrations on Kummer surfaces \cite{SI}.
They started with a Kummer surface of product type as in Example \ref{Ex:Kummer}.
Explicitly they identified a divisor $D$ of type $\II^*$ in the double Kummer pencil as mentioned in Example \ref{Ex:Kummer2}.
This divisor is marked blue in the following diagram of the double Kummer pencil:

\begin{figure}[ht!]
\begin{center}
\setlength{\unitlength}{1.08mm}
\begin{picture}(60, 47)
%
%

{\color{yellow}
\put(7, 10){\line(1, 0){41}}}

{\color{blue}
\put(7, 20){\line(1, 0){41}}
\put(7, 30){\line(1, 0){41}}}
\put(7, 40){\line(1, 0){41}}
%
%
{\color{blue}
\put(10, 5.1){\line(0, 1){4.4}}
\put(10, 10.6){\line(0, 1){8.9}}
\put(10, 20.6){\line(0, 1){8.9}}
\put(10, 30.6){\line(0, 1){8.9}}
\put(10, 40.6){\line(0, 1){4.4}}
\put(21, 5.1){\line(0, 1){4.4}}
\put(21, 10.6){\line(0, 1){8.9}}
\put(21, 20.6){\line(0, 1){8.9}}
\put(21, 30.6){\line(0, 1){8.9}}
\put(21, 40.6){\line(0, 1){4.4}}}

{\color{red}
\put(30, 5){\line(0, 1){4.4}}
\put(30, 10.6){\line(0, 1){8.9}}
\put(30, 20.6){\line(0, 1){8.9}}
\put(30, 30.6){\line(0, 1){8.9}}
\put(30, 40.6){\line(0, 1){4.4}}
\put(40, 5.1){\line(0, 1){4.4}}
\put(40, 10.6){\line(0, 1){8.9}}
\put(40, 20.6){\line(0, 1){8.9}}
\put(40, 30.6){\line(0, 1){8.9}}
\put(40, 40.6){\line(0, 1){4.4}}}
%
%

\put(8, 6){\line(1,1){5.5}}
{\color{blue}
\put(8, 16){\line(1,1){5.5}}
\put(8, 26){\line(1,1){5.5}}
\put(8, 36){\line(1,1){5.5}}}
%
{\color{blue}
\put(18, 6){\line(1,1){5.5}}
\put(18, 16){\line(1,1){5.5}}}
\put(18, 26){\line(1,1){5.5}}
\put(18, 36){\line(1,1){5.5}}
%

{\color{red}
\put(28, 6){\line(1,1){5.5}}}
\put(28, 16){\line(1,1){5.5}}
\put(28, 26){\line(1,1){5.5}}
{\color{red}

\put(28, 36){\line(1,1){5.5}}
%

\put(38, 6){\line(1,1){5.5}}}
\put(38, 16){\line(1,1){5.5}}
\put(38, 26){\line(1,1){5.5}}

{\color{red}
\put(38, 36){\line(1,1){5.5}}}
%
\end{picture}
\end{center}
\vskip -.3cm
\caption{Singular fibres in the double Kummer pencil}\label{fig:SI}
\end{figure}
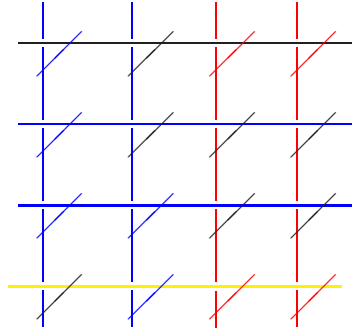

By Prop.~\ref{Prop:PSS}, the Kummer surface admits an elliptic fibration with $D$ as a singular fibre and section corresponding to the rational curve marked in yellow.
As the red marked curves are perpendicular to $D$, they constitute further fibre components.
Thus there are two more reducible fibres.
Shioda-Inose prove that their types can only be $\I_0^*, \I_1^*$ or possibly once $\IV^*$.

Consider the quadratic base change $X$ of this elliptic fibration on $\Km(E\times E')$ ramified at the last two singular fibres.
Since these fibres are non-reduced, the base change has again $e(X)=24$.
Thus $X$ is a K3 surface, and it is singular since the surfaces are isogenous.
Shioda and Inose proceeded to prove that $X$ has the same transcendental lattice as the original abelian surface $A$.
This concludes the construction of a singular K3 surface with any given transcendental lattice.

Note that the deck transformation corresponding to the quadratic base change defines a Nikulin involution on $X$:
It has exactly eight isolated fixed points and leaves the holomorphic 2-form invariant.
The fixed points sit in the two branch fibres; their resolution yields the corresponding fibres on the Kummer surface.
This general construction -- abelian surface and K3 surface with the same transcendental lattice 
such that Kummer quotient and Nikulin involution yield the same Kummer surface -- is nowadays referred to as \textbf{Shioda-Inose structure}. 
$$
\begin{array}{ccccc}
A &&&& X\\
& \searrow && \swarrow &\\
&& \text{Km}(A) &&
\end{array}
$$
The terminology was introduced by Morrison who worked out lattice theoretic criteria to decide which K3 surfaces of Picard number $\rho\geq 17$ admit a Shioda-Inose structure \cite{Mo}.
Recently Kumar extended Shioda-Inose's construction for Kummer surfaces of product type to Jacobians of genus two curves \cite{Kumar}.
This time, elliptic fibrations with one fibre of type $\\II^*$ and $\III^*$ each play the central role.

\subsection{Inose fibration}

The construction of Shioda-Inose can be defined over some explicit number field:
\begin{enumerate}[(1)]
 \item 
The elliptic curves $E, E'$ are defined over the Hilbert class field $H(d)$.
\item
The Kummer quotient respects the base field of the abelian surface (cf.~Example \ref{Ex:Kummer}).
\item
The field of definition of the divisor $D$ of type $\II^*$ only involves the $2$-torsion of $E$ and $E'$.
Thus $D$ and with it the elliptic fibration $|D|$ are defined over $H(d)(E[2], E'[2])$.
\item
The quadratic base change can be defined over the same field, since the ramification points are either both rational or conjugate quadratic as there cannot be any further singular fibres of the same type by the Euler number $e(\Km(A))=24$.
\end{enumerate}
The Shioda-Inose structure has some interesting consequences for the arithmetic of a singular K3 surface $X$. 
For instance, over some extension of $H(d)$ the zeta function of $X$ can be expressed in terms of Hecke characters \cite[Thm.~6]{SI}.

Subsequently Inose exhibited an explicit model for the elliptic fibration on $X$ induced by the Shioda-Inose structure.
By construction, this fibration has two singular fibres of type $\II^*$.
Hence it admits a Weierstrass equation of the following shape (with $\II^*$'s at $0,\infty$):
\begin{eqnarray}\label{eq:Inose}
X:\;\;\; y^2 = x^3 - 3\,A\,t^4\,x + t^5\,(t^2 - 2\, B\,t+1).
\end{eqnarray}
This fibration is often called \textbf{Inose fibration}.
Inose showed in \cite{Inose} that $A$ and $B$ are related to the j-invariants of $E, E'$ as follows:
\[
 A^3 = j(E)\,j(E')/12^6,\;\;\; B^2 = (1-j(E)/12^3)\,(1-j(E')/12^3). 
\]
This gives a model of $X$ over a degree six extension of $H(d)$.
In fact, the Inose fibration can be twisted in such a way that the coefficients are already defined over $H(d)$ (cf.~\cite[Prop.~4.1]{S-fields}).

The remaining singular fibres of Inose's fibration are easily computed from $E$ and $E'$.
We also list the Mordell-Weil ranks and the discriminants in the case where $X$ is singular, i.e.~$E$ and $E'$ are isogenous with CM.
In the non-CM case, the $\MW$-rank drops by one if $E\sim E'$, resp.~by two if $E\not\sim E'$ (exactly as in Example \ref{Ex:Kummer}).

\begin{table}[ht!]
$$
\begin{array}{ccccc}
\hline
E, E' & j & \text{sing fibres} & \mbox{rank}(\MW) & d\\
\hline
E\cong E' & j(E)=0 & \IV & 0 & -3\\
& j(E)=12^3 & 2\,\I_2 & 0 & -4\\
& j(E)\neq 0, 12^3 & \I_2, 2\, \I_1 & 1 & \neq -3, -4\\
\hline
E \not\cong E' & j(E)\,j(E')=0 & 2 \, \II & 2 & -3\,N^2 (N>1)\\
& j(E)\,j(E')\neq 0 & 4\,\I_1 & 2 & -16, -28 \text{ or } h(d)>1\\
\hline
\end{array}
$$
\caption{Mordell-Weil ranks of Inose's fibration (CM case)}
\label{Table:Inose}
\end{table}

\subsection{Automorphisms}

Except for two cases, Inose's fibration always has positive Mordell-Weil rank.
Hence these singular K3 surfaces have infinite automorphisms groups. 
Shioda--Inose made this observation and extended it to all singular K3 surfaces:

\begin{Lemma}
 Any singular K3 surface has infinite automorphism group.
\end{Lemma}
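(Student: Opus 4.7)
The plan is to produce on $X$ an elliptic fibration of positive Mordell--Weil rank, because translation by any non-torsion section then supplies an automorphism of the underlying surface of infinite order (cf.~\ref{ss:quot-sect}).

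By the Shioda--Inose construction (\ref{ss:SI}), every singular K3 surface $X$ carries the Inose fibration (\ref{eq:Inose}), attached to a pair of isogenous CM elliptic curves $E,E'$ with $\T(X)\cong \T(E\times E')$. Table~\ref{Table:Inose} shows that this fibration has positive Mordell--Weil rank in all cases except precisely when $E\cong E'$ and $j(E)\in\{0,12^3\}$, corresponding to $\mathrm{disc}\,\T(X)\in\{-3,-4\}$. Outside these two explicit surfaces the claim is immediate.

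For the two exceptional singular K3 surfaces I would exploit the rich endomorphism structure of $A=E\times E$. Here $\mathrm{End}(A)\cong M_2(\OO)$ with $\OO=\mathbb{Z}[i]$ or $\mathbb{Z}[\omega]$, so $\mathrm{Aut}(A)=\mathrm{GL}_2(\OO)$ contains unipotent elements of infinite order such as $\left(\begin{smallmatrix}1&1\\0&1\end{smallmatrix}\right)$. Such an automorphism commutes with $[-1]$, hence descends to $\Km(A)$ and induces an infinite-order Hodge isometry of $\T(A)\cong \T(X)$; extended by the identity on $\NS(X)$ (which is compatible with the anti-isomorphism $q_{\NS(X)}=-q_{\T(X)}$ of Prop.~\ref{Prop:disc_form}) and then composed with reflections placing it in the fundamental Weyl chamber, it becomes an effective Hodge isometry of $H^2(X,\mathbb{Z})$, whence the strong Torelli theorem returns a genuine automorphism of $X$ of infinite order. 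A more hands-on alternative is to run Nishiyama's method (\ref{ss:Nishi}) on each of these two surfaces to enumerate their finitely many elliptic fibrations and exhibit directly one with positive Mordell--Weil rank.

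The hard step is this last exceptional case. Passing from $\mathrm{Aut}(A)$ to $\mathrm{Aut}(X)$ through Torelli requires verifying that the Hodge isometry of $\T(X)$ induced by the unipotent element acts trivially on the discriminant group of $\T(X)$, so that the extension by the identity on $\NS(X)$ is well-defined; this is a short but delicate lattice-theoretic calculation using the explicit realisation of $\T(X)$ inside $H^2(A,\mathbb{Z})$. The concrete route via Nishiyama sidesteps Torelli entirely, at the cost of a direct case analysis of the two exceptional surfaces.
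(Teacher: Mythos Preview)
Your argument matches the paper's proof almost exactly. The paper handles the generic case via the positive Mordell--Weil rank of Inose's fibration (Table~\ref{Table:Inose}), and for the two exceptional surfaces with $d\in\{-3,-4\}$ it cites the explicit geometric construction in \cite[\S 5]{SI}, then offers precisely your second alternative via Nishiyama's classification \cite{Nishi}: among the six (resp.~thirteen) fibrations for $d=-3$ (resp.~$d=-4$), one has Mordell--Weil rank one.

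Your first alternative for the exceptional cases---pushing a unipotent element of $\mathrm{GL}_2(\OO)$ through Torelli---is not in the paper but is a reasonable route. The delicate point you flag is real: you must check that the induced isometry of $\T(X)$ acts trivially on the discriminant group so that gluing with the identity on $\NS(X)$ is well-defined. For $d=-3$ the discriminant group is $\Z/3\Z$ and for $d=-4$ it is $(\Z/2\Z)^2$; the unipotent action on $\T(A)\cong\T(X)$ (computed via $\wedge^2$ of the action on $H^1(A)$) does preserve the form, but verifying triviality on the discriminant is a genuine calculation you would need to carry out. The Nishiyama route avoids this entirely and is what the paper actually invokes, so your proposal already contains the paper's argument.
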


To complete the proof of the lemma, Shioda and Inose pursued an explicit geometric approach for the two remaining singular K3 surfaces \cite[\S 5]{SI}.
Later Vinberg calculated the automorphism groups of these two ``most algebraic'' K3 surfaces completely \cite{Vinberg}.
Alternatively, we could use the classification of elliptic fibrations on these K3 surfaces along the lines of \ref{ss:Nishi}.
In \cite{Nishi}, Nishiyama showed that there are exactly six elliptic fibrations for $d=-3$ and 13 for $d=-4$.
In either case, $X$ admits an elliptic fibration with Mordell-Weil rank one, so the automorphism group is infinite. 

\subsection{}
\label{ss:cyclic-base}

We shall now start with the Inose fibration (\ref{eq:Inose}) and apply cyclic base changes ramified at the $\II^*$ fibres.
For instance, a quadratic base change yields another K3 surface $X'$ with two singular fibres of type $\IV^*$ instead of $\II^*$.
Shioda showed in \cite{Sandwich} that $X'$ returns in fact the Kummer surface $\Km(E\times E')$ from the Shioda-Inose structure -- we have identified the singular fibres in the double Kummer pencil in~Example \ref{Ex:Kummer2}.
Thus $X$ is sandwiched by this very Kummer surface.

In general, all such cyclic base changes of degree $n$ up to six return isogenous K3 surfaces $X^{(n)}$. By \cite{Shioda}, the base change results in a multiplication of the intersection form on the transcendental lattice by the degree:
\[
\T(X^{(n)}) = \T(X)[n]\;\;\; (n=1,\hdots,6).
\]

\subsection{Mordell-Weil ranks of elliptic K3 surfaces}
\label{ss:Kuwata}

By Cor.~\ref{Cor:ST}, a complex elliptic K3 surface could have any Mordell-Weil rank from 0 to 18, since $\rho\leq 20$.
Cox proved in  \cite{Cox} that all these ranks actually occur. 
This result is a consequence of the surjectivity of the period map;
however, Cox did not give any explicit examples. 

To overcome this lack of examples, Kuwata considered the cyclic base changes of Inose's fibration from \ref{ss:cyclic-base}.
He noticed that since $\rho$ stays constant under each base change, these surfaces provide explicit  examples of complex elliptic K3 surfaces with any Mordell-Weil rank from 0 to 18 except for 15 \cite{Kuwata}. 
Here we produce a table of the fibre configurations and Mordell-Weil ranks (Table \ref{Table:Kuwata}).

\begin{table}[ht!]
$$
\begin{array}{c||cc||c|c|c}
\hline
& \multicolumn{2}{c}{E\cong E'} 
& E\not\cong E' & E\sim E' & E\not\sim E'\\
\text{degree} & \text{config} & \mbox{MW-rank} & \text{config} & \mbox{MW-rank} & \mbox{MW-rank}\\
\hline
1 & 2\,\II^*, \I_2, 2\,\I_1 & \begin{cases} 1\\0\end{cases} & 2\,\II^*, 4\,\I_1 & \begin{cases} 2\\1\end{cases} & 0\\
2 & 2\,\IV^*, 2\,\I_2, 4\, \I_1 & \begin{cases} 4\\3\end{cases} & 2\,\IV^*, 8\,\I_1 & \begin{cases} 6\\5\end{cases} & 4\\
3 & 2\,\I_0^*, 3\,\I_2, 6\,\I_1 &\begin{cases} 7\\6\end{cases} & 2\,\I_0^*, 12\, \I_1 & \begin{cases} 10\\9\end{cases} & 8\\
4 & 2\,\IV, 4\,\I_2,8\,\I_1 & \begin{cases} 10\\9\end{cases} & 2\,\IV, 16\,\I_1 & \begin{cases} 14\\13\end{cases} & 12\\
5 & 2\,\II, 5\,\I_2, 10\,\I_1 & \begin{cases} 13\\12\end{cases} & 2\,\II, 20\,\I_1 & \begin{cases} 18\\17\end{cases} & 16\\
6 &  6\,\I_2, 12\,\I_1 & \begin{cases} 12\\11\end{cases} & 24\,\I_1 & \begin{cases} 18\\17\end{cases} & 16\\
\hline
\end{array}
$$
\caption{Mordell-Weil ranks of cyclic base changes of Inose's fibration}
\label{Table:Kuwata}
\end{table}

The configurations are valid under the assumptions that in the first case $j(E)\neq 0, 12^3$, and in the second case $j(E)\,j(E')\neq 0$.
The Mordell-Weil ranks are computed from the Picard number given in Example \ref{Ex:Kummer} by Cor.~\ref{Cor:ST}.
In the first two cases, we list both the Mordell-Weil ranks in the CM case and in the non-CM case.
By inspection, the ranks cover all numbers from $0$ to $18$ except for $15$.

\subsection{}
Kloosterman filled the gap in Kuwata's list by exhibiting an elliptic K3 surface with Mordell-Weil rank $15$ \cite{Kl}.
Here we briefly sketch his construction.

Let $S$ be a rational elliptic surface with a fibre of type $\III^*$ and three $\I_1$'s.
These surfaces have Mordell-Weil rank one by Cor.~\ref{Cor:ST} and come in a one-dimensional family (write down the general Weierstrass form of a rational elliptic surface with an $\III^*$ fibre).

We construct an auxiliary K3 surface $X'$ by applying a general quadratic base change to $S$.
Then the trivial lattice $T(X')$ has rank 16, as it equals $U \oplus E_7^2$.
For general $X'$, the N\'eron-Severi group is thus generated by $T(X')$ and the pull-back of the $\MW$-generator of $S$.
Since quadratic base changes over $\PP^1$ have two moduli, $X'$ lives in a three-dimensional family with $\rho(X')\geq 17$.
Hence the general member actually has $\rho=17$.

Finally, we exhibit the cyclic base change of degree 4 to $X'$ that ramifies at the two $\III^*$ fibres.
In general, this yields another elliptic K3 surface $X$ with only $\I_1$ fibres.
Since $\rho(X)=\rho(X')$ by construction, $X$ has Mordell-Weil rank $r\geq 15$. Again we have $r=15$ in general.

Within this three-dimensional family, it remains to single out a member with $\rho=17$.
Kloosterman achieved this as follows:
he first determined a surface (over $\Q$) where by means of good reduction mod $p$, the Lefschetz fixed point formula implied $\rho\leq 18$ after point counting over $\F_q$ for $q=p,p^2,p^3$;
then he compared the reduction modulo another good prime $q$ to prove $\rho=17$ by an extension of a technique pioneered by van Luijk in \cite{vL} (related to the Tate conjecture, see \ref{s:Tate}).


\subsection{Explicit Mordell-Weil lattices}

Sometimes it is even possible to determine the Mordell-Weil lattices abstractly or in terms of generators.
For the base changes $X^{(n)}$ of Inose's pencil,
this was achieved in \cite{Shioda}.

Kloosterman's example above initiates a little detour to yet another area:
symplectic automorphisms of K3 surfaces,
i.e.~automophisms that leave the holomorphic two-form invariant.
In consequence, the desingularisation of the quotient is again a K3 surface.
Recall that we have seen one example in form of Nikulin involutions.

Symplectic automorphism groups $G$ on K3 surfaces $X$ have been classified completely starting with Nikulin for the abelian case \cite{N0}.
One crucial property is that the transcendental lattice $\T(X)$ is $G$-invariant.
Lattice theoretically this has the following effect.
Let $\Omega_G$ be defined as orthogonal complement
\[
\Omega_G = (H^2(X,\Z)^G)^\bot \subset H^2(X,\Z).
\]
Then $\Omega_G$ is in fact a primitive negative-definite sublattice of $\NS(X)$ that does not depend on $X$.
To compute $\Omega_G$, it thus suffices to work with one particular K3 surface $X$.
In practice, it is often convenient to work with elliptic K3 surfaces since here we have several means of prooducing symplectic automorphisms:
notably 
as quotients by translation by torsion sections and by base change (possibly composed with the hyperelliptic involution).
This approach has been pursued very successfully by Garbagnati and Sarti \cite{GS}.

Returning to Kloosterman's example above, it has the dihedral group on four elements, $D_4$, as group of symplectic automorphisms.
In \cite{Garba} Garbagnati determined $\Omega_{D_4}$ explicitly.
For the general member of Kloosterman's family, it follows that $\MWL\cong\Omega_{D_4}(-1)$.

\subsection{Mordell-Weil rank in characteristic $p$}

The corresponding $\MW$ rank problem in positive characteristic can be approached by similar methods.
For instance, the Kummer construction from Example \ref{Ex:Kummer} is well-defined in any odd characteristic $p>2$. 
The major difference is the occurence of supersingular K3 surfaces, but the Picard number is expressed by the same formula as in characteristic zero:
\begin{eqnarray}
\label{eq:rho-p}
\;\;\;\;\;\;\;\;\;\;
\rho(\Km(E\times E'))=18+\mbox{rank}(\mbox{Hom}(E,E')) \in\{18,19,20,22\}.
\end{eqnarray}
Note that over $\bar\F_p$, the Tate conjecture predicts an even parity of the Picard number (cf.~\ref{s:Tate}).
Hence $\rho=19$ cannot occur over $\bar\F_p$.

In fact, in characteristic $p>3$ one can still consider Inose's fibration (\ref{eq:Inose}) with the same Picard number as the above Kummer surface.
With Tables \ref{Table:Inose}, \ref{Table:Kuwata} adjusted according to (\ref{eq:rho-p}), we obtain elliptic K3 surfaces covering many $\MW$ ranks in characteristic $p>3$.

In particular, $X^{(5)}$ and $X^{(6)}$ yield K3 surfaces with $\MW$ rank 20 over $\bar\F_p$ as soon as there are at least two distinct supersingular elliptic curves over $\F_p$ (since maximal $\MW$ rank requires all fibres to be irreducible, so $E$ and $E'$ cannot be isomorphic).
By the well-known formulas, the latter holds true for $p=11$ and $p>13$.

\begin{Example}
Choose $E, E'$ with j-invariants $0$ resp.~$12^3$.
Then 
\[
 X^{(5)}: \;\;\; y^2 = x^3 + t\,(t^{10} +1) \;\;\; \mbox{and} \;\;\; X^{(6)}: \;\;\; y^2 = x^3 + t^{12} +1.
\]
These fibrations have $\MW$ rank 20 over $\bar\F_p$ if and only if both $E$ and $E'$ are supersingular mod $p$, i.e.~if $p\equiv -1\mod 12$.
\end{Example}

We note that all these supersingular K3 surfaces have Artin invariant $\sigma=1$ (i.e. $\mbox{disc}\; \NS(X)=-p^2$).
By a result of Ogus \cite{Ogus}, such a K3 surface is unique up to isomorphism. 
Through $X^{(5)}$ and $X^{(6)}$, this K3 surface is endowed with two distinct elliptic fibrations of maximal rank 20 for any pair of non-isomorphic supersingular elliptic curves over $\F_p$.
In particular, the number of distinct $\MW$ rank 20 fibrations grows with the characteristic (cf.~\cite{Sh-MW-p}).

\subsection{Mordell-Weil ranks of elliptic surfaces over $\Q$}
\label{ss:MW-Q}

So far we have mainly been concerned with N\'eron-Severi group and Mordell-Weil group of elliptic surfaces over algebraically closed ground fields $k$, thus regarding them as purely geometric objects.
As soon as we do not assume $k$ to be algebraically closed, arithmetic problems enter much more prominently.
In fact, these problems can be very interesting in their own right, and
often elliptic fibrations provide a genuine tool to study these problems.
We shall discuss a few of them throughout the remainder of this section.

\smallskip

Consider Kuwata's and Kloosterman's explicit examples of elliptic K3 surfaces over $\C$ for any given Mordell-Weil rank.
In any case, $\NS$ and $\MW$ are defined over some finite extension of the ground field the details of which we ignored so far.
However, the analogous $\MW$-rank question for fixed ground field is much more delicate.

Here we outline part of the solution to the problem over $\Q$.
The question of Mordell-Weil rank 18 over $\Q$ was formulated by Shioda in \cite{Shioda-20}. 
Elkies gave a negative answer in \cite{Elkies}, \cite{Elkies-rank}.
His line of  argument combined results specific to singular K3 surfaces with lattice theory.

\smallskip

By Cor.~\ref{Cor:ST}, Mordell-Weil rank 18 implies that an elliptic K3 surface is singular. 
Moreover the trivial lattice is only $U$, i.e.~all fibres are irreducible. 
In consequence, the Mordell-Weil lattice is even and integral, but has no roots.
This is a very special property, shared for instance by the Leech lattice.

Thanks to the Shioda-Inose structure, any singular K3 surface $X$ of discriminant $d<0$ can be defined over the ring class field $H(d)$ by means of Inose's fibration (\ref{eq:Inose}).
Here  $X$ may admit a model over some smaller field, but $H(d)$ is always preserved through its Galois action on $\NS(X)$ (cf.~\cite{S-NS}).
If the Mordell-Weil rank over $\Q$ is 18, this implies that $d$ has class number one, so $|d|\leq 163$. 
Because of the absence of roots, $\MWL$ would thus  break the density records for sphere packings in $\R^{18}$. 
By gluing up to a  Niemeier lattice as in \ref{ss:Nishi}, Elkies is able to establish a contradiction.

As for lower rank, Elkies found an elliptic K3 surface with Mordell-Weil rank 17 over $\Q$ (and necessarily $\rho=19$) \cite{Elkies-rank}. 
One can show that all intermediate ranks are also attained over $\Q$. 


\subsection{Modularity}
\label{ss:mod}

Modularity usually refers to varieties over $\Q$.
Here one asks for a relation to modular forms through the zeta function or $L$-series.
For a singular K3 surfaces, Inose's fibration (\ref{eq:Inose}) defines a model over the ring class field $H(d)$.
In \ref{ss:MW-Q}, we have reviewed an obstruction to descending $X$ to smaller number fields: 
the Galois action on the N\'eron-Severi group.
On singular abelian surfaces, there is a similar, but stronger obstruction to the corresponding problem stemming from the Galois representation of $H^1(A)$ (cf.~\cite[Lemma 6]{ES}).
For singular K3 surfaces, lattice theory imposes another substantially weaker condition
related to the genus of the transcendental lattice (cf.~\cite{Shimada-T}, \cite{S-fields}).

Specifically, we can ask for singular K3 surfaces over $\Q$.
By \cite{S-fields}, this requires that the genus of $\T(X)$ consists of a single class;
thus we are concerned with imaginary quadratic fields of class group exponent two.
Over some extension, the zeta functions is expressed through particular Hecke characters by \cite[Thm.~6]{SI}.
Over $\Q$, this relation specialises to classical modularity by a result of Livn\'e:

\begin{Theorem}[Livn\'e {\cite{L}}]
\label{Thm:mod}
Every singular K3 surface $X$ over $\Q$ is modular:
the Galois representation on $\T(X)$ is associated to a Hecke eigenform of weight 3.
\end {Theorem}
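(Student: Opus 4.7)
The plan is to reduce the claim to the classical fact that Hecke characters of imaginary quadratic fields correspond to modular forms, by exhibiting the $2$--dimensional Galois representation on $\T(X)$ as an induced representation. Throughout, write $d=\mathrm{disc}\,\T(X)<0$ and $K=\Q(\sqrt{d})$; fix a prime $\ell$ and write $V_\ell=\T(X)\otimes_\Z\Q_\ell$, regarded as a Galois-stable subspace of $H^2_{\mathrm{\acute et}}(X_{\bar\Q},\Q_\ell)$ via the Hodge/algebraic decomposition. Because $\rho(X)=20$, the Hodge structure on $\T(X)\otimes\C$ has type $(2,0)+(1,1)+(0,2)$ with $h^{2,0}=h^{0,2}=1$, so $V_\ell$ is a continuous $2$-dimensional $\ell$-adic representation of $G_\Q$ with Hodge--Tate weights $\{0,2\}$; moreover the cup-product restricted to $\T(X)$ is a $G_\Q$-invariant symmetric bilinear form of discriminant $d$.

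First I would establish the CM-structure on $V_\ell$. The intersection form on $\T(X)$ gives $V_\ell$ the structure of an orthogonal representation; because $\det V_\ell$ corresponds to $d$ via the cyclotomic character, the image of $\rho_{V_\ell}$ lies (projectively) in a dihedral subgroup. Concretely, I would use the Shioda--Inose structure: over $H(d)$ one has a diagram $A\leftarrow Y\to \mathrm{Km}(A)$ with $A=E\times E'$ for CM elliptic curves $E,E'$ with CM in $K$, inducing a Hodge isometry $\T(X)\cong \T(A)$; after tensoring with $\Q_\ell$ this becomes a $G_{H(d)}$-equivariant isomorphism $V_\ell\cong H^1(E)\otimes H^1(E')$ restricted to its transcendental part. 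Since $E,E'$ have CM by $K$, each $H^1$-factor is $\mathrm{Ind}_{G_K}^{G_{H(d)}}\psi_i$ of a Hecke character $\psi_i$ of infinity type $(1,0)$; the transcendental summand of the tensor product is then (up to Tate twist) $\mathrm{Ind}_{G_K}^{G_{H(d)}}(\psi_1\psi_2)$, a representation of infinity type $(2,0)$.

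Next I would descend from $G_{H(d)}$ to $G_\Q$. Since $X$ is defined over $\Q$, the Galois action of $\mathrm{Gal}(H(d)/\Q)$ permutes the decomposition above in a way dictated by class field theory: complex conjugation of $K/\Q$ exchanges $\psi_1\psi_2$ and its conjugate, while $\mathrm{Gal}(H(d)/K)$ acts through the ideal-class character. Combining these, the full representation $V_\ell$ is realised as $\mathrm{Ind}_{G_K}^{G_\Q}\chi$ for a single algebraic Hecke character $\chi$ of $K$ of infinity type $(2,0)$ and some conductor $\mathfrak f$ dividing the conductor of the Shioda--Inose data. The compatibility of the family $\{V_\ell\}_\ell$ as $\ell$ varies ensures $\chi$ is independent of $\ell$.

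Finally, by the classical theta-series construction of Hecke--Shimura, any algebraic Hecke character $\chi$ of an imaginary quadratic field $K$ with infinity type $(k-1,0)$ yields a newform
\[
f_\chi(\tau)=\sum_{\mathfrak a\subset\OO_K} \chi(\mathfrak a)\,q^{N\mathfrak a}\in S_k(\Gamma_0(N),\varepsilon),
\]
of weight $k$ whose $\ell$-adic Deligne representation is $\mathrm{Ind}_{G_K}^{G_\Q}\chi$. In our situation $k-1=2$, so $f_\chi$ is a Hecke eigenform of weight $3$, and the isomorphism $V_\ell\cong \rho_{f_\chi,\ell}$ is exactly the asserted modularity. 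I expect the main obstacle to be the descent step: verifying that the Hecke character one naturally writes down on $G_{H(d)}$ actually extends compatibly under $\mathrm{Gal}(H(d)/\Q)$ to a Hecke character of $K$ with the correct infinity type and conductor. This uses crucially that $\T(X)$ has a $\Q$-rational geometric origin (so the cocycle conditions are automatic) together with Nikulin's uniqueness of the genus of $\T(X)$ up to $SL_2(\Z)$-equivalence, which pins down $\chi$ up to finitely many twists.
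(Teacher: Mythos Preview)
The paper does not give its own proof of this theorem; it simply quotes Livn\'e's result \cite{L} as established fact. So the relevant comparison is between your approach and Livn\'e's.

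Your route through the Shioda--Inose structure is more geometric than Livn\'e's, and it is essentially the approach of \cite[Thm.~6]{SI}, which the paper explicitly flags as giving the Hecke-character description only \emph{over an extension} of $\Q$. You correctly identify the descent from $G_{H(d)}$ to $G_\Q$ as the main obstacle, and indeed it is a genuine one: the Shioda--Inose correspondence is not in general defined over $\Q$, the elliptic curves $E,E'$ live over $H(d)$, and the identification $V_\ell|_{G_{H(d)}}\cong$ (transcendental part of $H^1(E)\otimes H^1(E')$) does not by itself tell you how $\mathrm{Gal}(H(d)/\Q)$ acts. Patching this together into a single Hecke character of $K$ requires exactly the kind of cocycle argument you gesture at, and making it rigorous is nontrivial.

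Livn\'e's actual argument bypasses all of this. You already wrote down the key observation --- the cup-product makes $V_\ell$ an orthogonal $2$-dimensional representation --- but then you abandon it for the geometric detour. The point is that orthogonality alone forces the (projective) image to be dihedral: the image of $G_\Q$ lies in $O(q)$ for the binary quadratic form $q$ on $\T(X)$, and $SO(q)$ is a one-dimensional torus which splits exactly over $K=\Q(\sqrt{d})$. Hence $V_\ell|_{G_K}$ is a sum of two characters and $V_\ell\cong\mathrm{Ind}_{G_K}^{G_\Q}\chi$ for a character $\chi$ of $G_K$, with no descent problem because everything is phrased over $\Q$ from the outset. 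That $\chi$ is an algebraic Hecke character of infinity type $(2,0)$ then follows from the Hodge--Tate weights, and your final step via Hecke's theta series is correct. So the fix is simply to stay with the orthogonality argument you started and not invoke Shioda--Inose at all.
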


This result can be viewed as one of the first higher-dimensional analogues of the modularity of elliptic curves after Wiles 
\cite{Wiles}.
The converse was recently established by Elkies and Sch\"utt:

\begin{Theorem}[Elkies, Sch\"utt {\cite{ES}}]\label{Thm:geom}
Every known Hecke eigenform of weight 3 with eigenvalues $a_p\in\Z$ is associated to a singular K3 surface over $\Q$.
\end{Theorem}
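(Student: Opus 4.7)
The plan is to reduce the claim to a finite, explicit verification and then execute it case-by-case. First, recall that a weight $3$ Hecke eigenform $f=\sum a_n q^n$ with $a_p\in\Z$ for all primes $p$ is necessarily a CM form: by a classical argument of Ribet, $f=f_\psi$ for a Hecke Gr\"ossencharakter $\psi$ of an imaginary quadratic field $K=\Q(\sqrt{-D})$, and the possible $K$ satisfy a strong constraint (the class group of $\OO_K$ must have exponent dividing $2$, by the same reasoning that restricts the fields of definition of singular K3 surfaces in \cite{S-fields}). In particular, $D$ lies in a finite set, and for each $K$ the set of newforms $f_\psi$ with rational coefficients is parametrised by $\psi$ up to quadratic twist, yielding a finite collection of Galois orbits. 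This already cuts the problem down to a finite combinatorial list, which can be read off from tables (e.g.~by the second author's classification of such CM forms).

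Second, the plan is to observe that the association between singular K3 surfaces $X/\Q$ and weight $3$ newforms respects quadratic twisting: if $X/\Q$ is associated to $f$, then the twist of $X$ by a Dirichlet character $\chi$ (e.g.\ a twist of an Inose fibration at two rational places, in the sense of \ref{ss:ex_quad}) is associated to $f\otimes\chi$. Hence it suffices to realise one representative from each twist class, and for each of the finitely many admissible $K$ we only need to exhibit \emph{one} singular K3 surface over $\Q$ whose transcendental lattice $\T(X)$ has CM by $K$ and whose intersection form $Q(X)$ coincides, up to $SL_2(\Z)$-equivalence, with the chosen representative.

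Third, for the construction we use the Shioda--Inose machinery of \ref{ss:SI}--\ref{ss:cyclic-base}. Pick CM elliptic curves $E,E'$ whose $j$-invariants are Galois-conjugate (so the unordered pair is $\Q$-rational); then by Inose's formulae, the Weierstrass equation
\[
y^{2} \,=\, x^{3} \,-\, 3A\,t^{4}\,x \,+\, t^{5}(t^{2} - 2B\,t + 1),
\qquad A^{3}=j(E)j(E')/12^{6},\; B^{2}=(1-j(E)/12^{3})(1-j(E')/12^{3}),
\]
has coefficients in $\Q$ and defines a singular K3 surface $X/\Q$ with $\T(X)\cong \T(E\times E')$. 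Varying $(E,E')$ over representatives of the orders of $\OO_K$ and applying the cyclic base changes $X\mapsto X^{(n)}$ of \ref{ss:cyclic-base} (which multiply the transcendental form by $n$) reaches every intersection form $Q$ of discriminant $-D N^{2}$ compatible with $K$. In the small number of remaining cases --- those where $\T(X)$ is neither $2$-divisible nor arising from a product $E\times E'$ with $j(E),j(E')$ both in $\Q$ or quadratic-conjugate --- one falls back on explicit models: extremal elliptic K3 surfaces (like the $\I_{19}$ surface of Example \ref{Ex:19}, realising discriminant $-19$), Kuwata's base-change K3's of \ref{ss:Kuwata}, and ad hoc models obtained by Kummer-type and Shioda--Inose-type constructions for the sporadic small-discriminant cases. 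For each realisation one confirms modularity by matching $L$-series: compute $\#X(\F_p)$ for sufficiently many good primes $p$, subtract the contribution from $\NS(X)\otimes\Q_\ell$ (which is known because $\NS$ and its Galois action are pinned down by the explicit fibration), read off the trace of Frobenius on $\T(X)\otimes\Q_\ell$, and compare with $a_p(f_\psi)$.

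The main obstacle will be the final matching step in the cases where the CM field $K$ has class number $>1$ (so $h(K)\in\{2,4\}$): there $\T(X)$ may split into several $SL_2(\Z)$-classes with the same genus, corresponding to genuinely distinct newforms, and one must ensure that the \emph{particular} Shioda--Inose (or twisted) construction employed hits the right class. This requires a careful analysis of the Galois action on $\NS(X)$ and on the CM orders involved, as in \cite{S-NS}, \cite{S-fields}, together with the observation that the Artin symbol on $H(d)/\Q$ distinguishes the classes; equivalently, one verifies that $\psi$ and the Hecke character attached to $\T(X)$ agree modulo the appropriate finite idele class group, which in practice is a finite computation carried out prime by prime up to the Sturm bound for weight~$3$ forms of the relevant level.
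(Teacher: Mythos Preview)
Your reduction step---CM newforms, class group exponent at most two, finitely many twist classes---matches the paper's own sketch (Weinberger's list plus the classification in \cite{S-CM}, giving 65 discriminants). The divergence is in the realisation of the K3 surfaces, and here your main construction has a genuine gap.

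You assert that when $j(E), j(E')$ are Galois-conjugate over $\Q$, Inose's Weierstrass equation has coefficients in $\Q$. But Inose's formulae only give $A^3 = j(E)j(E')/12^6 \in \Q$ and $B^2 = (1-j(E)/12^3)(1-j(E')/12^3) \in \Q$; they say nothing about $A$ or $B$ themselves, and the shape of the Inose pencil admits no rescaling that absorbs a cube root of $A^3$ and a square root of $B^2$ simultaneously. The paper states exactly this: Inose's fibration yields a model over a degree-six extension of $H(d)$, and even after the optimal twist one lands only in $H(d)$ (see the remark following (\ref{eq:Inose})). For the many discriminants with $h(d)>1$ your construction therefore does not produce a $\Q$-model as written. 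Your secondary tool---the cyclic base changes $X\mapsto X^{(n)}$---only replaces $\T(X)$ by $\T(X)[n]$ for $n\le 6$, which reaches far fewer forms than ``every intersection form of discriminant $-DN^2$''; so the ``small number of remaining cases'' requiring ad hoc treatment is in fact the majority of the 65, and your sketch does not indicate how to handle them systematically.

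The paper's route is quite different and avoids this descent problem. Rather than starting from CM elliptic curves, one parametrises one- and two-dimensional families of elliptic K3 surfaces over $\Q$ whose generic member already has $\rho\ge 19$ (resp.\ $\rho\ge 18$), with $\NS$ spanned by the trivial lattice. Inside such a family one then searches for $\Q$-rational specialisations acquiring an extra independent section so that $\rho=20$ with the prescribed discriminant $d$. The height formula (\ref{eq:disc-NS-E}) converts the target $d$ into concrete intersection data for the new section (which fibre components it must meet, whether it meets $\bar O$), and these constraints cut the problem down to an explicit polynomial computation in the family parameters. This is what makes the 65 cases tractable uniformly; the Shioda--Inose/CM-curve approach you propose is the right intuition for why such surfaces \emph{should} exist, but it does not by itself deliver the $\Q$-models.
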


The solution was first reduced to a finite problem by a combination of a finiteness result of Weinberger for imaginary quadratic fields with class group exponent two \cite{Wb}, and the classification of the above Hecke eigenforms up to twisting in \cite{S-CM}.
Then it remained to find singular K3 surfaces over $\Q$ for the 65 discriminants from Weinberger's list.
In support of exhibiting these surfaces, elliptic K3 surfaces enter through two key features:

First parametrise suitable one- and two-dimensional families of elliptic K3 surfaces over $\Q$ with Picard number $\rho\geq 19$ resp.~$18$.
Essentially, this is achieved through explicit calculations with extended Weierstrass forms and their discriminant, as it suffices to consider families with $\NS$ spanned by the trivial lattice.

Then determine a singular member of the family with additional section(s) such that the discriminant equals some given $d<0$.
By the height formula (\ref{eq:disc-NS-E}), the discriminant translates into information on the intersection behaviour of the section(s).
In particular, one can directly read off the fibre components that have to be met by the section(s).
Often these extra information reduce the complexity enough as to facilitate an explicit computation of the singular member of the family and the section(s).

\subsection{Shimura curves}
\label{ss:Shim}

There is another reason to be interested in one-dimensional families of (elliptic) K3 surfaces with $\rho\geq 19$ as in the proof of Thm.~\ref{Thm:geom}:
the parametrising curve is always a modular curve or a Shimura curve associated to a quaternion algebra over $\Q$.
In fact, as the explicit knowledge of Shimura curves is still fairly limited, K3 families enable us to draw interesting consequences for Shimura curves.
This approach was pursued by Elkies in \cite{E-Shimura};
he derived explicit defining equations and CM points of certain Shimura curves and also determined the corresponding QM-abelian surfaces.

The reason why K3 surfaces are easier to parametrise lies in the N\'eron-Severi group:
it has rank 19, so the level of the quaternion algebra (which gives the discriminant of $\NS$) is spread over substantially more divisor classes.

\smallskip

In practice, Elkies in \cite{E-Shimura} pursues a similar approach as in \ref{ss:mod}:
he determines a one-dimensional family of elliptic K3 surface with $\rho\geq 19$ and given discriminant;
then he computes CM points along the lines of \ref{ss:mod}.

The main step now consists in finding the corresponding abelian surfaces.
This can be achieved through a Shioda-Inose structure.
Namely we need an elliptic fibration with section and singular fibres of type $\II^*$ and $\III^*$.
Then we can apply Kumar's construction \cite{Kumar} to find the Igusa-Klebsch invariants of a family of genus 2-curves.
The Jacobians of these curves yield the corresponding abelian surfaces.

In practice, it might actually not be so easy to parametrise the required elliptic fibrations directly.
However, often the K3 surfaces admit other fibrations which are more readily found.
By Prop.~\ref{Prop:PSS}, it then suffices to identify perpendicular divisors of type $\tilde E_7$ and $\tilde E_8$ in $\NS(X)$ and compute their linear systems.
In \cite{E-Shimura}, Elkies achieves this in several steps.
Here we treat a related example.

\begin{Example}
Consider a K3 surface $X$ given by Inose's fibration (\ref{eq:Inose}) with two $\II^*$ fibres.
In the intersection graph consisting of zero section and $\tilde E_8$'s, we directly identify a divisor of type $\tilde D_{12}$ by omitting the vertices coresponding to the far double components of the $\II^*$ fibre.
The graph is sketched in Figure \ref{Fig:D16}.

\begin{figure}[ht!]
\setlength{\unitlength}{.45in}
\begin{picture}(10,3.7)(-0.5,-0.3)
\thicklines
\put(0,2){\line(1,0){7}}
\multiput(0,2)(1,0){8}{\circle*{.1}}
\put(2,2){\line(0,1){1}}
  \put(2,3){\circle*{.1}}
    \put(0.05,2.45){\makebox(0,0)[l]{$\tilde{E}_8$}}

\put(0,1){\line(1,0){7}}
\multiput(0,1)(1,0){8}{\circle*{.1}}
\put(2,1){\line(0,-1){1}}
  \put(2,0){\circle*{.1}}
    \put(0,0.55){\makebox(0,0)[l]{$\tilde{E}_8$}}
  
\put(8,1.5){\circle{.2}}
\put(8,1.5){\circle*{.1}}
\put(7,2){\line(2,-1){1}}
\put(7,1){\line(2,1){1}}
  \put(8.25,1.15){\makebox(0,0)[l]{$O$}}

\thinlines
\put(0.75,-0.25){\framebox(8.05,3.5){}}

\end{picture}
\caption{An $\tilde{D}_{16}$ divisor supported on the zero-section and components of the singular fibres}
\label{Fig:D16}
\end{figure}
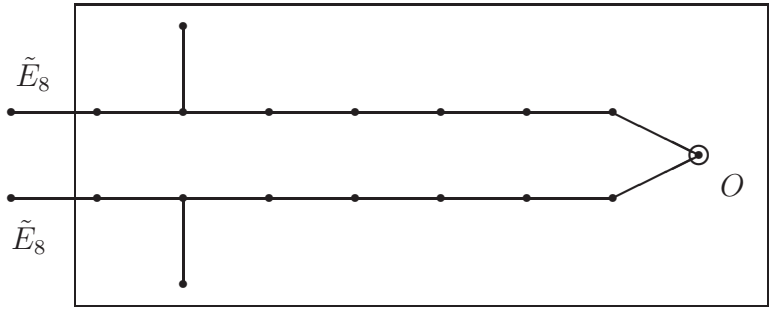

By Prop.~\ref{Prop:PSS}, $X$ admits an elliptic fibration with $\I_{12}^*$ fibre and sections induced by the omitted far components.
By itself, this fibration might be not so easy to parametrise.
However, the fibration can be deduced directly from (\ref{eq:Inose}) by choosing $u=x/t^2$ as an elliptic parameter.
One obtains the Weierstrass form
\[
 X:\;\;\; y^2 = t^2\,u^3 - 3\,A\,t^2\,u + t\,(t^2 - 2\, B\,t+1)
\]
over $k(u)$ in $(t,y)$-coordinates with $\I_{12}^*$ fibre at $u=\infty$ and two-torsion section $(0,0)$.
\end{Example}

\subsection{Real multiplication abelian surfaces}

Similar techniques can be applied to study abelian surfaces $A$ with real multiplication (RM):
here an order in a real quadratic field $K=\Q(\sqrt{D})$ embeds into the endomorphism ring of $A$.
A general RM abelian surface $A$ has Picard number $\rho(A)=2$, but only in the very first cases the general $A$ is known.
Elkies and Kumar again argue with K3 surfaces:
as in \ref{ss:Shim} the main idea is to exhibit a two-dimensional family of K3 surfaces with $\rho\geq 18$ and the right discriminant.
Then compute an appropriate elliptic fibration to find the Shioda-Inose structure.
Elkies reported on some results  for $D=33, 53$ in \cite{Elkies-RM}.

\subsection{Rational points}

Let $X$ be a variety over a number field $K$ (or over the function field of a curve). The rational points  are said to be  \emph{potentially dense} on $X$ if there is some finite extension $K'/K$ such that $X(K')$ is dense. Behind this notion stands the expectation that potential density should be a geometric property, depending only on the canonical class $K_X$.
Thanks to Faltings' theorem \cite{Fa},
this concept applies to curves: 
potential density holds true exactly for curves of genus zero and one.

The same is true for surfaces with $K_X$ negative: Over a finite extension, they are all rational. 
On the other hand, the Lang-Bombieri conjecture rules out potential density for projective varieties of general type.

Among surfaces with $K_X\equiv 0$, potential density has been proved for abelian and Enriques surfaces. 
In this context, elliptic fibrations enter naturally since positive Mordell-Weil rank over $K$ implies the density of $K$-rational points.
In this spirit, a result by Bogomolov and Tschinkel covers a great range of K3 surfaces \cite{BT} -- among them all elliptic K3 surfaces:

\begin{Theorem}[Bogomolov, Tschinkel]\label{Thm:BT}
Let $X$ be a K3 surface over a number field. Assume that $X$ has an elliptic fibration or infinite automorphism group. Then the rational points  are potentially dense on $X$.
\end{Theorem}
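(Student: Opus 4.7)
The plan is to treat the two hypotheses separately. In both cases we will, after replacing $K$ by a finite extension, reduce density of $X(\bar K)$ to density of a single algebraic orbit. For the elliptic fibration case $f: X\to\PP^1$, we may arrange after a finite extension that $f$ has a section, either by descending a component of a multisection or by passing to the Jacobian fibration $J(X)$, which is again a K3 surface (it has the same Euler number and trivial canonical bundle as $X$ by Theorems~\ref{Thm:can} and \ref{Thm:Euler_number}) and from which density transfers to $X$ via the natural rational correspondence. Let $E$ denote the generic fibre over $K(\PP^1)$. If, after a further finite extension, $E$ admits a section $P$ of infinite order, then Silverman's specialisation theorem implies that $P(t_0)$ is non-torsion on $F_{t_0}$ for all but finitely many $t_0\in\PP^1(K)$; the multiples $\{nP(t_0):n\in\Z\}$ are therefore Zariski dense in the genus-one curve $F_{t_0}$, and taking $t_0$ over the dense set $\PP^1(K)$ produces a dense set of rational points in $X$.

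The genuine difficulty is the case in which $E(\overline{K(\PP^1)})$ has rank zero, so no non-torsion section exists for $f$ itself. The idea is then to choose an irreducible multisection $C\subset X$ of degree $d\geq 2$ over $\PP^1$ and pass to the base-changed elliptic surface $X_C = X\times_{\PP^1}\widetilde C\to\widetilde C$, which automatically has a tautological section; the components of the pullback of $C$ yield additional sections of the generic fibre of $X_C$, and by Prop.~\ref{Prop:base} their heights differ from any heights on $X$ by the factor $d$. One must arrange that $C$ is chosen so that at least one of the induced classes lies outside $\MW(X_C)_{\mathrm{tors}}$, and then the positive-rank argument above gives density on $X_C$, which pushes forward to density on $X$ along the dominant map $X_C\to X$. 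Producing such a multisection is the main obstacle: it exploits the abundance of rational curves on K3 surfaces (in many ample linear systems) together with a height-pairing computation in the spirit of \S\ref{s:MWL} to certify that some effective multisection does not yield merely a torsion class. This is where the hardest geometric input enters the proof.

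For the second hypothesis, let $\sigma\in\mathrm{Aut}(X)$ have infinite order. By the Torelli theorem the induced isometry $\sigma^*$ of $\NS(X)\otimes\R$ also has infinite order, and a standard Lorentzian-lattice dichotomy leaves two possibilities. Either $\sigma^*$ is parabolic, preserving an isotropic class $[F]$ with $[F]^2=0$ and hence inducing a genus-one fibration on $X$ by Prop.~\ref{Prop:D^2=0}, which reduces us to the elliptic case above. Or $\sigma^*$ is hyperbolic with a Salem eigenvalue $\lambda>1$, in which case $\sigma$ has positive topological entropy; any proper $\sigma$-invariant closed subvariety of $X$ would then force a $\sigma^*$-invariant effective algebraic class incompatible with the spectral behaviour of $\sigma^*$ on $\NS(X)\otimes\R$. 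Consequently, for any $K'$-rational point $P$ chosen outside the countable union of $\sigma$-periodic proper subvarieties (such a $P$ exists after a finite extension, for instance by intersecting two general members of an ample linear system), the orbit $\{\sigma^n(P):n\in\Z\}$ is Zariski dense in $X$, giving the required density of rational points.
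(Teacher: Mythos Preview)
The paper does not supply a proof of this theorem: it is quoted as a result of Bogomolov--Tschinkel with a citation to \cite{BT}, followed only by the remark that the fibration need not have a section. So there is no ``paper's own proof'' to compare against, and your proposal must be judged on its own merits.

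Your treatment of the elliptic fibration case has the right architecture and identifies the genuine crux (the rank-zero case, handled via a well-chosen multisection and base change). A few steps are glossed: the passage from $J(X)$ back to $X$ is not just a ``rational correspondence'' --- $X$ is a torsor under $J(X)$, and transferring density requires first producing a rational point on $X$ (over a finite extension) to trivialise the torsor fibrewise on a dense set. In the actual argument one works directly with multisections on $X$ rather than detouring through the Jacobian; the key geometric input, as you say, is the supply of rational curves on $X$ that serve as multisections and become non-torsion sections after base change.

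The hyperbolic branch of your infinite-automorphism argument, however, has a real gap. You assert that for a hyperbolic $\sigma$ the orbit of a generic rational point is Zariski dense, justifying this by saying a proper $\sigma$-invariant subvariety would yield a $\sigma^*$-invariant effective class ``incompatible with the spectral behaviour''. But hyperbolic $\sigma^*$ does admit invariant classes: the orthogonal complement of the plane spanned by the expanding and contracting eigenvectors is $\sigma^*$-fixed and negative definite, so $(-2)$-classes (and finite $\sigma$-orbits of $(-2)$-curves) are perfectly compatible with hyperbolicity. More seriously, Zariski density of a single point orbit under a positive-entropy surface automorphism is a deep statement that is not available as a black box here. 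The standard route is different: one produces, after a finite extension, a curve $C\subset X$ carrying infinitely many rational points (a rational curve, or an elliptic curve of positive rank), and then argues that the translates $\sigma^n(C)$ are pairwise distinct for infinitely many $n$ (since a hyperbolic $\sigma^*$ has only finitely many periodic effective classes of bounded square), hence sweep out a Zariski-dense set. Replacing your point-orbit argument by this curve-orbit argument would close the gap.
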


Note that the elliptic fibration of the theorem is not required to have a section.
In consequence, Cor.~\ref{Cor:ell-K3} implies that every K3 surface over some number field with $\rho\geq 5$ shares the property of potential density.


\subsection{Hasse principle}

The question of rational points is closely related to the Hasse principle: If a variety $X$ has a $\Q$-rational point, then it has $\Q_v$-rational points at every place $v$. Hence the set of adelic points $X(\A)$ is nonempty. The converse implication is known as the Hasse principle:
\[
X(\A)\neq\emptyset \;\, \stackrel{?}{\Longrightarrow} \;\, X(\Q)\neq \emptyset.
\]
The Hasse principle holds for conics, but is not true in general. 
In 1970, Manin \cite{Manin} discovered that the failure of the Hasse principle can often be explained through the Brauer group $\Br(X)$. 
He defined a subset $X(\A)^{\Br}\subset X(\A)$ that contains $X(\Q)$, but can be empty even if $X(\A)$ is not. This case is referred to as Brauer-Manin obstruction. 

\smallskip

For K3 surfaces, it is  an open problem whether the Brauer-Manin obstruction is the only obstruction to the Hasse principle. 
In the context of elliptic surfaces (not necessarily with section), Colliot-Th\'el\`ene, Skorobogatov and Swinnerton-Dyer pioneered an elaborate technique encompassing these problems \cite{CTSSD}.
There are direct applications to K3 surfaces;
subject to standard hypotheses, they imply the existence of rational points on certain K3 surfaces (cf.~\cite[pp.~585, 625/626]{CTSSD}).

The techniques of \cite{CTSSD} apply not only to K3 surfaces, but to several types of surfaces endowed with an elliptic fibration.
They have been further devolped by Wittenberg in \cite{Wit}.

\section{Ranks of elliptic curves}
\label{s:ranks}

In this section, we shall give a brief discussion of possible Mordell-Weil ranks of elliptic surfaces.
In order for this problem to make sense, the base curve $C$ has to be fixed;
otherwise one can always apply a base change to a curve of higher genus to split some multisection and thus increase the $\MW$ rank.
Here we will almost exclusively  be concerned with the case $C\cong \PP^1$.

\subsection{Noether-Lefschetz loci}
\label{ss:N-L}

Before discussing Mordell-Weil ranks, we have to look into Picard numbers in generality.
Let $S\to\PP^1$ denote an elliptic surface with section over $\C$.
Upon fixing the arithmetic genus $\chi(S)=n$, the moduli space of these surfaces has dimension $10\,n-2$ by inspection of a globally minimal Weierstrass form as in \ref{ss:min}:
the degrees $4\,n+1$ resp.~$6\,n+1$ of $a_4$ and $a_6$ minus 4 normalisations by M\"obius transformations and scaling.
Here we are not concerned with rational elliptic surfaces, since then always $\rho=10$.
Hence we shall assume that $\chi(S)>1$.

\smallskip

Recall that $\NS(S)$ is generated by horizontal and vertical divisors by Cor.~\ref{Cor:ST}.
Hence an increase of the Picard number within a family of elliptic surfaces can be achieved by two means:
\begin{itemize}
 \item 
by degenerating the singular fibres in order to produce an additional fibre component;
\item
by forcing an additional, i.e.~independent section.
\end{itemize}
From the discriminant $\Delta$, it is visible that the first degeneration causes just one condition on the parameters of the family: 
the degenerate surfaces ought to live in one-dimensional subspaces of the moduli space.
Kloosterman gave a rigorous proof of this fact in \cite{Kl-NL} based on the model as hypersurface in weighted projective space from \ref{ss:IP^1}.

\begin{Theorem}
\label{Thm:NL}
Consider non-isotrivial complex elliptic surfaces over $\PP^1$ with section and fixed arithmetic genus $\chi\geq 2$.
The surfaces with Picard number $\rho\geq r\; (r\geq 2)$ have $(10\, \chi-r)$-dimensional Noether-Lefschetz locus
\end{Theorem}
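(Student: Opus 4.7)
The plan is to exploit the explicit description of the moduli space from \ref{ss:IP^1}: a non-isotrivial elliptic surface with section and arithmetic genus $\chi$ corresponds to a globally minimal generalised Weierstrass form, or equivalently a hypersurface in the weighted projective space $\PP[1,1,2\chi,3\chi]$. Counting coefficients in the Weierstrass form (say $a_4$ of degree $4\chi$ and $a_6$ of degree $6\chi$, i.e.~$(4\chi+1)+(6\chi+1)=10\chi+2$ parameters) and subtracting the four-dimensional group of M\"obius and admissible rescalings yields the moduli dimension $10\chi-2$. The task is to show that the Noether-Lefschetz locus $\mathrm{NL}_r$ where $\rho(S)\geq r$ has codimension exactly $r-2$.

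First I would establish the upper bound on the dimension (i.e.~the lower bound on codimension) by a Hodge-theoretic argument. By the Hodge diamond computed in \ref{ss:period} adapted to the present case, the geometric genus is $p_g = \chi-1 \geq 1$. A class $\xi\in H^2(S,\Z)\cap H^{1,1}(S)$ remains algebraic along a deformation $S_t$ precisely when the period vector $p_{S_t}$ is orthogonal to the parallel transport of $\xi$, which imposes one linear condition on the infinitesimal variation of Hodge structure, provided it is non-trivial. To conclude that each additional independent class gives one independent condition on the moduli, I would invoke M.-H.~Sait\=o's infinitesimal Torelli theorem for non-isotrivial elliptic surfaces (cited in \ref{ss:isotrivial}), which guarantees that the period map is a local immersion; non-isotriviality is essential here. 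Thus the locus $\mathrm{NL}_r$ has codimension at least $r-2$ in any component.

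For the lower bound on the dimension, I would show that $r-2$ such orthogonality conditions are actually independent and cut out a non-empty locus of the expected codimension. One way is to proceed inductively: starting from any surface $S_0$ with $\rho(S_0)=r$ (such a surface is exhibited for every admissible $r$ by the constructions of section \ref{s:K3} or by Kloosterman's argument \cite{Kl-NL}), the local deformations keeping the rank-$r$ sublattice algebraic form a smooth subspace of the moduli space of codimension $r-2$, because each of the $r-2$ generators of $\NS(S_0)/\langle\bar O,F\rangle$ gives a surjective map onto an independent direction in the period domain (again via infinitesimal Torelli). Patching these loci together via the variation of Hodge structure yields the full Noether-Lefschetz locus of dimension $10\chi-r$.

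The main obstacle will be the independence statement: one must rule out the possibility that several Hodge classes conspire to impose fewer than $r-2$ conditions on the period. This is exactly where the weighted projective model of \cite{Kl-NL} is useful, as it lets one translate the independence of Hodge classes into the non-degeneracy of an explicit Jacobian ring pairing, in the spirit of the classical Carlson-Green-Griffiths-Harris approach to Noether-Lefschetz for hypersurfaces. The restrictions $\chi\geq 2$ (so that $p_g\geq 1$) and non-isotriviality (so that infinitesimal Torelli holds) are precisely what make these independence arguments go through.
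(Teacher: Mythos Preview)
Your outline is essentially the approach the paper points to: the survey does not give its own proof of Thm.~\ref{Thm:NL} but attributes it to Kloosterman \cite{Kl-NL}, whose argument uses exactly the weighted projective model from \ref{ss:IP^1} together with the Griffiths--Jacobian ring description of the infinitesimal variation of Hodge structure, combined with Sait\=o's infinitesimal Torelli theorem for non-isotrivial elliptic surfaces. Your sketch of the upper bound (each extra algebraic class imposes one period condition, made non-trivial by infinitesimal Torelli) and of the lower bound (independence via the Jacobian ring pairing) matches this.

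Two small corrections worth noting. First, the Hodge diamond you want is the one for general elliptic surfaces computed in Section~\ref{s:NS} (Betti and Hodge numbers), not the K3-specific discussion in \ref{ss:period}; the relevant fact is $p_g=\chi-1\geq 1$, which you state correctly. Second, your appeal to ``the constructions of section~\ref{s:K3}'' to produce a surface with $\rho(S_0)=r$ only covers $\chi=2$; for general $\chi\geq 2$ the existence of such base points is part of what Kloosterman's argument establishes (and is also clear from the fibre-degeneration picture the paper sketches just before the theorem: imposing $r-2$ coincidences in the discriminant gives a codimension-$(r-2)$ locus with $\rho\geq r$ via extra fibre components). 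With those adjustments your proposal is a faithful expansion of the reference the paper gives.
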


To some extent, isotrivial elliptic surfaces can be dealt with as well (cf.~\cite[Cor.~1.2]{Kl-NL}).
The K3 case of $\chi=2$ follows from the period map (\ref{ss:period}).
For $r=3$, the result was previously proven by Cox \cite{Cox-NL} as we shall discuss in the next section.

\subsection{Section heuristics}

While the above fibre degeneration argument seems to work in any characteristic, 
the problem of additional sections is more subtle;
in particular, it depends on the characteristic.

\smallskip

First we discuss a heuristic argument:
Let $P=(X(t), Y(t))$ be an integral section on an integral model of the elliptic surface $S\to\PP^1$ in the notion of \ref{ss:int-surf}.
Then $X(t)$ has degree $2\chi$ and $\deg(Y(t)) = 3\chi$ in general.
In total, these polynomials have $5\chi+2$ coefficients;
on the other hand, the Weierstrass form has degree $6\chi$ in $t$.
Hence there are $(\chi-1)$ more equations (at $t^i,\, i=0,\hdots,6\,\chi$) than coefficients available.
These heuristics suggest that increasing the Mordell-Weil rank by one should cost $\chi-1$ moduli dimensions.

So far these arguments have been made explicit only in the first case by Cox \cite{Cox-NL}:

\begin{Theorem}
\label{Thm:Cox}
Consider non-isotrivial complex elliptic surfaces over $\PP^1$ with section and fixed arithmetic genus $\chi\geq 2$.
The surfaces with Mordell-Weil rank $\geq 1$ have $(9\, \chi-1)$-dimensional moduli.
\end{Theorem}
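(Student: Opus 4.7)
The plan is to promote the parameter-counting heuristic stated just before the theorem to a rigorous dimension bound by considering an incidence variety of pairs (globally minimal Weierstrass form, polynomial section). Let $\mathcal{W}$ be the affine space of quadruples $(a_4, a_6, X, Y)$ of polynomials in $t$ with $\deg a_i \leq i\chi$, $\deg X \leq 2\chi$, $\deg Y \leq 3\chi$; it has dimension $15\chi + 4$. Inside $\mathcal{W}$ let $\mathcal{I}$ denote the closed subvariety defined by the polynomial identity $Y^2 = X^3 + a_4 X + a_6$ as an equation in $t$, i.e.~by the $6\chi + 1$ scalar conditions obtained by equating coefficients. The $4$-dimensional group $G$ of M\"obius transformations of the base $\PP^1$ together with admissible rescalings (\ref{eq:scale}) preserves $\mathcal{I}$, and projection onto the Weierstrass coordinates descends to a morphism $\pi: \mathcal{I}/G \to \mathcal{M}$, where $\mathcal{M}$ is the $(10\chi-2)$-dimensional moduli of non-isotrivial globally minimal elliptic surfaces with section and arithmetic genus $\chi$.

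I would establish the upper bound as follows. The key observation is that the alternative projection $\pi_2: \mathcal{I} \to k[t]_{\leq 2\chi}\times k[t]_{\leq 3\chi}$ onto $(X, Y)$ is surjective with every fibre a $(4\chi+1)$-dimensional affine space: given $(X, Y)$, the equation $a_4 X + a_6 = Y^2 - X^3$ in $k[t]_{\leq 6\chi}$ has a full affine space of solutions because the linear map $(a_4, a_6)\mapsto a_4 X + a_6$ already surjects onto $k[t]_{\leq 6\chi}$ through its $a_6$-component alone. Consequently $\mathcal{I}$ is smooth and irreducible of pure dimension $(5\chi + 2) + (4\chi + 1) = 9\chi + 3$, and quotienting by the generically free $G$-action gives $\dim \mathcal{I}/G = 9\chi - 1$. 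The fibre of $\pi$ over a surface $S$ is in bijection with the polynomial sections of the minimal Weierstrass form representing $S$, i.e.~with the integral sections of $S$ in the sense of \ref{ss:int-surf}; the function-field Siegel theorem stated there shows this set is finite. Since a morphism with finite generic fibre preserves dimension, the image of $\pi$ has dimension exactly $9\chi - 1$, and this image contains the MW rank $\geq 1$ locus because every non-torsion section minimalises to a polynomial one.

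For the matching lower bound, let $\mathcal{I}^{\mathrm{nt}} \subset \mathcal{I}$ denote the open subvariety of pairs $(S, P)$ with $P$ non-torsion: the height $h(P)$ from \ref{ss:height} varies algebraically in families, so this is a Zariski-open condition, and $\mathcal{I}^{\mathrm{nt}}$ is non-empty (pull back a rational elliptic surface of positive Mordell-Weil rank under a degree $\chi$ base change $\PP^1 \to \PP^1$ and minimalise the resulting section). Since $\mathcal{I}^{\mathrm{nt}}$ has the same dimension as $\mathcal{I}$ and its image lies in the MW rank $\geq 1$ locus, the lower bound $9\chi - 1$ follows. The hardest ingredient will be the finiteness of polynomial sections on a fixed Weierstrass form, i.e.~the function-field Siegel theorem in \ref{ss:int-surf}; without it, one cannot rule out that $\pi$ collapses a positive-dimensional family of $\mathcal{I}$ to a single surface. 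A Hodge-theoretic alternative is available: the class $\varphi(P) \in \NS(S)$ from (\ref{eq:height-ortho}) imposes the condition of being of type $(1,1)$, which cuts out $p_g = \chi - 1$ linear conditions on the period of $S$, and infinitesimal Torelli for elliptic surfaces (cited in \ref{ss:isotrivial}) pulls these back to the same codimension in $\mathcal{M}$.
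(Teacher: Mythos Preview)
The paper does not prove this theorem; it states the section-counting heuristic and then cites Cox \cite{Cox-NL} for the rigorous result. So there is no ``paper's own proof'' to compare against, only Cox's original argument, which is Hodge-theoretic.

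Your lower bound is fine: the incidence variety $\mathcal{I}$ is indeed irreducible of dimension $9\chi+3$ via the projection to $(X,Y)$, the non-torsion locus $\mathcal{I}^{\mathrm{nt}}$ is non-empty and open, and its image under $\pi$ sits inside the rank $\geq 1$ locus with finite fibres. That gives $\dim \geq 9\chi-1$.

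The upper bound, however, has a real gap. You assert that ``this image contains the MW rank $\geq 1$ locus because every non-torsion section minimalises to a polynomial one''; this is false. A surface with MW rank $\geq 1$ need not have any integral section at all. Concretely, take $\chi=2$ and a K3 surface with $\NS(X)\cong U\oplus\langle -6\rangle$ (such surfaces exist by Prop.~\ref{Prop:lp}). By Lemma~\ref{Lem:frame-root} the frame $\langle -6\rangle$ has no roots, so all fibres are irreducible; by Thm.~\ref{Thm:E-NS} the Mordell-Weil group is $\Z$, generated by a section $P$ with $h(P)=6$. Every non-zero multiple $nP$ then has $h(nP)=6n^2>4=2\chi$, so by (\ref{eq:height-ortho}) it meets $\bar O$ and is not integral. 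These surfaces lie in the MW rank $\geq 1$ locus but \emph{not} in the image of your $\pi$. Thus your incidence variety only sees the sublocus of surfaces admitting an integral section, and you have not bounded the full rank $\geq 1$ locus from above.

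The Hodge-theoretic alternative you sketch at the end is the correct route and is essentially Cox's approach: the extra divisor class must be of type $(1,1)$, which imposes $p_g=\chi-1$ conditions on the period, and one uses an infinitesimal Torelli/transversality argument to show these conditions are independent. If you want a self-contained proof, that is the argument to develop; the parameter count via $\mathcal{I}$ gives only the lower bound.
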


For higher $\MW$ rank. Thm.~\ref{Thm:NL} gives an upper bound for the dimension of the Noether-Lefschetz locus;
however this bound cannot be expected to be optimal, as suggested by Thm.~\ref{Thm:Cox}.

\subsection{Mordell-Weil rank over $\C$}

The above heuristics show why it is hard to produce elliptic surfaces of high $\MW$ rank.
At this point, the main hope seemingly consists in finding a canonical (geometric) construction as in \ref{ss:Kuwata}.
In particular, it is unclear whether Mordell-Weil ranks of complex elliptic surfaces over $\PP^1$ can be arbitrary large:

\begin{Question}
Is there an upper bound for the Mordell-Weil ranks of complex elliptic surfaces over $\PP^1$?
\end{Question}

One should point out that Lapin once claimed that the $\MW$ rank is unbounded \cite{Lapin}.
However, some gaps were discovered in his proof (cf.~\cite{Schoen}), so the question remains open.

\subsection{}
To our knowledge, the current record for Mordell-Weil ranks of complex elliptic surfaces over $\PP^1$ is $68$.
it is attained by some of the isotrivial elliptic surfaces
\begin{eqnarray}\label{eq:m}
 S:\;\;\; y^2 = x^3 + t^m + 1\;\;\;\; (m\in \N).
\end{eqnarray}

\begin{Theorem}[Shioda (1991)]
The elliptic surface $S$ over $\C$ has Mordell-Weil rank $r\leq 68$.
Moreover
\[
 r=68\;\; \Longleftrightarrow\;\; m\equiv 0\mod 360.
\]
\end{Theorem}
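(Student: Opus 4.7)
I would proceed via Shioda's theory of Delsarte surfaces. Since the defining equation $y^2 = x^3 + t^m + 1$ consists of exactly four monomials, $S_m$ is Delsarte, and, setting $n=\operatorname{lcm}(6,m)$, the rational map
$$[X_0{:}X_1{:}X_2{:}X_3]\;\longmapsto\;\bigl((X_1/X_0)^{n/m},\,(X_2/X_0)^{n/3},\,(X_3/X_0)^{n/2}\bigr)$$
(with roots of unity inserted to match signs against the Fermat relation) realizes $S_m$ as a finite abelian quotient $F_n/G_m$ of the Fermat surface $F_n = \{\sum X_i^n = 0\}\subset\PP^3$ by an explicit subgroup $G_m\subset(\mu_n)^3$. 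Independently, a direct Tate-algorithm calculation on the globally minimal Weierstrass form yields $\chi(S_m)=\lceil m/6\rceil$, produces $m$ singular fibres of Kodaira type $\II$ over the zeros of $t^m+1$, and places one further fibre at $\infty$ of type (respectively) smooth, $\II$, $\IV$, $\I_0^*$, $\IV^*$, $\II^*$ as $m\equiv 0,1,2,3,4,5\pmod 6$. This pins down $\mathrm{rank}\,T(S_m)$, so by the Shioda-Tate formula (Cor.~\ref{Cor:ST}) the computation of $r$ is reduced to that of $\rho(S_m)$.

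For the Picard number I would invoke Shioda's decomposition of $H^2(F_n,\C)$ under the natural diagonal $(\mu_n)^4/\mu_n$-action into one-dimensional eigenspaces $V_\mathbf{a}$ indexed by characters $\mathbf{a}=(a_0,a_1,a_2,a_3)\in(\Z/n)^4$ with $a_i\neq 0$ and $\sum_i a_i\equiv 0\pmod n$. Writing $\sigma(\mathbf{a})=\sum_i\langle a_i/n\rangle$, the eigenspace $V_\mathbf{a}$ lies in $H^{1,1}$ iff $\sigma(\mathbf{a})=2$, and is algebraic iff $\mathbf{a}$ is \emph{decomposable}, i.e.\ $\sigma(t\mathbf{a})=2$ for every $t\in(\Z/n)^\times$. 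Passing to $G_m$-invariants and absorbing the boundary contributions (those with some $a_i=0$), which account precisely for $\mathrm{rank}\,T(S_m)$, one extracts an explicit character-counting formula
$$r(S_m)\;=\;\#\bigl\{\mathbf{a}\in(\Z/n)^4\,:\,\mathbf{a}\text{ decomposable and }G_m\text{-invariant}\bigr\}.$$
Because both decomposability and $G_m$-invariance factor through the primary decomposition of $n$, this count is multiplicative in the prime-power decomposition of $m$.

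The final step, and the main obstacle, is the combinatorial optimization. I would first show that no $\mathbf{a}$ of order divisible by a prime $p\geq 7$ can simultaneously be decomposable and $G_m$-invariant: the fractional-part condition $\sigma(t\mathbf{a})=2$ for all $t\in(\Z/p)^\times$ combined with the Fermat constraint $\sum a_i\equiv 0\pmod p$ forces the $a_i/p$ into a set incompatible with $p\geq 7$. Hence $r(S_m)$ depends only on $(v_2(m),v_3(m),v_5(m))$. A direct enumeration then establishes that the count stabilizes once $v_2(m)\geq 3$, $v_3(m)\geq 2$, and $v_5(m)\geq 1$, i.e.\ once $360\mid m$, with stationary value exactly $68$. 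Carrying out this enumeration -- proving the plateau at $(v_2,v_3,v_5)=(3,2,1)$ for $p\in\{2,3,5\}$ and verifying the precise count $68$ at the plateau -- is the core combinatorial content of the argument, and the only truly delicate step.
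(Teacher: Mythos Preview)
Your approach matches the paper's sketch: recognize $S$ as a Delsarte surface covered by a Fermat surface, compute $\rho(S)$ (equivalently the Lefschetz number) by counting decomposable $G_m$-invariant characters in the eigenspace decomposition of $H^2(F_n)$, and recover the Mordell-Weil rank from Cor.~\ref{Cor:ST} using that the only possibly reducible fibre sits at $\infty$. One small slip: your list of fibre types at $\infty$ is reversed --- for $m\equiv 1\pmod 6$ the fibre at $\infty$ is $\II^*$ and for $m\equiv 5$ it is $\II$ --- but this does not affect the method.
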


The proof relies on the fact that $S$ is a Delsarte surface, i.e.~covered by a Fermat surface.
Since the Lefschetz number $\lambda(S)$ is a birational invariant, it can be computed from the Galois cover.
Then $b_2(S)=\lambda(S)+\rho(S)$ gives the Picard number.
The Mordell-Weil rank is easily obtained from Cor.~\ref{Cor:ST} since there are no reducible fibres except possibly at $\infty$.

This argument was exhibited in detail by Shioda in \cite{Shioda-AJM} where the previous rank record of $56$ was achieved by another class of 
Delsarte elliptic surfaces
 which is non-isotrivial (i.e. with non-constant $j$-invariant).

The lattice structure of the Mordell-Weil lattice for the elliptic surface (\ref{eq:m}) for any $m$ has been  determined by Usui \cite{Usui}. For example, in case $r=68$, its determinant is equal to $2^{136} 3^{102} 5^{40}$.

\subsection{Mordell-Weil ranks in positive characteristic}

The approach through Delsarte surfaces is also very useful in positive characteristic.
In fact, it enables us to show that $\MW$-ranks are not bounded.

The reason for this lies in unirationality.
Namely, by \cite{KaS} the Fermat surface of degree $n$ is unirational in charateristic $p$ if and only if
\[
 \exists \, \nu\in\N:\;\;\; p^\nu \equiv -1\mod n.
\]
Since unirationality implies supersingularity ($b_2=\rho$), one can thus construct supersingular elliptic surfaces in abundance:
by controlling the trivial lattice, one can achieve arbitrarily large $\MW$ ranks.
For instance, the elliptic surface in (\ref{eq:m}) is unirational in characteristic $p$ if $p\equiv -1\mod 3m$ (or if this holds for some power of $p$).

The first proof that $\MW$-ranks are unbounded in any positive characteristic goes back to Tate  and Shafarevich \cite{S-Tate} who used supersingular Fermat curves.
Their proof is based on the validity of the Tate conjecture of abelian varieties over finite fields \cite{Tate-Endo}
that we will briefly discuss below in \ref{s:Tate}.
In particular, the construction of Tate and Shafarevich works over fixed finite fields.
This contrasts with the geometric reasoning in \cite{KaS} over an algebraically closed field.

\smallskip

Mordell-Weil lattices in positive characteristic had a great impact on the sphere packing problem.
Namely they enabled Elkies and one of us independently to derive lattices with greater density than previously known (cf.~\cite[pp.~xviii-xxii]{CS}, \cite{Elki}, \cite{Shio-SP}).

\subsection{Tate conjecture}
\label{s:Tate}

Another feature of positive characteristic is that the Noether-Lefschetz loci behave in a different way.
This observation can be easily checked for explicit examples;
in general it depends on the validity of the \textbf{Tate conjecture}.

\smallskip

The Tate conjecture \cite{Tate-C} states that over a finite field $\F_q$, the Picard number of an algebraic surface $X$ can be read off from the induced action of the Frobenius morphism Frob$_q$ on $H^2(X)$.
Namely, consider the subgroup $N$ of $\NS(X)$ generated by divisor class defined over $\F_q$.
Conjecturally, the rank of $N$ equals the multiplicity of $q$ as an eigenvalue of Frob$_q^*$ on $H^2(X)$ (for a suitable \'etale cohomology with $\ell$-adic coefficients).
The Tate conjecture is only known for specific classes of surfaces, such as abelian surfaces and products of curves \cite{Tate-Endo}, elliptic K3 surfaces with section \cite{ASD} and Fermat surfaces \cite{KaS}.

One particular consequence of the Tate conjecture concerns the (geometric) Picard number of $X$ (i.e.~over $\bar\F_q$):
it equals the number of eigenvalues of Frob$_q^*$ which have the shape $q$ times a root of unity.
By the Weil conjectures, all other eigenvalues come in complex conjugate pairs.
Hence the Tate conjecture implies that the difference $b_2(X)-\rho(\bar X)$ has even parity.
In other words, divisors come always in pairs on surfaces over finite fields!

\smallskip

The Tate conjecture has the following impact on an elliptic surfaces over $\bar\F_q$:
Suppose we degenerate or specialise a given surface over $\bar\F_q$ so that there is an additional fibre component (or section).
Then the Tate conjecture 
forces the occurrence of another independent divisor.
Generally this divisor will appear as an independent section.

\subsection{Specialisation}

When working over number fields, there is one particular reason to be interested in elliptic surfaces with high Mordell-Weil rank:
specialisation yields elliptic curves with high rank.
Namely, by a result of 
N\'eron and Silverman \cite{Silv}, the Mordell-Weil group of the general member in a one-dimensional family of abelian varieties injects into the Mordell-Weil groups of the specialisations except at finitely many places  over any given (finite) number field.

Applied to the elliptic K3 surface of $\MW$ rank 17 mentioned in \ref{ss:MW-Q},
specialisation yields infinitely many elliptic curves over $\Q$ with rank at least 17.
This construction was accelerated by Elkies in \cite{Elkies} as follows.
First he exhibited a base change from $\PP^1$ to an elliptic curves $E$ of positive rank such that the $\MW$ rank increased to 18.
Then he studied the fibres  of the base changed elliptic surface over $\Q$ -- generally elliptic curves of rank at least 18 over $\Q$.
Among them, Elkies found one particular fibre that attains the current record of an elliptic curve over $\Q$ with rank at least 28.

In a way, this new record curve puts an interesting twist to the previous achievements:
the first record curves were based on geometric considerations, in particular cubic pencils and rational elliptic surfaces.
This approach culminated  in the famous work of N\'eron (1954) to construct infinitely many elliptic curves over $\Q$ of rank at least 11 (cf. Serre \cite{Serre}).
N\'eron's technique was  later clarified  with
the method of Mordell-Weil lattices in \cite{Sh-Neron}.
Then, starting from an elliptic curve over $\Q(t)$ of rank at least 12 
by Mestre  and  others, 
extensive computer searches played the dominant role in subsequent years, leading to the previous record curves of rank (at least) 24.
It is now with Elkies' work that geometric arguments are reclaiming their prominent role.

Similar techniques have been pursued for elliptic curves over $\Q$ with fixed torsion subgroup (cf.~\cite{Elkies-rank}).
Note that here the Mordell-Weil ranks of the elliptic surfaces are much smaller compared to Picard number and Euler number of the elliptic surface, since the torsion sections force reducible singular fibres by \ref{ss:quot-sect}.

\subsection*{Acknowledgements} 
We thank the organisers of the conference ``Algebraic Geometry in East Asia'' for inviting us to participate in this fruitful meeting and contribute this paper to the proceedings.
The first author is indepted to I.~Shimada for supporting and arranging his visit.

Most of this paper was written while the first author held a position at University of Copenhagen,
and the first draft was almost finished during a visit of the second author.
We are grateful to the Department of Mathematics for their support.

We would like to express our gratitude to all our colleagues for discussions and collaborations throughout the years.
Particular thanks go to the students who attended our lectures on the subject of this survey. 
It was on one of these occasions that we first met in Milano in 2006, and we would like to take this opportunity to thank B.~van Geemen for making this initial meeting possible.

We are grateful to I.~Dolgachev, R.~Kloosterman, D.~Lorenzini, and H.~Partsch for their helpful comments.
Our thanks go to the referee for many comments that helped us improve the paper.


%




\end{document}